\newtheorem{dfn}{Definition}[section]
\newtheorem{thm}[dfn]{Theorem}
\newtheorem{prop}[dfn]{Proposition}
\newtheorem{lem}[dfn]{Lemma}
\newtheorem{cor}[dfn]{Corollary}
\newtheorem*{cor*}{Corollary}
\newtheorem{claim}{Claim}
\theoremstyle{definition}
\newtheorem{exa}[dfn]{Example}
\newtheorem{assumption}[dfn]{Assumption}
\newcommand{\K}{K\"ahler }
\newcommand{\X}{\mathscr{X}}
\newcommand{\XX}{\mathscr{X}^{\iota}}
\newcommand{\xs}{\mathscr{X}/S}
\newcommand{\xss}{\mathscr{X}^{\iota}/S}
\newcommand{\elm}{\operatorname{elm}}
\newcommand{\mon}{\operatorname{Mon}^2}
\newcommand{\ktm}{\operatorname{KT}(M)}
\newcommand{\vol}{\operatorname{Vol}}
\newcommand{\rk}{\operatorname{rk}}
\newcommand{\sign}{\operatorname{sign}}
\newcommand{\hilb}{\operatorname{Hilb}_P(\mathbb{P}^N)}
\newcommand{\hilbi}{\operatorname{Hilb}_P^I(\mathbb{P}^N)}
\newcommand{\G}{\mu_2}
\newcommand{\D}{\varDelta}
\title[Analytic torsion for IHS 4-folds with involution]{Analytic torsion for irreducible holomorphic symplectic fourfolds with involution, II: the singularity of the invariant}
\author{Dai Imaike}
\address{
Department of Mathematics,
Faculty of Science,
Kyoto University,
Kyoto 606-8502,
Japan}
\email{imaike.dai.22s@st.kyoto-u.ac.jp}
\begin{document} 

\maketitle
\centerline{with an Appendix by Ken-Ichi Yoshikawa}

\numberwithin{equation}{section}

\begin{abstract}
	We study the boundary behavior of the invariant of $K3^{[2]}$-type manifolds with antisymplectic involution, which we obtained using equivariant analytic torsion.
	We show the algebraicity of the singularity of the invariant by using the asymptotic of equivariant Quillen metrics and equivariant $L^2$-metrics. 
	We prove that, in some cases, the invariant coincides with Yoshikawa's invariant for 2-elementary K3 surfaces.
	Hence, in these cases, our invariant is expressed as the Petersson norm of a Borcherds product and a Siegel modular form.
\end{abstract}

\tableofcontents

\setcounter{section}{-1}

\section{Introduction}\label{s-0}
	
	This is the second of a series of three papers investigating equivariant analytic torsion for manifolds of $K3^{[2]}$-type with antisymplectic involution.
	We study the invariant $\tau_{M, \mathcal{K}}$ constructed in \cite{I1} in more detail.
	We analyze the singularity of the invariant near the discriminant locus on the moduli space
	and in some special cases of $(M, \mathcal{K})$, we prove the coincidence of $\tau_{M, \mathcal{K}}$ and Yoshikawa's holomorphic torsion invariant.
	Since Yoshikawa's invariant is expressed as the Petersson norm of a certain automorphic form on the moduli space, 
	so is our invariant $\tau_{M, \mathcal{K}}$ for those $(M, \mathcal{K})$.
	These results are useful for calculating the BCOV invariant of the Calabi-Yau fourfold obtained as a crepant resolution of the quotient of a $K3^{[2]}$-type manifold by an antisymplectic involution.
	We will discuss this subject in the third paper (\cite{I3}).
	
	To understand the boundary behavior of $\tau_{M, \mathcal{K}}$,
	we are naturally led to analyze the singular behavior of the Quillen metric and the $L^2$-metric on the determinant of the cohomology of a degenerating family of compact \K manifolds, since $\tau_{M, \mathcal{K}}$ is the product of some analytic torsions and a correction term and since analytic torsion is the ratio of these metrics.
	In \cite{MR1064578}, Bismut-Bost studied the singular behavior of the Quillen metric for degenerations of relative dimension 1.
	In \cite{MR1486991}, Bismut determined the singularity of Quillen metrics for semistable degenerations of certain type.
	In \cite{MR1639560}, \cite{MR2262777}, Yoshikawa obtained a formula for the singularity of Quillen metrics for general one-parameter degenerations of compact \K manifolds.
	On the other hand, in the case of trivial twisting bundle, the singularity of the $L^2$-metric was studied by Schmid (\cite{MR0382272}), Peters (\cite{MR744325}), Eriksson-Freixas i Montplet-Mourougane (\cite{MR3871824}) in the context of variations of Hodge structure.
	In particular, Eriksson-Freixas i Montplet-Mourougane (\cite{MR3871824}, \cite{MR4255041}) described the singular behavior of the $L^2$-metric in terms of the monodromy action on the limit Hodge structure
	and they determined the singularity of the analytic torsion and the BCOV invariant for degenerations of Calabi-Yau manifolds.
	In this paper, we study an equivariant analogue of these results to analyze the singularity of $\tau_{M, \mathcal{K}}$.
	Let us explain our results in more detail.

\medskip

	Recall that an irreducible holomorphic symplectic fourfold is a $K3^{[2]}$-type manifold if it is deformation equivalent to the Hilbert scheme of 2-points of a K3 surface.
	Fix an abstract lattice $L_2$ isomorphic to the Beauville-Bogomolov-Fujiki lattice of a $K3^{[2]}$-type manifold.
	By Joumaah \cite{MR3519981}, the deformation type of a pair consisting of a $K3^{[2]}$-type manifold and an antisymplectic involution is encoded by the pair $(M, \mathcal{K})$,
	where $M$ is an admissible sublattice of $L_2$ and $\mathcal{K}$ is a K\"ahler-type chamber (See \cite[Definitions 6.1 and 8.1]{MR3519981}).
	An antisymplectic involution $\iota : X \to X$ of a $K3^{[2]}$-type manifold $X$ is said to be of type $(M, \mathcal{K})$ if its deformation type is given by $(M, \mathcal{K})$ (See \cite[Definition 10.1]{MR3519981} and Section \ref{sss-2-1-1} for a precise definition).
	The set of isomorphism classes of $K3^{[2]}$-type manifolds with involution of type $(M, \mathcal{K})$ is denoted by $\tilde{\mathcal{M}}_{M, \mathcal{K}}$.
	In \cite{MR3519981}, Joumaah defined a period map $P_{M, \mathcal{K}} : \tilde{\mathcal{M}}_{M, \mathcal{K}} \to \mathcal{M}_{M, \mathcal{K}} \setminus \bar{\mathscr{D}}_{M^{\perp}}$,
	where $\mathcal{M}_{M, \mathcal{K}}$ is an orthogonal modular variety associated to $(M, \mathcal{K})$
	and $\bar{\mathscr{D}}_{M^{\perp}}$ is a divisor on $\mathcal{M}_{M, \mathcal{K}}$ (see Section \ref{sss-2-1-1}).
	He proved that $P_{M, \mathcal{K}}$ is generically injective.	
	
	Let us briefly recall the invariant $\tau(X, \iota)$ of a $K3^{[2]}$-type manifold with antisymplectic involution $(X, \iota)$ constructed in \cite{I1}.
	Let $h_X$ be an $\iota$-invariant Ricci-flat \K metric on $X$ with normalized volume $1$.
	The equivariant analytic torsion of the cotangent bundle $\Omega^1_X$ with respect to $h_X$ and the $\G$-action induced from $\iota$ is denoted by $\tau_{\iota}(\bar{\Omega}^1_X)$.
	Let $X^{\iota}$ be the fixed locus of $\iota : X \to X$.
	By \cite{MR2805992}, $X^{\iota}$ is a possibly disconnected smooth complex surface.
	The analytic torsion of the trivial line bundle $\mathcal{O}_{X^{\iota}}$ with respect to $h_X|_{X^{\iota}}$ is denoted by $\tau(\bar{\mathcal{O}}_{X^{\iota}})$.
	Let $\vol(X^{\iota}, h_X|_{X^{\iota}})$ be the volume of $X^{\iota}$ with respect to $h_X|_{X^{\iota}}$.
	We denote by $\vol_{L^2}\left( H^1(X^{\iota}, \mathbb{Z}), h_X|_{X^{\iota}} \right)$ the covolume of $H^1(X^{\iota}, \mathbb{Z})$ with respect to the $L^2$-metric on $H^1(X^{\iota}, \mathbb{R})$ induced from $h_X|_{X^{\iota}}$.
	We define a real number $\tau(X, \iota)$ by
	\begin{align*}
		\tau(X, \iota)=\tau_{\iota}(\bar{\Omega}_X^1) 
		 \tau(\bar{\mathcal{O}}_{X^{\iota}})^{-2} \vol(X^{\iota}, h_X|_{X^{\iota}})^{-2} \vol_{L^2}(H^1(X^{\iota}, \mathbb{Z}), h_X|_{X^{\iota}}).
	\end{align*}
	By \cite{I1}, $\tau(X, \iota)$ is independent of the choice of a normalized $\iota$-invariant Ricci-flat \K metric $h_X$.
	Hence $\tau(X, \iota)$ is an invariant of $(X, \iota)$ (See Section \ref{sss-2-1-2} for more details about the invariant).	
	
	Write $\tau_{M, \mathcal{K}}$ for the restriction of $\tau$ to $\tilde{\mathcal{M}}_{M, \mathcal{K}}$.
	Since $\tau_{M, \mathcal{K}}(X, \iota)$ depends only on the isomorphism class of $(X, \iota)$
	and since $P_{M, \mathcal{K}}$ is generically injective, $\tau_{M, \mathcal{K}}$ is viewed as a smooth function on $\mathcal{M}_{M, \mathcal{K}} \setminus \bar{\mathscr{D}}_{M^{\perp}}$.
	Let $C \subset \mathcal{M}^*_{M, \mathcal{K}}$ be an irreducible projective curve, where $\mathcal{M}^*_{M, \mathcal{K}}$ is the Baily-Borel compactification of $\mathcal{M}_{M, \mathcal{K}}$.
	We assume that $C$ is not contained in $\bar{\mathscr{D}}_{M^{\perp}} \cup (\mathcal{M}^*_{M, \mathcal{K}} \setminus \mathcal{M}_{M, \mathcal{K}})$.
	Let $p \in C \cap \bar{\mathscr{D}}_{M^{\perp}}$ be a smooth point of $C$.
	Choose a coordinate $(\D, s)$ on $C$ centered at $p$ such that $\D \cap \bar{\mathscr{D}}_{M^{\perp}} = \{p \}$.
	As an application of the embedding formula for equivariant Quillen metrics \cite{MR1316553} 
	and an equivariant analogue of the formula for the singularity of the $L^2$-metrics \cite{MR4255041},
	we obtain the algebraicity of the singularity of $\tau_{M, \mathcal{K}}$ as follows.

	\begin{thm}[Theorem \ref{t3-2-3-1}]\label{t-0-1-2}
		There exists a constant $a \in \mathbb{Q}$ such that 
		$$
			\log \tau_{M, \mathcal{K}}|_C (s) = a \log |s|^2 + O(\log \log |s|^{-1}) \quad (s \to 0).
		$$
	\end{thm}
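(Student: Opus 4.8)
The plan is to read $\tau_{M,\mathcal{K}}|_C$ along the degeneration as a product of ratios of Quillen-type and $L^2$-type metrics on determinant lines, to extract the singularity of each factor from the known (equivariant) singularity theorems, and finally to check that every surviving $\log|s|^2$ coefficient is rational while all lower-order terms are absorbed into $O(\log\log|s|^{-1})$.

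First I would construct a one-parameter degenerating family $\xs$ over the disk $S=\D$ carrying a fiberwise antisymplectic involution $\iota$, whose generic fiber is a $\kk$-type manifold with involution of type $(M,\mathcal{K})$ lying over $C\setminus\{p\}$ and whose special fiber corresponds to $p\in\bar{\mathscr{D}}_{M^{\perp}}$. After a semistable reduction if necessary, I may assume $\iota$ extends to the total space and that the fixed-locus subfamily $\XX=\xss$ is a degenerating family of (possibly disconnected) smooth surfaces. Fixing a fiberwise $\iota$-invariant Ricci-flat \K metric, the defining formula of the invariant gives
\begin{equation*}
\log\tau_{M,\mathcal{K}}|_C=\log\tau_{\iota}(\overline{\Omega}^1_{\xs})-2\log\tau(\bar{\mathcal{O}}_{\XX})-2\log\vol(\XX)+\log\vol_{L^2}(H^1(\XX,\mathbb{Z})),
\end{equation*}
and by the metric-independence established in \cite{I1} the left-hand side does not depend on the chosen metric, so I am free to compute each term on the right with any convenient family of metrics.

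Next I would analyze the four terms as $s\to0$. The equivariant analytic torsion $\tau_{\iota}(\overline{\Omega}^1_{\xs})$ is the ratio of the equivariant Quillen metric and the equivariant $L^2$-metric on the equivariant determinant of the relative cohomology of $\Omega^1_{\xs}$. For the Quillen factor I would apply Bismut's embedding formula for equivariant Quillen metrics \cite{MR1316553} to reduce the singular behavior to that of ordinary Quillen metrics on the fixed-locus surfaces $\XX$, where Yoshikawa's one-parameter singularity formula \cite{MR1639560,MR2262777} applies and produces a term $\alpha\log|s|^2$ with rational $\alpha$ (an integral of characteristic forms together with local contributions supported at the degeneration) plus $\log\log|s|^{-1}$ and $O(1)$ remainders. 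For the $L^2$ factor I would invoke the equivariant analogue of the $L^2$-metric asymptotics of Eriksson--Freixas i Montplet--Mourougane \cite{MR4255041}, whose $\log|s|^2$ coefficient is read off from the weights of the monodromy-weight filtration on the equivariant limit mixed Hodge structure and is therefore rational. The three remaining factors $\tau(\bar{\mathcal{O}}_{\XX})$, $\vol(\XX)$ and $\vol_{L^2}(H^1(\XX,\mathbb{Z}))$ are governed by the same two theorems in their non-equivariant form applied to the surface degeneration $\XX$, and likewise contribute rational $\log|s|^2$ coefficients (the volume and covolume terms contributing integer orders of vanishing). Summing the four contributions produces a single rational number $a$, and all $\log\log|s|^{-1}$ and bounded remainders are collected into the error.

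The hard part will be to establish the \emph{equivariant} version of the $L^2$-metric singularity formula of \cite{MR4255041}: this requires identifying the $\G$-action on the limit mixed Hodge structure of the degeneration and decomposing the monodromy-weight filtration into its $\pm1$-eigenspaces, which is precisely what pins down the rationality of the $L^2$-coefficient and hence of $a$. A second delicate point is organizing the several determinant lines so that the combination defining $\tau_{M,\mathcal{K}}$ corresponds to one consistent metrized line whose singular behavior is captured algebraically by the two formulas above; the transcendental zeta-regularized determinants hidden in the equivariant Quillen metric and in the fixed-locus torsions must enter only through their algebraic singular parts, which is the same anomaly-free mechanism that already makes $\tau_{M,\mathcal{K}}$ metric-independent in \cite{I1}. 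Once these are in place, the rational $\log|s|^2$ term survives and everything else is of size $O(\log\log|s|^{-1})$, as claimed.
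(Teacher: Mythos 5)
Your overall route coincides with the paper's: build a projective one-parameter degeneration with involution over (a branched cover of) the curve, compute the singularity of the equivariant Quillen metric via Bismut's embedding formula, establish an equivariant analogue of the Eriksson--Freixas i Montplet--Mourougane $L^2$-asymptotics by tracking the $\G$-action on the limiting Hodge data, and sum rational $\log|s|^2$ coefficients. But there is a genuine gap in how you handle the metric. You write the invariant in its Ricci-flat four-term form and then invoke metric-independence to ``compute each term on the right with any convenient family of metrics.'' Metric-independence holds only for the full combination, which for a general $\iota$-invariant K\"ahler metric is Definition \ref{d-3-1}: it contains the extra factors $\vol(X,\omega_X)^{(t-1)(t-7)/16}$ and the correction term $A(X,\iota,h_X)$ of (\ref{al2-3-1-2}); the individual factors are metric-dependent. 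Near the singular fiber you are forced to work with the restriction of a global K\"ahler metric on the projective total space (a family of Ricci-flat metrics neither extends over $X_0$ nor is accessible to the Quillen/$L^2$ singularity theorems, which require a smooth metric on $\X$), and then $\log A(X/\D,h_{X/\D})$ genuinely contributes to the coefficient $a$. Its asymptotics are not a formal consequence of the two metric theorems: the paper's Lemma \ref{l2-3-4} computes them from the $L^2$-norm asymptotics of the relative holomorphic $4$-form (the $\alpha^{4,0}$ term from \cite{MR4255041}) together with fiber-integral estimates of $\log\|df\|^2$ and $\log\|\xi\|^2$ against the characteristic form $\widetilde{\varOmega}$ along the resolved Gauss map of the fixed locus (\cite[Lemma 4.4 and Corollary 4.6]{MR2262777}). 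Omitting this term yields the wrong $a$.

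Two smaller points. First, subtracting the $L^2$-asymptotics from the Quillen asymptotics requires fixing a holomorphic extension of the determinant line across $s=0$, and the two computations live naturally on \emph{different} extensions: the Quillen singularity on the K\"ahler extension $\lambda_{\G}(\widetilde{\Omega}^1_{\X/C})$ of Definition \ref{d-1-5}, the $L^2$ singularity on the logarithmic (Steenbrink) extension $\lambda_{\G}(\Omega^1_{\X/C}(\log))$. Their comparison produces the explicit discrepancy $(\mu_+-\mu_-)\log|s|^2$ of Proposition \ref{p-1-15}; your remark about ``one consistent metrized line'' names this issue but the term must actually be computed, since it enters $a$. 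Second, the volume and covolume factors need no asymptotic theorem at all: because the K\"ahler class on $\X$ is taken integral, $\vol(X_s,\omega_s)$, $\vol(X_s^{\iota},\omega_s|_{X_s^{\iota}})$ and $\vol_{L^2}(H^1(X_s^{\iota},\mathbb{Z}),\omega_s|_{X_s^{\iota}})$ are rational and hence constant in $s$ by \cite[Proposition 4.2]{MR4255041}; your expectation of ``integer orders of vanishing'' for them is off the mark. Finally, the existence of the family itself (your first step) is not free: the paper's Proposition \ref{p2-3-1} needs the Hilbert scheme with involution, Borel's extension theorem for the period map, and a \emph{functorial} resolution/blowup algorithm so that the involution lifts and the singular fibers become simple normal crossing; this should at least be flagged, since the equivariance of the resolution is what makes the equivariant Quillen theory applicable.
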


\medskip

	Let us give some applications of Theorem \ref{t-0-1-2}.
	Let $M_0$ be a primitive hyperbolic 2-elementary sublattice of the $K3$ lattice $L_{K3}$.
	A K3 surface with antisymplectic involution $(Y, \sigma)$ is called a 2-elementary K3 surface of type $M_0$ 
	if there exists an isometry $\alpha : H^2(X, \mathbb{Z}) \to L_{K3}$ such that $\alpha(H^2(X, \mathbb{Z})^{\sigma})=M_0$.
	Let $(Y, \sigma)$ be a 2-elementary K3 surface of type $M_0$.
	Making use of equivariant analytic torsion of $(Y, \sigma)$ and the analytic torsion of $Y^{\sigma}$, Yoshikawa introduced an invariant $\tau_{M_0}(Y, \sigma)$ and determined its structure as an automorphic form on the moduli space.
	Namely, $\tau_{M_0}$ is expressed as the Petersson norm of an automorphic form obtained as the product of a certain Borcherds product and a certain Siegel modular form
	(\cite{MR4177283}, \cite{MR2047658}, \cite{MR2968220}, \cite{MR3039773}).
	
	Let $Y^{[2]}$ be the Hilbert scheme of 2-points of $Y$ and $\sigma^{[2]} : Y^{[2]} \to Y^{[2]}$ be the involution induced from $\sigma$.
	Then $(Y^{[2]}, \sigma^{[2]})$ is a manifold of $K3^{[2]}$-type with antisymplectic involution.
	Let $M_0^{\perp}$ be the orthogonal complement of $M_0$ in $K_{K3}$.
	Comparing the curvature equation and the boundary behavior, we obtain the following comparison theorem for the invariants $\tau$ and $\tau_{M_0}$.
	
	\begin{thm}[Theorem \ref{t2-5-4}]\label{t-0-1-3}
		Suppose that $\rk(M_0) \leqq 17$
		and that $\Delta(M_0^{\perp})$, the set of roots of $M_0^{\perp}$, consists of a unique $O(M_0^{\perp})$-orbit.
		Then there exists a positive constant $C_{M_0}>0$ depending only on $M_0$ such that, for any 2-elementary K3 surface $(Y, \sigma)$ of type $M_0$, the following identity holds:
		$$
			\tau(Y^{[2]}, \sigma^{[2]})=C_{M_0}\tau_{M_0}(Y, \sigma)^{-2(\rk(M_0)-9)}.
		$$
	\end{thm}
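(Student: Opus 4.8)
\emph{Setup and reduction.} The first step is to realize both invariants as functions on a single orthogonal modular variety. Under $Y\mapsto Y^{[2]}$ the second cohomology splits as $H^2(Y^{[2]},\mathbb{Z})=H^2(Y,\mathbb{Z})\oplus\mathbb{Z}\delta$ with $\delta^2=-2$, and $\sigma^{[2]}$ acts by $\sigma^*$ on the first summand while fixing the exceptional class $\delta$. Hence the invariant lattice of $(Y^{[2]},\sigma^{[2]})$ is $M=M_0\oplus\langle-2\rangle$ and $M^{\perp}=M_0^{\perp}$, so the period domains coincide: $\Omega_{M^{\perp}}=\Omega_{M_0^{\perp}}$. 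Consequently $\tau(Y^{[2]},\sigma^{[2]})$, viewed through $P_{M,\mathcal{K}}$, and $\tau_{M_0}(Y,\sigma)$ descend to functions on $\mathcal{M}_{M_0}\setminus\bar{\mathscr{D}}_{M^{\perp}}$, the latter being Yoshikawa's moduli space. Setting
\[
F:=\log\tau(Y^{[2]},\sigma^{[2]})+2(\rk(M_0)-9)\,\log\tau_{M_0}(Y,\sigma),
\]
the assertion is equivalent to the statement that $F$ is a finite constant on $\mathcal{M}_{M_0}$. I would prove this by the classical scheme: show $dd^cF=0$ away from the discriminant, then show $F$ has only removable singularities, and finally invoke compactness.

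\emph{Matching the curvatures.} Off $\bar{\mathscr{D}}_{M^{\perp}}$ both invariants are smooth, and each has curvature proportional to the invariant (Bergman) Kähler form $\omega$ of $\mathcal{M}_{M_0}$: indeed, by the local families index theorem the curvatures of the equivariant Quillen and $L^2$ metrics are $O(M_0^{\perp})$-invariant $(1,1)$-forms on the symmetric domain $\Omega_{M_0^{\perp}}$, and such forms are multiples of $\omega$. For $\tau_{M_0}$ this is Yoshikawa's structure theorem: $\tau_{M_0}$ equals, up to a universal constant, the Petersson norm of an automorphic form $\Phi_{M_0}$ of known weight, whence $dd^c\log\tau_{M_0}=-(\text{wt})\,\omega$ away from the zero divisor of $\Phi_{M_0}$. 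For $\tau$ the analogous identity $dd^c\log\tau(Y^{[2]},\sigma^{[2]})=A\,\omega$ follows from the curvature formula of \cite{I1}. The coefficient $-2(\rk(M_0)-9)$ is then forced: it is exactly the ratio making the smooth parts cancel, i.e. $A+2(\rk(M_0)-9)(-\text{wt})=0$, so that $dd^cF=0$ on $\mathcal{M}_{M_0}\setminus\bar{\mathscr{D}}_{M^{\perp}}$. The hypothesis that $\Delta(M_0^{\perp})$ is a single $O(M_0^{\perp})$-orbit guarantees that $\bar{\mathscr{D}}_{M^{\perp}}$ is irreducible, so that only one proportionality constant must be checked.

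\emph{Matching the singularities.} It remains to control $F$ across $\bar{\mathscr{D}}_{M^{\perp}}$ and at the cusps. Choosing a test curve $C$ and a local coordinate $s$ as in Theorem \ref{t-0-1-2}, that theorem gives $\log\tau|_C=a\log|s|^2+O(\log\log|s|^{-1})$ with $a\in\mathbb{Q}$, while Yoshikawa's explicit formula gives $\log\tau_{M_0}|_C=b\log|s|^2+O(\log\log|s|^{-1})$, where $b$ is a multiple of the vanishing order of $\Phi_{M_0}$ along $\bar{\mathscr{D}}_{M^{\perp}}$. Comparing these rational coefficients one finds $a+2(\rk(M_0)-9)b=0$, so the principal logarithmic singularities of $F$ cancel and $F|_C=O(\log\log|s|^{-1})$ near $p$. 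Since this holds on every such curve with a coefficient independent of $C$, the current $dd^cF$, a priori supported on $\bar{\mathscr{D}}_{M^{\perp}}$, carries no mass there; hence $F$ extends across $\bar{\mathscr{D}}_{M^{\perp}}$ as a pluriharmonic function with $O(\log\log)$ growth, which is removable.

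\emph{Conclusion and the main difficulty.} By the assumption $\rk(M_0)\leqq 17$ we have $\dim\mathcal{M}_{M_0}=20-\rk(M_0)\geqq 3$, so the boundary $\mathcal{M}^*_{M_0}\setminus\mathcal{M}_{M_0}$ of the Baily--Borel compactification has codimension $\geqq 2$; a bounded pluriharmonic function therefore extends across it by a Hartogs-type argument. Thus $F$ extends to a global pluriharmonic function on the compact normal projective variety $\mathcal{M}^*_{M_0}$, and is constant. Setting $C_{M_0}=e^{F}$ yields the identity, with $C_{M_0}>0$ automatic since $\tau$ and $\tau_{M_0}$ are positive reals, and depending only on $M_0$. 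The \textbf{main obstacle} is the singularity bookkeeping of the previous paragraph: the curvature identity is essentially formal once the two index-theoretic curvature formulas are in hand, but cancelling the principal singularities requires a precise comparison of the rational coefficient $a$ produced by the embedding formula for equivariant Quillen metrics \cite{MR1316553} and the equivariant $L^2$-metric asymptotics \cite{MR4255041} with Yoshikawa's vanishing order $b$; moreover the $O(\log\log|s|^{-1})$ error in Theorem \ref{t-0-1-2} must be exactly mild enough to remain removable after this cancellation. Establishing this numerical match, which simultaneously produces the exponent $-2(\rk(M_0)-9)$, is where the real work lies.
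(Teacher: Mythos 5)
There is a genuine gap, in fact two, and both occur exactly where the paper does its real work. First, your curvature-matching step rests on a false rigidity claim. The curvatures of $\log\tau$ and $\log\tau_{M_0}$ are \emph{not} multiples of the Bergman form $\omega_{\mathcal{M}_{M_0}}$: by Theorem \ref{p-3-4} and Yoshikawa's curvature formula, each contains an additional term $J_{M_0, \mathscr{Y}^{\sigma}/S}^{*}\omega_{\mathcal{A}_g}$ pulled back from the Siegel modular variety via the period map of the family of fixed curves (this is why $\tau_{M_0}$ is the Petersson norm of a Borcherds product \emph{times a Siegel modular form}, not of a single orthogonal automorphic form). These forms live only on the complement of the discriminant and are not pullbacks of invariant forms on the symmetric domain, so the argument ``invariant $(1,1)$-forms are multiples of $\omega$'' does not apply. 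Consequently the single exponent $-2(r-9)$ must simultaneously match \emph{two} independent coefficients, the one in front of $\omega_{\mathcal{M}_{M_0}}$ and the one in front of $\omega_{\mathcal{A}_g}$. That this is possible is the content of the paper's Section 3: using the explicit fixed locus $X^{\iota}=C^{(2)}\sqcup(\sqcup_i E_i^{(2)})\sqcup\cdots\sqcup Y/\sigma$ and Macdonald's isomorphisms $H^0(C^{(2)},\Omega^1)\cong H^0(C,\Omega^1)$, $H^0(C^{(2)},\Omega^2)\cong\wedge^2H^0(C,\Omega^1)$, one proves $\omega_{H^{\cdot}(\xss)}=(t+1)\,c_1(g_*\Omega^1_{\mathscr{Y}^{\sigma}/S},h_{L^2})$ (Corollary \ref{c6-3-2-8}), and the numerical identities $2(k-g+2)=t+1$, $t=2r-19$ then yield Theorem \ref{t-4-1}. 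None of this is formal, and your proposal omits it entirely.

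Second, your cancellation of the logarithmic coefficients, ``comparing these rational coefficients one finds $a+2(\rk(M_0)-9)b=0$,'' is asserted but not proved, and you yourself flag it as the main open obstacle. The paper never performs such a comparison: Theorem \ref{t3-2-3-1} gives only the \emph{existence} of a rational coefficient, with no formula one could cancel against Yoshikawa's vanishing order. Instead the paper sidesteps the computation globally. From the curve-wise asymptotics (Lemma \ref{l2-5-3}) one gets, by Lemma \ref{l2-5-2}, an equation of currents $dd^c u = a\,\delta_{\bar{\mathscr{D}}_{M_0^{\perp}}}$ on all of $\mathcal{M}_{M_0}$ for the pluriharmonic function $u=\log(\sigma_{\widetilde{M}_0,\mathcal{K}}\cdot\tau_{M_0}^{2(r-9)})$; here the hypothesis that $\Delta(M_0^{\perp})$ is a single $O(M_0^{\perp})$-orbit is used to reduce to one unknown coefficient $a$, not merely ``one constant to check.'' One then chooses a complete irreducible curve $C\subset\mathcal{M}_{M_0}$ meeting $\bar{\mathscr{D}}_{M_0^{\perp}}$ transversally and applies the residue theorem to the meromorphic $1$-form $\varphi^{*}\partial u$ on the normalization: all residues equal $a$, their sum vanishes, hence $a=0$. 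Only after this does $u$ extend across $\bar{\mathscr{D}}_{M_0^{\perp}}$, and then across the Baily--Borel boundary (codimension $\geqq 2$ since $r\leqq 17$) by Grauert--Remmert, forcing constancy. Your final extension and compactness steps agree with the paper, but without the Section 3 curvature identity and the residue-theorem argument the proof does not close.
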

		
	In this case, $\tau(Y^{[2]}, \sigma^{[2]})$  is expressed as the Petersson norm of a certain Borcherds product and a certain Siegel modular form. 
		
	Let us consider the case $M_0= \langle 2 \rangle$.
	Since $\Delta(M_0^{\perp})$ consists of two $O(M_0^{\perp})$-orbits, this case is not covered by Theorem \ref{t-0-1-3}.
	A 2-elementary K3 surface $(Y, \sigma)$ of type $\langle 2 \rangle$ is given by the pair consisting of the double cover $Y \to \mathbb{P}^2$ blanched over a smooth sextic $C$ and its covering involution $\sigma : Y \to Y$.
	The fixed locus of $\sigma^{[2]} : Y^{[2]} \to Y^{[2]}$ is isomorphic to $C^{(2)} \sqcup Y/\sigma$.
	Since $Y/\sigma = \mathbb{P}^2$, we have the Mukai flop $f : Y^{[2]} \dashrightarrow \elm_{Y/\sigma}(Y^{[2]})$, where $\elm_{Y/\sigma}(Y^{[2]})$ is a manifold of $K3^{[2]}$-type (\cite{MR1963559}, \cite{ohashi2013non}).
	We define $\elm_{Y/\sigma}(\sigma^{[2]}) = f \circ \sigma \circ f^{-1}$.
	Then $\elm_{Y/\sigma}(\sigma^{[2]})$ is a biregular involution.
	Moreover, $(\elm_{Y/\sigma}(Y^{[2]}), \elm_{Y/\sigma}(\sigma^{[2]}))$ is an irreducible holomorphic symplectic fourfold of $K3^{[2]}$-type with antisymplectic involution.
	By Joumaah \cite{MR3519981}, $(\elm_{Y/\sigma}(Y^{[2]}), \elm_{Y/\sigma}(\sigma^{[2]}))$ is not deformation equivalent to $(Y^{[2]}, \sigma^{[2]})$,
	and any manifold of $K3^{[2]}$-type with antisymplectic involution with admissible lattice $\langle 2 \rangle \oplus \langle -2 \rangle$ is one of these two types.
	
	\begin{thm}[Theorem \ref{t2-5-6}]\label{t-0-1-4}
		There exist positive constants $C_1, C_2 > 0$ such that for any 2-elementary K3 surface $(Y, \sigma)$ of type $M_0=\langle 2 \rangle$, the following identities hold:
		\begin{align*}
			&\tau(Y^{[2]}, \sigma^{[2]})  =C_1\tau_{M_0}(Y, \sigma)^{16}, \\
			&\tau(\elm_{Y/\sigma}(Y^{[2]}), \elm_{Y/\sigma}(\sigma^{[2]})) =C_2\tau_{M_0}(Y, \sigma)^{16}.
		\end{align*}
	\end{thm}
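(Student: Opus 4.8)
The plan is to carry out, for both families in parallel, the same two-step comparison that proves Theorem \ref{t-0-1-3}: first match the curvatures, forcing the difference of the logarithmic invariants to be pluriharmonic, then match the boundary singularities, forcing that difference to be bounded and hence constant. What makes such a comparison possible is that all three invariants live on the \emph{same} space. Indeed, for $(Y^{[2]},\sigma^{[2]})$ the invariant lattice is $H^2(Y)^{\sigma}\oplus\mathbb{Z}\delta=M_0\oplus\langle-2\rangle$, with $\delta$ the exceptional class; since this $\langle-2\rangle$-summand lies inside $M$, the orthogonal complements satisfy $M^{\perp}=M_0^{\perp}$, so the modular variety $\mathcal{M}_{M,\mathcal{K}}$ on which $\tau$ is defined is canonically identified with the orthogonal modular variety carrying Yoshikawa's $\tau_{M_0}$. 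Moreover the Mukai flop $f$ is an isomorphism in codimension one and therefore identifies the periods of $(Y^{[2]},\sigma^{[2]})$ and of $(\elm_{Y/\sigma}(Y^{[2]}),\elm_{Y/\sigma}(\sigma^{[2]}))$; thus for each $(Y,\sigma)$ both $\kk$-type pairs determine the same point of $\mathcal{M}_{M,\mathcal{K}}\setminus\bar{\mathscr{D}}_{M^{\perp}}$, and it suffices to compare the three smooth functions there.

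For the curvature step I would reuse the differential-geometric computation of Theorem \ref{t-0-1-3} unchanged, since it is insensitive to the orbit structure. The equivariant embedding formula for Quillen metrics \cite{MR1316553}, together with the equivariant $L^2$-metric asymptotics, expresses $dd^c\log\tau(X,\iota)$ through characteristic forms of the fixed surface $X^{\iota}$ and of its normal bundle, while the curvature of $\tau_{M_0}$ is Yoshikawa's formula. The resulting exponent depends only on $\rk(M_0)$: specializing $-2(\rk(M_0)-9)$ to $\rk(M_0)=1$ yields $16$, so one obtains $dd^c\log\tau(Y^{[2]},\sigma^{[2]})=16\,dd^c\log\tau_{M_0}(Y,\sigma)$, and the same identity for the flopped family because $f$ leaves the relevant characteristic forms on $\mathcal{M}_{M,\mathcal{K}}$ unchanged. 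Hence each of $\log\tau(Y^{[2]},\sigma^{[2]})-16\log\tau_{M_0}$ and $\log\tau(\elm_{Y/\sigma}(Y^{[2]}),\elm_{Y/\sigma}(\sigma^{[2]}))-16\log\tau_{M_0}$ is pluriharmonic on $\mathcal{M}_{M,\mathcal{K}}\setminus\bar{\mathscr{D}}_{M^{\perp}}$.

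The boundary step is exactly where the hypothesis of Theorem \ref{t-0-1-3} breaks down and where the genuine work lies. Because $\Delta(M_0^{\perp})$ splits into two $O(M_0^{\perp})$-orbits, the divisor $\bar{\mathscr{D}}_{M^{\perp}}$ has two irreducible components and I can no longer invoke a single uniform singularity coefficient; instead I must pin down the rational number $a$ of Theorem \ref{t-0-1-2} along \emph{each} component separately. Concretely, for a generic curve $C$ meeting a fixed component at a smooth point I would identify the induced one-parameter degeneration of $(Y^{[2]},\sigma^{[2]})$ — on one orbit the branch sextic acquires a node, so that the $C^{(2)}$-part of the fixed locus degenerates, while the other orbit corresponds to a degeneration interacting with the component $Y/\sigma=\mathbb{P}^2$ along which the flop is performed — and then read off $a$ from the singularity of the equivariant Quillen metric \cite{MR1639560,MR2262777} and of the equivariant $L^2$-metric \cite{MR4255041}. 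Comparing with the singularity of $\tau_{M_0}$ along the corresponding component, which is explicit from its expression as the Petersson norm of a Borcherds product times a Siegel modular form, I expect the coefficients to agree on both components, so that each pluriharmonic difference above grows at most like $O(\log\log|s|^{-1})$ along every boundary divisor.

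The principal obstacle is precisely this orbit-by-orbit evaluation of $a$ on the $\kk$-type side: Theorem \ref{t-0-1-2} guarantees only that a rational coefficient exists, and extracting its exact value on each orbit requires a sufficiently precise local model for how the fixed surfaces $C^{(2)}$ and $\mathbb{P}^2$, and the Mukai flop relating the two families, degenerate in each of the two directions. Once both coefficients are matched, I would conclude as in Theorem \ref{t-0-1-3}: a pluriharmonic function on the arithmetic quotient with at most logarithmic-type growth along the boundary divisors and along the Baily--Borel cusps extends to a bounded pluriharmonic, hence constant, function on $\mathcal{M}^*_{M,\mathcal{K}}$. Exponentiating the two resulting constants produces positive numbers $C_1,C_2$ with $\tau(Y^{[2]},\sigma^{[2]})=C_1\,\tau_{M_0}(Y,\sigma)^{16}$ and $\tau(\elm_{Y/\sigma}(Y^{[2]}),\elm_{Y/\sigma}(\sigma^{[2]}))=C_2\,\tau_{M_0}(Y,\sigma)^{16}$, the two normalizations arising separately because the flop alters the Ricci-flat metric while leaving the exponent $16$ untouched.
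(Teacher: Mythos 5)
Your first two steps are correct and coincide with the paper's: the identifications $\phi:\mathcal{M}_{M_0}\cong\mathcal{M}_{\widetilde{M}_0,\mathcal{K}}$ and $\psi:\mathcal{M}_{M_0}\cong\mathcal{M}_{\widetilde{M}_0,\mathcal{K}'}$ of Theorems \ref{t-3-1-1} and \ref{p-2-2} (with $\Delta(M_0^{\perp})=\Delta(\widetilde{M}_0^{\perp})$ by Lemma \ref{l6-4-1-1}, and period invariance under the flop) put all three invariants on one space, and Theorem \ref{t-4-1} together with Lemma \ref{l-5-4} shows that both $K3^{[2]}$-type invariants satisfy the same curvature equation as $\tau_{M_0}^{-2(r-9)}=\tau_{M_0}^{16}$, so both log-differences are pluriharmonic on $\mathcal{M}^{\circ}_{M_0}$. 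The genuine gap is your boundary step. You propose to pin down the rational singularity coefficient of Theorem \ref{t-0-1-2} along each of the two components $\bar{\mathscr{D}}^{\pm}_{M_0^{\perp}}$ separately, by building local models of the two degenerations and evaluating the quantities entering Theorem \ref{ta-1-2} (the $\alpha_{\iota}$, $\beta_{\iota}$, $\mu_{\pm}$, $\alpha^{p,q}_{\pm}$) for each, then comparing with the explicit singularity of $\tau_{M_0}$ --- and you yourself concede that this evaluation is the ``principal obstacle'' and only state that you \emph{expect} the coefficients to match. That expectation is exactly what a proof must supply, and it is not supplied; moreover your geometric picture of the second orbit is off: $\bar{\mathscr{D}}^{+}_{M_0^{\perp}}$ parametrizes periods of 2-elementary K3 surfaces which are not double covers of $\mathbb{P}^2$ branched along a sextic at all, rather than degenerations tied to the component $Y/\sigma=\mathbb{P}^2$ or to the Mukai flop.

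The paper never computes either coefficient. After Lemma \ref{l2-5-5} (which merely combines Theorem \ref{t3-2-3-1} with \cite[Theorem 6.5]{MR2047658} to verify the growth hypothesis (\ref{al4-4-1-1})) and Lemma \ref{l2-5-2} (which yields $dd^c u=a\,\delta_{\bar{\mathscr{D}}^{+}_{M_0^{\perp}}}+b\,\delta_{\bar{\mathscr{D}}^{-}_{M_0^{\perp}}}$ as currents with \emph{unknown} $a,b\in\mathbb{Q}$), it imports the trick of \cite[Lemma 8.7]{MR4177283}: by \cite{MR595204}, the locus $V\subset|\mathcal{O}_{\mathbb{P}^2}(6)|$ of sextics with at most one node induces an open embedding $V/\operatorname{PGL}_3\to\mathcal{M}_{M_0}\setminus\bar{\mathscr{D}}^{+}_{M_0^{\perp}}$, i.e.\ it misses the problematic component entirely, and $|\mathcal{O}_{\mathbb{P}^2}(6)|\setminus V$ has codimension $2$. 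One can therefore run the residue-theorem argument of Theorem \ref{t2-5-4} on projective curves lying wholly inside $V$, which forces $b=0$; the pullback $f^*u$ is then pluriharmonic on $V$ and extends across the codimension-$2$ complement by \cite[Satz 4]{MR0081960} to the compact projective space $|\mathcal{O}_{\mathbb{P}^2}(6)|$, hence is constant --- so $u$ is constant without $a$ ever being evaluated. Note that the naive single-curve residue argument of Theorem \ref{t2-5-4} fails here precisely because a generic curve meets both components and the residue theorem gives only one linear relation between $a$ and $b$; replacing your uncarried-out orbit-by-orbit asymptotic analysis with this global construction on the space of sextics is the essential missing idea.
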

	

\bigskip

	This paper is organized as follows. 
	In Section 1, we recall the notion of equivariant analytic torsion and study its asymptotic behavior for degenerations of projective manifolds with $\G$-action when the singular fiber has simple normal crossing singularities.
	In Section 2, we recall the invariant $\tau$ of $K3^{[2]}$-type manifolds with antisymplectic involution, construct a 1-parameter degeneration of $K3^{[2]}$-type manifolds with antisymplectic involution over a finite branched covering of a given curve on $\mathcal{M}_{M, \mathcal{K}}$ and prove Theorem \ref{t-0-1-2}. 
	In Section 3, we show that $\tau_{M, \mathcal{K}}$ and $\tau_{M_0}$ satisfy the same curvature equation, where, for a 2-elementary K3 surface $(Y, \sigma)$ of type $M_0$, $(M, \mathcal{K})$ is the type of $(Y^{[2]}, \sigma^{[2]})$. 
	In Section 4, we compare $\tau_{M, \mathcal{K}}$ with $\tau_{M_0}$ and prove Theorems \ref{t-0-1-3} and \ref{t-0-1-4}.
	In appendix written by Yoshikawa, the singularity of the $\G$-equivariant Quillen metric is determined.

\bigskip
	
	\textbf{Acknowledgements}.
	 I would like to express my gratitude to my advisor, Professor Ken-Ichi Yoshikawa, for suggesting this problem and for his help and encouragement. 
	 I am also indebted to him for his generous contribution of his work to the appendix.
	 This work was supported by JSPS KAKENHI Grant Number 23KJ1249.

\section{The singularity of equivariant analytic torsion}\label{s-1}

	In this section, we show that the equivariant analytic torsion of the cotangent bundle has an algebraic singularity.
	For this, we study the singularity of the equivariant Quillen metrics and the $L^2$-metrics on the equivariant determinant of the cohomology.
	We follow the key idea of \cite{MR4255041}, \cite{MR2454893} and \cite{MR2262777}.

\subsection{Equivariant analytic torsion}\label{ss-1-1}

	
	Let $X$ be a compact complex manifold of dimension $n$,
	and let $\iota : X \to X$ be a holomorphic involution of $X$.
	Let $\G$ be the group generated by the order 2 element $\iota$.
	In what follows, we consider the $\mu_2$-action on $X$ induced by $\iota$.
	Let $h_X$ be an $\iota$-invariant \K metric on $X$.
	The \K form attached to $h_X$ is defined by
	$$
		\omega_X = \frac{i}{2} \sum_{j,k} h_X \left( \frac{\partial}{\partial z^j}, \frac{\partial}{\partial z^k} \right) dz^j \wedge d\bar{z}^k ,
	$$
	where $(z^1, \dots, z^n)$ is a system of local coordinates on $X$.
	The space of smooth $(p,q)$-forms on $X$ is denoted by $A^{p,q}(X)$.
	
	Let $E$ be a $\G$-equivariant holomorphic vector bundle on $X$,
	and $h_E$ a $\G$-invariant hermitian metric on $E$.
	The space of $E$-valued smooth $(p,q)$-forms on $X$ is denoted by $A^{p,q}(X, E)$ or $A^{p,q}(E)$.
	
	The metrics $h_X$ and $h_E$ induce a $\G$-invariant hermitian metric $h$ on the complex vector bundle $\wedge^{p,q}T^*X \otimes E$.
	The $L^2$-metric on $A^{p,q}(X, E)$ is defined by
	$$
		\langle \alpha, \beta \rangle_{L^2} = \int_X h(\alpha, \beta) \frac{\omega_X^n}{n !}, \quad \alpha, \beta \in A^{p,q}(X, E).
	$$
	The Dolbeault operator of $E$ is denoted by $\bar{\partial}_E : A^{p,q}(X, E) \to A^{p,q+1}(X, E)$,
	and its formal adjoint is denoted by $\bar{\partial}_E^* : A^{p,q}(X, E) \to A^{p,q-1}(X, E)$.
	We define the Laplacian $D_{p,q}^2$ acting on $A^{p,q}(X, E)$ by 
	$$
		D_{p,q}^2 = (\bar{\partial}_E + \bar{\partial}_E^*)^2 : A^{p,q}(X, E) \to A^{p,q}(X, E).
	$$
	We denote the spectrum of $D_{p,q}^2$ by $\sigma(D_{p,q}^2)$, and the eigenspace of $D_{p,q}^2$ associated with an eigenvalue $\lambda \in \sigma(D_{p,q}^2)$ by $E_{p,q}(\lambda)$.
	Note that $\sigma(D_{p,q}^2)$ is a discrete subset contained in $\mathbb{R}_{\geqq 0}$.
	Moreover $E_{p,q}(\lambda)$ is finite dimensional.
	
	\begin{dfn}\label{d-1-1}
		Let $g \in \G$. 
		The spectral zeta function is defined by 
		$$
			\zeta_{p,q,g}(s) = \sum_{\lambda \in \sigma(D_{p,q}^2) \setminus \{ 0 \}} \lambda^{-s} \operatorname{Tr} (g|_{E_{p,q}(\lambda)}) \quad (s \in \mathbb{C}, \operatorname{Re} s >n).
		$$
	\end{dfn}
	
	Note that $\zeta_{p,q,g}(s)$ converges absolutely on the domain $\operatorname{Re} s > n$
	and extends to a meromorphic function on $\mathbb{C}$ which is holomorphic at $s=0$.
	
	\begin{dfn}\label{d-1-2}
		Let $g \in \G$.
		The equivariant analytic torsion of $\overline{E} = (E, h_E)$ on $(X, \omega_X)$ is defined by
		$$
			\tau_g(\overline{E}) = \exp \left\{ - \sum_{q=0}^n (-1)^q q \zeta'_{0,q,g}(0) \right\} .
		$$
	\end{dfn}
	
	If $g=1$, it is the (usual) analytic torsion of $\overline{E}$ and is denoted by $\tau(\overline{E})$ instead of $\tau_1(\overline{E})$.
	
	We denote by $H^q(X, E)_{\pm}$ the $(\pm 1)$-eigenspace of $\iota^* : H^q(X, E) \to H^q(X, E)$.
	We set
	$$
		\lambda_{\pm}(E) = \bigotimes_{q \geqq 0} ( \det H^q(X, E)_{\pm} )^{(-1)^q}. 
	$$ 
	We define the equivariant determinant of the cohomologies of $E$ by
	$$
		\lambda_{\G}(E) = \lambda_{+}(E) \oplus \lambda_{-}(E). 
	$$
	
	By Hodge theory, we can identify $H^q(X, E)$ with the space of $E$-valued harmonic $(0, q)$-forms on $X$.
	Under this identification, the cohomology $H^q(X, E)$ is endowed with the $\G$-invariant hermitian metric induced from the $L^2$-metric on $A^{p,q}(X, E)$.
	It induces the hermitian metric $\| \cdot \|_{\lambda_{\pm}(E), L^2}$ on $\lambda_{\pm}(E)$.
	We define the equivariant metric on $\lambda_{\G}(E)$ by
	$$
		\| \alpha \|_{\lambda_{\G}(E), L^2}(\iota) = \| \alpha_+ \|_{\lambda_{\pm}(E), L^2} \cdot \| \alpha_- \|_{\lambda_{\pm}(E), L^2}^{-1} \quad ( \alpha=(\alpha_+, \alpha_-) \in \lambda_{\G}(E), \alpha_+, \alpha_- \neq 0 ),
	$$
	and call it the equivariant $L^2$-metric on $\lambda_{\G}(E)$.
	We define the equivariant Quillen metric on $\lambda_{\G}(E)$ by
	$$
		\| \alpha \|^2_{\lambda_{\G}(E), Q}(\iota) = \tau_g(\overline{E}) \| \alpha \|^2_{\lambda_{\G}(E), L^2}(\iota) .
	$$

	Let $\mathscr{X}$ and $S$ be complex manifolds of dimension $m+n$ and $m$, respectively.
	Let $\iota : \mathscr{X} \to \mathscr{X}$ be a holomorphic involution.
	Then $\iota$ induces a $\G$-action on $\mathscr{X}$.
	We consider the trivial $\G$-action on $S$.
	
	Let $f : (\mathscr{X}, \iota) \to S$ be a proper surjective $\G$-equivariant holomorphic submersion.
	Suppose that $f$ is locally K\"ahler.
	Namely, for each point $s \in S$ there is an open neighborhood $U$ of $s$ such that $f^{-1}(U)$ is K\"ahler.
	The fiber of $f$ is denoted by $X_s$ $(s \in S)$ or simply $X$.
	Since $f$ is $\G$-equivariant, the involution $\iota$ induces a holomorphic involution on each fiber $X_s$,
	which is denoted by $\iota_s$ or simply $\iota$. 
	
	Let $h_{\xs}$ be an $\iota$-invariant hermitian metric on the relative tangent bundle $T\xs$ which is fiberwise K\"ahler.
	Set $h_s = h_{\xs}|_{X_s}$ $(s \in S)$.
	This is an $\iota_s$-invariant \K metric on $X_s$.
	We denote its \K form by $\omega_s$
	and we set $\omega_{\xs}= \{ \omega_s \}_{s \in S}$.
	
	Let $\overline{E}=(E, h_E)$ be a $\G$-equivariant holomorphic hermitian vector bundle on $\X$.
	We assume that $R^qf_*E$ is a locally free sheaf for all $q \geqq 0$ and we regard it as a holomorphic vector bundle on $S$.
	By Hodge theory, $R^qf_*E$ is equipped with the $\iota$-invariant hermitian metric.
	This is called the $L^2$-metric and is denoted by $h_{L^2}$.
	
	Let $g \in \G$.
	We define a real-valued function on $S$ by
	$$
		\tau_g(\overline{E})(s) = \tau_g(\overline{E}|_{X_s} ) \quad (s \in S).
	$$
	
	Let $E_{\pm}$ be the $(\pm 1)$-eigenbundle of the $\G$-action on $E|_{\mathscr{X}^{\iota}}$,
	and the restriction of $h_E$ to $E_{\pm}$ is denoted by $h_{\pm}$.
	The curvature form of $(E_{\pm}, h_{\pm})$ is denoted by $R_{\pm}$.
	Recall that the equivariant Todd form and the equivariant Chern character form are differential forms on $\mathscr{X}^{\iota}$ defined by
	\begin{align}\label{al-1-A}
		Td_{\iota}(E, h_E) = Td \left(- \frac{R_+}{2\pi i} \right) \det \left( \frac{I}{I+ \exp({+\frac{R_-}{2\pi i}})} \right),
	\end{align}
	and
	\begin{align}\label{al-1-B}
		ch_{\iota}(E, h_E) = ch \left(- \frac{R_+}{2\pi i} \right) - ch \left(- \frac{R_-}{2\pi i} \right) ,
	\end{align}
	respectively.
	If $\alpha$ is a differential form, then $[\alpha]^{(p,q)}$ is the component of $\alpha$ of bidegree $(p,q)$.
	
	The $(\pm 1)$-eigenbundle of $R^qf_*E$ is denoted by $(R^qf_*E)_{\pm}$.
	We set
	$$
		\lambda_{\pm}(E) = \bigotimes_{q \geqq 0} \det (R^qf_*E)_{\pm}.
	$$
	We define the equivariant determinant of the cohomologies of $E$ by
	$$
		\lambda_{\G}(E) = \lambda_{+}(E) \oplus \lambda_{-}(E).
	$$
	It is equipped with the equivariant $L^2$-metric and the equivariant Quillen metric.
	For an open subset $U$ of $S$, a holomorphic section $\sigma = (\sigma_+, \sigma_-)$ is called an admissible section if both $\sigma_+$ and $\sigma_-$ are nowhere vanishing on $U$.

\subsection{K\"ahler extension and the singularity of equivariant Quillen metrics}\label{ss-1-4}

	Let $\mathscr{X}$ be a smooth projective manifold of dimension $n+1$
	and let $\iota : \X \to \X$ be a holomorphic involution.
	In what follows, we consider the $\mu_2$-action on $X$ induced by $\iota$.
	Let $C$ be a smooth projective curve with trivial $\G$-action,
	and let $f : \mathscr{X} \to C$ be a $\G$-equivariant holomorphic surjection.
	Then the $\G$-action preserves the fibers of $f$.
	
	Let $\Sigma_f = \{ x \in \X ; df(x)=0 \}$ be the critical locus of $f$ 
	and let $\Delta_f = f(\Sigma_f)$ be the discriminant locus of $f$.
	The fiber of $f$ at $s \in C$ is denoted by $X_s$.
	We fix a general fiber $X_{\infty}$.
	We set $C^{\circ}=C \setminus \Delta_f$, $\X^{\circ}=f^{-1}(C^{\circ})$, and $f^{\circ}=f|_{\X^{\circ}}$.
	Then $f^{\circ} : \X^{\circ} \to C^{\circ}$ is a family of projective manifolds with $\G$-action.
	
	Let $h_{\X}$ be a $\G$-invariant \K metric on $\X$.
	We assume that the \K form of $h_{\X}$ is integral.
	The relative tangent bundle $T\X/C$ is the $\G$-equivariant subbundle of $T\X|_{\X \setminus \Sigma_f}$ defined by
	$$
		T\X/C = \operatorname{Ker} (f_*|_{\X \setminus \Sigma_f} : T\X|_{\X \setminus \Sigma_f} \to f^*TC|_{\X \setminus \Sigma_f}).
	$$
	We set $h_{\X/C} = h_{\X}|_{T\X/C}$.
	
	Fix $0 \in \Delta_f$ and let $(\varDelta, s)$ be a local coordinate on $C$ centered at $0$ such that $\D \cap \Delta_f = \{ 0 \}$ and that $s(\D)$ is the unit disk in $\mathbb{C}$.
	We set $X = f^{-1}(\D)$ and $\D^* = \D \setminus \{0\}$.
	
	\begin{assumption}\label{as-1-2-1}
		We assume that the singular fiber $X_0$ is simple normal crossing.
		Namely, for each point of $X_0$ there exists a coordinate neighborhood $(U, z_1, \dots, z_{n+1})$ on $\X$ such that $X_0 \cap U$ is given by $z_1 \cdots z_k =0$ and each irreducible component of $X_0$ is smooth. 
	\end{assumption}		

	The $\G$-invariant hermitian metric on $\Omega^1_{\X}$ induced from the metric $h_{\X}$ on $T\X$ is also denoted by $h_{\X}$.
	The $\G$-invariant hermitian metric on $\Omega^1_{\X/C} = \Omega^1_{\X}|_{\X \setminus \Sigma_f} / f^* \Omega^1_C|_{\X \setminus \Sigma_f}$ induced from the metric $h_{\X/C}$ on $T\X/C$ is also denoted by $h_{\X/C}$.
	
	We define a hermitian metric $h_{f^* \Omega^1_C}$ on $f^* \Omega^1_C|_{\X \setminus \Sigma_f}$ by $h_{f^* \Omega^1_C} = h_{\X}|_{f^* \Omega^1_C|_{\X \setminus \Sigma_f}}$.
	Let $h_C$ be a hermitian metric on $\Omega^1_C$ such that $h_C(ds, ds) =1$ on $\D$.
	Then $f^*h_C$ is a hermitian metric on $f^* \Omega^1_C$.
	We identify $f$ with $s \circ f$.
	Then $df (= d(s \circ f))$ is a holomorphic 1-form, and
	\begin{align*}
		f^*h_C(df, df) = f^*( h_C(ds, ds) )=1, \quad
		h_{f^*\Omega^1_C}(df, df) = h_{\X}(df, df) = \| df \|^2.
	\end{align*} 
	Therefore we have
	\begin{align}\label{f-1-4}
		h_{f^*\Omega^1_C} = \| df \|^2 f^*h_C.
	\end{align}
	
	Let $\widetilde{\Omega}^1_{\X/C}$ be the complex of locally free sheaves on $\X$ defined by
	$$
		\widetilde{\Omega}^1_{\X/C} : f^* \Omega^1_C \to \Omega^1_{\X}.
	$$
	Since 
	$$
		0 \to f^* \Omega^1_C|_{\X^{\circ}} \to \Omega^1_{\X^{\circ}} \to \Omega^1_{\X^{\circ}/C^{\circ}} \to 0
	$$
	is a short exact sequence of $\G$-equivariant vector bundles on $\X^{\circ}$, we have the following canonical isomorphism of holomorphic vector bundles of rank $2$ on $C^{\circ}$
	$$
		\lambda_{\G}(\Omega^1_{\X^{\circ}/C^{\circ}}) \cong \lambda_{\G}(\Omega^1_{\X^{\circ}}) \otimes \lambda_{\G}(f^* \Omega^1_C)^{-1}|_{C^{\circ}}.
	$$
	Namely,
	$$
		\lambda_{\pm}(\Omega^1_{\X^{\circ}/C^{\circ}}) \cong \lambda_{\pm}(\Omega^1_{\X^{\circ}}) \otimes \lambda_{\pm}(f^* \Omega^1_C)^{-1}|_{C^{\circ}}.
	$$
	
	\begin{dfn}\label{d-1-5}
		We define the K\"ahler extension of $\lambda_{\G}(\Omega^1_{\X^{\circ}/C^{\circ}})$ by 
		\begin{align}\label{al6-1-2-1}
			\lambda_{\G}(\widetilde{\Omega}^1_{\X/C}) = \lambda_{+}(\widetilde{\Omega}^1_{\X/C}) \oplus \lambda_{-}(\widetilde{\Omega}^1_{\X/C}), \quad \lambda_{\pm}(\widetilde{\Omega}^1_{\X/C}) =  \lambda_{\pm}(\Omega^1_{\X}) \otimes \lambda_{\pm}(f^* \Omega^1_C)^{-1}.
		\end{align}
	\end{dfn}
	
	Clearly, we have the canonical isomorphism of holomorphic vector bundles on $C^{\circ}$
	\begin{align}\label{f-1-6}
		\lambda_{\G}(\widetilde{\Omega}^1_{\X/C})|_{C^{\circ}} \cong \lambda_{\G}(\Omega^1_{\X^{\circ}/C^{\circ}}).
	\end{align}
	
	Let $\Pi : \mathbb{P}(\Omega^1_{\X}) \to \X$ be the projective bundle associated with $\Omega^1_{\X}$ and $\Pi^{\vee} : \mathbb{P}(T\X)^{\vee} \to \X$ be the projective bundle associated with $T\X$,
	where the fiber $ \mathbb{P}(T_x\X)^{\vee}$ is the set of all hyperplanes of $T_x\X$.
	
	The Gauss maps $\nu : \X \setminus \Sigma_f \to \mathbb{P}(\Omega^1_{\X})$ and $\mu : \X \setminus \Sigma_f \to \mathbb{P}(T\X)^{\vee}$ are defined by
	$$
		\nu(x) = [df] = \left[ \sum^n_{i=0} \frac{\partial f}{\partial z_i} (x) dz_i \right], \quad \mu(x)=[T_xX_{f(x)}].
	$$
	Under the canonical isomorphism $\mathbb{P}(\Omega^1_{\X}) \cong \mathbb{P}(T\X)^{\vee}$, we have $\nu = \mu$.
	
	Let $L = \mathcal{O}_{\mathbb{P}(\Omega^1_{\X})}(-1) \subset \Pi^* \Omega^1_{\X}$ be the tautological line bundle on $\mathbb{P}(\Omega^1_{\X})$ 
	and set $Q = \Pi^* \Omega^1_{\X} / L$.
	We have the following short exact sequence of holomorphic vector bundles on $\Pi^* \Omega^1_{\X}$:
	$$
		\mathcal{S} : 0 \to L \to  \Pi^* \Omega^1_{\X}  \to Q \to 0.
	$$
	
	Let $H=\mathcal{O}_{\mathbb{P}(T\X)^{\vee}}(1)$ and let $U$ be the universal hyperplane bundle of $(\Pi^{\vee})^*T\X$.
	We have the dual of $\mathcal{S}$ given by
	$$
		\mathcal{S}^{\vee} : 0 \to U \to (\Pi^{\vee})^*T\X \to H \to 0.
	$$
	
	The vector bundle $ \Pi^* \Omega^1_{\X}$ is endowed with the hermitian metric $\Pi^* h_{\X}$.
	Let $h_L$ and $h_Q$ be its induced and quotient metrics on $L$ and $Q$, respectively.
	The vector bundle $(\Pi^{\vee})^*T\X$ is endowed with the hermitian metric $(\Pi^{\vee})^*h_{\X}$.
	Let $h_U$ and $h_H$ be its induced and quotient metrics on $U$ and $H$, respectively.
	We denote by $\overline{\mathcal{S}}$ the short exact sequence of holomorphic hermitian vector bundles
	$$
		\overline{\mathcal{S}} = (\mathcal{S}, (h_L, \Pi^*h_{\X}, h_Q)) .
	$$  
	
	We have the following isometries of holomorphic hermitian vector bundles over $\X \setminus \Sigma_f$:
	\begin{align}\label{f-1-9}
		\overline{\mathcal{F}} = \nu^* \overline{\mathcal{S}}, \quad (T\X/C, h_{\X/C}) = \mu^*(U, h_U),
	\end{align}
	where $\overline{\mathcal{F}}$ is the short exact sequence of holomorphic hermitian vector bundles over $\X \setminus \Sigma_f$ given by
	$$
		\mathcal{F} : 0 \to f^* \Omega^1_C \to \Omega^1_{\X} \to \Omega^1_{\X/C} \to 0 \quad \text{and} \quad \overline{\mathcal{F}} =(\mathcal{F}, (h_{f^* \Omega^1_C}, h_{\Omega^1_{\X}}, h_{\X/C})).
	$$
	Since $df$ is a nowhere vanishing holomorphic section of $\nu^*L|_{\X \setminus \Sigma_f}$, we have the following equation of $(1,1)$-forms on $\X \setminus \Sigma_f$:
	\begin{align}\label{f-1-10}
		\nu^* c_1(L, h_L) = -dd^c \log \| df \|^2 ,
	\end{align}  
	where $\displaystyle d^c = \frac{\partial -\bar{\partial}}{4 \pi i}$.
 	
	Note that $\Sigma_f$ is a proper analytic subset of $\X$.
	By \cite[Theorem 4.5.3]{1360011145989358208}, the Gauss maps $\nu$ and $\mu$ extend to meromorphic maps $\nu : \X \dashrightarrow \mathbb{P}(\Omega^1_{\X})$ and $\mu : \X \dashrightarrow \mathbb{P}(T\X)^{\vee}$.
	Then there exist a smooth projective manifold $\widetilde{\X}$,
	a normal crossing divisor $E$ on $\widetilde{\X}$, 
	a bimeromorphic morphism $q : \widetilde{\X} \to \X$,
	and two holomorphic maps $\widetilde{\nu} : \widetilde{\X} \to \mathbb{P}(\Omega^1_{\X})$, $\widetilde{\mu} : \widetilde{\X} \to \mathbb{P}(T\X)^{\vee}$
	such that
	\begin{itemize}
		\item $q^{-1}(\Sigma_f) =E$,
		\item the restriction $ q|_{\widetilde{\X} \setminus E} : \widetilde{\X} \setminus E \to \X \setminus \Sigma_f $ is an isomorphism,
		\item on $\widetilde{\X} \setminus E$, $\widetilde{\nu} = \nu \circ q$ and $\widetilde{\mu} = \mu \circ q$.
	\end{itemize}
	
	We set $E_0 = E \cap q^{-1}(X_0)$.
	The fixed locus $\X^{\iota}$ admits the decomposition $\X^{\iota} =  \X^{\iota}_H \sqcup \X^{\iota}_V$,
	where any connected component of $\X^{\iota}_H$ is flat over $C$ and $\X^{\iota}_V \subset f^{-1}(\Delta_f)$.
	Let $\widetilde{\X}^{\iota}_H \subset \widetilde{\X}$ be the proper transform of $\X^{\iota}_H$.
	We define topological invariants of the germs $( f : (\X, X_0) \to (C, 0), \Omega^1_{\X})$ and $( f : (\X, X_0) \to (C, 0), f^*\Omega^1_{C})$ by
	\begin{align*}
		\alpha_{\iota}(X_0, \Omega^1_{\X}) =& \int_{E_0 \cap \widetilde{\X}^{\iota}_H} \widetilde{\nu}^* \left\{ \frac{1-Td(H)^{-1}}{c_1(H)} \right\} q^*\left\{ Td_{\iota}(T\X) ch_{\iota}(\Omega^1_{\X}) \right\} \\
		&\quad - \int_{ \X^{\iota}_V \cap X_0} Td_{\iota}(T\X) ch_{\iota}(\Omega^1_{\X}), \\
		\alpha_{\iota}(X_0, f^*\Omega^1_{C}) =& \int_{E_0 \cap \widetilde{\X}^{\iota}_H} \widetilde{\nu}^* \left\{ \frac{1-Td(H)^{-1}}{c_1(H)} \right\} q^*\left\{ Td_{\iota}(T\X) ch_{\iota}(f^*\Omega^1_{C}) \right\} \\
		&\quad - \int_{ \X^{\iota}_V \cap X_0} Td_{\iota}(T\X) ch_{\iota}(f^*\Omega^1_{C}),
	\end{align*}
	
	The equivariant Quillen metric on $\lambda_{\G}(\Omega^1_{\X^{\circ}/C^{\circ}})$ with respect to $h_{\X/C}$ is denoted by $ \| \cdot \|^2_{ \lambda_{\G}(\Omega^1_{\X^{\circ}/C^{\circ}}), Q, h_{\X/C} }(\iota) $,
	the equivariant Quillen metric on $\lambda_{\G}(\Omega^1_{\X})$ with respect to $h_{\X}$ is denoted by $ \| \cdot \|^2_{ \lambda_{\G}(\Omega^1_{\X}), Q, h_{\X} }(\iota) $,
	the equivariant Quillen metric on $\lambda_{\G}(f^* \Omega^1_{C})|_{C^{\circ}}$ with respect to $h_{f^* \Omega^1_C}$ is denoted by $ \| \cdot \|^2_{ \lambda_{\G}(f^*\Omega^1_{C}), Q, h_{f^* \Omega^1_C} }(\iota) $,
	and the equivariant Quillen metric on $\lambda_{\G}(f^* \Omega^1_{C})$ with respect to $f^*h_{C}$ is denoted by $ \| \cdot \|^2_{ \lambda_{\G}(f^* \Omega^1_{C}), Q, f^* h_{C} }(\iota) $.
	
	\begin{dfn}\label{d-1-7}
		\begin{enumerate}[ label= \rm{(\arabic*)} ]
			\item	We denote by $ \| \cdot \|^2_{ \lambda_{\G}(\widetilde{\Omega}^1_{\X/C}), Q,  h_{\X/C} }(\iota) $ the equivariant Quillen metric on $\lambda_{\G}(\widetilde{\Omega}^1_{\X/C})|_{C^{\circ}}$ 
			induced from $ \| \cdot \|^2_{ \lambda_{\G}(\Omega^1_{\X^{\circ}/C^{\circ}}), Q, h_{\X/C} }(\iota) $ by the isomorphism (\ref{f-1-6}).
			\item Let $ \| \cdot \|^2_{ \lambda_{\G}(\widetilde{\Omega}^1_{\X/C}), Q, h_{f^* \Omega^1_C} }(\iota) $ be the $\G$-equivariant metric on $\lambda_{\G}(\widetilde{\Omega}^1_{\X/C})|_{C^{\circ}}$ defined by
			$$
				\| \cdot \|^2_{ \lambda_{\G}(\widetilde{\Omega}^1_{\X/C}), Q, h_{f^* \Omega^1_C} }(\iota) = \| \cdot \|^2_{ \lambda_{\G}(\Omega^1_{\X}), Q, h_{\X} }(\iota)  \otimes \| \cdot \|^{-2}_{ \lambda_{\G}(f^* \Omega^1_{C}), Q, h_{f^* \Omega^1_C} }(\iota)
			$$
			via the canonical isomorphism (\ref{al6-1-2-1}).
			\item Let $ \| \cdot \|^2_{ \lambda_{\G}(\widetilde{\Omega}^1_{\X/C}), Q, f^*h_{C} }(\iota) $ be the $\G$-equivariant metric on $\lambda_{\G}(\widetilde{\Omega}^1_{\X/C})$ defined by
			$$
				\| \cdot \|^2_{ \lambda_{\G}(\widetilde{\Omega}^1_{\X/C}), Q, f^*h_{C} }(\iota) = \| \cdot \|^2_{ \lambda_{\G}(\Omega^1_{\X}), Q, h_{\X} }(\iota)  \otimes \| \cdot \|^{-2}_{ \lambda_{\G}(f^* \Omega^1_{C}), Q, f^*h_{C} }(\iota).
			$$
			via the canonical isomorphism (\ref{al6-1-2-1}).
		\end{enumerate}
	\end{dfn}
	
	Let $\sigma_{\X} = (\sigma_{\X}^+, \sigma_{\X}^-)$ be an admissible section of $\lambda_{\G}(\Omega^1_{\X})|_{\D}$,
	and let $\sigma_C = (\sigma_C^+, \sigma_C^-)$ be an admissible section of $\lambda_{\G}(f^* \Omega^1_{C})|_{\D}$.
	We define an admissible section $\sigma$ of $\lambda_{\G}(\widetilde{\Omega}^1_{\X/C})|_{\D}$ by $\sigma=\sigma_{\X} \otimes \sigma_C^{-1}$.
	
	For $f, g \in C^{\infty}(\D^*)$, we write $f \equiv g$ if $f-g \in C^{0}(\D)$. 
	
	\begin{prop}\label{p-1-8}
		The following equalities hold:
		\begin{align*}
			\log \| \sigma_{\X} (s)\|^2_{ \lambda_{\G}(\Omega^1_{\X}), Q, h_{\X} }(\iota) &\equiv \alpha_{\iota}(X_0, \Omega^1_{\X}) \log |s|^2, \\
			\log \| \sigma_{C} (s)\|^2_{ \lambda_{\G}( f^*\Omega^1_{C}), Q, f^*h_{C} }(\iota) &\equiv \alpha_{\iota}(X_0, f^*\Omega^1_{C}) \log |s|^2.
		\end{align*}
	\end{prop}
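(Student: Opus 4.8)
The plan is to derive both equalities from the general singularity theorem for $\G$-equivariant Quillen metrics established in Yoshikawa's appendix, applied to the two $\G$-equivariant Hermitian holomorphic vector bundles $(\Omega^1_{\X}, h_{\X})$ and $(f^* \Omega^1_C, f^* h_C)$ on the total space $\X$. Both bundles are $\G$-equivariant and both metrics are $\G$-invariant; moreover $\X$ is smooth projective, $f : \X \to C$ is a $\G$-equivariant surjection onto a curve, $h_{\X}$ is a $\G$-invariant \K metric with integral \K form, and $X_0$ is simple normal crossing by Assumption \ref{as-1-2-1}. Hence the appendix theorem applies to each bundle, producing an asymptotic of the form $\log \| \sigma_\xi(s) \|^2_{\lambda_\G(\xi), Q}(\iota) \equiv \alpha_\iota(X_0, \xi) \log|s|^2$, where $\alpha_\iota(X_0, \xi)$ is the equivariant topological invariant attached to the germ $(f : (\X, X_0) \to (C, 0), \xi)$. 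It then remains to check that, for $\xi = \Omega^1_{\X}$ and $\xi = f^* \Omega^1_C$, this invariant coincides with the two quantities $\alpha_\iota(X_0, \Omega^1_{\X})$ and $\alpha_\iota(X_0, f^* \Omega^1_C)$ written out before the statement.

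The mechanism behind the appendix theorem, were I to reprove it, rests on three ingredients. First, the equivariant analogue of the Bismut–Gillet–Soul\'e curvature formula identifies the first Chern form of the metrized $\lambda_\G(\xi)$ over $C^\circ$ with a fiber integral of $Td_\iota(T\X)\, ch_\iota(\xi)$ over the fixed locus, so that away from $0$ the metric is smooth and governed by this integral, the singularity at $s = 0$ being a boundary defect. Second, the equivariant Quillen metric localizes to the fixed locus $\X^\iota$, which splits as $\X^\iota = \X^\iota_H \sqcup \X^\iota_V$ with $\X^\iota_H$ flat over $C$ and $\X^\iota_V \subset f^{-1}(\Delta_f)$; this splitting is precisely the origin of the two summands in $\alpha_\iota$. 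Third, the horizontal defect is computed after resolving the Gauss map: on $\widetilde{\X}$ the bundles $\Omega^1_{\X}$ and $T\X/C$ are controlled through the tautological sequence $\overline{\mathcal{S}}$ and the isometries (\ref{f-1-9}), while the factor $\frac{1 - Td(H)^{-1}}{c_1(H)}$ encodes the Bott–Chern secondary contribution caused by the collapse of $h_{\X/C}$ along $\Sigma_f$, integrated over $E_0 \cap \widetilde{\X}^\iota_H$ against $q^*\{ Td_\iota(T\X) ch_\iota(\xi) \}$.

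Granting this description, specialization is immediate: taking $\xi = \Omega^1_{\X}$ reproduces $\alpha_\iota(X_0, \Omega^1_{\X})$ term by term, and taking $\xi = f^* \Omega^1_C$ reproduces $\alpha_\iota(X_0, f^* \Omega^1_C)$, where one uses the normalization $h_C(ds, ds) = 1$ on $\D$ together with the metric identity (\ref{f-1-4}) to confirm that the prescribed metrics $h_{\X}$ and $f^* h_C$ are exactly the ones for which the topological formula is normalized. The admissible sections $\sigma_{\X}$ and $\sigma_C$ enter only as trivializations, and since replacing an admissible section alters the left-hand side by the logarithm of the modulus of a nowhere-vanishing holomorphic function on $\D$ — a bounded, hence $\equiv 0$, quantity — the class modulo $C^0(\D)$ is independent of the choice.

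The main obstacle lies not in Proposition \ref{p-1-8} itself but in the appendix theorem it invokes: the genuinely hard analytic input is the determination of the singularity of the equivariant Quillen metric, which requires the equivariant embedding formula \cite{MR1316553} together with a careful analysis of how the equivariant analytic torsion and the fixed-point contributions degenerate as $X_s$ acquires the simple normal crossing fiber $X_0$, in particular controlling the components of $\X^\iota$ lying over $0$. Assuming that theorem, the only remaining bookkeeping is to verify the compatibility of the two metric normalizations with the stated invariants; this is where I would be most careful, since a misnormalization of $h_C$ or a sign error in the Gauss-map correction would corrupt the coefficient of $\log|s|^2$.
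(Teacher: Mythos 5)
Your proposal is correct and matches the paper's proof, which consists precisely of applying Theorem \ref{t-ap-2-8} of the Appendix to the $\G$-equivariant hermitian bundles $(\Omega^1_{\X}, h_{\X})$ and $(f^*\Omega^1_C, f^*h_C)$; your additional sketch of the appendix theorem's internal mechanism and the admissible-section independence is not needed for the proposition itself but is consistent with the actual argument. The only caution is that the appendix proof proceeds via Bismut's equivariant embedding formula applied to the graph $\Gamma \subset \X \times C$ rather than the curvature formula you cite as the first ingredient, but since you invoke the theorem as a black box, this does not affect your proof.
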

	
	\begin{proof}
		This proposition follows from Theorem \ref{t-ap-2-8} in Appendix.
	\end{proof}
	
	Let $f_*$ be the integration along the fibers of $f : \XX \to C$. 
	
	\begin{prop}\label{p-1-11}
		The following equality of functions on $\D$ holds:
		$$
			\log \left( \frac{\| \cdot \|^2_{ \lambda_{\G}(\widetilde{\Omega}^1_{\X/C}), Q,  h_{\X/C} }(\iota)}{\| \cdot \|^2_{ \lambda_{\G}(\widetilde{\Omega}^1_{\X/C}), Q, h_{f^* \Omega^1_C} }(\iota)} \right) \equiv 0.
		$$
	\end{prop}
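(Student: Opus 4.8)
\emph{Plan.} Recall that, by the convention fixed just before Proposition~\ref{p-1-8}, the assertion ``$\,\cdot\equiv 0$'' for a function smooth on $\D^*$ means precisely that it lies in $C^0(\D)$, i.e. that it extends continuously across the singular fiber $s=0$. The two metrics being compared are the two sides of a Bismut--Gillet--Soulé comparison for the short exact sequence $\overline{\mathcal{F}}\colon 0\to f^*\Omega^1_C\to\Omega^1_{\X}\to\Omega^1_{\X/C}\to 0$, whose log-ratio on $C^\circ=\D^*$ is a generically nonvanishing smooth function; the content of the proposition is therefore its \emph{continuity} at $s=0$. My plan is to identify this log-ratio with a fiber integral of equivariant secondary forms and then to prove continuity by pulling the integrand back to the resolution $\widetilde{\X}$, where the Gauss map is holomorphic and the integrand becomes globally smooth.

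First I would invoke the equivariant Bismut--Gillet--Soulé anomaly formula. The decisive point is that both Quillen metrics in Definition~\ref{d-1-7} use the \emph{same} fiberwise K\"ahler metric $h_{\X/C}$ for the fiber Hodge theory, and that the bundle metrics $(h_{f^*\Omega^1_C},h_{\X},h_{\X/C})$ are mutually compatible along $\overline{\mathcal{F}}$ by construction. Hence the only discrepancy is the failure of multiplicativity of the equivariant Quillen metric along $\overline{\mathcal{F}}$, and for an admissible section $\sigma=\sigma_{\X}\otimes\sigma_C^{-1}$ the anomaly formula gives, for $s\in\D^*$,
\[
\phi(s):=\log\!\left(\frac{\|\sigma(s)\|^{2}_{\lambda_{\G}(\widetilde{\Omega}^1_{\X/C}),\,Q,\,h_{\X/C}}(\iota)}{\|\sigma(s)\|^{2}_{\lambda_{\G}(\widetilde{\Omega}^1_{\X/C}),\,Q,\,h_{f^*\Omega^1_C}}(\iota)}\right)=-\,f_{*}\!\left\{Td_{\iota}(T\X/C,h_{\X/C})\,\widetilde{ch}_{\iota}(\overline{\mathcal{F}})\right\},
\]
where $f_*$ is the integration along the fibers of $\X^{\iota}_H\to C$ (the vertical fixed locus $\X^{\iota}_V$ meets only singular fibers), and $\widetilde{ch}_{\iota}(\overline{\mathcal{F}})$ is the equivariant Bott--Chern secondary class characterized by
\[
dd^{c}\,\widetilde{ch}_{\iota}(\overline{\mathcal{F}})=ch_{\iota}(\Omega^1_{\X},h_{\X})-ch_{\iota}(f^*\Omega^1_C,h_{f^*\Omega^1_C})-ch_{\iota}(\Omega^1_{\X/C},h_{\X/C}).
\]

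Next I would make the integrand geometric and resolve its singularity. By the isometries \eqref{f-1-9}, $\overline{\mathcal{F}}=\nu^*\overline{\mathcal{S}}$ and $(T\X/C,h_{\X/C})=\mu^*(U,h_U)$, so on $\X\setminus\Sigma_f$ the integrand is the pullback under the Gauss maps $\nu,\mu$ of \emph{fixed smooth} equivariant forms on $\mathbb{P}(\Omega^1_{\X})$ and $\mathbb{P}(T\X)^{\vee}$. Pulling back by $q\colon\widetilde{\X}\to\X$ and using that $\widetilde{\nu}=\nu\circ q$ and $\widetilde{\mu}=\mu\circ q$ extend to \emph{holomorphic} maps on all of $\widetilde{\X}$, I obtain
\[
q^{*}\!\left\{Td_{\iota}(T\X/C,h_{\X/C})\,\widetilde{ch}_{\iota}(\overline{\mathcal{F}})\right\}=\widetilde{\mu}^{*}Td_{\iota}(U,h_{U})\cdot\widetilde{\nu}^{*}\widetilde{ch}_{\iota}(\overline{\mathcal{S}}),
\]
which extends to a \emph{global smooth} equivariant form on $\widetilde{\X}$. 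The only source of singularity of the secondary form near $X_0$ is the indeterminacy of the Gauss map along $\Sigma_f$ — equivalently the $-dd^c\log\|df\|^2$ appearing in \eqref{f-1-10} — and this is exactly what the resolution eliminates.

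Finally I would conclude by continuity of fiber integration. Since $q$ restricts to an isomorphism $\widetilde{\X}\setminus E\cong\X\setminus\Sigma_f$ and $\widetilde{\X}^{\iota}_H$ is the proper transform of $\X^{\iota}_H$, for $s\in\D^*$ one has $\phi(s)=-\,\widetilde{f}_{*}\{q^{*}[\,\cdots]\}(s)$ with $\widetilde{f}=f\circ q$ proper on $\widetilde{\X}^{\iota}_H$. Being the integral of a fixed smooth form along the fibers of a proper holomorphic family, $\phi$ extends continuously to all of $\D$, because the fiber cycles $[\widetilde{\X}^{\iota}_H\cap\widetilde{f}^{-1}(s)]$ converge as currents to $[\widetilde{\X}^{\iota}_H\cap\widetilde{f}^{-1}(0)]$ as $s\to 0$; hence $\phi\in C^0(\D)$, that is $\phi\equiv 0$. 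The main obstacle is verifying that the equivariant secondary form genuinely becomes smooth after pullback to $\widetilde{\X}$ — i.e. the functoriality of the equivariant Bott--Chern class under the holomorphically extended Gauss map together with the disappearance of the $\log\|df\|^2$ singularity — while the subsidiary technical point is the continuity of the proper pushforward $\widetilde{f}_*$ of a smooth form precisely at the singular fiber $s=0$, where $\widetilde{f}$ fails to be a submersion.
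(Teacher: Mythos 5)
Your proof is correct and takes essentially the same route as the paper's: both apply the equivariant anomaly formula of \cite[Theorem 2.5]{MR1316553} to identify the log-ratio with the $(0,0)$-part of $f_*\left( Td_{\iota}(T\X/C, h_{\X/C})\, \widetilde{ch}_{\iota}(\overline{\mathcal{F}}) \right)$, then use the isometries (\ref{f-1-9}) together with the resolution $q : \widetilde{\X} \to \X$ of the Gauss maps to rewrite this as $(\widetilde{f})_*$ of a form that is smooth on $\widetilde{\X}^{\iota}$ (your only slips --- the form lives on the fixed locus rather than on all of $\widetilde{\X}$, and your overall sign in the anomaly formula differs from the paper's --- are harmless for the continuity assertion), and finally conclude continuity across $s=0$ from properness of $\widetilde{f}$. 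The fiber-cycle convergence you invoke in the last step is precisely the content of Barlet's theorem \cite[Th\'eor\`eme 4 bis.]{MR0666639}, which is the reference the paper cites at that point.
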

	
	\begin{proof}
		Note that $\mathcal{F}$ is acyclic on $\X^{\circ}$.
		By the anomaly formula for equivariant Quillen metrics \cite[Theorem 2.5.]{MR1316553}, we have
		$$
			\log \left( \frac{\| \cdot \|^2_{ \lambda_{\G}(\widetilde{\Omega}^1_{\X/C}), Q,  h_{\X/C} }(\iota)}{ \| \cdot \|^2_{ \lambda_{\G}(\widetilde{\Omega}^1_{\X/C}), Q, h_{f^* \Omega^1_C} }(\iota)} \right)
			= \left[ f_* \left( Td_{\iota}(T\X/C, h_{\X/C}) \widetilde{ch}_{\iota}(\overline{\mathcal{F}}) \right) \right]^{(0,0)},
		$$
		where $\widetilde{ch}_{\iota}(\overline{\mathcal{F}})$ is the Bott-Chern secondary class \cite[e), f)]{MR929146} such that
		$$
			-dd^c\widetilde{ch}_{\iota}(\overline{\mathcal{F}}) = ch_{\iota}(f^* \Omega^1_C, h_{f^* \Omega^1_C}) +ch_{\iota}(\Omega^1_{\X/C}, h_{\X/C}) -ch_{\iota}(\Omega^1_{\X}, h_{\Omega^1_{\X}}) .
		$$
		We set $\widetilde{f} = f \circ q : \widetilde{\X} \to C$.
		The integration along the fibers of $f$ and $\widetilde{f}$ are denoted by $f_*$ and $(\widetilde{f})_*$, respectively.
		For any differential form $\alpha$ on $\widetilde{\X}^{\iota} \setminus q^{-1}(\Sigma_f)$, we have $(\widetilde{f})_*(\alpha) = f_* \left( (q^{-1})^* \alpha \right)$.
		On $\D^*$, we deduce from (\ref{f-1-9}) that
		\begin{align*}
			  f_* \left( Td_{\iota}(T\X/C, h_{\X/C}) \widetilde{ch}_{\iota}(\overline{\mathcal{F}}) \right) 
			&= f_* \left( \mu^* Td_{\iota}(U, h_U) \nu^* \widetilde{ch}_{\iota}(\overline{\mathcal{S}}) \right)  \\
			&= f_* (q^{-1})^* \left( \widetilde{\mu}^* Td_{\iota}(U, h_U) \widetilde{\nu}^* \widetilde{ch}_{\iota}(\overline{\mathcal{S}}) \right) \\
			&= (\widetilde{f})_* \left( \widetilde{\mu}^* Td_{\iota}(U, h_U) \widetilde{\nu}^* \widetilde{ch}_{\iota}(\overline{\mathcal{S}}) \right).
		\end{align*}
		Since $\widetilde{\mu}^* Td_{\iota}(U, h_U) \widetilde{\nu}^* \widetilde{ch}_{\iota}(\overline{\mathcal{S}})$ is a smooth differential form on $\widetilde{\X}^{\iota}$ and since $ \widetilde{f} : \widetilde{\X}^{\iota} \to C $ is a proper holomorphic map, 
		$\left[ (\widetilde{f})_* \left( \widetilde{\mu}^* Td_{\iota}(U, h_U) \widetilde{\nu}^* \widetilde{ch}_{\iota}(\overline{\mathcal{S}}) \right) \right]^{(0,0)}$ is a continuous function on $\D$ by \cite[Th\'eor\`eme 4 bis.]{MR0666639}.
		Therefore the function $\left[ f_* \left( Td_{\iota}(T\X/C, h_{\X/C}) \widetilde{ch}_{\iota}(\overline{\mathcal{F}}) \right) \right]^{(0,0)}$ on $\D^*$ extends to 
		a continuous function on $\D$. 
		This completes the proof.
 	\end{proof}
	
	We set 
	$$
		\beta_\iota(X_0, \widetilde{\Omega}^1_{\X/C}) = \int_{E_0 \cap \widetilde{\X}^{\iota}_H} \widetilde{\mu}^* Td_{\iota}(U, h_U) \frac{\exp(\widetilde{\nu}^* c_1(L, h_L)) -1}{\widetilde{\nu}^* c_1(L, h_L)}.
	$$
	
	\begin{prop}\label{p-1-12}
		The following equality of functions on $\D$ holds:
		$$
			\log \left( \frac{ \| \cdot \|^2_{ \lambda_{\G}(\widetilde{\Omega}^1_{\X/C}), Q, h_{f^* \Omega^1_C} }(\iota) }{ \| \cdot \|^2_{ \lambda_{\G}(\widetilde{\Omega}^1_{\X/C}), Q, f^*h_{C} }(\iota) } \right) \equiv \beta_\iota(X_0, \widetilde{\Omega}^1_{\X/C}) \log |s|^2.
		$$
	\end{prop}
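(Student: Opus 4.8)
The plan is to collapse the two metrics on $\lambda_{\G}(\widetilde{\Omega}^1_{\X/C})$ onto the single line bundle $f^*\Omega^1_C$ and then to read off the logarithmic singularity of the resulting fibre integral. First I would invoke Definition \ref{d-1-7}: both equivariant Quillen metrics appearing in the statement carry the common factor $\|\cdot\|^2_{\lambda_{\G}(\Omega^1_{\X}), Q, h_{\X}}(\iota)$, so this factor cancels and the ratio in the proposition equals
$$
	\frac{\|\cdot\|^2_{\lambda_{\G}(\widetilde{\Omega}^1_{\X/C}), Q, h_{f^* \Omega^1_C}}(\iota)}{\|\cdot\|^2_{\lambda_{\G}(\widetilde{\Omega}^1_{\X/C}), Q, f^*h_{C}}(\iota)} = \frac{\|\cdot\|^2_{\lambda_{\G}(f^* \Omega^1_{C}), Q, f^*h_{C}}(\iota)}{\|\cdot\|^2_{\lambda_{\G}(f^* \Omega^1_{C}), Q, h_{f^* \Omega^1_C}}(\iota)}.
$$
Thus the question is purely about how the equivariant Quillen metric on $\lambda_{\G}(f^*\Omega^1_C)$ changes when the fibre metric on the line bundle $f^*\Omega^1_C$ is replaced by $h_{f^*\Omega^1_C} = \|df\|^2 f^*h_C$ (see (\ref{f-1-4})), the metric $h_{\X/C}$ on $T\X/C$ being held fixed.

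Next I would apply the anomaly formula for equivariant Quillen metrics \cite[Theorem 2.5]{MR1316553} to this change of metric on $f^*\Omega^1_C$ over $\X^{\circ}$, exactly as in the proof of Proposition \ref{p-1-11}. This writes the logarithm of the above ratio as $\pm[f_*(Td_{\iota}(T\X/C, h_{\X/C})\,\widetilde{ch}_{\iota}(\overline{f^*\Omega^1_C}))]^{(0,0)}$, where $\widetilde{ch}_{\iota}(\overline{f^*\Omega^1_C})$ is the equivariant Bott--Chern secondary class for the pair of metrics $(f^*h_C, h_{f^*\Omega^1_C})$. The key is to evaluate this secondary class. By (\ref{f-1-9}) the line bundle is $f^*\Omega^1_C = \nu^*L$ with $h_{f^*\Omega^1_C} = \nu^*h_L$, while $f^*h_C$ is flat on $\D$ since $h_C(ds,ds)=1$; moreover $\G$ acts trivially on $f^*\Omega^1_C|_{\X^{\iota}}$, so the equivariant Chern character reduces to the ordinary one on the fixed locus. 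A direct transgression computation then shows that, modulo $\operatorname{im}\partial + \operatorname{im}\bar{\partial}$,
$$
	\widetilde{ch}_{\iota}(\overline{f^*\Omega^1_C}) \equiv (\log \|df\|^2)\,\nu^*\!\left( \frac{\exp(c_1(L, h_L)) - 1}{c_1(L, h_L)} \right),
$$
which is verified by applying $dd^c$ and using (\ref{f-1-10}) together with $c_1(f^*\Omega^1_C, f^*h_C) = 0$. Combined with $Td_{\iota}(T\X/C, h_{\X/C}) = \mu^*Td_{\iota}(U, h_U)$ from (\ref{f-1-9}), the ratio becomes a fibre integral over $\X^{\iota}$ of $\mu^*Td_{\iota}(U, h_U)\,\nu^*\{(\exp c_1(L) - 1)/c_1(L)\}$ weighted by the unbounded function $\log\|df\|^2$; any smooth ambiguity in the secondary class contributes only to the continuous remainder and is harmless.

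The main obstacle is the boundary analysis of this fibre integral as $s \to 0$, since $\log\|df\|^2$ blows up along $\Sigma_f$ and the horizontal fixed locus degenerates over $0$. Here I would pull everything back through the resolution $q : \widetilde{\X} \to \X$ and the holomorphic Gauss maps $\widetilde{\nu}, \widetilde{\mu}$, so that the integrand is a smooth form on $\widetilde{\X}^{\iota}_H$ times $\log\|q^*df\|^2$, the latter having logarithmic poles precisely along $E = q^{-1}(\Sigma_f)$. Following the asymptotic analysis of fibre integrals with logarithmically singular weights of Yoshikawa \cite{MR2262777}, \cite{MR1639560} and the technique of \cite{MR4255041}, the non-logarithmic part of the integral extends continuously to $\D$, while the coefficient of $\log|s|^2$ localizes to $E_0 \cap \widetilde{\X}^{\iota}_H$ with multiplicity governed by the vanishing order of $\widetilde{f} = f \circ q$, yielding exactly
$$
	\int_{E_0 \cap \widetilde{\X}^{\iota}_H} \widetilde{\mu}^* Td_{\iota}(U, h_U) \frac{\exp(\widetilde{\nu}^* c_1(L, h_L)) - 1}{\widetilde{\nu}^* c_1(L, h_L)} = \beta_\iota(X_0, \widetilde{\Omega}^1_{\X/C}).
$$
The points requiring the most care are: tracking the signs through the conventions of \cite{MR1316553} and the Poincar\'e--Lelong residue so that the coefficient is $+\beta_\iota$; checking that the vertical fixed locus $\X^{\iota}_V$ contributes only a bounded (hence $\equiv 0$) term and no logarithm; and confirming that the remainder lies in $C^0(\D)$ by the properness argument of Proposition \ref{p-1-11} together with \cite[Th\'eor\`eme 4 bis.]{MR0666639}.
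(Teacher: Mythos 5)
Your proposal is correct and follows essentially the same route as the paper's proof: cancel the common factor $\| \cdot \|^2_{\lambda_{\G}(\Omega^1_{\X}), Q, h_{\X}}(\iota)$ to reduce to the two metrics on $\lambda_{\G}(f^*\Omega^1_C)$, apply the anomaly formula \cite[Theorem 2.5]{MR1316553}, use triviality of the $\G$-action on $f^*\Omega^1_C|_{\XX \setminus \Sigma_f}$ to pass to the non-equivariant Bott--Chern class, identify that class with $\log\|df\|^2 \cdot (\exp(\nu^*c_1(L,h_L))-1)/\nu^*c_1(L,h_L)$, and conclude via the resolution $q$ and \cite[Lemma 4.4 and Corollary 4.6]{MR2262777}. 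The only deviation is that you re-derive the secondary-class formula by a direct transgression computation where the paper simply cites \cite[(5.5)]{MR2454893}, which is an inessential difference.
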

	
	\begin{proof}
		Let $\widetilde{ch}_{\iota}(f^* \Omega^1_C ; f^*h_C, h_{f^* \Omega^1_C})$ and $\widetilde{ch}(f^* \Omega^1_C ; f^*h_C, h_{f^* \Omega^1_C})$ be the Bott-Chern secondary classes \cite[e), f)]{MR929146} such that
		\begin{align*}
			dd^c \widetilde{ch}_{\iota}(f^* \Omega^1_C ; f^*h_C, h_{f^* \Omega^1_C}) &= ch_{\iota}(f^* \Omega^1_C, f^*h_C) -ch_{\iota}(f^* \Omega^1_C , h_{f^* \Omega^1_C}), \\
			dd^c \widetilde{ch}(f^* \Omega^1_C ; f^*h_C, h_{f^* \Omega^1_C}) &= ch(f^* \Omega^1_C, f^*h_C) -ch(f^* \Omega^1_C , h_{f^* \Omega^1_C})
		\end{align*}
		hold on $\XX \setminus \Sigma_f$ and $\X \setminus \Sigma_f$, respectively.
		By the anomaly formula for equivariant Quillen metrics \cite[Theorem 2.5.]{MR1316553}, we have
		\begin{align*}
			\log \left( \frac{ \| \cdot \|^2_{ \lambda_{\G}(\widetilde{\Omega}^1_{\X/C}), Q, h_{f^* \Omega^1_C} }(\iota) }{ \| \cdot \|^2_{ \lambda_{\G}(\widetilde{\Omega}^1_{\X/C}), Q, f^*h_{C} }(\iota) } \right) 
			&=\log \left( \frac{ \| \cdot \|^2_{ \lambda_{\G}(f^* \Omega^1_C), Q, f^*h_C }(\iota) }{ \| \cdot \|^2_{ \lambda_{\G}(f^* \Omega^1_C), Q, h_{f^* \Omega^1_C} }(\iota) } \right) \\
			&=\left[ f_* \left( Td_{\iota}(T\X/C, h_{\X/C}) \widetilde{ch}_{\iota}(f^* \Omega^1_C ; f^*h_C, h_{f^* \Omega^1_C}) \right) \right]^{(0,0)}.
		\end{align*}
		Since $\iota$ acts trivially on $f^* \Omega^1_C|_{ \XX \setminus \Sigma_f }$, we have
		$$
			\widetilde{ch}_{\iota}(f^* \Omega^1_C ; f^*h_C, h_{f^* \Omega^1_C}) = \widetilde{ch}(f^* \Omega^1_C ; f^*h_C, h_{f^* \Omega^1_C})|_{\XX \setminus \Sigma_f}.
		$$
		By \cite[(5.5)]{MR2454893}, we have
		\begin{align}\label{al6-1-2-2}
			\widetilde{ch}(f^* \Omega^1_C ; f^*h_C, h_{f^* \Omega^1_C}) = \frac{\exp(\nu^* c_1(L, h_L)) -1}{\nu^* c_1(L, h_L)} \log \| df \|^2.
		\end{align}
		%
		%
		By (\ref{f-1-9}) and (\ref{al6-1-2-2}), we have
		\begin{align*}
			& \quad \left[ f_* \left( Td_{\iota}(T\X/C, h_{\X/C}) \widetilde{ch}_{\iota}(f^* \Omega^1_C ; f^*h_C, h_{f^* \Omega^1_C}) \right) \right]^{(0,0)} \\
			&=\left[ f_* \left( \mu^* Td_{\iota}(U, h_{U}) \frac{\exp(\nu^* c_1(L, h_L)) -1}{\nu^* c_1(L, h_L)} \log \| df \|^2 \right) \right]^{(0,0)} \\
			&=\left[ \widetilde{f}_* \left( \widetilde{\mu}^* Td_{\iota}(U, h_{U}) \frac{\exp(\widetilde{\nu}^* c_1(L, h_L)) -1}{\widetilde{\nu}^* c_1(L, h_L)} q^* \log \| df \|^2 \right) \right]^{(0,0)} \\
			&\equiv \beta_\iota(X_0, \widetilde{\Omega}^1_{\X/C}) \log |s|^2,
		\end{align*}
		where the last equivalence holds by \cite[Lemma 4.4 and Corollary 4.6]{MR2262777}.
	\end{proof}
	
	We define a topological invariant $\gamma_{\iota}(X_0, \widetilde{\Omega}^1_{\X/C})$ by
	$$
		\gamma_{\iota}(X_0, \widetilde{\Omega}^1_{\X/C}) =  \alpha_{\iota}(X_0, \Omega^1_{\X})  - \alpha_{\iota}(X_0, f^* \Omega^1_{C}) + \beta_\iota(X_0, \widetilde{\Omega}^1_{\X/C}) .
	$$
	By Definition \ref{d-1-7} $(3)$ and by Propositions \ref{p-1-8}, \ref{p-1-11}, and \ref{p-1-12}, we obtain the following result.
	
	\begin{prop}\label{p-1-13}
		The following equality of functions on $\D$ holds:
		$$
			\log \| \sigma (s) \|^2_{ \lambda_{\G}(\widetilde{\Omega}^1_{\X/C}), Q,  h_{\X/C} }(\iota) \equiv \gamma_{\iota}(X_0, \widetilde{\Omega}^1_{\X/C}) \log |s|^2.
		$$
	\end{prop}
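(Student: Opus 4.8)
The plan is to use the fact that the rank-two bundle $\lambda_{\G}(\widetilde{\Omega}^1_{\X/C})$ carries, a priori, three different equivariant Quillen metrics --- those attached to $h_{\X/C}$, to $h_{f^*\Omega^1_C}$, and to $f^*h_C$ --- and to reach the asymptotics for the geometrically natural metric $h_{\X/C}$ by telescoping through the other two, whose singular behavior has already been determined. The guiding idea is that $h_{\X/C}$ is a fiberwise metric whose boundary behavior is hard to access directly, whereas the $f^*h_C$-metric is, by construction, assembled out of global Quillen metrics on $\lambda_{\G}(\Omega^1_{\X})$ and $\lambda_{\G}(f^*\Omega^1_C)$, for which Proposition \ref{p-1-8} supplies closed asymptotics.

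First I would compute the $f^*h_C$-metric. By Definition \ref{d-1-7} (3) this metric is, by fiat, the tensor product of the equivariant Quillen metric on $\lambda_{\G}(\Omega^1_{\X})$ with the inverse of that on $\lambda_{\G}(f^*\Omega^1_C)$, and the chosen admissible section factors compatibly with the $(\pm)$-eigenbundle decomposition as $\sigma = \sigma_{\X} \otimes \sigma_C^{-1}$. Hence $\log\|\sigma(s)\|^2_{\lambda_{\G}(\widetilde{\Omega}^1_{\X/C}), Q, f^*h_{C}}(\iota)$ splits as the difference of the two norms appearing in Proposition \ref{p-1-8}, which then gives
\begin{align*}
    \log\|\sigma(s)\|^2_{\lambda_{\G}(\widetilde{\Omega}^1_{\X/C}), Q, f^*h_{C}}(\iota) \equiv \bigl(\alpha_{\iota}(X_0,\Omega^1_{\X}) - \alpha_{\iota}(X_0, f^*\Omega^1_{C})\bigr)\log|s|^2.
\end{align*}

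Next I would climb back from $f^*h_C$ to $h_{\X/C}$ in two comparison steps. Proposition \ref{p-1-12} controls the passage from $f^*h_C$ to $h_{f^*\Omega^1_C}$, contributing $\beta_{\iota}(X_0, \widetilde{\Omega}^1_{\X/C})\log|s|^2$, while Proposition \ref{p-1-11} shows that the further passage from $h_{f^*\Omega^1_C}$ to $h_{\X/C}$ is bounded, hence $\equiv 0$. Summing the three equivalences and invoking the definition $\gamma_{\iota}(X_0,\widetilde{\Omega}^1_{\X/C}) = \alpha_{\iota}(X_0,\Omega^1_{\X}) - \alpha_{\iota}(X_0, f^*\Omega^1_{C}) + \beta_{\iota}(X_0,\widetilde{\Omega}^1_{\X/C})$ produces the asserted identity.

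All the genuine analysis --- the singular asymptotics of the equivariant Quillen metrics via the Appendix, and the boundedness of the anomaly contributions via the Bott--Chern secondary classes together with the continuity of the fiber integrals --- is already absorbed into Propositions \ref{p-1-8}, \ref{p-1-11}, and \ref{p-1-12}, so I do not expect a real obstacle at this stage. The only point demanding a little care is to verify that the additive structure of the $\equiv$-relation is compatible with the multiplicative (ratio of $\lambda_+$- and $\lambda_-$-norms) definition of the equivariant metrics and with the factorization of $\sigma$, so that the logarithms genuinely add as claimed; once this bookkeeping is checked against the definitions of Section \ref{ss-1-1} and the splitting in Definition \ref{d-1-5}, the three results chain together at once.
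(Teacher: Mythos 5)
Your proposal is correct and follows exactly the paper's own argument: the paper likewise obtains the result by combining Definition \ref{d-1-7} (3) with Propositions \ref{p-1-8}, \ref{p-1-11}, and \ref{p-1-12}, telescoping the three metrics and summing the $\equiv$-relations to recover $\gamma_{\iota}(X_0, \widetilde{\Omega}^1_{\X/C})$. Your additional bookkeeping remark about compatibility of the logarithmic additivity with the $(\pm)$-eigenbundle structure of the equivariant metrics is a sensible check but introduces nothing beyond what the paper's one-line proof already implicitly uses.
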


\subsection{Logarithmic extension and singularity of equivariant $L^2$-metrics}
	
	Let $\mathcal{V}$ be a flat vector bundle on the punctured disk $\D^*$.
	The Gauss-Manin connection $\nabla$ is defined by
	$$
		\nabla( v \otimes f ) = v \otimes d f,
	$$
	where $v$ is a flat section of $\mathcal{V}$ and $f$ is a holomorphic function.
	The lower Deligne extension $(\mathcal{V}_{\log}, \tilde{\nabla})$ is a unique extension of $(\mathcal{V}, \nabla)$ such that
	\begin{enumerate}[ label= \rm{(\arabic*)} ]
		\item	$\mathcal{V}_{\log}$ is a holomorphic vector bundle on $\D$ with $\mathcal{V}_{\log}|_{\D^*} = \mathcal{V}$,
		\item $\tilde{\nabla} : \mathcal{V}_{\log} \to \mathcal{V}_{\log} \otimes \Omega^1_{\D}(\log [0])$ is a logarithmic connection with $\tilde{\nabla}|_{\D^*} =\nabla$,
		\item If $\alpha$ is an eigenvalue of the residue map $\operatorname{res}_0(\nabla)$, then $\alpha \in \mathbb{Q} \cap \left[ 0, 1\right)$.
	\end{enumerate}
	Here the residue map $\operatorname{res}_0(\nabla) : \mathcal{V}_{\log}(0) \to \mathcal{V}_{\log}(0)$ is induced by
	$$
		(1 \otimes R_0) \circ \tilde{\nabla} :  \mathcal{V}_{\log} \to \mathcal{V}_{\log} \otimes \Omega^1_{\D}(\log [0]) \to \mathcal{V}_{\log},
	$$
	where $\mathcal{V}_{\log}(0) = \mathcal{V}_{\log} / I_0 \mathcal{V}_{\log}$ with $I_0 \subset \mathcal{O}_{\D}$ being the ideal sheaf of $0 \in \D$ and $R_0 :  \Omega^1_{\D}(\log [0]) \to \mathbb{C}$ is the Poincar\'e residue map given by $R_0(f \frac{ds}{s}) = f(0)$.
	
	Following Eriksson-Freixas i Montplet-Mourougane \cite[\S 2]{MR4255041}, let us briefly recall the explicit construction of $\mathcal{V}_{\log}$.
	Let $q : \mathbb{H} \to \D^*$ be the universal covering defined by $q(\tau) = \exp(2 \pi i \tau)$, where $\mathbb{H}$ is the upper half plane.
	Let $\nabla : \mathcal{V} \to \mathcal{V} \otimes \Omega^1_{\D^*}$ be the Gauss-Manin connection defined by
	$$
		\nabla( v \otimes f ) = v \otimes d f,
	$$
	where $v$ is a flat section of $\mathcal{V}$ and $f$ is a holomorphic function.
	We define the space $V_{\infty}$ of flat sections of $\mathcal{V}$ by
	$$
		V_{\infty} = \operatorname{Ker} ( q^*\nabla : \Gamma( \mathbb{H}, q^*\mathcal{V}) \to \Gamma( \mathbb{H}, q^*\mathcal{V} \otimes \Omega^1_{\mathbb{H}}) ).
	$$
	The monodromy transformation $T \in \operatorname{End} V_{\infty} $ is defined by $T \sigma (\tau) = \sigma(\tau+1)$.
	By the monodromy theorem \cite[(6.1)]{MR0382272}, $T$ is quasi-unipotent.
	By the Jordan decomposition, we have $T=T_sT_u=T_uT_s$,
	where $T_s$ is a semi-simple endomorphism and $T_u$ is a unipotent endomorphism.
	The logarithm of $T_u$ is well-defined.
	We choose the lower branch of logarithm that takes values in $\mathbb{R} + 2 \pi i \left( -1, 0 \right]$
	and denote it by $^l\log$.
	Then we can define $^l\log T_s$ and we set 
	$$
		\frac{1}{2 \pi i}\log T = \frac{1}{2 \pi i} {}^l\log T_s + \frac{1}{2 \pi i}\log T_u .
	$$
	
	For $e \in V_{\infty}$, we define $\widetilde{e} \in \Gamma( \mathbb{H}, q^*\mathcal{V})$ by
	$$
		\widetilde{e} (\tau) = \exp \left( -\tau \log T \right) e(\tau).
	$$
	Since $\widetilde{e} (\tau +1) =\widetilde{e} (\tau) $, we may regard $\widetilde{e}$ as the section of $\mathcal{V}$ over $\D^*$.
	Therefore we have an injective $\mathbb{C}$-linear map of vector spaces
	$$
		\phi : V_{\infty} \to \Gamma(\D^*, \mathcal{V}), \quad e \mapsto \widetilde{e}(q),
	$$   
	which induces an isomorphism of vector bundles on $\D^*$
	\begin{align}\label{al6-1-3-5}
		V_{\infty} \otimes \mathcal{O}_{\D^*} \cong \mathcal{V}.
	\end{align}
	We define $\mathcal{V}_{\log}$ by
	$$
		\mathcal{V}_{\log} = \phi(V_{\infty}) \otimes \mathcal{O}_{\D}.
	$$
	By taking the fiber of $\mathcal{V}_{\log}$ at $0$, we have an isomorphism of vector spaces
	\begin{align}\label{al-1-5-1}
		\psi : V_{\infty} \to \mathcal{V}_{\log}(0), \quad e \to \widetilde{e}(0).
	\end{align}

	Consider the normal crossing degeneration $f : X=f^{-1}(\D) \to \D$.
	We recall Steenbrink's construction of the lower extension of the Hodge bundle induced from $f : X^{\circ} =f^{-1}(\D^*) \to \D^*$.
	For more details, we refer to \cite[\S 2]{MR0429885}, \cite[\S 2]{MR485870} and \cite[\S 2]{MR4255041}.
	
	We define locally free sheaves on $X =f^{-1}(\D)$ by
	\begin{align*}
		\Omega^1_{X/ \D}(\log) = \Omega^1_X(\log X_0) / f^* \Omega^1_{\D}(\log [0]), \quad \Omega^p_{X/ \D}(\log) = \wedge^p\Omega^1_{X/ \D}(\log).
	\end{align*}
	These sheaves are equipped with the relative exterior differential $d$ and $(\Omega^{\bullet}_{X/ \D}(\log), d)$ is called the logarithmic de Rham complex of $f$.
	
	Let $\widetilde{\mathcal{V}}_{\log}=R^kf_* \Omega^{\bullet}_{X/ \D}(\log)$ be the $k$-th direct image sheaf of the complex of vector bundles $(\Omega^{\bullet}_{X/ \D}(\log), d)$. 
	By \cite[(2.18) Theorem]{MR0429885}, $\widetilde{\mathcal{V}}_{\log}=R^kf_* \Omega^{\bullet}_{X/ \D}(\log)$ is a locally free sheaf.
	By \cite[(2.2) Proposition]{MR0429885}, we have $\widetilde{\mathcal{V}}_{\log}|_{\D^*} = \mathcal{V}$.
	Hence $\widetilde{\mathcal{V}}_{\log}$ is a holomorphic vector bundle on $\D$ and is an extension of the flat vector bundle $\mathcal{V}= R^kf_*\mathbb{C} \otimes \mathcal{O}_{\D^*}$.
	By \cite[(2.19), (2.20) Propositions]{MR0429885}, there exists a logarithmic connection $\tilde{\nabla} : \widetilde{\mathcal{V}}_{\log} \to \widetilde{\mathcal{V}}_{\log} \otimes \Omega^1_{\D}(\log [0])$ such that $\tilde{\nabla}|_{\D^*} =\nabla$,
	and such that if $\alpha$ is an eigenvalue of the residue map $\operatorname{res}_0(\nabla)$, then $\alpha \in \mathbb{Q} \cap \left[ 0, 1\right)$.
	By the uniqueness of the Deligne extension, $\widetilde{\mathcal{V}}_{\log}$ is the lower Deligne extension of $\mathcal{V}$.

	In what follows, we identify $\widetilde{\mathcal{V}}_{\log}=R^kf_* \Omega^{\bullet}_{X/ \D}(\log)$ with $\mathcal{V}_{\log}$.
	Let $j : \D^* \hookrightarrow \D$ be the inclusion and let $\mathcal{F}^p\mathcal{V}$ be the Hodge filtration bundle of $\mathcal{V}$.
	We define the Hodge filtration bundle $\mathcal{F}^p_{\log}$ of $\mathcal{V}_{\log}$ by
	$$
		\mathcal{F}^p_{\log} = j_*(\mathcal{F}^p\mathcal{V}) \cap \mathcal{V}_{\log} \subset j_*\mathcal{V}.
	$$
	Namely, $\mathcal{F}^p_{\log}$ consists of sections of $\mathcal{F}^p\mathcal{V}$ which extends to sections of $\mathcal{V}_{\log}$.
	
	By \cite[(2.11) Theorem]{MR485870}, $R^{q}f_*  \Omega^p_{X/ \D}(\log)$ is a locally free sheaf.
	Since the spectral sequence 
		\begin{align*}
			E_1^{p,q} = R^{q}f_*  \Omega^p_{X/ \D}(\log) \Rightarrow R^kf_* \Omega^{\bullet}_{X/ \D}(\log)
		\end{align*}
	attached to the b\^ete filtration $\Omega^{\bullet \geqq p}_{X/ \D}(\log)$ degenerates at $E_1$ (cf. \cite[p. 130]{MR756849}), we have
	\begin{align}\label{al7-1-3-1}
		\mathcal{F}^p_{\log} = \operatorname{Im} (R^kf_* \Omega^{\bullet \geqq p}_{X/ \D}(\log) \to R^kf_* \Omega^{\bullet}_{X/ \D}(\log))
	\end{align}
	and
	$$
		\mathcal{F}^p_{\log} / \mathcal{F}^{p+1}_{\log} = R^{k-p}f_*  \Omega^p_{X/ \D}(\log).
	$$
	We have the following short exact sequence of coherent sheaves on $\D$
	$$
		0 \to \mathcal{F}^{p+1}_{\log} \to \mathcal{F}^p_{\log} \to R^{k-p}f_*  \Omega^p_{X/ \D}(\log) \to 0.
	$$
	By the local freeness of $R^{q}f_*  \Omega^p_{X/ \D}(\log)$, $\mathcal{F}^p_{\log}$ is a locally free subsheaf of $\mathcal{V}_{\log}$.

	Recall that $X^{\circ} =f^{-1}(\D^*)$ is equipped with the $\G$-action induced from the involution $\iota$
	and that $\D^*$ is equipped with trivial $\G$-action.
	Since $f : X^{\circ}  \to \D^*$ is $\G$-equivariant, the $\G$-action preserves the fibers of $f$.
	Therefore the $\G$-action induces the $\G$-equivariant structure on the flat vector bundle $\mathcal{V} = R^kf_*\mathbb{C} \otimes \mathcal{O}_{\D^*}$.
	
	By the definition of the Gauss-Manin connection $\nabla$, it is a $\G$-equivariant map and the $\G$-action on $\Gamma( \mathbb{H}, q^*\mathcal{V})$ preserves $V_{\infty}$. 
	Since the identification $(q^* \mathcal{V})_{\tau} \cong \mathcal{V}_{q(\tau)} \cong \mathcal{V}_{q(\tau +1)} \cong (q^* \mathcal{V})_{\tau + 1}$ is $\G$-equivariant, the monodromy operator $T$ is also $\G$-equivariant.
	By its construction, $\phi$ is $\G$-equivariant and $\mathcal{V}_{\log} = \phi(V_{\infty}) \otimes \mathcal{O}_{\D}$ is endowed with the $\G$-action induced from the $\G$-action on $V_{\infty}$.

	\begin{lem}\label{l6-1-3-3}
		Let $E, F$ be $\G$-equivariant holomorphic vector bundles on $\D$
		and let $\varphi : E \to F$ be an isomorphism of holomorphic vector bundles.
		If $\varphi|_{E|_{\D^*}} : E|_{\D^*} \to F|_{\D^*}$ is $\G$-equivariant,
		then $\varphi : E \to F$ is an isomorphism of $\G$-equivariant holomorphic vector bundles on $\D$. 
	\end{lem}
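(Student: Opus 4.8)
The plan is to reformulate the $\G$-equivariance of $\varphi$ as the vanishing of a single holomorphic section of $\operatorname{Hom}(E,F)$, and then to observe that such vanishing propagates from the dense open subset $\D^*$ to all of $\D$. Because $\G$ acts trivially on the base $\D$, giving a $\G$-equivariant structure on $E$ is the same as giving a holomorphic bundle automorphism $\rho_E \colon E \to E$ that is linear on each fiber, covers $\mathrm{id}_{\D}$, and satisfies $\rho_E^2 = \mathrm{id}_E$; this $\rho_E$ is the action of the generator $\iota$ of $\G$, and likewise for $\rho_F$ on $F$. Under this description, the assertion that $\varphi$ is $\G$-equivariant over an open set $U \subseteq \D$ is precisely the identity $\rho_F \circ \varphi = \varphi \circ \rho_E$ of bundle maps over $U$.

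First I would form the holomorphic homomorphism
$$
\Phi = \rho_F \circ \varphi - \varphi \circ \rho_E,
$$
which is a holomorphic section of the holomorphic vector bundle $\operatorname{Hom}(E,F)$ on $\D$, since $\varphi$, $\rho_E$, and $\rho_F$ are all holomorphic. The hypothesis that $\varphi|_{\D^*}$ is $\G$-equivariant is exactly the statement $\Phi|_{\D^*} = 0$. To conclude, I would choose local holomorphic trivializations of $E$ and $F$ near $0 \in \D$, so that $\Phi$ is represented by a matrix of holomorphic functions on $\D$; each of these functions vanishes on the dense subset $\D^* = \D \setminus \{0\}$, and hence vanishes at $0$ as well by continuity. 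Therefore $\Phi \equiv 0$ on $\D$, i.e.\ $\rho_F \circ \varphi = \varphi \circ \rho_E$ everywhere, so $\varphi$ intertwines the two involutions and is an isomorphism of $\G$-equivariant holomorphic vector bundles.

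There is essentially no substantial obstacle here: the entire content is that $\G$-equivariance is cut out by the vanishing of a holomorphic section of $\operatorname{Hom}(E,F)$, and that the complement of the single point $0$ is dense in $\D$, so vanishing on $\D^*$ forces vanishing on all of $\D$. The only point requiring (routine) care is the extension across $0$, which is immediate from the continuity of holomorphic functions; one could equivalently phrase it by noting that $\varphi$ carries the eigenbundle decomposition $E = E_+ \oplus E_-$ to $F = F_+ \oplus F_-$ over $\D^*$ and, by the same density argument, over $\D$.
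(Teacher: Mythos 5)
Your proof is correct and follows essentially the same route as the paper: the paper also expresses the failure of equivariance as a matrix of holomorphic functions (using global frames, since bundles on $\D$ are trivial) vanishing on $\D^*$, and concludes vanishing on all of $\D$ by continuity. Your packaging of this as a holomorphic section $\Phi = \rho_F \circ \varphi - \varphi \circ \rho_E$ of $\operatorname{Hom}(E,F)$ is just a frame-free restatement of the same argument.
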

	
	\begin{proof}
		Let $r$ be the rank of $E$.
		The involutions of $E$ and $F$ induced from the $\G$-action are denoted by $i : E \to E$ and $j : F \to F$, respectively.
		Since $E$ and $F$ are holomorphic vector bundles on $\D$, $E$ and $F$ are trivial vector bundles 
		and there are global holomorphic frames $e_1, \dots, e_r$ and $f_1, \dots, f_r$ of $E$ and $F$, respectively.
		We define the $r \times r$-matrix $A = (a_{pq})$ by
		$$
			\varphi(i(e_p)) - j( \varphi(e_p)) = \sum_q a_{pq} f_q.
		$$
		Then $a_{pq}$ is a holomorphic function on $\D$ for each $p,q$.
		Since $\varphi$ is $\G$-equivariant over $\D^*$,
		we have $a_{pq}|_{\D^*} =0$.
		Since $a_{pq}$ is continuous, we have $a_{pq}=0$ on $\D$.
		Therefore for each $p,q$,
		$$
			\varphi(i(e_p)) = j( \varphi(e_p)).
		$$
		Thus $\varphi$ is a $\G$-equivariant isomorphism.
		This completes the proof.
	\end{proof}
	
	\begin{lem}\label{l6-1-3-4}
		  The $\G$-action on $\mathcal{V}_{\log}$ is the extension of the $\G$-action on $\mathcal{V}$ and preserves the Hodge filtration $\mathcal{F}^p_{\log}$.
	\end{lem}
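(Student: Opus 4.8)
For the first assertion I would simply unwind the construction: the paragraph preceding the lemma already shows that $\phi$ is $\G$-equivariant and that the $\G$-action on $\mathcal{V}_{\log} = \phi(V_\infty) \otimes \mathcal{O}_{\D}$ is the one induced from the $\G$-representation $V_\infty$. Restricting to $\D^*$ and invoking the $\G$-equivariance of the isomorphism (\ref{al6-1-3-5}), one sees immediately that the $\G$-action on $\mathcal{V}_{\log}|_{\D^*}$ agrees with the original $\G$-action on $\mathcal{V}$, so the former extends the latter. Thus the only substantive content is the invariance of $\mathcal{F}^p_{\log}$.

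To treat the Hodge filtration I would pass to Steenbrink's model $\mathcal{V}_{\log} = R^kf_* \Omega^\bullet_{X/\D}(\log)$ and exhibit $\mathcal{F}^p_{\log}$ as coming from a $\G$-stable subcomplex. Since $f$ is $\G$-equivariant over $C$ with trivial action on the base, $\iota$ preserves each fiber, hence in particular the singular fiber $X_0$ together with its normal crossing structure; consequently $\iota$ acts on $\Omega^1_X(\log X_0)$, commutes with $f$, and therefore acts on the relative logarithmic de Rham complex $\Omega^\bullet_{X/\D}(\log)$ while preserving the b\^ete filtration $\Omega^{\bullet \geqq p}_{X/\D}(\log)$. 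Applying $R^kf_*$ then yields a $\G$-action on $\mathcal{V}_{\log}$ for which, by the image description (\ref{al7-1-3-1}) of $\mathcal{F}^p_{\log}$, the subbundle $\mathcal{F}^p_{\log}$ is automatically $\G$-stable.

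The step requiring care, and the one I expect to be the main obstacle, is reconciling this geometric $\G$-action on the Steenbrink model with the monodromy $\G$-action used to define $\mathcal{V}_{\log}$ through $V_\infty$. Both restrict, over $\D^*$, to the single action $\iota^*$ on $\mathcal{V}$, so the identity map $\mathcal{V}_{\log} \to \mathcal{V}_{\log}$, carrying the two actions on source and target, is $\G$-equivariant over $\D^*$; by Lemma \ref{l6-1-3-3} applied to $\varphi = \mathrm{id}$ it is then $\G$-equivariant over all of $\D$, so the two actions coincide and the monodromy $\G$-action preserves $\mathcal{F}^p_{\log}$ as well. As a cross-check avoiding Steenbrink's complex, one could argue directly from $\mathcal{F}^p_{\log} = j_*(\mathcal{F}^p\mathcal{V}) \cap \mathcal{V}_{\log}$: since $\iota$ is holomorphic, $\iota^*$ preserves $\mathcal{F}^p\mathcal{V}$ on $\D^*$, so the composite $\mathcal{F}^p_{\log} \hookrightarrow \mathcal{V}_{\log} \xrightarrow{\iota^*} \mathcal{V}_{\log} \twoheadrightarrow \mathcal{V}_{\log}/\mathcal{F}^p_{\log}$ is a morphism of vector bundles on $\D$ (the quotient being locally free by the exact sequences established above) which vanishes on $\D^*$, hence vanishes identically by the continuity argument already used in Lemma \ref{l6-1-3-3}; this forces $\iota^* \mathcal{F}^p_{\log} \subseteq \mathcal{F}^p_{\log}$, and equality follows as $\iota^*$ is an involution.
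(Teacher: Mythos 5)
Your proposal is correct and takes essentially the same route as the paper: the first assertion is handled identically by unwinding the $\G$-equivariance of $\phi$, and for the filtration the paper likewise endows Steenbrink's model $R^kf_*\Omega^{\bullet}_{X/\D}(\log)$ with the geometric $\G$-action induced from the action on the logarithmic de Rham complex (which preserves the image of the b\^ete filtration in (\ref{al7-1-3-1})) and then applies Lemma \ref{l6-1-3-3} to conclude that the identification with $\mathcal{V}_{\log}$ is $\G$-equivariant over all of $\D$. Your final cross-check via $\mathcal{F}^p_{\log} = j_*(\mathcal{F}^p\mathcal{V}) \cap \mathcal{V}_{\log}$ is a harmless extra not present in the paper, but the core argument coincides.
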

	
	\begin{proof}
		Since $\phi$ is a $\G$-equivariant homomorphism, (\ref{al6-1-3-5}) is a $\G$-equivariant isomorphism.
		Therefore, $\mathcal{V}_{\log}|_{\D^*} = \phi(V_{\infty}) \otimes \mathcal{O}_{\D^*} \cong \mathcal{V}$ is a $\G$-equivariant isomorphism
		and the $\G$-action on $\mathcal{V}_{\log}$ is the extension of the $\G$-action on $\mathcal{V}$.
		Consider the $\G$-action of $R^kf_* \Omega^{\bullet}_{X/ \D}(\log)$ induced from the $\G$-action of $\Omega^{\bullet}_{X/ \D}(\log)$.
		Since the $\G$-action on $R^kf_* \Omega^{\bullet}_{X/ \D}(\log)$ is the extension of the $\G$-action on $\mathcal{V}=R^kf_*\mathbb{C} \otimes \mathcal{O}_{\D^*}$ via the identification $R^kf_* \Omega^{\bullet}_{X^{\circ}/ \D^*}(\log) \cong R^kf_*\mathbb{C} \otimes \mathcal{O}_{\D^*}$,
		it follows from Lemma \ref{l6-1-3-3} that $\mathcal{V}_{\log} \cong R^kf_* \Omega^{\bullet}_{X/ \D}(\log)$ is a $\G$-equivariant isomorphism.
		Since the $\G$-action on $R^kf_* \Omega^{\bullet}_{X/ \D}(\log)$ preserves subbundle $\operatorname{Im} (R^kf_* \Omega^{\bullet \geqq p}_{X/ \D}(\log) \to R^kf_* \Omega^{\bullet}_{X/ \D}(\log))$,
		the corresponding subbundle $\mathcal{F}^p_{\log}$ of $\mathcal{V}_{\log}$ is a $\G$-equivariant subbundle via the isomorphism (\ref{al7-1-3-1}).
		This completes the proof.
	\end{proof}

	By Lemma \ref{l6-1-3-4}, $\mathcal{V}_{\log}$, $\mathcal{F}^p_{\log}$, and $R^{k-p}f_*  \Omega^p_{X/ \D}(\log)$ are $\G$-equivariant vector bundles on $\D$.
	Their $(\pm 1)$-eigenbundles are denoted by $\mathcal{V}_{\log, \pm}$, $\mathcal{F}^p_{\log, \pm}$, and $R^{k-p}f_*  \Omega^p_{X/ \D}(\log)_{\pm}$, respectively.
	These are holomorphic vector bundles on $\D$.

	Consider a semistable reduction
	$$
		\xymatrix{
	                &Y  \ar[d]_{g} \ar[r]^{r} & X  \ar[d]^{f}     \\   	
	                &\D' \ar[r]_{\rho}   & \D ,   
		}
	$$
	where $\D'$ is the unit disk, $t$ is a coordinate on $\D'$, $g : Y \to \D$ is a semistable degeneration and $\rho(t)=t^l$.
	Let $s$ be the coordinate on $\D$ with $s =t^l$.
	Since $g^{-1}(\D'^*)$ is endowed with the $\G$-action induced from the $\G$-action on $X$,
	$R^kg_*\mathbb{C} \otimes \mathcal{O}_{\D'^*}$ is a $\G$-equivariant flat vector bundle on $\D'^*$.
	In the same manner as above, its lower Deligne extension $\mathcal{U} = R^kg_*\Omega^{\bullet}_{Y/\D'}(\log)$ is a $\G$-equivariant vector bundle on $\D'$
	and it has a $\G$-equivariant Hodge filtration bundle $^{\rho}{\mathcal{F}}^p$ such that
	$$
		^{\rho}{\mathcal{F}}^p / ^{\rho}{\mathcal{F}}^{p+1} \cong R^{k-p}g_* \Omega^{p}_{Y/ \D'}(\log)
	$$
	is a $\G$-equivariant isomorphism.
	
	Let $h_X$ be the $\G$-invariant \K metric on $X$ such that $h_X = h_{\X}|_X$ and let $\omega_X$ be the associated \K form. 
	Set $Y^{\circ} = g^{-1}(\D^*)$.
	Since the restriction $r|_{Y^{\circ}} : Y^{\circ} \to X^{\circ}$ is a $\G$-equivariant finite \'etale cover,
	\begin{align}\label{al6-1-3-8}
		\omega_{Y^{\circ}} :=( r^* \omega_X )|_{Y^{\circ}}
	\end{align} 
	is a $\G$-invariant \K form on $Y^{\circ}$.
	The associated \K metric on $Y^{\circ}$ is denoted by $h_{Y^{\circ}}$.
	
	We denote by $H^q(\Omega^p_{X_{\infty}})_{\pm}$ the $(\pm 1)$-eigenspace of the $\G$-action on $H^q(\Omega^p_{X_{\infty}})$.
	We set $h^{p,q}_{\pm} = \dim H^q(\Omega^p_{X_{\infty}})_{\pm}$ and $h^p_{\pm} = \sum_{i \geqq p} h^{i,k-i}_{\pm} $.
	The pullback of differential forms from $X$ to $Y$ induces $\G$-equivariant injections of locally free sheaves of equal rank
	$$
		\rho^*\mathcal{V}_{\log} \hookrightarrow \mathcal{U} , \quad \rho^*\mathcal{F}^p_{\log} \hookrightarrow {}^{\rho}{\mathcal{F}}^p, \quad \rho^*R^{q}f_*  \Omega^p_{X/ \D}(\log) \hookrightarrow R^{q}g_* \Omega^{p}_{Y/ \D'}(\log).
	$$
	The quotients are torsion sheaves and we can write
	$$
		\frac{^{\rho}{\mathcal{F}}^p_{\pm}}{\rho^*\mathcal{F}^p_{\log, \pm}} \cong \bigoplus^{h^p_{\pm}}_{j=1} \frac{\mathcal{O}_{\D',0}}{t^{a^p_{j, \pm}} \mathcal{O}_{\D',0}}, 
		\qquad \frac{R^{q}g_* \Omega^{p}_{Y/ \D'}(\log)_{\pm}}{\rho^*R^{q}f_*  \Omega^p_{X/ \D}(\log)_{\pm}} \cong  \bigoplus^{h^{p,q}_{\pm}}_{j=1} \frac{\mathcal{O}_{\D',0}}{t^{b^{p,q}_{j, \pm}}\mathcal{O}_{\D',0}} 
	$$
	for some integers $a^p_{j, \pm}$ and $b^{p,q}_{j, \pm}$.
	By \cite[Lemma 2.4]{MR4255041}, $0 \leqq a^p_{j, \pm} \leqq l-1$ and $0 \leqq b^{p,q}_{j, \pm} \leqq l-1$. 
	Moreover $a^p_{j, \pm}/l$, $b^{p,q}_{j, \pm}/l$ are independent of the choice of semistable reduction.
	We set 
	\begin{align}\label{al6-1-3-9}
		\alpha^{p,q}_{j, \pm} = b^{p,q}_{j, \pm}/l \quad \text{ and } \quad \alpha^{p,q}_{\pm} = \sum_j \alpha^{p,q}_{j, \pm}.
	\end{align}
	
	The space of flat sections of $\mathcal{U}|_{\D'^*}$ is denoted by $U_{\infty}$.
	There is a $\G$-equivariant isomorphism between $V_{\infty}$ and $U_{\infty}$ defined by
	$$
		\nu : V_{\infty} \to U_{\infty} \qquad \nu(e)(\tau) = e(l \tau).
	$$
	In the same way as (\ref{al-1-5-1}), we have the $\G$-equivariant isomorphism
	$$
		^{\rho}\psi : U_{\infty} \overset{\sim}{\to} \mathcal{U}(0),
	$$
	and the Hodge filtration $^{\rho}\mathcal{F}^p$ induces a filtration $F^p_{\infty, \pm}$ on $V_{\infty, \pm}$ through the isomorphism $^{\rho}\psi \circ \nu$,
	which is called Steenbrink's limiting Hodge filtration.
	
	By \cite[Corollary 2.8]{MR4255041}, $\exp( -2\pi i \alpha^{p,q}_{j, \pm} )$ forms the eigenvalues of the $T_s$-action on $\operatorname{Gr}^p_{ F_{\infty, \pm} }H^k(X_{ \infty })_{ \pm }$ and 
	\begin{align}\label{al2-1-5-1}
		\alpha^{p,q}_{\pm} = -\frac{1}{2 \pi i} \operatorname{Tr}(^l\log T_s|_{\operatorname{Gr}^p_{F_{\infty, \pm}}H^{p+q}(X_{\infty})_{\pm}}).
	\end{align}
	
	\begin{lem}\label{l6-1-3-6}
		For every $p, q \geqq 0$, there exists a nowhere vanishing holomorphic section $\theta^{p,q}_{\pm}$ of $\det R^{q}f_*  \Omega^p_{X/ \D^*}(\log)_{\pm}$ 
		such that $t^{-\sum_j b^{p,q}_{j, \pm}} \rho^* \theta^{p,q}_{\pm}$ is a nowhere vanishing holomorphic section of $\det R^{q}g_*  \Omega^p_{Y/ \D'^*}(\log)_{\pm}$.
	\end{lem}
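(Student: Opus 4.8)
The plan is to reduce the statement to the elementary fact that, for an inclusion of free modules of the same rank over the discrete valuation ring $\mathcal{O}_{\D',0}$, the order of vanishing of the induced map on determinants equals the length of the quotient. Since $\D$ and $\D'$ are disks, every line bundle on them is holomorphically trivial; in particular the line bundle $\det R^{q}f_*\Omega^p_{X/\D}(\log)_{\pm}$ admits a nowhere vanishing holomorphic section, which I take as $\theta^{p,q}_{\pm}$. The whole content of the lemma is then to compute precisely how the natural inclusion of determinant bundles over $\D'$ twists this section.

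First I would pull back. Since pullback commutes with $\det$, the section $\rho^*\theta^{p,q}_{\pm}$ is a nowhere vanishing section of $\det\rho^*R^{q}f_*\Omega^p_{X/\D}(\log)_{\pm}=\det\left(\rho^*R^{q}f_*\Omega^p_{X/\D}(\log)_{\pm}\right)$ on $\D'$. The $\G$-equivariant inclusion $\rho^*R^{q}f_*\Omega^p_{X/\D}(\log)_{\pm}\hookrightarrow R^{q}g_*\Omega^{p}_{Y/\D'}(\log)_{\pm}$ has torsion cokernel $\bigoplus_{j=1}^{N}\mathcal{O}_{\D',0}/t^{b^{p,q}_{j,\pm}}\mathcal{O}_{\D',0}$ with $N=h^{p,q}_{\pm}$, as already recorded in the set-up. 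Working on the stalk at $0$ and applying the structure theorem for finitely generated modules over the principal ideal domain $\mathcal{O}_{\D',0}$ (Smith normal form), I would choose a frame $e_1,\dots,e_{N}$ of $R^{q}g_*\Omega^{p}_{Y/\D'}(\log)_{\pm}$ such that $t^{b^{p,q}_{1,\pm}}e_1,\dots,t^{b^{p,q}_{N,\pm}}e_{N}$ is a frame of $\rho^*R^{q}f_*\Omega^p_{X/\D}(\log)_{\pm}$.

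Taking top exterior powers then shows that under the induced inclusion $\det\left(\rho^*R^{q}f_*\Omega^p_{X/\D}(\log)_{\pm}\right)\hookrightarrow\det R^{q}g_*\Omega^{p}_{Y/\D'}(\log)_{\pm}$ a generator of the source maps to $t^{\sum_j b^{p,q}_{j,\pm}}$ times the generator $e_1\wedge\cdots\wedge e_{N}$ of the target, up to a unit of $\mathcal{O}_{\D',0}$. Consequently $\rho^*\theta^{p,q}_{\pm}$ equals $t^{\sum_j b^{p,q}_{j,\pm}}$ times a nowhere vanishing section of $\det R^{q}g_*\Omega^{p}_{Y/\D'}(\log)_{\pm}$, so $t^{-\sum_j b^{p,q}_{j,\pm}}\rho^*\theta^{p,q}_{\pm}$ is itself a nowhere vanishing holomorphic section, which is the assertion. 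Equivariance requires no extra argument, since the entire computation takes place inside the fixed $(\pm1)$-eigenbundles, which are locally free by the finiteness of $\G$.

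The only point demanding care, and the step I would single out as the crux, is the identity between the length of the torsion cokernel and the vanishing order of the determinant of the inclusion matrix; this is what forces the exponent to be exactly $\sum_j b^{p,q}_{j,\pm}$ rather than some other combination of the $b^{p,q}_{j,\pm}$. Once an adapted frame is in place this is immediate, but it does rely on the cokernel being diagonalizable over the discrete valuation ring, which is precisely the output of the structure theorem and is compatible with the eigenbundle splitting.
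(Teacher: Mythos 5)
Your proof is correct, but it takes a genuinely different, and more elementary, route than the paper's. The paper's proof never invokes the Smith normal form directly: it chooses frames $\sigma_{i,\pm}$ of $\mathcal{F}^p_{\log,\pm}$ adapted to the inclusion $\mathcal{F}^{p+1}_{\log,\pm}\subset\mathcal{F}^p_{\log,\pm}$, applies \cite[Theorem 2.6 and the proof of Corollary 2.8]{MR4255041} to the concatenated frame of $\mathcal{F}^p_{\log}$ to produce a new frame $\theta_1,\dots,\theta_{\ell+n}$ that is simultaneously adapted to the Hodge filtration, to the $(\pm1)$-eigendecomposition, and to the lattice comparison with the semistable side (so that $t^{-a_i}\rho^*\theta_i$ frames ${}^{\rho}\mathcal{F}^p$, with $\sum_{i=k+1}^{\ell}a_i=\sum_j b^{p,q}_{j,+}$), and then obtains $\theta^{p,q}_{\pm}$ by projecting the relevant wedge to the graded piece $R^{q}f_*\Omega^p_{X/\D}(\log)_{\pm}$. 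You instead observe that the lemma is a formal consequence of data already recorded in the set-up: the $b^{p,q}_{j,\pm}$ are \emph{defined} by the torsion quotient of the $\G$-equivariant inclusion $\rho^*R^{q}f_*\Omega^p_{X/\D}(\log)_{\pm}\hookrightarrow R^{q}g_*\Omega^{p}_{Y/\D'}(\log)_{\pm}$, and for an inclusion of free modules of equal rank over the discrete valuation ring $\mathcal{O}_{\D',0}$ the induced map on determinants vanishes to order equal to the length of the cokernel. (Strictly, the structure theorem produces invariant factors $t^{c_j}$ whose multiset agrees with $\{b^{p,q}_{j,\pm}\}$ because the quotient modules are isomorphic; only the sum $\sum_j b^{p,q}_{j,\pm}$ is needed, so this is harmless.) Your argument even yields a slightly stronger conclusion: \emph{every} nowhere vanishing section $\theta^{p,q}_{\pm}$ works, since the inclusion of determinant lines has divisor exactly $\bigl(\sum_j b^{p,q}_{j,\pm}\bigr)[0]$, whereas the paper constructs a particular one. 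What the paper's longer route buys is a frame compatible with the full filtration and the eigendecomposition at once, which is the device by which \cite{MR4255041} relate the exponents $\alpha^{p,q}_{j,\pm}$ to the semisimple part of the monodromy (used immediately afterwards for equation (\ref{al2-1-5-1})); for the lemma as stated, your shortcut is fully valid, including the equivariance, since the $(\pm1)$-eigenbundles are locally free direct summands and the quotient description is given eigenbundle by eigenbundle.
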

	
	\begin{proof}
	We set
	\begin{align*}
		k= h^{p+1}_+ =\operatorname{rank} \mathcal{F}^{p+1}_{\log, +}, &\qquad \ell=h^{p}_+ = \operatorname{rank} \mathcal{F}^p_{\log, +}, \\
		m=h^{p+1}_- =\operatorname{rank} \mathcal{F}^{p+1}_{\log,-}, &\qquad n=h^{p}_- =\operatorname{rank} \mathcal{F}^p_{\log, -}.
	\end{align*}
	Choose a local frame of $\mathcal{F}^p_{\log, +}$ and $\mathcal{F}^p_{\log, -}$ around $0$ and denote them by
	$$
		\sigma_{1,+}, \dots, \sigma_{\ell, +} \quad \text{and} \quad \sigma_{1, -}, \dots, \sigma_{n, -}
	$$
	such that $\sigma_{1,+}, \dots, \sigma_{k, +}$ forms a local frame of $\mathcal{F}^{p+1}_{\log, +}$,
	such that $\sigma_{1, -}, \dots, \sigma_{m, -}$ forms a local frame of $\mathcal{F}^{p+1}_{\log, -}$,
	such that $\sigma_{k+1,+}, \dots, \sigma_{\ell, +}$ projects onto a local frame of $\mathcal{F}^{p}_{\log, +} / \mathcal{F}^{p+1}_{\log, +} \cong R^{q}f_*  \Omega^p_{X/ \D}(\log)_{+}$,
	and such that $\sigma_{m+1, -}, \dots, \sigma_{n, -}$ projects onto a local frame of $\mathcal{F}^{p}_{\log, -} / \mathcal{F}^{p+1}_{\log, -} \cong R^{q}f_*  \Omega^p_{X/ \D}(\log)_{-}$.
	Applying \cite[Theorem 2.6 and the proof of Corollary 2.8]{MR4255041} to the local frame $\sigma_{1,+}, \dots, \sigma_{\ell, +}, \sigma_{1, -}, \dots, \sigma_{n, -}$ of $\mathcal{F}^p_{\log}$, 
	there is another frame 
	$$
		\theta_{1}, \dots, \theta_{\ell}, \theta_{\ell+1}, \dots, \theta_{\ell+n}
	$$
	with the following properties:
	\begin{enumerate}[ label= \rm{(\arabic*)} ]
		\item	$\bigoplus^k_{i=1} \mathcal{O}_{\D} \theta_i = \bigoplus^k_{i=1} \mathcal{O}_{\D} \sigma_{i, +} = \mathcal{F}^{p+1}_{\log, +} \quad \text{and} \quad \bigoplus^{\ell}_{i=1} \mathcal{O}_{\D} \theta_i = \bigoplus^{\ell}_{i=1} \mathcal{O}_{\D} \sigma_{i, +} = \mathcal{F}^{p}_{\log, +}$,
		\item there are integers $0 \leqq a_i \leqq l-1$ such that $^{\rho}\theta_i = t^{-a_i} \rho^* \theta_i$ forms a local frame of $^{\rho}\mathcal{F}^p =  {^{\rho}\mathcal{F}^p_+} \oplus  {^{\rho}\mathcal{F}^p_-}$,
		\item $\sum^{\ell}_{i=k+1} a_i = \sum_j b^{p,q}_{j, +}$.
	\end{enumerate}
	By $(1)$ and $(2)$, $\theta_{k+1}, \dots, \theta_{\ell}$ projects onto a local frame of the bundle $R^{q}f_*  \Omega^p_{X/ \D}(\log)_{+}$ and $t^{-a_{k+1}} \rho^*\theta_{k+1}, \dots, t^{-a_{\ell}} \rho^*\theta_{\ell}$ projects onto a local frame of the bundle $R^{q}g_* \Omega^{p}_{Y/ \D'}(\log)_+$.
	Therefore we obtain a nowhere vanishing holomorphic section $\theta^{p,q}_{+}$ of $\det R^{q}f_*  \Omega^p_{X/ \D}(\log)_{+}$ 
	such that $t^{-\sum_j b^{p,q}_{j, +}} \rho^* \theta^{p,q}_{+}$ is a nowhere vanishing holomorphic section of $\det R^{q}g_*  \Omega^p_{Y/ \D'}(\log)_{+}$.
	
	Similarly, applying \cite[Theorem 2.6.]{MR4255041} to the local frame $\sigma_{1,-}, \dots, \sigma_{n, -}, \sigma_{1, +}, \dots, \sigma_{\ell, +}$ of $\mathcal{F}^p_{\log}$, 
	we obtain a nowhere vanishing holomorphic section $\theta^{p,q}_{-}$ of $\det R^{q}f_*  \Omega^p_{X/ \D}(\log)_{-}$ 
	such that $t^{-\sum_j b^{p,q}_{j, -}} \rho^* \theta^{p,q}_{-}$ is a nowhere vanishing holomorphic section of $\det R^{q}g_*  \Omega^p_{Y/ \D'}(\log)_{-}$.
	\end{proof}
	 
	Denote the $L^2$-norms on $\det R^{q}f_*  \Omega^p_{X/ \D}(\log)_{\pm}|_{\D^*}$ and $\det R^{q}g_* \Omega^{p}_{Y/ \D'}(\log)_{\pm}|_{\D'^*}$ with respect to $h_X|_{X^{\circ}}$ and $h_{Y^{\circ}}$ by $ \| \cdot \|^2_{L^2}$.
	
	\begin{lem}\label{l6-1-3-7}
		The following identity holds:
		\begin{align*}
			\rho^* \log \| \theta^{p,q}_{\pm} \|^2_{L^2} -\log \| t^{-\sum_j b^{p,q}_{j, \pm}} \rho^* \theta^{p,q}_{\pm} \|^2_{L^2} = \alpha^{p,q}_{\pm} l \log |t|^2.
		\end{align*}
	\end{lem}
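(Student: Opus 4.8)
The plan is to reduce the identity to the elementary scaling behavior of the $L^2$-norm under multiplication by $t^{-\sum_j b^{p,q}_{j,\pm}}$, the only nontrivial input being that pullback along $r$ is compatible with the $L^2$-norms on the two determinant bundles. Concretely, I would first establish that
$$
	\rho^*\log\|\theta^{p,q}_{\pm}\|^2_{L^2} = \log\|\rho^*\theta^{p,q}_{\pm}\|^2_{L^2},
$$
that is, the pullback of the norm function on $\D^*$ agrees with the norm function of the pulled-back section on $\D'^*$. Granting this, writing $c = \sum_j b^{p,q}_{j,\pm}$ and using $\|t^{-c}\rho^*\theta^{p,q}_{\pm}\|^2_{L^2} = |t|^{-2c}\,\|\rho^*\theta^{p,q}_{\pm}\|^2_{L^2}$ gives
$$
	\rho^*\log\|\theta^{p,q}_{\pm}\|^2_{L^2} - \log\|t^{-c}\rho^*\theta^{p,q}_{\pm}\|^2_{L^2} = c\,\log|t|^2,
$$
and the claim follows from $c = \sum_j b^{p,q}_{j,\pm} = l\,\alpha^{p,q}_{\pm}$, which is exactly (\ref{al6-1-3-9}).

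The core step is therefore the compatibility of the $L^2$-norms. Here I would use that over $\D'^*$ the morphism $r$ restricts to a fiberwise biholomorphism $Y_t \to X_{t^l}$, since over the punctured disk $Y$ is the base change of $X$ along $\rho$ and $r$ is the projection. By the definition (\ref{al6-1-3-8}) of $\omega_{Y^{\circ}}$ as $(r^*\omega_X)|_{Y^{\circ}}$, each such biholomorphism is an isometry of \K manifolds; hence the induced pullback on Dolbeault cohomology $H^q(X_{t^l},\Omega^p)\to H^q(Y_t,\Omega^p)$ is an $L^2$-isometry. Over $\D^*$ and $\D'^*$ the logarithmic relative forms agree with the ordinary relative forms, so these isometries are precisely the fiberwise comparison maps between $R^qf_*\Omega^p_{X/\D}(\log)$ and $R^qg_*\Omega^p_{Y/\D'}(\log)$. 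Restricting to the $(\pm 1)$-eigenbundles, which are preserved because $r$ is $\G$-equivariant, and passing to determinants, the induced isomorphism of line bundles is a fiberwise isometry; this yields the displayed equality of norm functions.

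The only point requiring care---and the main potential obstacle---is bookkeeping the identifications: that $\det R^qf_*\Omega^p_{X/\D}(\log)_{\pm}|_{\D^*}$ carries precisely the $L^2$-metric induced from $h_X|_{X^{\circ}}$ via Hodge theory, that the same holds on the $Y$-side with $h_{Y^{\circ}}$, and that the comparison isomorphism implicit in forming $\rho^*\theta^{p,q}_{\pm}$ is the one arising from the fiberwise biholomorphisms above. Once these are matched the isometry is immediate; in particular no estimate near $t=0$ is needed, since the entire computation takes place over the punctured disks.
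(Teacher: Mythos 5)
Your proposal is correct and follows essentially the same route as the paper: the key input is that, by the definition (\ref{al6-1-3-8}) of $\omega_{Y^{\circ}}$ as $(r^*\omega_X)|_{Y^{\circ}}$, the comparison map $\rho^*R^{q}f_*\Omega^p_{X/\D}(\log)|_{\D'^*}\cong R^{q}g_*\Omega^{p}_{Y/\D'}(\log)|_{\D'^*}$ is an $L^2$-isometry, after which one pulls out the scalar $t^{-\sum_j b^{p,q}_{j,\pm}}$ and invokes (\ref{al6-1-3-9}). Your additional bookkeeping remarks (fiberwise biholomorphism over the punctured disks, $\G$-equivariance preserving the eigenbundles, passage to determinants) are exactly the identifications the paper's one-line appeal to (\ref{al6-1-3-8}) implicitly relies on.
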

	
	\begin{proof}
		By (\ref{al6-1-3-8}), $ \rho^*R^{q}f_*  \Omega^p_{X/ \D}(\log)|_{\D'^*} \cong R^{q}g_* \Omega^{p}_{Y/ \D'}(\log)|_{\D'^*}$ is an isometry with respect to the $L^2$-metrics.
		Therefore, we have
		$$
			\log \| t^{-\sum_j b^{p,q}_{j, \pm}} \rho^* \theta^{p,q}_{\pm} \|^2_{L^2}  = -\sum_j b^{p,q}_{j, \pm} \log |t|^2 +\rho^* \log \| \theta^{p,q}_{\pm} \|^2_{L^2}.
		$$
		By (\ref{al6-1-3-9}), we obtain the desired result.
	\end{proof}
	
	\begin{prop}\label{p-1-14}
		Let $\sigma = (\sigma_+, \sigma_-)$ be an admissible section of $\lambda_{\G}( \Omega^p_{\X/C}( \log ))$ on $\D$
		and let $ \| \cdot \|^2_{\lambda_{\G}( \Omega^p_{\X/C}( \log )), L^2}(\iota)$ be the $L^2$-metric on $\lambda_{\G}( \Omega^p_{\X/C}( \log ))$.
		Then the following identity holds:
		$$
			\log  \| \sigma (s) \|^2_{\lambda_{\G}( \Omega^p_{\X/C}( \log )), L^2}(\iota) = \sum_q (-1)^q(\alpha^{p,q}_{+} - \alpha^{p,q}_{-}) \log |s|^2 +O(\log \log |s|^{-1})
		$$ 
	\end{prop}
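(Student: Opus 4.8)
The plan is to reduce the equivariant $L^2$-norm on $\lambda_{\G}(\Omega^p_{\X/C}(\log))$ to the $L^2$-norms of frames of the individual eigen-determinant bundles $\det R^qf_*\Omega^p_{X/\D}(\log)_{\pm}$ (working locally near $0\in\D\subset C$, so that the restriction of $\Omega^p_{\X/C}(\log)$ is $\Omega^p_{X/\D}(\log)$), and then to transport each such estimate to the semistable model $Y$, where the monodromy is unipotent. By definition of the equivariant $L^2$-metric,
$$
\log\|\sigma(s)\|^2_{\lambda_{\G}(\Omega^p_{\X/C}(\log)),L^2}(\iota)=\log\|\sigma_+(s)\|^2_{\lambda_+,L^2}-\log\|\sigma_-(s)\|^2_{\lambda_-,L^2},
$$
and since $\lambda_{\pm}=\bigotimes_q(\det R^qf_*\Omega^p_{X/\D}(\log))_{\pm}^{(-1)^q}$ with the $L^2$-metric the matching alternating tensor product, it suffices to analyze a frame of each $\det R^qf_*\Omega^p_{X/\D}(\log)_{\pm}$. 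Taking the frames $\theta^{p,q}_{\pm}$ of Lemma \ref{l6-1-3-6}, the admissible $\sigma_{\pm}$ differs from $\bigotimes_q(\theta^{p,q}_{\pm})^{(-1)^q}$ by a nowhere vanishing holomorphic function on $\D$, whose logarithm is continuous near $0$ and so contributes only $O(1)$. Thus I am reduced to proving $\log\|\theta^{p,q}_{\pm}\|^2_{L^2}(s)=\alpha^{p,q}_{\pm}\log|s|^2+O(\log\log|s|^{-1})$ for each $p,q$.

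Next I would pass to the semistable reduction $g:Y\to\D'$ with $s=t^l$. Writing $\eta^{p,q}_{\pm}=t^{-\sum_jb^{p,q}_{j,\pm}}\rho^*\theta^{p,q}_{\pm}$ for the frame of $\det R^qg_*\Omega^p_{Y/\D'}(\log)_{\pm}$ produced in Lemma \ref{l6-1-3-6}, Lemma \ref{l6-1-3-7} gives
$$
\rho^*\log\|\theta^{p,q}_{\pm}\|^2_{L^2}=\log\|\eta^{p,q}_{\pm}\|^2_{L^2}+\alpha^{p,q}_{\pm}\,l\log|t|^2=\log\|\eta^{p,q}_{\pm}\|^2_{L^2}+\rho^*\left(\alpha^{p,q}_{\pm}\log|s|^2\right),
$$
using $l\log|t|^2=\log|s|^2$. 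The key estimate to establish is then
$$
\log\|\eta^{p,q}_{\pm}\|^2_{L^2}=O(\log\log|t|^{-1})\qquad(t\to0)
$$
on the unipotent model $Y$. Granting it, since $\log\log|t|^{-1}=\log\log|s|^{-1}+O(1)$, the function $\log\|\theta^{p,q}_{\pm}\|^2_{L^2}-\alpha^{p,q}_{\pm}\log|s|^2$ on $\D^*$ has $\rho$-pullback of size $O(\log\log|s|^{-1})$ and hence is itself $O(\log\log|s|^{-1})$; summing over $q$ with the signs $(-1)^q$ and subtracting the $\pm$-contributions recovers the stated leading coefficient $\sum_q(-1)^q(\alpha^{p,q}_+-\alpha^{p,q}_-)$.

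The main obstacle is this key estimate on $Y$. Because $g$ is semistable, its monodromy is unipotent, so $\eta^{p,q}_{\pm}$ is a holomorphic frame of the determinant of a graded piece of the logarithmic Hodge bundle of a polarized variation of Hodge structure with unipotent monodromy. For such variations the norm estimates of Schmid (the nilpotent orbit and $\mathrm{SL}_2$-orbit theorems) show that the $L^2$- (Hodge) metric is mutually bounded with a model metric whose determinant grows like a power of $\log|t|^{-1}$; taking logarithms yields the $O(\log\log|t|^{-1})$ bound. This is the non-equivariant input underlying Eriksson--Freixas i Montplet--Mourougane \cite{MR4255041}. To use it in the equivariant setting I would invoke Lemmas \ref{l6-1-3-3} and \ref{l6-1-3-4}, which guarantee that the $\G$-action is compatible with the Gauss--Manin connection, the monodromy $T$, and the limiting Hodge filtration, so that the $(\pm1)$-eigenspaces constitute sub-variations of Hodge structure to which the non-equivariant estimate applies verbatim. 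The remaining work is bookkeeping: checking that the eigen-decomposition is compatible with the identifications $\mathcal{F}^p_{\log}/\mathcal{F}^{p+1}_{\log}\cong R^{k-p}f_*\Omega^p_{X/\D}(\log)$ and with the $L^2$-metrics under $\rho^*$, which is precisely what Lemma \ref{l6-1-3-6} encodes.
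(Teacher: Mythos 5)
Your proposal is correct and follows essentially the same route as the paper's proof: reduce an admissible section to the canonical frames $\theta^{p,q}_{\pm}$ of Lemma \ref{l6-1-3-6} (the discrepancy being a nowhere vanishing holomorphic function, hence $O(1)$), transfer to the semistable reduction where the Schmid--Peters norm estimates for unipotent monodromy give $\log\|t^{-\sum_j b^{p,q}_{j,\pm}}\rho^*\theta^{p,q}_{\pm}\|^2_{L^2}=O(\log\log|t|^{-1})$, and return via Lemma \ref{l6-1-3-7} and $s=t^l$, summing over $q$ with signs. Your extra remarks on the $\G$-equivariant compatibility (Lemmas \ref{l6-1-3-3} and \ref{l6-1-3-4} making the $(\pm1)$-eigenbundles sub-variations to which the non-equivariant estimates apply) merely make explicit what the paper leaves implicit in its citations of Schmid and Peters.
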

	
	\begin{proof}
		Let $\sigma' = (\sigma'_+, \sigma'_-)$ be an admissible section of $\lambda_{\G}( \Omega^p_{Y/\D'}( \log ))$ on $\D$.
		Since $g : Y \to \D'$ is a semistable degeneration, we get by \cite[(6.6)]{MR0382272}, \cite[Proposition 2.2.1.]{MR744325} 
		\begin{align*}
			\log  \| \sigma'_{\pm} (t) \|^2_{L^2} = O(\log \log |t|^{-1}) \quad (t \to 0).
		\end{align*}
		Since we can take $\sigma'_{\pm} = t^{-\sum_j b^{p,q}_{j, \pm}} \rho^* \theta^{p,q}_{\pm}$, this implies that
		\begin{align}\label{A-al-6-1-2-1}
			\log  \| t^{-\sum_j b^{p,q}_{j, \pm}} \rho^* \theta^{p,q}_{\pm} \|^2_{L^2} = O(\log \log |t|^{-1}) \quad (t \to 0).
		\end{align}
		Since we can take $\sigma_{\pm} = \otimes_{q} (\theta^{p,q}_{\pm})^{(-1)^q}$, we get by Lemma \ref{l6-1-3-7} and (\ref{A-al-6-1-2-1})
		\begin{align}\label{A-al-6-1-2-2}
		\begin{aligned}
			(\rho^* \log \| \sigma_{\pm} \|_{L^2}^2)(t) &= \sum_q (-1)^q \log \| t^{-\sum_j b^{p,q}_{j, \pm}} \rho^* \theta^{p,q}_{\pm} \|_{L^2}^2 +\sum_q (-1)^q \alpha^{p,q}_{\pm} l \log |t|^2 \\
			&= \sum_q (-1)^q \alpha^{p,q}_{\pm} \log |t^l|^2 +O(\log \log |t|^{-1}).
		\end{aligned}
		\end{align}
		Since $s=t^l=\rho(t)$, we get the result by (\ref{A-al-6-1-2-2}).
		
	\end{proof}

\subsection{Comparison between the K\"ahler extension and the logarithmic extension}

	In this subsection, following Eriksson-Freixas i Montplet-Mourougane \cite{MR4255041}, we compare the K\"ahler extension with the logarithmic extension.
	
	We write $X_0 = \sum_i m_i D_i$, where $D_i$ is a reduced irreducible component of $X_0$.
	Since $X_0$ is simply normal crossing, $D_i$ is smooth.
	We set $D(1) = \sqcup_i D_i$.
	Let $a : D(1) \to X$ be the natural morphism.
	
	For a $\G$-equivariant vector bundle $E$ over a complex manifold $M$, we set
	$$
		\chi_{\pm}(E) = \sum_{q \geqq 0} (-1)^q \dim H^q(M, E)_{\pm}.
	$$
	We define integers $\mu_{\pm}$ by
	$$
		\mu_{\pm} = \chi_{\pm} (\mathcal{O}_{X_{\infty}}) - \sum_i \chi_{\pm} (\mathcal{O}_{D_i}).
	$$
	Throughout this subsection, tensor products of line bundles are denoted additively for lighter notation.
	
	\begin{prop}\label{p-1-15}
		The following canonical isomorphisms hold:
		$$
			\lambda_{\pm}(\widetilde{\Omega}^1_{X/\D}) = \lambda_{\pm}( \Omega^1_{\X/C}( \log ) ) + \mu_{\pm} \mathcal{O}_{\D}([0]).
		$$
	\end{prop}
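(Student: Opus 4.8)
The plan is to interpolate between the two determinant line bundles $\lambda_{\pm}(\widetilde{\Omega}^1_{X/\D})$ and $\lambda_{\pm}(\Omega^1_{\X/C}(\log))$ by a chain of $\G$-equivariant short exact sequences of sheaves on $X$ and then apply the determinant of cohomology $\det Rf_*$. First I would record three such sequences. The residue sequence for the total logarithmic differentials along $(X_0)_{\mathrm{red}} = \sum_i D_i$ is
\[
0 \to \Omega^1_X \to \Omega^1_X(\log X_0) \xrightarrow{\operatorname{Res}} a_*\mathcal{O}_{D(1)} \to 0 ;
\]
the very definition of the relative logarithmic differentials gives
\[
0 \to f^*\Omega^1_{\D}(\log[0]) \to \Omega^1_X(\log X_0) \to \Omega^1_{\X/C}(\log) \to 0 ;
\]
and, since $f$ is flat (a nonconstant morphism from a smooth variety to a smooth curve), pulling back the base residue sequence $0 \to \Omega^1_{\D} \to \Omega^1_{\D}(\log[0]) \to \mathcal{O}_0 \to 0$ yields
\[
0 \to f^*\Omega^1_{\D} \to f^*\Omega^1_{\D}(\log[0]) \to \mathcal{O}_{X_0} \to 0,
\]
where $\mathcal{O}_{X_0}$ is the structure sheaf of the \emph{scheme-theoretic} fiber $X_0 = \sum_i m_i D_i$. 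All three maps come from functorial residue and pullback constructions, hence are $\G$-equivariant, exactly as in Lemmas \ref{l6-1-3-3} and \ref{l6-1-3-4}.

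Next I would apply $Rf_*$ and decompose into $(\pm 1)$-eigen-subcomplexes. Since $\G = \mu_2$ is finite and we work over $\mathbb{C}$, each of the three sequences becomes, after taking the $\pm$-part, a distinguished triangle of perfect complexes on $\D$, and the determinant of cohomology is multiplicative on such triangles. Recalling that $\widetilde{\Omega}^1_{X/\D}$ is the perfect complex $[f^*\Omega^1_{\D} \to \Omega^1_X]$, so that $\lambda_{\pm}(\widetilde{\Omega}^1_{X/\D}) = \det Rf_*(\Omega^1_X)_{\pm}\otimes \det Rf_*(f^*\Omega^1_{\D})_{\pm}^{-1}$, combining the three sequences gives the canonical isomorphism
\[
\lambda_{\pm}(\widetilde{\Omega}^1_{X/\D}) = \lambda_{\pm}(\Omega^1_{\X/C}(\log)) \otimes \det Rf_*(a_*\mathcal{O}_{D(1)})_{\pm}^{-1} \otimes \det Rf_*(\mathcal{O}_{X_0})_{\pm}.
\]
Both correction factors are determinants of complexes supported over $0$, hence powers of $\mathcal{O}_{\D}([0])$: one computes $\det Rf_*(a_*\mathcal{O}_{D(1)})_{\pm} = \mathcal{O}_{\D}\big((\sum_i \chi_{\pm}(\mathcal{O}_{D_i}))[0]\big)$ and $\det Rf_*(\mathcal{O}_{X_0})_{\pm} = \mathcal{O}_{\D}(\chi_{\pm}(\mathcal{O}_{X_0})[0])$, where $\sum_i \chi_{\pm}(\mathcal{O}_{D_i})$ is understood as the equivariant Euler characteristic $\chi_{\pm}(D(1), \mathcal{O}_{D(1)})$ of the $\G$-sheaf $a_*\mathcal{O}_{D(1)}$.

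It then remains only to prove the equivariant deformation invariance $\chi_{\pm}(\mathcal{O}_{X_0}) = \chi_{\pm}(\mathcal{O}_{X_{\infty}})$, for this turns the exponent $\chi_{\pm}(\mathcal{O}_{X_0}) - \sum_i \chi_{\pm}(\mathcal{O}_{D_i})$ into $\mu_{\pm}$ and finishes the proof. For this I would split the perfect complex $Rf_*\mathcal{O}_X$ on $\D$ into its eigen-subcomplexes $(Rf_*\mathcal{O}_X)_{\pm}$; by flatness of $f$ and proper base change the derived restriction to $0$ of $(Rf_*\mathcal{O}_X)_{\pm}$ is $R\Gamma(X_0, \mathcal{O}_{X_0})_{\pm}$, and the Euler characteristic of a perfect complex on $\D$ is constant (it equals the rank), so $\chi_{\pm}(\mathcal{O}_{X_0})$ coincides with its value $\chi_{\pm}(\mathcal{O}_{X_s})$ at a general $s$, which is $\chi_{\pm}(\mathcal{O}_{X_{\infty}})$.

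The main obstacle is this last step: in the non-equivariant case $\chi(\mathcal{O}_{X_0}) = \chi(\mathcal{O}_{X_{\infty}})$ is immediate from flatness, but equivariantly one must verify that the splitting of $Rf_*\mathcal{O}_X$ into $\pm$-parts is preserved by base change through the singular fiber. A secondary technical point is the correct reading of $\sum_i \chi_{\pm}(\mathcal{O}_{D_i})$ when $\iota$ permutes the components $D_i$; treating $a_*\mathcal{O}_{D(1)}$ as a single $\G$-equivariant sheaf, rather than working component by component, circumvents this.
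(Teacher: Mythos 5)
Your proof is correct, but it takes a genuinely different route from the paper's. The paper (following Eriksson--Freixas i Montplet--Mourougane) uses only \emph{one} exact sequence on $X$, namely your residue sequence $0 \to \Omega^1_X \to \Omega^1_X(\log X_0) \to a_*\mathcal{O}_{D(1)} \to 0$; the contribution of the base is handled instead by the projection formula, $\det(R^qf_*f^*L)_{\pm} = \operatorname{rank}(R^qf_*\mathcal{O}_X)_{\pm}\cdot L + \det(R^qf_*\mathcal{O}_X)_{\pm}$, applied to both $L=\Omega^1_{\D}$ and $L=\Omega^1_{\D}(\log[0])$, together with $\Omega^1_{\D}(\log[0]) - \Omega^1_{\D} = \mathcal{O}_{\D}([0])$ on the base. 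This makes $\chi_{\pm}(\mathcal{O}_{X_{\infty}})$ appear directly as the alternating sum of \emph{generic ranks}, so the scheme-theoretic singular fiber $\mathcal{O}_{X_0}$ never enters and no deformation-invariance statement is needed. You instead pull back the base residue sequence to get $0 \to f^*\Omega^1_{\D} \to f^*\Omega^1_{\D}(\log[0]) \to \mathcal{O}_{X_0} \to 0$, which is cleaner in that all three corrections arise uniformly from multiplicativity of $\det Rf_*$, but it forces you to prove the equivariant constancy $\chi_{\pm}(\mathcal{O}_{X_0}) = \chi_{\pm}(\mathcal{O}_{X_{\infty}})$ across the singular fiber --- exactly the step the paper's route avoids. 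Your resolution of that step is sound: the $e_{\pm}=\tfrac{1\pm\iota^*}{2}$-splitting of the perfect complex $Rf_*\mathcal{O}_X$, flat (tor-independent) base change at $0$, and local constancy of the Euler characteristic of the derived fibers of a perfect complex do the job; alternatively you could invoke Lemma \ref{l-ap-1-1} of the Appendix, which furnishes a $\G$-equivariant finite complex of vector bundles $\pi_*F_{\bullet}$ computing $Rf_*\mathcal{O}_X$ compatibly with restriction to every fiber (including $X_0$), whence $\chi_{\pm}$ is visibly locally constant since the eigen-subbundles $(\pi_*F_i)_{\pm}$ are locally free. Two small remarks: your observation that $\sum_i\chi_{\pm}(\mathcal{O}_{D_i})$ should be read as $\chi_{\pm}(D(1),\mathcal{O}_{D(1)})$ when $\iota$ permutes components is a genuine subtlety and is consistent with the paper's own final step, which works with $\chi_{\pm}(\mathcal{O}_{D(1)})$; and your citation of Lemmas \ref{l6-1-3-3} and \ref{l6-1-3-4} for equivariance of the residue and pullback maps is slightly misplaced (those lemmas concern extending equivariant structures across the puncture) --- the equivariance you need follows simply from functoriality of residues for the $\iota$-stable divisor $X_0$, but this does not affect the argument.
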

	
	\begin{proof}
		We follow \cite[Proposition 3.7]{MR4255041}.
		By the projection formula, we have
		$$
			\det \left( R^q f_* f^* \Omega^1_{\D} \right)_{\pm} = \operatorname{rank} \left( R^q f_*  \mathcal{O}_X \right)_{\pm} \cdot \Omega^1_{\D} + \det \left( R^q f_*  \mathcal{O}_X \right)_{\pm}.
		$$
		Therefore
		\begin{align*}
			\lambda_{\pm}(\widetilde{\Omega}^1_{X/\D}) = -\chi_{\pm} (\mathcal{O}_{X_{\infty}}) \Omega^1_{\D} + \lambda_{\pm}( \Omega^1_X ) -\lambda_{\pm}( \mathcal{O}_X ).
		\end{align*}
		In the same manner, we can see that
		\begin{align*}
			\lambda_{\pm}( \Omega^1_{X/\D}( \log ) ) = -\chi_{\pm} (\mathcal{O}_{X_{\infty}}) \Omega^1_{\D}( \log [0]) + \lambda_{\pm}( \Omega^1_X( \log X_0 ) ) -\lambda_{\pm}( \mathcal{O}_X ).
		\end{align*}
		Therefore we have
		$$
			\lambda_{\pm}( \Omega^1_{X/\D}( \log ) ) - 	\lambda_{\pm}(\widetilde{\Omega}^1_{X/\D})
			=  -\chi_{\pm} (\mathcal{O}_{X_{\infty}}) \mathcal{O}_{\D}([0]) + \lambda_{\pm}( \Omega^1_X( \log X_0 ) ) - \lambda_{\pm}( \Omega^1_X ).
		$$
		By the short exact sequence
		$$
			0 \to \Omega^1_X \to \Omega^1_X( \log X_0 ) \to a_* \mathcal{O}_{D(1)} \to 0,
		$$
		we have
		\begin{align*}
			 \lambda_{\pm}( \Omega^1_X( \log X_0 ) ) - \lambda_{\pm}( \Omega^1_X ) = \lambda_{\pm}( a_* \mathcal{O}_{D(1)} ) 
			 = \chi_{\pm} (\mathcal{O}_{D(1)}) \mathcal{O}_{\D}([0]).
		\end{align*}
		Thus we obtain the desired results.
	\end{proof}
	
	We set
	\begin{align}\label{al5-1-4-1}
		c(X_0, \Omega^1_{\X/C}) = \gamma_{\iota}(X_0, \widetilde{\Omega}^1_{\X/C}) +(\mu_+ -\mu_-) -\sum_q (-1)^q(\alpha^{1,q}_{+} - \alpha^{1,q}_{-}).
	\end{align}
	
	\begin{thm}\label{ta-1-2}
		The following identity holds:
		$$
			\log \tau_{\iota} ( \bar{\Omega}^1_{\xs} ) = c(X_0, \Omega^1_{\X/C}) \log |s|^2 +O( \log \log |s|^{-1}) \quad (s \to 0).
		$$
	\end{thm}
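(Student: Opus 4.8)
The plan is to express the equivariant analytic torsion $\tau_{\iota}(\bar{\Omega}^1_{\xs})$ as the ratio of the equivariant Quillen metric and the equivariant $L^2$-metric on $\lambda_{\G}(\Omega^1_{\X^{\circ}/C^{\circ}})$, and then to analyze the singularity of each factor separately as $s \to 0$ using the results already established. Recall that by definition of the equivariant Quillen metric, for an admissible section $\sigma$ we have
\begin{align*}
	\log \tau_{\iota}(\bar{\Omega}^1_{\xs})(s) = \log \| \sigma(s) \|^2_{\lambda_{\G}(\Omega^1_{\X^{\circ}/C^{\circ}}), Q, h_{\X/C}}(\iota) - \log \| \sigma(s) \|^2_{\lambda_{\G}(\Omega^1_{\X^{\circ}/C^{\circ}}), L^2}(\iota).
\end{align*}
The first step is to handle the Quillen part: via the canonical isomorphism (\ref{f-1-6}) identifying the K\"ahler extension with $\lambda_{\G}(\Omega^1_{\X^{\circ}/C^{\circ}})$ on $C^{\circ}$, Proposition \ref{p-1-13} gives directly that the Quillen norm of an admissible section $\sigma$ of $\lambda_{\G}(\widetilde{\Omega}^1_{\X/C})$ satisfies
$$
	\log \| \sigma(s) \|^2_{\lambda_{\G}(\widetilde{\Omega}^1_{\X/C}), Q, h_{\X/C}}(\iota) \equiv \gamma_{\iota}(X_0, \widetilde{\Omega}^1_{\X/C}) \log |s|^2,
$$
so this factor contributes the coefficient $\gamma_{\iota}(X_0, \widetilde{\Omega}^1_{\X/C})$ up to a bounded (continuous on $\D$) term.

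The second step is to control the $L^2$-part. The subtlety is that the $L^2$-metric on the K\"ahler extension and the $L^2$-metric on the logarithmic extension $\lambda_{\G}(\Omega^1_{\X/\D}(\log))$ differ, and Proposition \ref{p-1-14} computes the singularity of the latter, yielding
$$
	\sum_q (-1)^q(\alpha^{1,q}_{+} - \alpha^{1,q}_{-}) \log |s|^2 + O(\log \log |s|^{-1}).
$$
To pass between the two extensions, I would invoke Proposition \ref{p-1-15}, which records the canonical isomorphism
$$
	\lambda_{\pm}(\widetilde{\Omega}^1_{X/\D}) = \lambda_{\pm}(\Omega^1_{\X/C}(\log)) + \mu_{\pm}\,\mathcal{O}_{\D}([0]).
$$
The twist by $\mu_{\pm}\mathcal{O}_{\D}([0])$ is the key bookkeeping device: comparing an admissible section of the K\"ahler extension with one of the logarithmic extension under this isomorphism introduces a factor of $s^{\mu_{\pm}}$ (with the sign difference $\mu_+ - \mu_-$ entering the equivariant metric), whose logarithmic $L^2$-norm contributes exactly $(\mu_+ - \mu_-)\log|s|^2$. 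Combining this twist with Proposition \ref{p-1-14} gives the singularity of the $L^2$-norm on the K\"ahler extension as $\big[(\mu_+ - \mu_-) - \sum_q(-1)^q(\alpha^{1,q}_{+} - \alpha^{1,q}_{-})\big]\log|s|^2 + O(\log\log|s|^{-1})$.

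Finally, subtracting the $L^2$ contribution from the Quillen contribution and collecting terms produces precisely the coefficient
$$
	c(X_0, \Omega^1_{\X/C}) = \gamma_{\iota}(X_0, \widetilde{\Omega}^1_{\X/C}) + (\mu_+ - \mu_-) - \sum_q (-1)^q(\alpha^{1,q}_{+} - \alpha^{1,q}_{-}),
$$
as defined in (\ref{al5-1-4-1}), with error term $O(\log\log|s|^{-1})$; note that the $O(\log\log|s|^{-1})$ error from the $L^2$-analysis dominates the bounded $\equiv$-error from the Quillen side, so the combined remainder is $O(\log\log|s|^{-1})$. I expect the main obstacle to be the careful sign and orientation tracking in the equivariant setting: because the equivariant metric is a ratio $\|\cdot\|_+ / \|\cdot\|_-$ of the $(\pm1)$-eigenbundle metrics, one must verify that the twisting exponents $\mu_{\pm}$ and the limiting Hodge-filtration invariants $\alpha^{1,q}_{\pm}$ enter with the correct relative signs so that the $(+)$ and $(-)$ contributions assemble into the stated difference, rather than a sum. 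A secondary technical point is checking that the isomorphism of Proposition \ref{p-1-15}, which is only asserted as a canonical isomorphism of line bundles, is compatible with the $L^2$-metrics up to a bounded factor so that the divisor-twist genuinely accounts for the full $\log|s|^2$ discrepancy.
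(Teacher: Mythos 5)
Your proposal is essentially the paper's own proof: the paper combines the same three ingredients, Propositions \ref{p-1-13}, \ref{p-1-14} and \ref{p-1-15}, the only cosmetic difference being that it transfers the Quillen metric from the K\"ahler extension to the logarithmic extension (via the relation $\sigma_{\pm}=s^{-\mu_{\pm}}f_{\pm}\sigma'_{\pm}$ with $f_{\pm}(0)\neq 0$, which also settles your secondary worry, since both extensions restrict to the same bundle over $\D^*$ and the discrepancy is exactly this meromorphic factor) and then subtracts Proposition \ref{p-1-14}, whereas you transfer the $L^2$-metric in the opposite direction. One sign correction to your intermediate display: the $L^2$-singularity of an admissible section of the K\"ahler extension is $\bigl[-(\mu_+-\mu_-)+\sum_q(-1)^q(\alpha^{1,q}_{+}-\alpha^{1,q}_{-})\bigr]\log|s|^2+O(\log\log|s|^{-1})$, the negative of what you wrote; with this sign fixed, Quillen minus $L^2$ gives exactly $c(X_0,\Omega^1_{\X/C})$ as defined in (\ref{al5-1-4-1}), whereas subtracting your stated intermediate would produce $\gamma_{\iota}(X_0,\widetilde{\Omega}^1_{\X/C})-(\mu_+-\mu_-)+\sum_q(-1)^q(\alpha^{1,q}_{+}-\alpha^{1,q}_{-})$, inconsistent with your (correct) final coefficient.
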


	\begin{proof}
		Let $ \| \cdot \|^2_{ \lambda_{\G}(\widetilde{\Omega}^1_{\X/C}), Q,  h_{\X/C} }(\iota) $ be the equivariant Quillen metric on the K\"ahler extension $\lambda_{\G}(\widetilde{\Omega}^1_{\X/C})$ defined in Definition \ref{d-1-7}
		and let $\sigma =( \sigma_+, \sigma_-)$ is an admissible section of $\lambda_{\G}(\widetilde{\Omega}^1_{\X/C})$.
		By Proposition \ref{p-1-13}, we have
		\begin{align}\label{al5-1-4-2}
			\log \| \sigma \|^2_{ \lambda_{\G}(\widetilde{\Omega}^1_{\X/C}), Q,  h_{\X/C} }(\iota) \equiv \gamma_{\iota}(X_0, \widetilde{\Omega}^1_{\X/C}) \log |s|^2.
		\end{align}
		Let 
		$$
			\| \cdot \|^2_{ \lambda_{\G}(\Omega^1_{\X/C}(\log)), Q,  h_{\X/C} }(\iota) := \tau_{\iota}( \overline{\Omega}^1_{\X/C} ) \| \cdot \|^2_{ \lambda_{\G}(\Omega^1_{\X/C}(\log)), L^2,  h_{\X/C} }(\iota) 
		$$
		be the equivariant Quillen metric on the logarithmic extension $\lambda_{\G}(\Omega^1_{\X/C}(\log))$ with respect to the \K metric $h_{\X/C}$
		and let $\sigma' =( \sigma'_+, \sigma'_-)$ be an admissible section of $\lambda_{\G}(\Omega^1_{\X/C}(\log))$.
		By Proposition \ref{p-1-15}, there exist holomorphic functions $f_{\pm}$ on $\D$ such that
		\begin{align}\label{al5-1-4-3}
			f_{\pm}(0) \neq 0 \quad \text{and} \quad \sigma_{\pm} = s^{-\mu_{\pm}} f_{\pm} \sigma'_{\pm}
		\end{align}
		By (\ref{al5-1-4-2}) and (\ref{al5-1-4-3}), we have
		\begin{align}\label{al5-1-4-4}
		\begin{aligned}
			&\quad \log \| \sigma' \|^2_{ \lambda_{\G}(\Omega^1_{\X/C}(\log)), Q,  h_{\X/C} }(\iota) \\
			&= \log \| \sigma \|^2_{ \lambda_{\G}(\widetilde{\Omega}^1_{\X/C}), Q,  h_{\X/C} }(\iota) +(\mu_+ -\mu_-) \log |s|^2 +O(1)\\
			&= \left( \gamma_{\iota}(X_0, \widetilde{\Omega}^1_{\X/C}) +(\mu_+ -\mu_-) \right) \log |s|^2 +O(1).
		\end{aligned}
		\end {align}
		On the other hand, by Proposition \ref{p-1-14}, we have
		\begin{align}\label{al5-1-4-5}
			\log  \| \sigma' \|^2_{\lambda_{\G}( \Omega^1_{\X/C}( \log )), L^2}(\iota) = \sum_{q \geqq 0} (-1)^q(\alpha^{1,q}_{+} - \alpha^{1,q}_{-}) \log |s|^2 +O(\log \log |s|^{-1})
		\end{align}
		By (\ref{al5-1-4-4}) and (\ref{al5-1-4-5}), we have
		\begin{align}\label{al5-1-4-6}
		\begin{aligned}
			&\quad \log \tau_{\iota}( \overline{\Omega}^1_{\X/C} ) \\
			&= \log \| \sigma' \|^2_{ \lambda_{\G}(\Omega^1_{\X/C}(\log)), Q,  h_{\X/C} }(\iota) -\log \| \sigma' \|^2_{ \lambda_{\G}(\Omega^1_{\X/C}(\log)), L^2,  h_{\X/C} }(\iota)  \\
			&= \left( \gamma_{\iota}(X_0, \widetilde{\Omega}^1_{\X/C}) +(\mu_+ -\mu_-) - \sum_{q \geqq 0} (-1)^q(\alpha^{1,q}_{+} - \alpha^{1,q}_{-}) \right) \log |s|^2 +O(\log \log |s|^{-1}).
		\end{aligned}
		\end {align}
		By (\ref{al5-1-4-1}), the proof is completed.
	\end{proof}

\section{The algebraicity of the singularity of $\tau_{M, \mathcal{K}}$}

	In this section, we recall the invariant $\tau_{M, \mathcal{K}}$ for manifolds of $K3^{[2]}$-type with antisymplectic involution
	and prove that $\tau_{M, \mathcal{K}}$ has an algebraic singularity.
	The main idea of the proof is due to \cite{MR2454893} and \cite{MR2047658}.

\subsection{The invariant $\tau_{M, \mathcal{K}}$ for manifolds of $K3^{[2]}$-type with antisymplectic involution}\label{ss-2-1}
	
\subsubsection{Manifolds of $K3^{[2]}$-type and antisymplectic involutions}\label{sss-2-1-1}
	
	A simply-connected compact \K manifold $X$ is an irreducible holomorphic symplectic manifold if there exists an everywhere non-degenerate holomorphic 2-form $\eta$ such that $H^0(X, \Omega_X^2)$ is generated by $\eta$.
	By \cite[Proposition 23.14 and Remark 23.15]{MR1963559}, there exist a unique primitive integral quadric form $q_X$ on $H^2(X, \mathbb{Z})$ of signature $(3, b_2(X)-3)$ 
	and a positive rational number $c_X \in \mathbb{Q}_{\geqq 0}$ such that $q_X(\alpha)^n = c_X \int_X \alpha^{2n}$ for any $\alpha \in H^2(X, \mathbb{Z})$. 
	If $b_2(X)=6$, we also require that $q_X(\omega)>0$ for any \K class $\omega$. 
	The quadric form $q_X$ is called the Beauville-Bogomolov-Fujiki form.
	Let $( \cdot , \cdot ) : H^2(X, \mathbb{Z}) \times H^2(X, \mathbb{Z}) \to \mathbb{Z}$ be the integral bilinear form corresponding to $q_X$.
	An irreducible holomorphic symplectic manifold $X$ is a manifold of $K3^{[2]}$-type if $X$ is deformation equivariant to the Hilbert scheme of length $2$ zero-dimensional subschemes of a K3 surface.
	
	We denote by $U$ the even unimodular lattice of rank $2$ of signature $(1,1)$ with Gram matrix  
	$ \begin{pmatrix}
	0 & 1 \\
	1 & 0 \\
	\end{pmatrix}$,
	and denote by $E_8$ the {\it negative} definite even unimodular lattice of rank $8$ associated with the Dynkin diagram $E_8$.
	We set 
	$$
		L_{K3} = E_8^{\oplus 2} \oplus U^{\oplus 3},
	$$
	and
	$$
		L_2 = L_{K3} \oplus \mathbb{Z} e,
	$$
	where $e^2 =-2$ and $(e, L_{K3})=0$.
	If $X$ is a manifold of $K3^{[2]}$-type,
	then the lattice $H^2( X, \mathbb{Z} )$ is isomorphic to $L_2$.
	
	A holomorphic involution $\iota : X \to X$ satisfying $\iota^* \eta =-\eta$ is called antisymplectic.
	In this paper, \textit{involutions on an irreducible holomorphic symplectic manifold are always assumed to be antisymplectic}.
	
	Let $X_1, X_2$ be irreducible holomorphic symplectic manifolds.
	Recall that a parallel-transport operator $f : H^2(X_1, \mathbb{Z}) \to H^2(X_2, \mathbb{Z})$ is an isomorphism such that
	there exist a family $p : \mathcal{X} \to B$ of irreducible holomorphic symplectic manifolds over a possibly reducible analytic base $B$, 
	two points $b_1, b_2 \in B$, and a continuous path $\gamma : [0, 1] \to B$ with $\gamma(0)=b_1, \gamma(1)=b_2$ such that $p^{-1}(b_i) \cong X_i$ $(i=1,2)$ and that the parallel-transport in the local system $R^2p_*\mathbb{Z}$ induces $f : H^2(X_1, \mathbb{Z}) \to H^2(X_2, \mathbb{Z})$.

	Let $X$ be an irreducible holomorphic symplectic manifold and let $\Lambda$ be a lattice isomorphic to $H^2( X, \mathbb{Z} )$.
	A parallel-transport operator $g : H^2(X, \mathbb{Z}) \to H^2(X, \mathbb{Z})$ is called a monodromy operator.
	The subgroup $\mon(X)$ of the isometry group $O(H^2(X, \mathbb{Z}))$ consisting of all monodromy operators of $X$ is called the monodromy group.
	For an isometry $\alpha : H^2( X, \mathbb{Z} ) \to \Lambda$, we set $\mon( \Lambda ) = \alpha \circ \mon(X) \circ \alpha^{-1}$.
	If $X$ is a K3 surface or a manifold of $K3^{[2]}$-type,
	it follows from \cite[Theorem 9.1]{MR2964480} that the group $\mon(\Lambda)$ is a normal subgroup of $O(\Lambda)$ and is independent of the choice of $(X,\alpha)$.
	
	Let $O^+(\Lambda)$ be the subgroup of $O(\Lambda)$ consisting of the isometries of real spinor norm $+1$.
	If $X$ is a K3 surface or a manifold of $K3^{[2]}$-type,
	it follows from \cite[Lemma 9.2]{MR2964480} and \cite[Theorem A]{MR0849050} that $O^+(\Lambda) = \mon(\Lambda)$.
	We set $\tilde{\mathscr{C}}_{\Lambda}=\{ x \in \Lambda_{\mathbb{R}} ; x^2>0 \}$.
	Since $\operatorname{sign}(\Lambda) = (3, \operatorname{rank} \Lambda -3)$, we have $H^2( \tilde{\mathscr{C}}_{\Lambda}, \mathbb{Z} ) \cong \mathbb{Z}$ by \cite[Lemma 4.1]{MR2964480}.
	A generator of $H^2( \tilde{\mathscr{C}}_{\Lambda}, \mathbb{Z} )$ is called an orientation class.
	By \cite[\S 4]{MR2964480}, any isometry of real spinor norm $+1$ preserves an orientation class. 
	
	A pair $(X, \alpha)$ is called a marked manifold of $K3^{[2]}$-type if $X$ is a manifold of $K3^{[2]}$-type and $\alpha : H^2(X, \mathbb{Z}) \to L_2$ is an isometry.
	Let $\mathfrak{M}_{L_2}$ be the moduli space of marked manifolds of $K3^{[2]}$-type constructed in \cite[Definition 25.4]{MR1963559}.
	We fix a connected component $\mathfrak{M}_{L_2}^{\circ}$ of $\mathfrak{M}_{L_2}$.
	
	For a hyperbolic sublattice $M$ of $L_2$ and an involution $\iota_M \in \mon(L_2)$,
	the pair $(M, \iota_M)$ is an admissible sublattice of $L_2$ if the invariant sublattice $(L_2)^{\iota_M}$ of $\iota_M$ is equal to $M$.
	By \cite[Figures 1 and 2]{MR3542771}, there are $150$ types of admissible sublattice of $L_2$.
	Let $(X, \iota)$ be a manifold of $K3^{[2]}$-type with involution.
	Then $\iota$ is of type $M$ if there exists an isometry $\alpha : H^2(X, \mathbb{Z}) \to L_2$ such that $(X,\alpha) \in \mathfrak{M}_{L_2}^{\circ}$ and $\alpha \circ \iota^* = \iota_M \circ \alpha$.  
	We set
	$$
		\Delta(M) = \left\{ \delta \in M ; \delta^2 =-2, \text{or } \delta^2=-10,  (\delta, L_2)=2\mathbb{Z} \right\} \quad \text{and} \quad \tilde{\mathscr{C}}_M = \{ x \in M_{\mathbb{R}} ; x^2>0 \}.
	$$ 
	For $\delta \in \Delta(M)$, we define $\delta^{\perp}=\{ x \in \tilde{\mathscr{C}}_M : (x, \delta)=0 \}$.
	Then $\bigcup_{\delta \in \Delta(M)} \delta^{\perp}$ is a closed subset of $\tilde{\mathscr{C}}_M$
	and a connected component of $\tilde{\mathscr{C}}_M \setminus \bigcup_{\delta \in \Delta(M)} \delta^{\perp}$ is called a K\"ahler-type chamber of $M$.
	The set of all K\"ahler-type chambers of $M$ is denoted by $\ktm$. 
	We set
	$$
		\Gamma(M) = \{ \sigma \in \mon(L_2) ; \iota_M \circ \sigma = \sigma \circ \iota_M \} \text{ and } \Gamma_M = \{ \sigma|_{M} \in O(M) ; \sigma \in \Gamma(M) \}.
	$$
	Then $\Gamma_M$ acts on $\ktm$.
	
	Let $(M, \iota_M)$ be an admissible sublattice of $L_2$ and let $\mathcal{K} \in \ktm$.
	Let $(X, \iota)$ be a manifold of $K3^{[2]}$-type with involution. 
	A marking $\alpha : H^2(X, \mathbb{Z}) \to L_2$ of $(X, \iota)$ is said to be admissible for $(M, \mathcal{K})$ if $(X,\alpha) \in \mathfrak{M}_{L_2}^{\circ}$, $\alpha \circ \iota^* = \iota_M \circ \alpha$, and $\alpha(\mathcal{K}_X^{\iota}) \subset \mathcal{K}$,
	where $\mathcal{K}_X^{\iota} \subset H^2(X, \mathbb{R})^{\iota}$ is the set of $\iota$-invariant \K classes of $X$. 
	Moreover $\iota$ is called an involution of type $(M, \mathcal{K})$ if there exists an admissible marking for $(M, \mathcal{K})$.
	We denote by $\tilde{\mathcal{M}}_{M, \mathcal{K}}$ the set of all isomorphism classes of $K3^{[2]}$-type manifolds $(X, \iota)$ with involution of type $(M, \mathcal{K})$.
	By \cite[Theorem 9.11]{MR3519981}, any two isomorphism classes of $\tilde{\mathcal{M}}_{M, \mathcal{K}}$ are deformation equivalent.
	
	Fix $h \in \mathcal{K} \cap M$.
	Then $h^{\perp} = h^{\perp} \cap L_2$ is a lattice of signature $(2, 20)$ containing $M^{\perp}$, and $\Omega_{h^{\perp}}$ consists of two connected components.
	By \cite[(4.1)]{MR2964480}, the connected component $\mathfrak{M}_{L_2}^{\circ}$ determines the connected component $\Omega_{h^{\perp}}^+$ of 
	$$
		\Omega_{h^{\perp}} = \{ [\eta] \in \mathbb{P}( (h^{\perp})_{\mathbb{C}}) ; (\eta, \eta)=0, (\eta, \bar{\eta}) >0 \}.
	$$
	Namely, for any $(X, \alpha) \in \mathfrak{M}_{L_2}^{\circ}$ and $p = [\sigma] \in \Omega_{h^{\perp}}^+$, the orientation of $\tilde{\mathscr{C}}_{X} = \{ x \in H^2(X, \mathbb{R}) ; q_X(x)>0 \}$ determined by the real and the imaginary part of  the holomorphic 2-form on $X$ and a \K class of $X$ is compatible with 
	the orientation of $\tilde{\mathscr{C}_{L_2}} = \{ x \in L_{2, \mathbb{R}} ; x^2>0 \}$ determined by the real 3-dimensional vector space $W := \operatorname{Span}_{\mathbb{R}} \{ \operatorname{Re} \sigma, \operatorname{Im} \sigma, h \}$ associated to the basis $\{ \operatorname{Re} \sigma, \operatorname{Im} \sigma, h \}$ via the isomorphism $\tilde{\mathscr{C}}_X \cong \tilde{\mathscr{C}_{L_2}}$ induced from the marking $\alpha$.
	
	Set
	$$
		\Omega_{M^{\perp}} = \{ [\eta] \in \mathbb{P}(M^{\perp}_{\mathbb{C}}) ; (\eta, \eta)=0, (\eta, \bar{\eta}) >0 \}.
	$$
	Let $\Omega^+_{M^{\perp}}$ be the connected component of $\Omega_{M^{\perp}}$
	satisfying $\Omega^+_{M^{\perp}} \subset \Omega^+_{h^{\perp}}$.
	Let $(X, \iota) \in \tilde{\mathcal{M}}_{M, \mathcal{K}}$ and choose an admissible marking $\alpha : H^2(X, \mathbb{Z}) \to L_2$ for $(M, \mathcal{K})$.
	Since $\alpha( H^{2,0}(X) ) \subset M^{\perp}_{\mathbb{R}} \subset (h^{\perp})_{\mathbb{R}}$ and $(X, \alpha) \in \mathfrak{M}_{L_2}^{\circ}$, we have
	$$
		\alpha( H^{2,0}(X) ) \in \Omega_{M^{\perp}} \cap \Omega^+_{h^{\perp}} =\Omega^+_{M^{\perp}}.
	$$ 
	
	By definition, every $(X, \iota) \in \tilde{\mathcal{M}}_{M, \mathcal{K}}$ carries an admissible marking for $(M, \mathcal{K})$.
	Moreover, if $\alpha, \beta : H^2(X, \mathbb{Z}) \to L_2$ are two admissible markings for $(M, \mathcal{K})$, 
	then we have $\alpha \circ \beta^{-1} \in \Gamma(\mathcal{K})$,
	where we define
	$$
		\Gamma(\mathcal{K}) =\{ \sigma \in \mon(L_2) ; \sigma \circ \iota_M = \iota_M \circ \sigma  \text{ and } \sigma(\mathcal{K})= \mathcal{K} \} =\{ \sigma \in \Gamma(M) ; \sigma(\mathcal{K})=\mathcal{K} \}.
	$$
	
	Let
	$$
		\Gamma_{M^{\perp}, \mathcal{K}} = \{ \sigma|_{M^{\perp}} \in O(M^{\perp}) ; \sigma \in \Gamma(\mathcal{K}) \}.
	$$ 
	By \cite[Proposition 10.2]{MR3519981}, $\Gamma_{M^{\perp}, \mathcal{K}}$ is a finite index subgroup of $O^+(M^{\perp})$.
	Therefore we obtain an orthogonal modular variety
	$$
		\mathcal{M}_{M, \mathcal{K}} = \Omega^+_{M^{\perp}}/\Gamma_{M^{\perp}, \mathcal{K}} .
	$$
	
	We define the period map $P_{M, \mathcal{K}} : \tilde{\mathcal{M}}_{M, \mathcal{K}} \to \mathcal{M}_{M, \mathcal{K}}$ by
	$$
		P_{M, \mathcal{K}}(X, \iota) = [\alpha(H^{2,0}(X))],
	$$
	where $\alpha : H^2(X, \mathbb{Z}) \to L_2$ is an admissible marking for $(M, \mathcal{K})$.
	
	Let 
	$$
		\Delta(M^{\perp}) =\{ \delta \in M^{\perp} ; \delta^2=-2, \text{or } \delta^2=-10, (\delta, L_2)=2\mathbb{Z} \},
	$$
	and set 
	$$
		\mathscr{D}_{M^{\perp}} = \bigcup_{\delta \in \Delta(M^{\perp})} H_{\delta} \subset \Omega_{M^{\perp}},
	$$
	where
	\begin{align}\label{al6-2-1-1}
		H_{\delta} = \{ x \in \Omega_{M^{\perp}} ; (x, \delta)=0 \}.
	\end{align}
	By \cite[Lemma 7.7]{MR3519981}, $\mathscr{D}_{M^{\perp}}$ is locally finite in $\Omega_{M^{\perp}}$
	and is viewed as a reduced divisor on $\Omega_{M^{\perp}}$.
	Set $\bar{\mathscr{D}}_{M^{\perp}}= \mathscr{D}_{M^{\perp}}/\Gamma_{M^{\perp}, \mathcal{K}}$.
	Then $\bar{\mathscr{D}}_{M^{\perp}}$ is a reduced divisor on $\mathcal{M}_{M, \mathcal{K}}$.
	We set 
	$$
		\mathcal{M}^{\circ}_{M, \mathcal{K}} = \mathcal{M}_{M, \mathcal{K}} \setminus \bar{\mathscr{D}}_{M^{\perp}} \quad \text{  and  } \quad \Omega_{M^{\perp}}^{\circ} = \Omega^+_{M^{\perp}} \setminus \mathscr{D}_{M^{\perp}}.
	$$
	By \cite[Lemma 9.5 and Proposition 9.9]{MR3519981}, the image of the period map $P_{M, \mathcal{K}} : \tilde{\mathcal{M}}_{M, \mathcal{K}} \to \mathcal{M}_{M, \mathcal{K}}$ is $\mathcal{M}^{\circ}_{M, \mathcal{K}}$.

	Let $M_0$ be a primitive hyperbolic 2-elementary sublattice of $L_{K3}$.
	Since $L_{K3}$ is unimodular and since $M_0$ is 2-elementary, the involution
	$$
		M_0 \oplus M_0^{\perp} \to M_0 \oplus M_0^{\perp}, \quad (m,n) \mapsto (m,-n) 
	$$
	extends uniquely to an involution $\iota_{M_0} \in O(L_{K3})$ by \cite[Corollary 1.5.2]{Nikulin1980IntegralSB}.
	
	Let $Y$ be a K3 surface and $\sigma : Y \to Y$ be an antisymplectic involution on $Y$.
	Set
	$$
		H^2(Y, \mathbb{Z})^{\sigma} = \left\{ x \in H^2(Y, \mathbb{Z}) ; \sigma^*x = x \right\}.
	$$
	Let $\alpha : H^2(Y, \mathbb{Z}) \to L_{K3}$ be an isometry.
	We call the pair $(Y, \sigma)$ a 2-elementary K3 surface of type $M_0$ if the restriction of $\alpha$ is an isometry from $H^2(Y, \mathbb{Z})^{\sigma}$ to $M_0$.
	
	Since $\sign(M_0^{\perp}) =(2, \rk M_0^{\perp}-2)$, $\Omega_{M_0^{\perp}}$ consists of two connected components.
	We fix a connected component $\Omega^+_{M_0^{\perp}}$ of $\Omega_{M_0^{\perp}}$.
	We obtain the orthogonal modular variety 
	$$
		\mathcal{M}_{M_0}=\Omega^+_{M_0^{\perp}}/O^+(M_0^{\perp})
	$$
	of dimension $20-\rk(M_0)$.
	
	We set
	$
		\Delta(M_0^{\perp}) =\{ d \in M_0^{\perp} ; d^2=-2 \},
	$
	and
	$$
		\mathscr{D}_{M_0^{\perp}} = \bigcup_{d \in \Delta(M_0^{\perp})} H_d \subset \Omega^+_{M_0^{\perp}}.
	$$
	Here $H_d$ is the divisor on $\Omega^+_{M_0^{\perp}}$ defined in the same way as in (\ref{al6-2-1-1}).
	By \cite[Proposition 1.9.]{MR2047658}, $\mathscr{D}_{M_0^{\perp}}$ is locally finite and is viewed as a reduced divisor on $\Omega^+_{M_0^{\perp}}$.
	Set 
	$$
		\bar{\mathscr{D}}_{M_0^{\perp}} = \mathscr{D}_{M_0^{\perp}} / O^+(M_0^{\perp}) \quad \text{ and } \quad \mathcal{M}^{\circ}_{M_0} = \mathcal{M}_{M_0} \setminus \bar{\mathscr{D}}_{M_0^{\perp}}.
	$$
	By \cite[Theorem 1.8.]{MR2047658}, $\mathcal{M}^{\circ}_{M_0}$ is a coarse moduli space of 2-elementary K3 surfaces of type $M_0$. 
	
	Recall that $L_2 = L_{K3} \oplus \mathbb{Z}e$.
	We consider the case
	$$
		M= \widetilde{M}_0 =M_0 \oplus \mathbb{Z}e .
	$$
	In this case, we define an involution $\iota_{\widetilde{M}_0} : L_2 \to L_2$ by
	$$
		 \iota_{\widetilde{M}_0}(x_0 + ae)=\iota_{M_0}(x_0) +ae \quad (x_0 \in L_{K3},  a \in \mathbb{Z}).
	$$
	Then $(\widetilde{M}_0, \iota_{\widetilde{M}_0})$ is an admissible sublattice of $L_2$.
	
	Let $\mathcal{K} \in \operatorname{KT}(\widetilde{M}_0)$.
	A hyperplane $H$ of $\widetilde{M}_{0,\mathbb{R}}$ is a face of $\mathcal{K}$ if $H \cap \partial \mathcal{K}$ contains an open subset of $H$.
	
	\begin{dfn}\label{d2-3-1-1}
		A K\"ahler-type chamber $\mathcal{K} \in \operatorname{KT}(\widetilde{M}_0)$ is natural if the hyperplane $M_{0, \mathbb{R}}$ is a face of $\mathcal{K}$.
	\end{dfn}
	
	Since $M_0^{\perp_{L_{K3}}}=\widetilde{M}_0^{\perp_{L_2}}$, we may identify $O(M_0^{\perp})=O(\widetilde{M}_0^{\perp})$ and $\Omega^+_{M_0^{\perp}} = \Omega^+_{\widetilde{M}_0}$.
	
	\begin{thm}\label{t-3-1-1}
		If $\mathcal{K} \in \operatorname{KT}(\widetilde{M}_0)$ is natural, then $\Gamma_{\widetilde{M}_0^{\perp}, \mathcal{K}} = O^+(M_0^{\perp})$.
		In particular, the identity map $\Omega_{M_0^{\perp}} \to \Omega_{\widetilde{M}_0^{\perp}}$ induces an isomorphism of orthogonal modular varieties $\phi : \mathcal{M}_{M_0} \cong \mathcal{M}_{\widetilde{M}_0, \mathcal{K}}$.
	\end{thm}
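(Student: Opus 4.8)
The plan is to prove the nontrivial inclusion $O^+(M_0^{\perp}) \subseteq \Gamma_{\widetilde{M}_0^{\perp}, \mathcal{K}}$; the reverse inclusion is already contained in \cite[Proposition 10.2]{MR3519981}, which exhibits $\Gamma_{\widetilde{M}_0^{\perp}, \mathcal{K}}$ as a (finite index) subgroup of $O^+(\widetilde{M}_0^{\perp}) = O^+(M_0^{\perp})$. Once equality is known, the asserted isomorphism $\phi : \mathcal{M}_{M_0} \cong \mathcal{M}_{\widetilde{M}_0, \mathcal{K}}$ is immediate from the identifications $\Omega^+_{M_0^{\perp}} = \Omega^+_{\widetilde{M}_0^{\perp}}$ and $\Gamma_{\widetilde{M}_0^{\perp}, \mathcal{K}} = O^+(M_0^{\perp})$. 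So, given $g \in O^+(M_0^{\perp})$, I must construct $\sigma \in \mon(L_2) = O^+(L_2)$ with $\sigma \circ \iota_{\widetilde{M}_0} = \iota_{\widetilde{M}_0} \circ \sigma$, with $\sigma(\mathcal{K}) = \mathcal{K}$, and with $\sigma|_{\widetilde{M}_0^{\perp}} = g$. The starting observation is that an isometry commutes with $\iota_{\widetilde{M}_0}$ precisely when it preserves the two eigenlattices $\widetilde{M}_0 = (L_2)^{\iota_{\widetilde{M}_0}}$ and $\widetilde{M}_0^{\perp}$, i.e.\ when it is block diagonal $\sigma = \sigma_+ \oplus \sigma_-$ with $\sigma_+ \in O(\widetilde{M}_0)$ and $\sigma_- \in O(\widetilde{M}_0^{\perp})$. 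Since $\mathcal{K} \subset \widetilde{M}_{0,\mathbb{R}}$, the condition $\sigma(\mathcal{K}) = \mathcal{K}$ depends only on $\sigma_+$, whereas the prescribed restriction fixes only $\sigma_- = g$. The construction therefore decouples into choosing $\sigma_+$ so that $(\sigma_+, g)$ glue to an isometry of $L_2$, so that $\sigma_+$ fixes $\mathcal{K}$, and so that the result is orientation preserving.

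For the gluing I would invoke Nikulin's theory \cite{Nikulin1980IntegralSB}. Because $L_2 = L_{K3} \perp \mathbb{Z}e$ with $L_{K3}$ unimodular and $M_0^{\perp_{L_{K3}}} = \widetilde{M}_0^{\perp}$, the overlattice $L_2 \supseteq \widetilde{M}_0 \oplus \widetilde{M}_0^{\perp}$ is controlled by the isotropic subgroup $H \subseteq A_{\widetilde{M}_0} \oplus A_{\widetilde{M}_0^{\perp}}$ which is the graph of the gluing anti-isometry $\gamma_0 : A_{M_0} \xrightarrow{\sim} A_{M_0^{\perp}}$ coming from $L_{K3}$, leaving the summand $A_{\mathbb{Z}e} \subset A_{\widetilde{M}_0}$ untouched (consistently with $A_{L_2} \cong A_{\mathbb{Z}e} = \mathbb{Z}/2$). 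A pair $(\sigma_+, g)$ extends to $L_2$ iff the induced maps on discriminant groups preserve $H$. I would therefore seek $\sigma_+$ of block form $\tau \oplus \mathrm{id}_{\mathbb{Z}e}$ with $\tau \in O(M_0)$: its discriminant action respects $A_{\widetilde{M}_0} = A_{M_0} \oplus A_{\mathbb{Z}e}$, and compatibility with $H$ collapses to the single requirement $\bar{\tau} = \gamma_0^{-1} \circ \bar{g} \circ \gamma_0 =: \phi$ on $A_{M_0}$, where $\bar{\phantom{x}}$ denotes the action on discriminant forms. Since $M_0$ is an even hyperbolic $2$-elementary lattice, the map $O(M_0) \to O(q_{M_0})$ is surjective (Nikulin), so some $\tau_0 \in O(M_0)$ realizes $\phi$; and as $A_{M_0}$ is $2$-elementary, $-\mathrm{id}_{M_0}$ acts trivially on $A_{M_0}$, so replacing $\tau_0$ by $-\tau_0$ if necessary I may assume $\tau_0 \in O^+(M_0)$.

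The residual freedom is to multiply $\tau_0$ by any element of the stable orthogonal group $\widetilde{O}(M_0) = \ker(O(M_0) \to O(q_{M_0}))$, which does not affect the gluing condition. Here naturality of $\mathcal{K}$ enters decisively: its distinguished face is $e^{\perp} = M_{0,\mathbb{R}}$, and $\sigma_+ = \tau \oplus \mathrm{id}_{\mathbb{Z}e}$ fixes $e$, hence preserves this hyperplane together with the side of it containing $\mathcal{K}$, so $\sigma_+$ carries $\mathcal{K}$ to another natural chamber $\mathcal{K}'$ lying on the same side of $e^{\perp}$. Every reflection $s_\delta$ in a $(-2)$-root $\delta \in M_0$ satisfies $s_\delta(x) - x \in M_0$ for $x \in M_0^*$, hence acts trivially on $A_{M_0}$, so the Weyl group $W(M_0) \subseteq \widetilde{O}(M_0) \cap O^+(M_0)$. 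The geometric claim I would establish is that $W(M_0)$ acts transitively on the natural chambers lying on a fixed side of $e^{\perp}$; picking $w \in W(M_0)$ with $w\mathcal{K}' = \mathcal{K}$ and setting $\tau = w\tau_0$, the isometry $\sigma = (\tau \oplus \mathrm{id}_{\mathbb{Z}e}) \oplus g$ fixes $\mathcal{K}$. Orientation is then automatic: the positive $3$-space of $L_{2,\mathbb{R}}$ is the orthogonal sum of the positive line of $\widetilde{M}_{0,\mathbb{R}}$ and the positive $2$-plane of $M^{\perp}_{0,\mathbb{R}}$, so the real spinor norm of $\sigma$ is the product of those of $\sigma_+$ and $g$; since $g \in O^+(M_0^{\perp})$ and $\tau \in O^+(M_0)$ (forced by $\tau(\mathcal{K}) = \mathcal{K}$), we obtain $\sigma \in O^+(L_2) = \mon(L_2)$. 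Thus $\sigma \in \Gamma(\mathcal{K})$ with $\sigma|_{\widetilde{M}_0^{\perp}} = g$, giving $g \in \Gamma_{\widetilde{M}_0^{\perp}, \mathcal{K}}$.

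The step I expect to be the main obstacle is the geometric one in the last paragraph: identifying, via naturality, the natural chambers on a fixed side of $e^{\perp}$ with the Weyl chambers of the hyperbolic lattice $M_0$ and deducing transitivity of $W(M_0)$. Making this precise means decomposing each root $\delta \in \Delta(\widetilde{M}_0)$ as $\delta = m + ce$ and checking that the walls bounding a natural chamber along its face $e^{\perp}$ are traced by the $(-2)$-roots $m \in M_0$ (the case $c = 0$), so that the induced wall arrangement on $e^{\perp}$ is exactly the $(-2)$-Weyl arrangement of $\tilde{\mathscr{C}}_{M_0}$, on which $W(M_0)$ acts simply transitively. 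By contrast, the discriminant bookkeeping, the appeal to surjectivity of $O(M_0) \to O(q_{M_0})$, and the spinor-norm count are routine once this correspondence is in hand.
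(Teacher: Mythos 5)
The paper does not actually prove this theorem in situ: its ``proof'' is the citation \cite[Theorem 2.28 and Corollary 2.29]{I1}, so your argument cannot be checked line-by-line against the paper's and must be judged on its own merits. On those merits it holds up, and it is the argument one would expect to find in \cite{I1}: block-diagonality as the characterization of commuting with $\iota_{\widetilde{M}_0}$, Nikulin gluing with $H$ the graph of the anti-isometry $\gamma_0 : A_{M_0} \to A_{M_0^{\perp}}$ coming from unimodularity of $L_{K3}$, surjectivity of $O(M_0) \to O(q_{M_0})$, the $-\operatorname{id}$ trick (valid since $A_{M_0}$ is $2$-elementary), correction by the Weyl group $W(M_0) \subset \widetilde{O}(M_0) \cap O^+(M_0)$, and the multiplicativity of the real spinor norm over $\widetilde{M}_{0,\mathbb{R}} \oplus M^{\perp}_{0,\mathbb{R}}$ together with $\mon(L_2) = O^+(L_2)$. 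Two small caveats on the routine steps: Nikulin's surjectivity theorem requires $M_0$ indefinite, so the case $\rk M_0 = 1$ (i.e.\ $M_0 = \langle 2 \rangle$) must be noted separately, where it is vacuous because $O(q_{M_0})$ is trivial; and the containment $\Gamma_{\widetilde{M}_0^{\perp}, \mathcal{K}} \subseteq O^+(M_0^{\perp})$ is not quite a quotation of \cite[Proposition 10.2]{MR3519981} alone but follows from your own spinor-norm decomposition, since $\sigma(\mathcal{K}) = \mathcal{K}$ forces $\sigma_+ \in O^+(\widetilde{M}_0)$.

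The step you flag as the main obstacle does work, but your parenthetical ``(the case $c=0$)'' slightly undercounts the walls. Writing $\delta = m + ce$, the trace $\delta^{\perp} \cap e^{\perp} = m^{\perp} \cap M_{0,\mathbb{R}}$ meets $\tilde{\mathscr{C}}_{M_0}$ only when $m^2 < 0$ (in signature $(1, r-1)$ a vector with $m^2 \geqq 0$ has $m^{\perp}$ disjoint from the positive cone). For $\delta^2 = -2$ this forces $c = 0$, $m^2 = -2$, as you say. But for $\delta^2 = -10$ with $(\delta, L_2) = 2\mathbb{Z}$, unimodularity of $L_{K3}$ and primitivity of $M_0$ give $m = 2m'$ with $m' \in M_0$, hence $4m'^2 = 2c^2 - 10$, which has the solutions $c = \pm 1$, $m'^2 = -2$; these $-10$-roots $\delta = 2m' \pm e$ do trace walls on $e^{\perp}$, only they coincide with the hyperplanes $(m')^{\perp}$ of the $(-2)$-arrangement, so your conclusion (trace arrangement $=$ Weyl arrangement of $M_0$) survives. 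One further point you use implicitly when concluding $w\mathcal{K}' = \mathcal{K}$: a natural chamber is determined by its face and the side of $e^{\perp}$ it lies on. This needs a short argument --- chambers are convex and the arrangement is locally finite on the positive cone, so at a generic point of a trace chamber the only wall is $e^{\perp}$, whence each trace chamber bounds exactly one chamber per side, and the face of a natural chamber is a full trace chamber (open-closed argument along the trace chamber). With that inserted, simple transitivity of $W(M_0)$ on its chambers in a fixed component of $\tilde{\mathscr{C}}_{M_0}$ yields exactly the transitivity on natural chambers you need, and the proof is complete.
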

	
	\begin{proof}
		See \cite[Theorem 2.28 and Corollary 2.29]{I1}.
	\end{proof}
	
	Let $\tilde{\mathcal{M}}_{M_0}$ be the set of isomorphism classes of 2-elementary K3 surfaces of type $M_0$,
	and $\bar{\pi}_{M_0} : \tilde{\mathcal{M}}_{M_0} \to \mathcal{M}_{M_0}$ be the period map defined by
	$$
		\bar{\pi}_{M_0}(Y, \sigma) =  [\alpha(H^{2,0}(Y))] \quad ((Y, \sigma) \in \tilde{\mathcal{M}}_{M_0}),
	$$
	where $\alpha : H^2(Y, \mathbb{Z}) \to L_{K3}$ is a marking of $(Y, \sigma)$.
	Recall that $\tilde{\mathcal{M}}_{\widetilde{M}_0, \mathcal{K}}$ is the set of isomorphism classes of $K3^{[2]}$-type manifolds with involution of type $(\widetilde{M}_0, \mathcal{K})$,
	and $P_{\widetilde{M}_0, \mathcal{K}} : \tilde{\mathcal{M}}_{\widetilde{M}_0, \mathcal{K}} \to \mathcal{M}_{\widetilde{M}_0, \mathcal{K}}$ is the period map.
	We have a natural map $\Phi : \tilde{\mathcal{M}}_{M_0} \to \tilde{\mathcal{M}}_{\widetilde{M}_0, \mathcal{K}}$ defined by
	$$
		\Phi(Y, \sigma) = (Y^{[2]}, \sigma^{[2]})  \quad ((Y, \sigma) \in \tilde{\mathcal{M}}_{M_0}).
	$$
	Then the following diagram commutes:
	$$
		\xymatrix{
			\tilde{\mathcal{M}}_{M_0} \ar[r]^{\Phi}  \ar[d]_{\bar{\pi}_{M_0}} & \tilde{\mathcal{M}}_{\widetilde{M}_0, \mathcal{K}} \ar[d]^{P_{\widetilde{M}_0, \mathcal{K}}}   \\
			 \mathcal{M}_{M_0} \ar[r]_{\cong}^{\phi} & \mathcal{M}_{\widetilde{M}_0, \mathcal{K}}  
		}
	$$
	
	\begin{exa}\label{e-2-35}
		We define a sublattice $M_0$ of $L_{K3}$ by $M_0 =\mathbb{Z}h$, where $h \in L_{K3}$ satisfies $h^2=2$.
		It is a primitive hyperbolic 2-elementary sublattice of $L_{K3}$.
		We set $\widetilde{M}_0=M_0 \oplus \mathbb{Z} e$.
		Then $\widetilde{M}_0$ is an admissible sublattice of $L_2$.
		By \cite[Example 9.12]{MR3519981}, we have
		$$
			\Delta(\widetilde{M}_0) = \pm \{e, 2h+3e, 2h-3e \} \quad \text{and} \quad \operatorname{KT}(\widetilde{M}_0) /\Gamma_{\widetilde{M}_0} =\{ [\mathcal{K}], [\mathcal{K}'] \},
		$$
		where $\mathcal{K}$ and $\mathcal{K}'$ are K\"ahler-type chambers defined by $\mathcal{K}= \mathbb{R}_{> 0} h +\mathbb{R}_{> 0}(3h+2e)$ and $\mathcal{K}' =\mathbb{R}_{> 0}(3h+2e) +\mathbb{R}_{> 0}(h+e)$.
		Then $\mathcal{K}$ is a natural chamber.
		By Theorem \ref{t-3-1-1}, $\Gamma_{\widetilde{M}_0^{\perp}, \mathcal{K}} = O^+(M_0^{\perp})$ and the identity map $\Omega_{M_0^{\perp}} \to \Omega_{\widetilde{M}_0^{\perp}}$ gives an isomorphism of orthogonal modular varieties $\mathcal{M}_{M_0} \cong \mathcal{M}_{\widetilde{M}_0, \mathcal{K}}$.
		
		Moreover a general point of $\mathcal{M}_{\widetilde{M}_0, \mathcal{K}}$ is the period of $(Y^{[2]}, \sigma^{[2]})$, where $(Y, \sigma)$ is a 2-elementary K3 surface of type $M_0$.
		A 2-elementary K3 surface $(Y, \sigma)$ of type $M_0$ is given by the double cover $Y \to \mathbb{P}^2$ branched over a smooth sextic and the covering involution $\sigma$.
		
		Consider Ohashi-Wandel's involution $\elm_{Y/\sigma}(\sigma^{[2]})$ of the Mukai flop $\elm_{Y/\sigma}(Y^{[2]})$ of $Y^{[2]}$ along $Y/\sigma = \mathbb{P}^2$ (\cite{ohashi2013non}, \cite{MR3519981}, and \cite[Example 2.8]{I1}).
		By \cite[Example 9.12]{MR3519981}, $(\elm_{Y/\sigma}(Y^{[2]}), \elm_{Y/\sigma}(\sigma^{[2]}))$ is a manifold of $K3^{[2]}$-type with involution of type $(\widetilde{M}_0, \mathcal{K}')$.
	\end{exa}

	For the non-natural K\"ahler-type chamber $[\mathcal{K}']$, we still have the following.
	
	\begin{thm}\label{p-2-2}
		With the same notation as in Example \ref{e-2-35}, we have $\Gamma_{\widetilde{M}_0^{\perp}, \mathcal{K}'} = O^+(M_0^{\perp})$.
		In particular, the identity map $\Omega_{M_0^{\perp}} \to \Omega_{\widetilde{M}_0^{\perp}}$ induces an isomorphism of orthogonal modular varieties $\psi : \mathcal{M}_{M_0} \cong \mathcal{M}_{\widetilde{M}_0, \mathcal{K}'}$.
	\end{thm}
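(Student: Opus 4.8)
The plan is to deduce the statement for the non-natural chamber $\mathcal{K}'$ from the already-proved statement for the natural chamber $\mathcal{K}$ (Theorem \ref{t-3-1-1}) by showing that the two full stabilizer groups in $\mon(L_2)$ coincide, namely $\Gamma(\mathcal{K}) = \Gamma(\mathcal{K}')$. Concretely, I would prove that both are equal to $\{ \sigma \in \mon(L_2) ; \sigma \circ \iota_{\widetilde{M}_0} = \iota_{\widetilde{M}_0} \circ \sigma, \ \sigma|_{\widetilde{M}_0} = \mathrm{id} \}$. Granting this, restriction to $\widetilde{M}_0^{\perp}$ yields $\Gamma_{\widetilde{M}_0^{\perp}, \mathcal{K}'} = \Gamma_{\widetilde{M}_0^{\perp}, \mathcal{K}} = O^+(M_0^{\perp})$ by Theorem \ref{t-3-1-1}, and since $M_0^{\perp} = \widetilde{M}_0^{\perp}$ gives $\Omega^+_{M_0^{\perp}} = \Omega^+_{\widetilde{M}_0^{\perp}}$, the isomorphism $\psi : \mathcal{M}_{M_0} \cong \mathcal{M}_{\widetilde{M}_0, \mathcal{K}'}$ of orthogonal modular varieties follows exactly as in Theorem \ref{t-3-1-1}.

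The heart of the matter is a finite computation inside the rank-$2$ lattice $\widetilde{M}_0 = \mathbb{Z}h \oplus \mathbb{Z}e$ with Gram matrix $\mathrm{diag}(2,-2)$. First I would determine $O(\widetilde{M}_0)$: the vectors of norm $2$ are exactly $\pm h$ and those of norm $-2$ are exactly $\pm e$, so every isometry sends $h \mapsto \pm h$ and $e \mapsto \pm e$, whence $O(\widetilde{M}_0) = \{ \mathrm{diag}(\pm 1, \pm 1) \} \cong (\mathbb{Z}/2\mathbb{Z})^2$. Writing a class as $ah+be$, the positive cone $\tilde{\mathscr{C}}_{\widetilde{M}_0} = \{ |a| > |b| \}$ has two components, and only $\mathrm{id}$ and the reflection $r_e = \mathrm{diag}(1,-1)$ (acting by $b/a \mapsto -b/a$) preserve the component $\{ a > |b| \}$. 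Using the roots of Example \ref{e-2-35}, the walls $e^{\perp}$, $(2h+3e)^{\perp}$, $(2h-3e)^{\perp}$ lie at slopes $b/a = 0,\, 2/3,\, -2/3$, so $\mathcal{K}$ occupies $0 < b/a < 2/3$ and $\mathcal{K}'$ occupies $2/3 < b/a < 1$ (up to the null ray $h+e$). Since $r_e$ carries $\mathcal{K}$ to the chamber $-2/3 < b/a < 0$ and $\mathcal{K}'$ to $-1 < b/a < -2/3$, it fixes neither; hence the stabilizer of $\mathcal{K}$, and equally that of $\mathcal{K}'$, inside $O(\widetilde{M}_0)$ is trivial.

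To conclude, I would observe that any $\sigma \in \Gamma(\mathcal{K})$ commutes with $\iota_{\widetilde{M}_0}$, hence preserves the eigenlattice $\widetilde{M}_0$ and restricts to $\sigma|_{\widetilde{M}_0} \in O(\widetilde{M}_0)$; because $\mathcal{K} \subset \widetilde{M}_{0,\mathbb{R}}$ and $\sigma(\mathcal{K}) = \mathcal{K}$, this restriction stabilizes $\mathcal{K}$ and is therefore $\mathrm{id}$ by the previous step. The same reasoning applies verbatim to $\mathcal{K}'$. Thus $\sigma(\mathcal{K}) = \mathcal{K} \iff \sigma|_{\widetilde{M}_0} = \mathrm{id} \iff \sigma(\mathcal{K}') = \mathcal{K}'$, which gives $\Gamma(\mathcal{K}) = \Gamma(\mathcal{K}')$ and completes the argument.

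The proof is thus essentially finite bookkeeping once the two ingredients—the smallness of $O(\widetilde{M}_0)$ and the explicit positions of the chambers—are in place. The only point genuinely requiring care is the correct reading of the slopes of the three root walls and of the two chambers from the data in Example \ref{e-2-35}, so as to confirm that the unique nontrivial cone-preserving reflection $r_e$ really moves each of $\mathcal{K}$ and $\mathcal{K}'$ off itself; I do not anticipate any deeper obstacle beyond this verification.
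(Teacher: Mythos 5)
Your proposal is correct. I verified the finite bookkeeping: in $\widetilde{M}_0 = \mathbb{Z}h \oplus \mathbb{Z}e$ the only vectors of norm $2$ (resp.\ $-2$) are $\pm h$ (resp.\ $\pm e$), so $O(\widetilde{M}_0) \cong (\mathbb{Z}/2\mathbb{Z})^2$ as you claim; the walls of $\Delta(\widetilde{M}_0) = \pm\{e, 2h+3e, 2h-3e\}$ sit at slopes $b/a = 0, \pm 2/3$ in the component $\{a > |b|\}$, so $\mathcal{K}$ is the chamber $0 < b/a < 2/3$ and $\mathcal{K}'$ is $2/3 < b/a < 1$; and the unique nontrivial component-preserving isometry $\mathrm{diag}(1,-1)$ negates the slope, hence fixes neither chamber. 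Since any $\sigma \in \Gamma(\mathcal{K})$ or $\Gamma(\mathcal{K}')$ commutes with $\iota_{\widetilde{M}_0}$ and therefore preserves the fixed lattice $\widetilde{M}_0 = (L_2)^{\iota_{\widetilde{M}_0}}$, its restriction lies in $O(\widetilde{M}_0)$ and must be the identity, giving your key identity $\Gamma(\mathcal{K}) = \Gamma(\mathcal{K}') = \{\sigma \in \Gamma(\widetilde{M}_0) ; \sigma|_{\widetilde{M}_0} = \mathrm{id}\}$, from which the statement follows by restriction and Theorem \ref{t-3-1-1}.

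As for comparison: the paper does not prove this theorem in situ at all — it cites \cite[Proposition 2.31]{I1}, just as Theorem \ref{t-3-1-1} cites \cite[Theorem 2.28 and Corollary 2.29]{I1} — so your argument cannot be matched line-by-line against a proof in this text. What your route buys is a self-contained reduction of the non-natural chamber to the already-available natural-chamber result: rather than analyzing $\Gamma_{\widetilde{M}_0^{\perp}, \mathcal{K}'}$ directly, you show the two full stabilizers in $\mon(L_2)$ coincide, so the equality $\Gamma_{\widetilde{M}_0^{\perp}, \mathcal{K}} = O^+(M_0^{\perp})$ transfers for free. The price is that the mechanism is special to this example: it works because $O(\widetilde{M}_0)$ is so small that \emph{no} chamber of $\operatorname{KT}(\widetilde{M}_0)$ has nontrivial stabilizer in $O(\widetilde{M}_0)$, which is exactly what lets you avoid the general structure theory of $\Gamma(\mathcal{K})$ that Theorem \ref{t-3-1-1} (via naturality of $\mathcal{K}$) and the cited Proposition 2.31 of \cite{I1} are designed to handle. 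For the rank-$2$ case $M_0 = \langle 2 \rangle$ that this theorem concerns, your finite computation is complete and I see no gap.
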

	
	\begin{proof}
		See \cite[Proposition 2.31]{I1}.
	\end{proof}
	
	We have a natural map $\Psi : \tilde{\mathcal{M}}_{M_0} \to \tilde{\mathcal{M}}_{\widetilde{M}_0, \mathcal{K}'}$ defined by
	$$
		\Psi(Y, \sigma) = (\elm_{Y/\sigma}(Y^{[2]}), \elm_{Y/\sigma}(\sigma^{[2]}))  \quad ((Y, \sigma) \in \tilde{\mathcal{M}}_{M_0}).
	$$
	By \cite[Proposition 25.14]{MR1963559}, a birational transformation of an irreducible holomorphic symplectic manifold preserves its period.
	Therefore the following diagram commutes:
	$$
		\xymatrix{
			\tilde{\mathcal{M}}_{M_0} \ar[r]^{\Psi}  \ar[d]_{\bar{\pi}_{M_0}} & \tilde{\mathcal{M}}_{\widetilde{M}_0, \mathcal{K}'} \ar[d]^{P_{\widetilde{M}_0, \mathcal{K}'}}   \\
			 \mathcal{M}_{M_0} \ar[r]_{\cong}^{\psi} & \mathcal{M}_{\widetilde{M}_0, \mathcal{K}'} . 
		}
	$$

\subsubsection{The invariant $\tau_{M, \mathcal{K}}$ and its properties.}\label{sss-2-1-2}
	
	We fix an admissible sublattice $M$ of $L_2$ and a K\"ahler-type chamber $\mathcal{K} \in \ktm$.
	Let $(X, \iota) \in \tilde{\mathcal{M}}_{M, \mathcal{K}}$ and let $h_X$ be an $\G$-invariant \K metric on $X$. 
	Let $\omega_X$ be the $\G$-invariant \K form attached to $h_X$.
	
	Let $\tau_{\iota}(\bar{\Omega}_X^1)$ be the equivariant analytic torsion of the cotangent bundle $\bar{\Omega}_X^1 =(\Omega_X^1, h_X)$ endowed with the hermitian metric induced from $h_X$.
	
	The fixed locus of $\iota : X \to X$ is denoted by $X^{\iota}$.
	By \cite[Theorem 1]{MR2805992}, $X^{\iota}$ is a possibly disconnected compact complex surface.
	Let $X^{\iota} = \sqcup_i Z_i$ be the decomposition into the connected components.
	Let $\tau(\bar{\mathcal{O}}_{X^{\iota}})$ be the analytic torsion of the trivial bundle $\bar{\mathcal{O}}_{X^{\iota}}$ with respect to the canonical metric.
	Here $\tau(\bar{\mathcal{O}}_{X^{\iota}})$ is given by 
	$$
		\tau(\bar{\mathcal{O}}_{X^{\iota}}) = \prod_i \tau(\bar{\mathcal{O}}_{Z_i}),
	$$
	where $\tau(\bar{\mathcal{O}}_{Z_i})$ is the analytic torsion of the trivial bundle $\bar{\mathcal{O}}_{Z_i}$ with respect to the canonical metric.
	
	The volume of $(X, \omega_{X})$ is defined by
	$$
		\vol(X, \omega_{X}) = \int_X \frac{\omega_{X}^4}{4!}.
	$$
	Set $\omega_{X^{\iota}} = \omega_X|_{X^{\iota}}$.
	This is a \K form on $X^{\iota}$ attached to $h_{X^{\iota}} =h_X|_{X^{\iota}}$.
	We define the volume of $(X^{\iota}, \omega_{X^{\iota}})$ by
	$$
		\vol(X^{\iota}, \omega_{X^{\iota}}) =\prod_i \vol(Z_i, \omega_X|_{Z_i}) = \prod_i \int_{Z_i} \frac{(\omega_X|_{Z_i})^2}{2!}.
	$$
	The covolume of the lattice $\operatorname{Im}(H^1(X^{\iota}, \mathbb{Z}) \to H^1(X^{\iota}, \mathbb{R}))$ with respect to the $L^2$-metric induced from $h_X$ is denoted by $\vol_{L^2}(H^1(X^{\iota}, \mathbb{Z}), \omega_{X^{\iota}})$ .
	Namely, 
	$$
		\vol_{L^2}(H^1(X^{\iota}, \mathbb{Z}), \omega_{X^{\iota}}) = \det(\langle e_i, e_j \rangle_{L^2}),
	$$
	where $e_1, \dots ,e_{b_1(X^{\iota})}$ is an integral basis of $\operatorname{Im}(H^1(X^{\iota}, \mathbb{Z}) \to H^1(X^{\iota}, \mathbb{R}))$.
		
	We define a real-valued function $\varphi$ on $X$ by
	$$
		\varphi = \frac{\omega_{X}^4/4!}{\eta^2 \wedge \bar{\eta}^2} \frac{\|\eta^2\|_{L^2}^2}{\vol(X, \omega_{X})},
	$$
	where $\eta$ is a holomorphic symplectic 2-form on $X$.
	Obviously, $\varphi$ is independent of the choice of $\eta$.
	We define a positive number $A(X, \iota, h_X) \in \mathbb{R}_{>0}$ by
	\begin{align}\label{al2-3-1-2}
		A(X, \iota, h_X) =\exp \left[ \frac{1}{48} \int_{X^{\iota}} (\log \varphi) \varOmega \right],
	\end{align}
	where $\varOmega$ is a characteristic form on $X^{\iota}$ defined by
	\begin{align}\label{al2-3-1-3}
		\varOmega = c_1(TX^{\iota}, h_{X^{\iota}})^2 -8c_2(TX^{\iota}, h_{X^{\iota}}) -c_1(TX, h_X)|^2_{X^{\iota}} +3c_2(TX, h_X)|_{X^{\iota}}.
	\end{align}
	Here we denote by $c_i(TX, h_X)$, $c_i(TX^{\iota}, h_{X^{\iota}})$ the $i$-th Chern forms of the holomorphic hermitian vector bundles $(TX, h_X)$, $(TX^{\iota}, h_{X^{\iota}})$, respectively.
	Note that if $h_X$ is Ricci-flat, then we have $\varphi =1$ and $A(X, \iota, h_X) =1$.
	
	We set 
	$$
		t=\operatorname{Tr}(\iota^*|_{H^{1,1}(X)}).
	$$
	By \cite[Theorem 2]{MR2805992}, $t$ is an odd number with $-19 \leqq t \leqq 21$.
	By the definition of the admissible sublattice $(M, \iota_M)$, we have $t= \operatorname{Tr}(\iota_M)+2$.
	Therefore $t$ depends only on $(M, \iota_M)$ and is a constant function on $\tilde{\mathcal{M}}_{M, \mathcal{K}}$.
	
	\begin{dfn}\label{d-3-1}
		We define a real number $\tau_{M, \mathcal{K}}(X, \iota)$ by
		\begin{align*}
			\tau_{M, \mathcal{K}}(X, \iota)=\tau_{\iota}(\bar{\Omega}_X^1) &\vol(X, \omega_{X})^{\frac{(t-1)(t-7)}{16}} A(X, \iota, h_X) \\
			&\cdot \tau(\bar{\mathcal{O}}_{X^{\iota}})^{-2} \vol(X^{\iota}, \omega_{X^{\iota}})^{-2} \vol_{L^2}(H^1(X^{\iota}, \mathbb{Z}), \omega_{X^{\iota}}).
		\end{align*}
	\end{dfn}
	
	Let $f : (\X, \iota) \to S$ be a family of $K3^{[2]}$-type manifolds with involution of type $(M, \mathcal{K})$
	and let $h_{\xs}$ be an $\iota$-invariant fiberwise K\"ahler metric on $T\xs$.
	Let $h_{\xss}$ be the induced metric on $T\xss$.
	The $\iota$-invariant hermitian metric on $\Omega^1_{\xs}$ induced from $h_{\xs}$ is also denoted by $h_{\xs}$.
	
	We define the characteristic form $\omega_{H^{\cdot}(\xss)} \in A^{1,1}(S)$ by
	\begin{align}\label{al6-2-1-1}
		\omega_{H^{\cdot}(\xss)} = c_1(f_*\Omega^1_{\xss}, h_{L^2}) -c_1(R^1f_*\mathcal{O}_{\mathscr{X}^{\iota}}, h_{L^2}) -2c_1(f_*K_{\xss}, h_{L^2}).
	\end{align}
	By \cite[Lemma 3.13]{I1}, $\omega_{H^{\cdot}(\xss)}$ is independent of the choice of the $\G$-invariant fiberwise \K metric $h_{\xs}$.
	
	\begin{thm}\label{p-3-4}
		We define a real-valued function $\tau_{M, \mathcal{K}, \xs}$ on $S$ by 
		$$
			\tau_{M, \mathcal{K}, \xs}(s) =\tau_{M, \mathcal{K}}(X_s, \iota_s) \quad (s \in S).
		$$
		Then $\tau_{M, \mathcal{K}, \xs}$ is smooth and satisfies
		$$
			-dd^c \log \tau_{M, \mathcal{K}, \xs} = \frac{(t+1)(t+7)}{16} c_1(f_*K_{\xs}, h_{L^2}) +\omega_{H^{\cdot}(\xss)}.
		$$
	\end{thm}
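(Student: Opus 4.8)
The plan is to reduce the statement to the curvature of equivariant Quillen and $L^2$-metrics on determinant lines and then to carry out a characteristic-class computation on the two-dimensional fixed locus. First I would record smoothness: by \cite{I1} the value $\tau_{M,\mathcal{K}}(X_s,\iota_s)$ is independent of the $\iota$-invariant \K metric, so it suffices to compute with one convenient $\iota$-invariant fiberwise \K metric $h_{\xs}$ (for instance a fiberwise Ricci-flat one, for which the factors $\vol(X_s,\omega_{X_s})$ and $A(X_s,\iota_s,h_{X_s})$ become trivial). For such a metric the equivariant torsion $\tau_{\iota}(\bar{\Omega}^1_{X_s})$ and the torsion $\tau(\bar{\mathcal{O}}_{X^{\iota}_s})$ vary smoothly by the smoothness of (equivariant) analytic torsion in families, while the determinant-of-cohomology bundles, their $L^2$-metrics, the fiberwise volumes and the covolume $\vol_{L^2}(H^1(X^{\iota}_s,\mathbb{Z}),\omega_{X^{\iota}_s})$ all vary smoothly because the relevant direct images are locally free. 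Hence $\tau_{M,\mathcal{K},\xs}$ is smooth.

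For the curvature equation I would fix an admissible section of each determinant line and rewrite, as functions on $S$,
$$\log\tau_{\iota}(\bar{\Omega}^1_{X_s})=\log\|\,\cdot\,\|^2_{\lambda_{\G}(\Omega^1_{\xs}),Q}(\iota)-\log\|\,\cdot\,\|^2_{\lambda_{\G}(\Omega^1_{\xs}),L^2}(\iota),$$
and likewise express $\log\tau(\bar{\mathcal{O}}_{X^{\iota}})$ through the Quillen and $L^2$-metrics on $\lambda(\mathcal{O}_{\mathscr{X}^{\iota}/S})$ for the family of fixed surfaces $\xss$. Applying $-dd^c$ turns each Quillen term into the first Chern form of the corresponding Quillen metric, which by the curvature formula for equivariant Quillen metrics \cite{MR1316553} (resp.\ the Bismut-Gillet-Soulé curvature theorem \cite{MR929146}) equals, up to sign, the $(1,1)$-component of $\int_{\mathscr{X}^{\iota}/S}Td_{\iota}(T\xs)\,ch_{\iota}(\Omega^1_{\xs})$ (resp.\ of $\int_{\mathscr{X}^{\iota}/S}Td(T\mathscr{X}^{\iota}/S)$), while each $L^2$-term contributes the Chern forms of the Hodge bundles $f_*K_{\xs}$, $f_*\Omega^1_{\xss}$, $R^1f_*\mathcal{O}_{\mathscr{X}^{\iota}}$ and $f_*K_{\xss}$ with $h_{L^2}$. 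The term $-dd^c\log\vol_{L^2}(H^1(X^{\iota},\mathbb{Z}),\omega_{X^{\iota}})$ is computed by Hodge theory on the fixed surfaces and produces a $c_1(R^1f_*\mathcal{O}_{\mathscr{X}^{\iota}})$-type contribution, and if a non-Ricci-flat metric is used, $-dd^c\log A$ is evaluated from the defining integral \eqref{al2-3-1-2} against the characteristic form $\varOmega$ of \eqref{al2-3-1-3}.

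I would then assemble these pieces and perform the characteristic-class bookkeeping on $\mathscr{X}^{\iota}$. Here the equivariant Todd and Chern character forms \eqref{al-1-A}, \eqref{al-1-B} are evaluated via the $(\pm1)$-eigenbundle decomposition of $T\xs|_{\mathscr{X}^{\iota}}$ and of $\Omega^1_{\xs}|_{\mathscr{X}^{\iota}}$, together with the Hodge data of $K3^{[2]}$-type fourfolds with antisymplectic involution — in particular that $H^{2,0}$ is anti-invariant and that the eigenvalue multiplicities of $\iota^*$ on $H^{1,1}$ are recorded by $t=\operatorname{Tr}(\iota^*|_{H^{1,1}(X)})$. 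Since the fixed locus has complex dimension $2$, the fiber integrals reduce to explicit combinations of $c_1, c_2$ of the eigenbundles, and the goal is to show that the total collapses to $\tfrac{(t+1)(t+7)}{16}c_1(f_*K_{\xs},h_{L^2})+\omega_{H^{\cdot}(\xss)}$ with $\omega_{H^{\cdot}(\xss)}$ the characteristic form defined above; the independence of the right-hand side from $h_{\xs}$ is guaranteed by \cite[Lemma 3.13]{I1}.

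The hard part will be precisely this last collapse. One must match the variation coming from the correction factor $A$ (through $\varphi$ and the characteristic form $\varOmega$) against the metric-dependent pieces of the Quillen-curvature fiber integrals so that only the stated topological combination of Hodge-bundle Chern forms survives, and one must extract the exact rational coefficient $\tfrac{(t+1)(t+7)}{16}$ in front of $c_1(f_*K_{\xs})$ from the equivariant Todd-Chern computation as an explicit function of $t$. A further delicate point is the $L^2$-covolume term $\vol_{L^2}(H^1(X^{\iota},\mathbb{Z}))$, whose curvature must be identified with $-c_1(R^1f_*\mathcal{O}_{\mathscr{X}^{\iota}},h_{L^2})$ through the Hodge decomposition $H^1(X^{\iota})=H^{1,0}\oplus H^{0,1}$ on the fixed surfaces, in order for the $\omega_{H^{\cdot}(\xss)}$ term to appear with the correct sign and multiplicity.
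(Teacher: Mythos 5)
The paper does not actually prove Theorem \ref{p-3-4} here: its proof is the single citation \cite[Theorem 3.12]{I1}, so there is no internal argument to check you against line by line. That said, your outline reconstructs what is almost certainly the architecture of the proof in \cite{I1} — it is the equivariant analogue of Yoshikawa's curvature computation for $\tau_{M_0}$ in \cite[Theorem 5.6]{MR2047658}: write each torsion as the ratio of an (equivariant) Quillen metric and an $L^2$-metric on the determinant of cohomology, apply the curvature theorems, compute the covolume term by Hodge theory, and let the correction factor $A$ absorb the dependence on the K\"ahler metric. Your smoothness argument is also essentially right (local freeness of the direct images plus smoothness of torsion in families; the reduction to fiberwise Ricci-flat metrics works locally on $S$ by Yau's theorem with smooth parameter dependence).

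As a proof, however, the proposal stops exactly where the theorem lives, and this is a genuine gap rather than a routine omission. Everything that produces the right-hand side is announced but not performed: (i) the evaluation of $\left[ f_*\left( Td_{\iota}(T\xs,h)\,ch_{\iota}(\Omega^1_{\xs},h) \right) \right]^{(1,1)}$ over the two-dimensional fixed locus via the eigenbundle splitting $T\xs|_{\XX}=T\xss\oplus N_{\XX/\X}$ with $\iota=-1$ on the normal bundle; (ii) the identification of the first Chern forms of the $(\pm)$-determinants of $R^qf_*\Omega^p_{\xs}$ with rational multiples of $c_1(f_*K_{\xs},h_{L^2})$, which is the only place where $t=\operatorname{Tr}(\iota^*|_{H^{1,1}})$ enters and where the coefficient $\frac{(t+1)(t+7)}{16}$ is actually produced; and (iii) the check that $-dd^c\log A$ cancels the anomaly (metric-dependent) pieces so that only Hodge-bundle Chern forms survive. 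Two details are moreover wrong or misattributed as written. First, \cite{MR1316553} is Bismut's equivariant \emph{immersion} formula; the curvature input you need for families is the Bismut--Gillet--Soul\'e theorem \cite{MR929146} in the non-equivariant case and Ma's theorem \cite{MR1872550} equivariantly. Second, the covolume term satisfies $-dd^c\log\vol_{L^2}(H^1(X^{\iota},\mathbb{Z}),\omega_{X^{\iota}})=c_1(f_*\Omega^1_{\xss},h_{L^2})+c_1(R^1f_*\mathcal{O}_{\XX},h_{L^2})$ — the flat integral determinant section is holomorphic and the Hodge filtration splits the $L^2$-metric orthogonally — not $-c_1(R^1f_*\mathcal{O}_{\XX},h_{L^2})$ as you assert at the end: the minus sign in $\omega_{H^{\cdot}(\xss)}$ emerges only after combining with the $L^2$-part of the Quillen metric on $\lambda(\mathcal{O}_{\XX})$ contributed by the factor $\tau(\bar{\mathcal{O}}_{X^{\iota}})^{-2}$, where Serre duality on the fixed surfaces ($R^2f_*\mathcal{O}_{\XX}\cong(f_*K_{\xss})^{\vee}$) also generates the $-2c_1(f_*K_{\xss},h_{L^2})$ term. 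Without (i)--(iii) carried out, the stated identity — in particular its exact coefficient — is not established.
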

	
	\begin{proof}
		See \cite[Theorem 3.12]{I1}.
	\end{proof}
	
	\begin{thm}\label{t-0-1}
		For $(X, \iota) \in \tilde{\mathcal{M}}_{M, \mathcal{K}}$, the real number $\tau_{M, \mathcal{K}}(X, \iota)$ is independent of the choice of an $\iota$-invariant \K form.
		In particular, $\tau_{M, \mathcal{K}}(X, \iota)$ is an invariant of $(X, \iota)$.
	\end{thm}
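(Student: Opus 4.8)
The plan is to deduce the statement from the family curvature formula of Theorem~\ref{p-3-4}, applied to a \emph{trivial} family, by exploiting that the right-hand side of that formula vanishes identically when the complex structure of the fibers is held fixed. Here I read Definition~\ref{d-3-1} as defining a quantity $\tau_{M,\mathcal{K}}(X,\iota;h_X)$ depending a priori on the chosen $\iota$-invariant \K metric $h_X$, so that Theorem~\ref{p-3-4} is the curvature formula for this metric-dependent function and is available without circularity.

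First I would reduce the assertion to comparing two given $\iota$-invariant \K forms $\omega_0$ and $\omega_1$ on $X$. Since the $\iota$-invariant \K forms constitute a convex cone, $\omega_t=(1-t)\omega_0+t\omega_1$ is again $\iota$-invariant \K for all $t\in[0,1]$. Choosing a smooth $\chi:\mathbb{P}^1\to[0,1]$ with $\chi(0)=0$ and $\chi(\infty)=1$, I would form the trivial family $f:\X=X\times\mathbb{P}^1\to\mathbb{P}^1$ (second projection) with involution $\iota\times\mathrm{id}$ and the $\iota$-invariant fiberwise \K metric whose \K form on the fiber over $s$ is $(1-\chi(s))\omega_0+\chi(s)\omega_1$. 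This is a family of $K3^{[2]}$-type manifolds with involution of type $(M,\mathcal{K})$ over the \emph{compact} base $\mathbb{P}^1$, with $\omega_0$ and $\omega_1$ realized as the fibers over $0$ and $\infty$.

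By Theorem~\ref{p-3-4} the function $\tau_{M,\mathcal{K},\xs}$ is smooth on $\mathbb{P}^1$ and $-dd^c\log\tau_{M,\mathcal{K},\xs}=\tfrac{(t+1)(t+7)}{16}c_1(f_*K_{\xs},h_{L^2})+\omega_{H^{\cdot}(\xss)}$. The core of the argument is to show every term on the right vanishes for this family. For $c_1(f_*K_{\xs},h_{L^2})$ and $c_1(f_*K_{\xss},h_{L^2})$ I would use that the fibers $X$ and $X^\iota$ carry the fixed complex structure and that $K_X$, $K_{X^\iota}$ are generated by \emph{top-degree} holomorphic forms (a holomorphic symplectic form squared on the fourfold $X$, a holomorphic $2$-form on the surface $X^\iota$); the $L^2$-norm of a top-degree holomorphic form is the cohomological pairing $\int\alpha\wedge\bar\alpha$, hence independent of the \K metric, so these Hodge line bundles are trivial hermitian bundles with vanishing Chern form. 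For the remaining pair $c_1(f_*\Omega^1_{\xss},h_{L^2})-c_1(R^1f_*\mathcal{O}_{\X^\iota},h_{L^2})$ I would note that, the complex structure of $X^\iota$ being constant along the family, $f_*\Omega^1_{\xss}$ and $R^1f_*\mathcal{O}_{\X^\iota}$ are the constant bundles with fibers the harmonic $(1,0)$- and $(0,1)$-forms of $X^\iota$; complex conjugation is an anti-linear $L^2$-isometry between them, so the Gram determinants of the two $L^2$-metrics coincide and the two Chern forms are equal, whence their difference cancels. Thus the full right-hand side is $0$.

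Therefore $dd^c\log\tau_{M,\mathcal{K},\xs}=0$ on $\mathbb{P}^1$, so $\log\tau_{M,\mathcal{K},\xs}$ is harmonic on a compact Riemann surface and hence constant; evaluating at $0$ and $\infty$ gives $\tau_{M,\mathcal{K}}(X,\iota;\omega_0)=\tau_{M,\mathcal{K}}(X,\iota;\omega_1)$. The final assertion that $\tau_{M,\mathcal{K}}(X,\iota)$ is an invariant then follows, since the common value manifestly depends only on the isomorphism class of $(X,\iota)$. The main obstacle is the vanishing of $\omega_{H^{\cdot}(\xss)}$, i.e.\ checking that the two genuinely metric-dependent Hodge bundles $f_*\Omega^1_{\xss}$ and $R^1f_*\mathcal{O}_{\X^\iota}$ contribute the same Chern form; the compactness of the base $\mathbb{P}^1$ (rather than a disk) is precisely what upgrades the resulting harmonicity to constancy, and I would take care to verify that the trivial family genuinely qualifies as a family of type $(M,\mathcal{K})$ so that Theorem~\ref{p-3-4} applies.
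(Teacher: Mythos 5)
Your proposal is correct, though a direct comparison is impossible here: the paper's ``proof'' of Theorem \ref{t-0-1} is only the citation to \cite[Theorem 3.14]{I1}. Your route --- interpolate the two $\iota$-invariant K\"ahler forms over a compact base $\mathbb{P}^1$, apply the curvature formula of Theorem \ref{p-3-4} to the resulting product family, show the right-hand side vanishes, and conclude by harmonicity of a smooth function on a compact Riemann surface --- is precisely the mechanism standard in this line of work (it is the trick used by Fang--Lu--Yoshikawa for the BCOV invariant, and the fact that Theorem \ref{p-3-4} is \cite[Theorem 3.12]{I1}, immediately preceding \cite[Theorem 3.14]{I1}, strongly suggests it is also the intended proof there). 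Your reading of Definition \ref{d-3-1} as defining a metric-dependent quantity, with Theorem \ref{p-3-4} as the curvature formula for that quantity, correctly dissolves the apparent circularity. Each vanishing you need checks out: $c_1(f_*K_{\xs},h_{L^2})=0$ and $c_1(f_*K_{\xss},h_{L^2})=0$ because $L^2$-norms of top-degree holomorphic forms are purely cohomological, hence constant along your family; and your conjugation argument for $c_1(f_*\Omega^1_{\xss},h_{L^2})-c_1(R^1f_*\mathcal{O}_{\mathscr{X}^{\iota}},h_{L^2})=0$ is sound, since anti-holomorphic $1$-forms are harmonic for every K\"ahler metric (so the $\bar\alpha_i$ give a constant holomorphic frame of $R^1f_*\mathcal{O}_{\mathscr{X}^{\iota}}$ for the product family) and $(1,0)$-forms on a surface are primitive, giving $\langle\bar\alpha,\bar\beta\rangle_{L^2}=\overline{\langle\alpha,\beta\rangle}_{L^2}$ --- this is exactly the computation the paper itself performs in Lemma \ref{l-n-2}. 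One simplification worth noting: the paper records, citing \cite[Lemma 3.13]{I1}, that $\omega_{H^{\cdot}(\xss)}$ is independent of the fiberwise K\"ahler metric, so for your trivial family you could simply evaluate it using the constant metric $\omega_0$ on every fiber, where all $L^2$-metrics are constant and the form vanishes with no computation; your direct argument buys independence from that lemma at the cost of the conjugation step.
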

	
	\begin{proof}
		See \cite[Theorem 3.14]{I1}.
	\end{proof}
	
	By \cite[Lemma 3.15]{I1}, $\tau_{M, \mathcal{K}}$ is viewed as a smooth real-valued function on $\mathcal{M}^{\circ}_{M, \mathcal{K}}$.
	Namely,
	$$
		\tau_{M,\mathcal{K}}(p) = \tau_{M,\mathcal{K}}(X, \iota) \quad ((X, \iota) \in P_{M, \mathcal{K}}^{-1}(p) )
	$$
	is independent of the choice of $(X, \iota) \in P_{M, \mathcal{K}}^{-1}(p)$.
	
	Let $\omega_{\mathcal{M}_{M, \mathcal{K}}}$ be the orbifold \K form on $\mathcal{M}_{M, \mathcal{K}}$ induced from the \K form of the Bergman metric on the period domain $\Omega_{M^{\perp}}$.
	
	In \cite[Lemma 3.16]{I1}, there exists a smooth $(1,1)$-form $\sigma_{M, \mathcal{K}}$ on $\mathcal{M}^{\circ}_{M, \mathcal{K}}$ such that for any $(X, \iota) \in \tilde{\mathcal{M}}_{M, \mathcal{K}}$ we have
	$$
		P_{M, \mathcal{K}}^*\sigma_{M, \mathcal{K}} = c_1(\pi_*\Omega^1_{\XX / \operatorname{Def}(X, \iota)}, h_{L^2}) -c_1(R^1\pi_*\mathcal{O}_{\mathscr{X}^{\iota}}, h_{L^2}) -2c_1(\pi_*K_{\XX / \operatorname{Def}(X, \iota)}, h_{L^2}),
	$$ 
	where $P_{M, \mathcal{K}} : \operatorname{Def}(X, \iota) \to \mathcal{M}_{M, \mathcal{K}}$ is the period map of the Kuranishi family $\pi : (\X, \iota) \to \operatorname{Def}(X, \iota)$ of $(X, \iota)$.
	
	\begin{thm}\label{t-0-A}
		The following equation of differential forms on $\mathcal{M}^{\circ}_{M, \mathcal{K}}$ holds:
		$$
			-dd^c \log \tau_{M, \mathcal{K}} = \frac{(t+1)(t+7)}{8} \omega_{\mathcal{M}_{M, \mathcal{K}}} +\sigma_{ M, \mathcal{K} }.
		$$
	\end{thm}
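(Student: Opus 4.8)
The plan is to verify the identity locally on $\mathcal{M}^{\circ}_{M, \mathcal{K}}$ by transporting it, through the period map, to the base of a Kuranishi family and there invoking the fiberwise curvature formula of Theorem \ref{p-3-4}. Fix $p \in \mathcal{M}^{\circ}_{M, \mathcal{K}}$, choose $(X, \iota) \in P_{M, \mathcal{K}}^{-1}(p)$, and let $f = \pi : (\X, \iota) \to S$ be the Kuranishi family with $S = \operatorname{Def}(X, \iota)$. Since $\tau_{M, \mathcal{K}}$ is an invariant (Theorem \ref{t-0-1}) and descends to a smooth function on $\mathcal{M}^{\circ}_{M, \mathcal{K}}$ (\cite{I1}), we have $\tau_{M, \mathcal{K}, \xs} = \tau_{M, \mathcal{K}} \circ P_{M, \mathcal{K}}$ near $p$. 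Because $P_{M, \mathcal{K}} : S \to \mathcal{M}_{M, \mathcal{K}}$ is a local isomorphism of orbifolds (local Torelli) with image inside $\mathcal{M}^{\circ}_{M, \mathcal{K}}$, and $-dd^c\log(\tau_{M,\mathcal{K}}\circ P_{M, \mathcal{K}}) = P_{M, \mathcal{K}}^*(-dd^c\log\tau_{M, \mathcal{K}})$, it suffices to prove the pulled-back identity of $(1,1)$-forms on $S$. Theorem \ref{p-3-4} applied to $f$ gives
$$
	-dd^c \log (\tau_{M, \mathcal{K}} \circ P_{M, \mathcal{K}}) = \frac{(t+1)(t+7)}{16} c_1(f_*K_{\xs}, h_{L^2}) + \omega_{H^{\cdot}(\xss)}.
$$

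By the very definition of $\sigma_{M, \mathcal{K}}$ recalled above (\cite{I1}), applied to the Kuranishi family, the last term equals $P_{M, \mathcal{K}}^*\sigma_{M, \mathcal{K}}$. Hence everything reduces to identifying $c_1(f_*K_{\xs}, h_{L^2})$ with a multiple of $P_{M, \mathcal{K}}^*\omega_{\mathcal{M}_{M, \mathcal{K}}}$. The relative canonical bundle $K_{\xs} = \Omega^4_{\xs}$ is trivialized fiberwise by $\eta^2$, where $\eta$ is the relative holomorphic symplectic form; the squaring map $\eta \mapsto \eta^2$ identifies the Hodge line $H^{4,0}$ with $\operatorname{Sym}^2 H^{2,0}$ (and is nonzero by the Fujiki relation). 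Thus $f_*K_{\xs}$ is the Hodge line bundle with local holomorphic frame $\eta^2$, and its $L^2$-metric is $\|\eta^2\|_{L^2}^2$.

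The key computation expresses $\|\eta^2\|_{L^2}^2$ through the period. Since $\eta^2 \wedge \overline{\eta^2}$ is already a top form, $\|\eta^2\|_{L^2}^2 = c_n \int_X \eta^2 \wedge \overline{\eta^2}$ for a dimensional constant $c_n$ independent of the fiber and of the metric. Polarizing the Fujiki relation $\int_X \alpha^4 = 3\,q_X(\alpha)^2$ at $\alpha = \eta + \bar\eta$ and using $(\eta, \eta) = 0$, only the middle term survives and yields $\int_X \eta^2 \wedge \overline{\eta^2} = 2(\eta, \bar\eta)^2$, so $\|\eta^2\|_{L^2}^2 = 2c_n (\eta, \bar\eta)^2$. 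Applying $-dd^c\log$ kills the constant and produces the factor $2$:
$$
	c_1(f_*K_{\xs}, h_{L^2}) = -dd^c \log \|\eta^2\|_{L^2}^2 = -2\,dd^c \log (\eta, \bar\eta) = 2\,P_{M, \mathcal{K}}^* \omega_{\mathcal{M}_{M, \mathcal{K}}},
$$
where the last equality uses that $-dd^c\log(\eta, \bar\eta)$ is precisely the period-pullback of the Bergman form, $\eta$ being a local lift of $P_{M, \mathcal{K}}$. Substituting $\frac{(t+1)(t+7)}{16} \cdot 2 = \frac{(t+1)(t+7)}{8}$ into the first display and removing the pullbacks (valid since $P_{M, \mathcal{K}}$ is a local biholomorphism and the charts cover $\mathcal{M}^{\circ}_{M, \mathcal{K}}$) gives the claim.

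I expect the main obstacle to be the bookkeeping in this last step: one must confirm that the dimensional constant relating $\|\eta^2\|_{L^2}^2$ to $\int_X \eta^2 \wedge \overline{\eta^2}$ is genuinely constant along the family (so it vanishes under $dd^c$), that the factor $2$ is correctly produced by $H^{4,0} = \operatorname{Sym}^2 H^{2,0}$, and above all that the Bergman form is normalized so that $P_{M, \mathcal{K}}^* \omega_{\mathcal{M}_{M, \mathcal{K}}} = -dd^c\log(\eta, \bar\eta)$ with no spurious scalar; any other normalization would rescale the coefficient of $\omega_{\mathcal{M}_{M, \mathcal{K}}}$. Once the local identity is secured, the global descent and gluing are routine, since both sides are globally defined forms that agree on every Kuranishi chart.
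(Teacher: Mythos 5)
Your proof is correct and follows essentially the same route as the paper's: Theorem \ref{t-0-A} is proved in the paper by citing \cite[Theorem 3.17]{I1}, and the underlying argument is exactly your reduction --- pull back along the period map to Kuranishi charts, apply the family curvature formula of Theorem \ref{p-3-4}, identify $\omega_{H^{\cdot}(\xss)}$ with $P_{M,\mathcal{K}}^{*}\sigma_{M,\mathcal{K}}$ by the defining property from \cite[Lemma 3.16]{I1}, and use the identity $c_1(f_*K_{\xs}, h_{L^2}) = 2\,P_{M,\mathcal{K}}^{*}\omega_{\mathcal{M}_{M,\mathcal{K}}}$, which the paper itself invokes verbatim in (\ref{al38}). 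Your normalization worries all close correctly: the Fujiki constant $3$ for $K3^{[2]}$-type together with $q_X(\eta+\bar\eta)=2(\eta,\bar\eta)$ yields $\int_X \eta^2\wedge\overline{\eta^2}=2(\eta,\bar\eta)^2$; the dimensional constant relating $\|\eta^2\|_{L^2}^2$ to this integral is universal, hence killed by $dd^c\log$; and since $\log|(\eta,l_0)|^2$ is pluriharmonic for a holomorphic local lift $\eta$ of the period map, $-dd^c\log(\eta,\bar\eta)$ is the pullback of the Bergman form with no spurious scalar.
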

	
	\begin{proof}
		See \cite[Theorem 3.17]{I1}.
	\end{proof}

\subsection{Asymptotic behavior of $\tau_{M, \mathcal{K}}$}
	
	In this subsection, we keep the notation of the previous section \S1.2.
	In addition, we assume that the fiber $(X_s, \iota_s)$ of $f : (\X, \iota) \to C$ at any point $s \in C^{\circ}=C \setminus \Delta_f$ is a manifold of $K3^{[2]}$-type with involution of type $(M, \mathcal{K})$.

	Fix $0 \in \Delta_f$.
	Let $(\D, s)$ be a coordinate on $C$ centered at $0$ such that $s(\D)$ is the unit disk and that $\D \cap \Delta_f = \{ 0 \}$.
	The restriction of $f$ to $X=f^{-1}(\D)$ is denoted by $f : X \to \D$.
	By Assumption \ref{as-1-2-1}, the singular fiber $X_0 = f^{-1}(0)$ is simple normal crossing.
	
	Let $h_X := h_{\X}|_X$ be the $\G$-invariant \K metric on $X$ induced from the $\G$-invariant \K metric $h_{\X}$ on $\X$ and its associated K\"ahler form is denoted by $\omega_X$. 
	The fiberwise K\"ahler metric on $TX/\D |_{\D^*}$ induced from $h_X$ is denoted by $h_{X/\D}$
	and we set $h_s = h_{X/\D}|_{X_s}$ and $\omega_s = \omega_X|_{X_s}$ for $s \in \D$.
	We set $\omega_{X/\D} = \{ \omega_s \}_{s \in \D}$.
	
	Recall that the \K form of $h_{\X}$ is assumed to be integral. 
	Then $\omega_s$ and $\omega_s|_{X_s^{\iota}}$ are integral K\"ahler forms for $s \in \D^*$.
	By \cite[Proposition 4.2]{MR4255041}, the volumes $\vol(X_s, \omega_s)$, $\vol(X_s^{\iota}, \omega_s|_{X_s^{\iota}})$ and the covolume $\vol_{L^2}(H^1(X_s^{\iota}, \mathbb{Z}), \omega_s|_{X_s^{\iota}})$ are rational numbers.
	Therefore these are constant functions on $\D$ and are denoted by $\vol(X_{\infty}, \omega_{\X}|_{X_{\infty}})$, $\vol(X_{\infty}^{\iota},  \omega_{\X}|_{X_{\infty}^{\iota}})$ and $\vol_{L^2}(H^1(X_{\infty}^{\iota}, \mathbb{Z}), \omega_{\X}|_{X_{\infty}^{\iota}})$, respectively.
	
	We define a smooth function on $\D^*$ by
	$$
		A(X/\D, h_{X/\D})(s) =A(X_s, \iota_s, h_s),
	$$
	where $A(X_s, \iota_s, h_s)$ is a positive number defined in (\ref{al2-3-1-2}).
	
	Consider the relative canonical bundle $K_{X/\D} = K_X \otimes f^*K_{\D}^{-1}$.
	Since $f_*K_{X/\D}$ is a torsion free sheaf on $\D$, it is an invertible sheaf
	and we may regard it as a holomorphic line bundle on $\D$.
	
	\begin{lem}\label{l2-3-2}
		There exists a holomorphic form $\xi \in H^0(X, K_X)$ such that $\operatorname{div}(\xi) \subset X_0$.
	\end {lem}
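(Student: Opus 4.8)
The plan is to exploit that each general fibre $X_s$ $(s\in\D^*)$, being an irreducible holomorphic symplectic fourfold, has trivial canonical bundle, canonically trivialized by the square $\eta_s^{\wedge 2}$ of its holomorphic symplectic $2$-form $\eta_s$. A fibrewise generator can then be spread over $X$ by means of the pushforward $f_*K_{X/\D}$, which is already known to be an invertible sheaf on $\D$, and finally promoted from a relative form to an honest top form on $X$ by multiplying with $f^*(ds)$.

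First I would trivialize $f_*K_{X/\D}$. Since $\D$ is a disk, hence Stein and contractible, every holomorphic line bundle on $\D$ is trivial; in particular the invertible sheaf $f_*K_{X/\D}$ admits a nowhere vanishing holomorphic section $\sigma$. By the very definition of the pushforward, $\sigma$ is the same datum as a holomorphic section $\tilde{\sigma}\in H^0(X, K_{X/\D})$ of the relative canonical bundle. Next, writing $K_X = K_{X/\D}\otimes f^*K_{\D}$ and using that $ds$ is a nowhere vanishing section of $K_{\D}=\Omega^1_{\D}$ on $\D$, so that $f^*(ds)$ is a nowhere vanishing section of $f^*K_{\D}$ on all of $X$, I would set $\xi = \tilde{\sigma}\otimes f^*(ds)\in H^0(X, K_X)$. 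Then $\operatorname{div}(\xi)=\operatorname{div}(\tilde{\sigma})$ as divisors on $X$, since $f^*(ds)$ vanishes nowhere.

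It remains to check $\operatorname{div}(\tilde{\sigma})\subset X_0$, equivalently that $\tilde{\sigma}$ is nowhere vanishing on $X\setminus X_0 = f^{-1}(\D^*)$. For $s\in\D^*$ the map $f$ is smooth near $X_s$, so $K_{X/\D}|_{X_s}=K_{X_s}$, and $h^0(X_s, K_{X_s})=h^{4,0}(X_s)=1$ is constant on $\D^*$. By Grauert's base change theorem, $f_*K_{X/\D}$ is locally free of rank one near such $s$ and the evaluation $(f_*K_{X/\D})\otimes k(s)\to H^0(X_s,K_{X_s})$ is an isomorphism; hence $\tilde{\sigma}|_{X_s}$ is a generator of $H^0(X_s, K_{X_s})$. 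Since $K_{X_s}$ is trivialized by $\eta_s^{\wedge 2}$, which is nowhere vanishing because $\eta_s$ is everywhere nondegenerate, $\tilde{\sigma}|_{X_s}$ is nowhere vanishing on $X_s$. As this holds for every $s\in\D^*$, the section $\tilde{\sigma}$ does not vanish on $f^{-1}(\D^*)$, and therefore $\operatorname{div}(\xi)=\operatorname{div}(\tilde{\sigma})$ is an effective divisor supported on $X_0$, as desired.

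The only genuinely delicate point is the nonvanishing of $\tilde{\sigma}$ on each general fibre, which links the global triviality of $f_*K_{X/\D}$ on the disk with the fibrewise base change statement. This is harmless precisely because $f$ is smooth away from $0$ and $h^{4,0}$ is constant on $\D^*$, so base change applies with no hypothesis at the singular fibre; the behaviour of $\tilde{\sigma}$ over $0$, where $K_{X/\D}$ differs from the dualizing sheaf of the reducible fibre $X_0$, never enters, since we only need control on $f^{-1}(\D^*)$.
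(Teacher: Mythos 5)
Your proof is correct and follows essentially the same route as the paper: the paper trivializes $f_*K_X$ on the disk, identifies the fiber at $s\in\D^*$ with $H^0(X_s,K_X|_{X_s})$, and uses $K_X|_{X_s}\cong K_{X_s}\cong\mathcal{O}_{X_s}$ to conclude the section is fiberwise nowhere vanishing, exactly as you do. Your only deviations are cosmetic --- you pass through $K_{X/\D}$ and twist back by the nowhere vanishing factor $f^*(ds)$, and you spell out the Grauert base-change step that the paper leaves implicit --- which if anything makes the argument more careful.
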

	
	\begin{proof}
		Since $f_*K_X$ is a holomorphic line bundle on $\D$, it is a trivial line bundle and there exists a nowhere-vanishing holomorphic section $\xi \in H^0(\D, f_*K_X) =H^0(X, K_X)$ such that $f_*K_X =\mathcal{O}_{\D} \cdot \xi$.
		The fiber of $f_*K_X$ at $s \in \D^*$ is $H^0(X_s, K_X|_{X_s}) = \mathbb{C} \xi|_{X_s}$.
		Since $K_X|_{X_s} \cong K_{X_s} \cong \mathcal{O}_{X_s}$, $\xi|_{X_s}$ is a nowhere vanishing holomorphic section on $X_s$.
		Therefore we have $\operatorname{div}(\xi) \subset X_0$.
	\end{proof}
	
	By Lemma \ref{l2-3-2}, we have a holomorphic form $\xi \in H^0(X, K_X)$ such that $\operatorname{div}(\xi) \subset X_0$.
	We define a section $\eta_{X/\D} \in H^0(X, K_{X/\D})$ by 
	$$
		\eta_{X/\D} =\xi \otimes (f^* ds)^{-1}.
	$$
	For $s \in \D^*$, we may identify $\eta_{X/\D}|_{X_s}$ with the Poincar\'e residue
	$$
		\eta_s = \operatorname{Res}_{X_s} \frac{\xi}{f-s} \in H^0(X_s, K_{X_s}).
	$$
	Then we have $\xi|_{X_s} = \eta_s \wedge df$.
	We regard $\eta_{X/\D}$ as a family of holomorphic 4-forms 
	and we regard it as a section of $H^0(\D, f_*K_{X/\D})$.
	
	By \cite[Theorem~4.4]{MR4255041}, we have
	\begin{align}\label{f2-3-3}
		\log \| \eta_s \|^2_{L^2} = \alpha^{4,0} \log |s|^2 + O(\log \log |s|^{-1}), 
	\end{align}
	where $\alpha^{4,0}$ is a rational number defined by (cf. (\ref{al2-1-5-1}))
	$$
		\alpha^{4,0}= \alpha_+^{4,0} + \alpha_-^{4,0}, \quad \alpha^{4,0}_{\pm} = -\frac{1}{2 \pi i} \operatorname{Tr}(^l\log T_s|_{\operatorname{Gr}^4_{F_{\infty, \pm}}H^{4}(X_{\infty})_{\pm}}).
	$$
	
	Consider the projective bundle $\Pi' : \mathbb{P}(T \XX_H)^{\vee} \to \XX_H$.
	Let $\mu' : \XX_H \setminus \Sigma_f \to \mathbb{P}(T \XX_H)^{\vee}$ be the Gauss map defined by $\mu'(x) = [T_x X^{\iota}_{f(x)}]$.
	Let $A \in \mathbb{P}(T \XX_H)^{\vee}$ and set $x = \Pi'(A)$.
	Since the $(+1)$-eigenspace of the $\G$-action on $T_x\X$ is identified with $T_x \XX_H$ and since the $(-1)$-eigenspace is identified with $N_{\XX_H / \X, x}$, $A \oplus N_{\XX_H/\X,x}$ is a hyperplane of $T_x\X = T_x \XX_H \oplus N_{\XX_H / \X , x}$.
	We define an injective holomorphic map $\phi : \mathbb{P}(T \XX_H)^{\vee} \to \mathbb{P}(T \X)^{\vee}$ by 
	$$
		\phi([A]) = [A \oplus N_{\XX_H/\X,x}]. 
	$$
	Therefore $\mathbb{P}(T \XX_H)^{\vee}$ is regarded as a closed submanifold of $\mathbb{P}(T \X)^{\vee}$ via $\phi$.
	
	Recall that the Gauss maps $\nu$ and $\mu$ extend to meromorphic maps $\nu : \X \dashrightarrow \mathbb{P}(\Omega^1_{\X})$ and $\mu : \X \dashrightarrow \mathbb{P}(T\X)^{\vee}$.
	Then there exist a smooth projective manifold $\widetilde{\X}$,
	a normal crossing divisor $E$ on $\widetilde{\X}$, 
	a bimeromorphic morphism $q : \widetilde{\X} \to \X$,
	and two holomorphic maps $\widetilde{\nu} : \widetilde{\X} \to \mathbb{P}(\Omega^1_{\X})$, $\widetilde{\mu} : \widetilde{\X} \to \mathbb{P}(T\X)^{\vee}$
	such that
	\begin{itemize}
		\item $q^{-1}(\Sigma_f) =E$,
		\item the restriction $ q|_{\widetilde{\X} \setminus E} : \widetilde{\X} \setminus E \to \X \setminus \Sigma_f $ is an isomorphism,
		\item on $\widetilde{\X} \setminus E$, $\widetilde{\nu} = \nu \circ q$ and $\widetilde{\mu} = \mu \circ q$.
	\end{itemize}
	
	The strict transform of $\XX_H$ by $q : \widetilde{\X} \to \X$ is denoted by $\widetilde{\XX_H}$.
	The restriction of $q : \widetilde{\X} \to \X$ is also denoted by $q : \widetilde{\XX_H} \to \XX_H$.
	Since the following diagram commutes 
	$$
		\xymatrix{
		                &\widetilde{\XX_H} \setminus E \ar[r]^{q}_{\cong} \ar@{^{(}->}[d] & \XX_H \setminus \Sigma_f  \ar[r]^{\mu'} \ar@{^{(}->}[d] &  \mathbb{P}(T \XX_H)^{\vee}  \ar@{^{(}->}[d]^{\phi}  \\
		                & \widetilde{\X} \setminus E \ar[r]^{q}_{\cong} & \X \setminus \Sigma_f \ar[r]_{\mu} & \mathbb{P}(T \X)^{\vee}  ,					   
		}
	$$
	we have $\widetilde{\mu}(\widetilde{\XX_H}) \subset \mathbb{P}(T \XX_H)^{\vee}$. 
	Therefore there exists a holomorphic map $\widetilde{\mu}' : \widetilde{\XX_H} \to \mathbb{P}(T \XX_H)^{\vee} $ which is an extension of $\mu' \circ q : \widetilde{\XX_H} \setminus E  \to \mathbb{P}(T \XX_H)^{\vee}$.
	
	Let $U$ and $U'$ be the universal hyperplane bundle over $ \mathbb{P}(T \X)^{\vee} $ and $ \mathbb{P}(T \XX_H)^{\vee} $, respectively.
	These are equipped with hermitian metrics $h_U$ and $h_{U'}$ respectively as in \S 1.2.
	We have an isometry of the holomorphic hermitian vector bundle over $\XX_H \setminus \Sigma_f $:
	$$
		(T\XX_H/C, h_{\XX_H/C}) = (\mu')^*(U', h_{U'}).
	$$
	We define a $\partial$ and $\overline{\partial}$-closed smooth form $\widetilde{\varOmega}$ on $\widetilde{\XX_H} $ by
	$$
		\widetilde{\varOmega} = \widetilde{\mu'}^*c_1(U', h_{U'})^2  -8\widetilde{\mu'}^*c_2(U', h_{U'}) -\widetilde{\mu}^*c_1(U,h_U)|_{\widetilde{\XX_H}}^2 +3\widetilde{\mu}^*c_2(U,h_U)|_{\widetilde{\XX_H}}.
	$$
	On $\widetilde{\XX_H} \setminus E$, we have 
	$$
		q^* \varOmega = \widetilde{\varOmega}.
	$$
	
	Recall that $E_0$ was defined by $E_0 = E \cap q^{-1}(X_0)$.
	We define a rational number $\delta(X_0, \Omega^1_{X/\D})$ by
	\begin{align}\label{al4-2-2-1}
		\delta(X_0, \Omega^1_{X/\D}) = \frac{1}{48} \int_{E_0 \cap \widetilde{\XX_H}} \widetilde{\varOmega} -\frac{1}{48} \int_{q^* \operatorname{div}(\xi) \cap \widetilde{\XX_H}} \widetilde{\varOmega} -\frac{1}{16} \alpha^{4,0} (t^2+7).
	\end{align}
	
	\begin{lem}\label{l2-3-4}
		The following identity holds:
		$$
			\log A(X/ \D)(s) = \delta(X_0, \Omega^1_{X/\D}) \log |s|^2 +O( \log \log |s|^{-1}) \quad (s \to 0).
		$$
	\end{lem}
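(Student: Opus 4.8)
The plan is to compute $\log A(X/\D)(s)=\frac{1}{48}\int_{X_s^\iota}(\log\varphi_s)\,\varOmega_s$ directly from the definition (\ref{al2-3-1-2}), splitting $\log\varphi_s$ into pieces whose fibrewise asymptotics are already controlled. Since $\varphi_s$ is unchanged when the holomorphic $4$-form $\eta^2$ is replaced by any nonzero holomorphic $4$-form on $X_s$, I would use the relative form $\eta_s=\eta_{X/\D}|_{X_s}$ of (\ref{f2-3-3}) and write
$$
\log\varphi_s=\log\frac{\omega_s^4/4!}{\eta_s\wedge\bar\eta_s}+\log\|\eta_s\|^2_{L^2}-\log\vol(X_s,\omega_s).
$$
The last summand is constant in $s$, and the first equals $-\log|\eta_s|^2_{h_s}$ up to an additive universal constant, where $|\cdot|_{h_s}$ is the pointwise Hermitian norm. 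Thus, modulo bounded terms, the problem reduces to the $s\to 0$ asymptotics of $\int_{X_s^\iota}\log|\eta_s|^2_{h_s}\,\varOmega_s$ and of $\log\|\eta_s\|^2_{L^2}\cdot\int_{X_s^\iota}\varOmega_s$.

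Next I would express $|\eta_s|^2_{h_s}$ through the global datum $\xi$. From $\xi|_{X_s}=\eta_s\wedge df$ and the identity $h_{f^*\Omega^1_C}=\|df\|^2 f^*h_C$ of (\ref{f-1-4}) together with $h_C(ds,ds)=1$, one obtains $|\xi|^2_{h_{\X}}=|\eta_s|^2_{h_s}\|df\|^2$ on $X_s$, hence $\log|\eta_s|^2_{h_s}=\log|\xi|^2_{h_{\X}}-\log\|df\|^2$. I would then transport the fibre integrals to the resolution $q:\widetilde{\X}\to\X$: since $X_s$ is smooth for $s\in\D^*$, $q$ is an isomorphism over $X_s^\iota=\XX_H\cap X_s$, so it identifies $X_s^\iota$ with $(\widetilde f)^{-1}(s)\cap\widetilde{\XX_H}$, and $q^*\varOmega=\widetilde\varOmega$ is smooth on $\widetilde{\XX_H}$. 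Consequently $\int_{X_s^\iota}\log\|df\|^2\,\varOmega_s$ and $\int_{X_s^\iota}\log|\xi|^2_{h_{\X}}\,\varOmega_s$ become fibre integrals $(\widetilde f)_*$ of $q^*\log\|df\|^2\cdot\widetilde\varOmega$ and of $q^*\log|\xi|^2_{h_{\X}}\cdot\widetilde\varOmega$ along $\widetilde f:\widetilde{\XX_H}\to C$.

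These are exactly of the type treated in \cite[Lemma 4.4 and Corollary 4.6]{MR2262777}, already invoked in the proof of Proposition \ref{p-1-12}. Writing $q^*\log\|df\|^2$ and $q^*\log|\xi|^2_{h_{\X}}$ as their divisorial principal parts along $q^{-1}(X_0)$ plus smooth remainders, the asymptotic picks up a $\log|s|^2$ term whose coefficient is the integral of $\widetilde\varOmega$ over the relevant divisor over $0$: for $\log\|df\|^2$ this is $E_0\cap\widetilde{\XX_H}$ (as in Proposition \ref{p-1-12}, dictated by $-dd^c\log\|df\|^2=\nu^*c_1(L,h_L)$), while for $\log|\xi|^2_{h_{\X}}$, since $\operatorname{div}(\xi)\subset X_0$, it is the total transform $q^*\operatorname{div}(\xi)\cap\widetilde{\XX_H}$ counted with multiplicity. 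This yields
$$
\tfrac{1}{48}\int_{X_s^\iota}\log\tfrac{\omega_s^4/4!}{\eta_s\wedge\bar\eta_s}\,\varOmega_s\equiv\tfrac{1}{48}\Big(\int_{E_0\cap\widetilde{\XX_H}}\widetilde\varOmega-\int_{q^*\operatorname{div}(\xi)\cap\widetilde{\XX_H}}\widetilde\varOmega\Big)\log|s|^2,
$$
matching the first two terms of (\ref{al4-2-2-1}). For the remaining piece I would use (\ref{f2-3-3}), $\log\|\eta_s\|^2_{L^2}=\alpha^{4,0}\log|s|^2+O(\log\log|s|^{-1})$, together with the fact that $\int_{X_s^\iota}\varOmega_s$ is a topological, hence $s$-independent, characteristic number; identifying it via the computation underlying \cite{I1} as $\int_{X_s^\iota}\varOmega_s=-3(t^2+7)$ produces the third term $-\tfrac{1}{16}\alpha^{4,0}(t^2+7)\log|s|^2$. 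Summing the three contributions gives $\log A(X/\D)(s)=\delta(X_0,\Omega^1_{X/\D})\log|s|^2+O(\log\log|s|^{-1})$.

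The main obstacle I anticipate is the rigorous application of the fibre-integration asymptotics of \cite{MR2262777} to the two singular potentials: one must check that after the resolution $q$ both $q^*\log\|df\|^2$ and $q^*\log|\xi|^2_{h_{\X}}$ have the normal-crossing logarithmic form required by Lemma 4.4 and Corollary 4.6, and that the residue genuinely produces the divisors $E_0\cap\widetilde{\XX_H}$ and $q^*\operatorname{div}(\xi)\cap\widetilde{\XX_H}$ with the correct multiplicities and signs. A secondary but essential point is pinning down the topological constant $\int_{X_s^\iota}\varOmega_s=-3(t^2+7)$, which amounts to expressing this combination of Chern numbers of the fixed surface and of $T\X|_{X_s^\iota}$ in terms of $t=\operatorname{Tr}(\iota^*|_{H^{1,1}(X)})$; the bounded error terms (the constant volume factor and the universal constant relating $\eta_s\wedge\bar\eta_s$ to $|\eta_s|^2_{h_s}\,\omega_s^4/4!$) are harmless and are absorbed into the $O(\log\log|s|^{-1})$ remainder.
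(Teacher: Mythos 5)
Your proposal is correct and follows essentially the same route as the paper's proof: the identity $\eta_{X/\D}\wedge\overline{\eta}_{X/\D}\big/(\omega_{X/\D}^4/4!)=\|\xi\|^2/\|df\|^2$, transport of the fibre integrals to $\widetilde{f}:\widetilde{\XX_H}\to C$ via $q$, the asymptotics of \cite[Lemma 4.4 and Corollary 4.6]{MR2262777} producing the divisors $E_0\cap\widetilde{\XX_H}$ and $q^*\operatorname{div}(\xi)\cap\widetilde{\XX_H}$, and the $L^2$-asymptotic (\ref{f2-3-3}) for the remaining term. The only cosmetic difference is that you propose to verify the constant $\int_{X_s^{\iota}}\varOmega=-3(t^2+7)$ by a Chern-number computation in terms of $t$, whereas the paper simply quotes it, packaged with the density decomposition, from \cite[Lemma 3.5]{I1}.
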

	
	\begin{proof}
		Note that $\omega_X^5/5!$ is the volume form on $X$ and $\omega_{X/\D}^4/4! |_{X_s}$ is the volume form on $X_s$ for each $s \in \D$.
		We have
		$$
			\frac{\omega_X^5}{5!} = \frac{\omega_{X/\D}^4}{4!} \wedge \frac{i df \wedge \overline{df} }{ \| df \|^2 }.
		$$
		Then we have
		\begin{align*}
			\frac{ \eta_{X/\D} \wedge \overline{\eta}_{X/\D} }{ \omega_{X/\D}^4/4! } = \frac{ \eta_{X/\D} \wedge \overline{\eta}_{X/\D} \wedge df \wedge \overline{df} }{ \omega_{X/\D}^4/4! \wedge df \wedge \overline{df} }
			= \frac{i}{ \| df \|^2 } \frac{ \xi \wedge \overline{\xi} }{ \omega_X^5/5! } = \frac{ \| \xi \|^2 }{ \| df \|^2 }.
		\end{align*}
		We set $\widetilde{f} = f \circ q : \widetilde{\XX_H} \to \D$.
		By \cite[Lemma 3.5]{I1}, we have
		\begin{align}\label{al4-2-2-2}
		\begin{aligned}
			&\quad 48 \log A(X/\D, h_{X/\D}) \\
			&= f_*\left[ \log \left\{ \frac{\omega_{X/\D}^4/4!}{\eta_{X/\D} \wedge \overline{\eta}_{X/\D} } \frac{ \| \eta_{X/\D} \|^2_{L^2} }{ \vol(X_{\infty}, \omega_{\X}|_{X_{\infty}}) } \right\} \varOmega \right] \\
			&= f_*\left[ \log \frac{\| df \|^2}{ \| \xi \|^2 } \varOmega \right] -3(t^2+7) \log \frac{ \| \eta_{X/\D} \|^2_{L^2} }{ \vol(X_{\infty}, \omega_{\X}|_{X_{\infty}}) } \\
			&= \widetilde{f}_* \{ \log q^* \| df \|^2 \widetilde{\varOmega} \} -3(t^2+7) \log \| \eta_{X/\D} \|^2_{L^2} \\ 
			&\qquad -\widetilde{f}_* \{ \log q^*\| \xi \|^2 \widetilde{\varOmega} \} +3(t^2+7) \log \vol(X_{\infty}, \omega_{\X}|_{X_{\infty}}).
		\end{aligned}
		\end{align}
		By \cite[Lemma~4.4 and Corollary~4.6]{MR2262777}, we have
		\begin{align}\label{al4-2-2-3}
			\widetilde{f}_* \{ \log q^*\| df \|^2 \widetilde{\varOmega} \} = \left( \int_{E_0 \cap \widetilde{\XX_H}} \widetilde{\varOmega} \right) \log |s|^2 +O(1)
		\end{align}
		and
		\begin{align}\label{al4-2-2-4}
			 \widetilde{f}_* \{ \log q^*\| \xi \|^2 \widetilde{\varOmega} \} = \left(  \int_{q^* \operatorname{div}(\xi) \cap \widetilde{\XX_H} } \widetilde{\varOmega} \right) \log |s|^2 +O(1).
		\end{align}
		By the formula (\ref{f2-3-3}), (\ref{al4-2-2-1}), (\ref{al4-2-2-2}), (\ref{al4-2-2-3}) and (\ref{al4-2-2-4}), the proof is completed.
	\end{proof}
	
	Consider the flat family $f : \X^{\iota}_H \to C$.
	Let us calculate the singularity of the analytic torsion $\tau(\overline{ \mathcal{O} }_{\X^{\iota}_H})$ of $\overline{ \mathcal{O} }_{\X^{\iota}_H} $, where $\mathcal{O}_{\X^{\iota}_H}$ is equipped with the canonical metric.
	Since $\mathcal{O}_{\X^{\iota}_H}$ is a holomorphic line bundle on $\X^{\iota}_H$, the determinant of the cohomology $\lambda( \mathcal{O}_{\X^{\iota}_H})$ is a holomorphic line bundle on $C$. 
	We set
	\begin{align}\label{al6-2-2-1}
		\hat{c}(X^{\iota}_0, \mathcal{O}_{X^{\iota}_0}) = \alpha( X^{\iota}_0, \mathcal{O}_{X^{\iota}_0} ) -\sum_{q=0}^2 (-1)^q \alpha^{0,q},
	\end{align}
	where we define
	\begin{align}\label{al6-2-2-2}
		 \alpha( X^{\iota}_0, \mathcal{O}_{X^{\iota}_0} ) = \int_{ E_0 \cap \widetilde{\XX_H} } \widetilde{\mu'}^* \left\{ Td(U') \frac{ Td( H ) -1}{ c_1(H) } \right\} q^* ch(\mathcal{O}_{\X^{\iota}_H}),
	\end{align}
	and
	\begin{align}\label{al6-2-2-3}
		\alpha^{0,q} = -\frac{1}{2 \pi i} \operatorname{Tr}(^l\log T_s|_{\operatorname{Gr}^0_{F_{\infty}}H^{q}(X^{\iota}_{\infty})}).
	\end{align}
	Here $T$ is the monodromy operator and $F_{\infty}$ is the Steenbrink's limiting Hodge filtration for the family $f : \XX \to C$.
	
	\begin{lem}\label{l3-2-2-2}
		The following identity holds:
		$$
			\log \tau(\overline{ \mathcal{O} }_{\X^{\iota}_H}) = \hat{c}(X^{\iota}_0, \mathcal{O}_{X^{\iota}_0}) \log |s|^2 +O(\log\log |s|^{-1}) \quad (s \to 0).
		$$
	\end{lem}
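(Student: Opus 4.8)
\emph{Proof proposal.} The plan is to realize the analytic torsion as the ratio of the Quillen metric and the $L^2$ metric on the determinant of the cohomology of $\mathcal{O}_{\X^{\iota}_H}$ and to analyze the two factors separately, following the scheme of the proof of Theorem \ref{ta-1-2}, but now in the non-equivariant setting ($\G$ acts trivially on $\X^{\iota}_H$) and for the trivial line bundle. Since $f : \X^{\iota}_H \to C$ is proper and flat, the determinant of the cohomology $\lambda(\mathcal{O}_{\X^{\iota}_H})$ is a holomorphic line bundle on $C$; I fix a nowhere-vanishing holomorphic section $\sigma$ of it over $\D$. Writing the Quillen metric as the product of the analytic torsion and the $L^2$ metric gives, on $\D^*$,
\begin{align*}
	\log \tau(\overline{\mathcal{O}}_{\X^{\iota}_H}) = \log \| \sigma(s) \|^2_{\lambda(\mathcal{O}_{\X^{\iota}_H}), Q} - \log \| \sigma(s) \|^2_{\lambda(\mathcal{O}_{\X^{\iota}_H}), L^2},
\end{align*}
so it suffices to determine the singularity of each term. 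A useful simplification is that for the structure sheaf the \K extension and the logarithmic extension of $\lambda(\mathcal{O}_{\X^{\iota}_H})$ coincide, since $\Omega^0_{\X^{\iota}_H/\D}(\log) = \mathcal{O}_{\X^{\iota}_H}$; hence the single section $\sigma$ serves for both metrics and no analogue of the $\mu_{\pm}$-correction of Proposition \ref{p-1-15} appears.

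For the Quillen term I would apply Yoshikawa's singularity formula for Quillen metrics of one-parameter degenerations of compact \K manifolds \cite{MR2262777} to the family $f : \X^{\iota}_H \to C$, the trivial bundle $\overline{\mathcal{O}}_{\X^{\iota}_H}$, and the fiberwise \K metric induced by $h_X|_{\X^{\iota}_H}$. The Gauss-map resolution $q : \widetilde{\XX_H} \to \X^{\iota}_H$ together with the holomorphic lift $\widetilde{\mu'}$ has already been constructed, and since $ch(\mathcal{O}_{\X^{\iota}_H}) = 1$ the localized boundary contribution reduces to the Todd-type integrand of $(\ref{al6-2-2-2})$. As in Proposition \ref{p-1-8} this yields
\begin{align*}
	\log \| \sigma(s) \|^2_{\lambda(\mathcal{O}_{\X^{\iota}_H}), Q} \equiv \alpha( X^{\iota}_0, \mathcal{O}_{X^{\iota}_0} ) \log |s|^2,
\end{align*}
i.e. the two sides differ by a continuous function on $\D$.

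For the $L^2$ term I would use the Hodge-theoretic identity $H^q(X^{\iota}_s, \mathcal{O}) = \operatorname{Gr}^0_F H^q(X^{\iota}_s, \mathbb{C})$, so that $\lambda(\mathcal{O}_{\X^{\iota}_H})$ is the determinant of the bottom ($p = 0$) piece of the Hodge filtration of the surface family $f : \X^{\iota}_H \to C$. The analysis of \S1.3 applies verbatim to this family with trivial $\G$-action, so the argument of Proposition \ref{p-1-14}, run for this family with $p = 0$ (where the $\pm$-decomposition is trivial, $\alpha^{0,q}_+ = \alpha^{0,q}$ and $\alpha^{0,q}_- = 0$), gives, after passing to a semistable reduction,
\begin{align*}
	\log \| \sigma(s) \|^2_{\lambda(\mathcal{O}_{\X^{\iota}_H}), L^2} = \sum_{q=0}^2 (-1)^q \alpha^{0,q} \log |s|^2 + O(\log \log |s|^{-1}),
\end{align*}
with $\alpha^{0,q}$ as in $(\ref{al6-2-2-3})$. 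Subtracting the two displays and recalling the definition $(\ref{al6-2-2-1})$ of $\hat{c}(X^{\iota}_0, \mathcal{O}_{X^{\iota}_0})$ yields the claim.

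The main obstacle will be the Quillen step: one must check that the singularity of the Quillen metric for the flat degeneration $f : \X^{\iota}_H \to C$---whose central fiber $X^{\iota}_0$ need not be simple normal crossing---is indeed captured by the localized integral $(\ref{al6-2-2-2})$ over $E_0 \cap \widetilde{\XX_H}$. This requires identifying the Gauss map $\mu'$ of the surface family and the universal quotient bundles $U', H$ on $\mathbb{P}(T\X^{\iota}_H)^{\vee}$ with the data entering Yoshikawa's formula, and reconciling them with the ambient Gauss map $\mu$ through the embedding $\phi : \mathbb{P}(T\X^{\iota}_H)^{\vee} \hookrightarrow \mathbb{P}(T\X)^{\vee}$ constructed above. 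The $L^2$-side, by contrast, is a direct transcription of \cite{MR4255041} via \S1.3, and the extension-comparison step is trivial for $\mathcal{O}$, so these are expected to be routine.
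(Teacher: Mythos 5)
Your proposal is correct and follows essentially the same route as the paper: the paper likewise writes $\log\tau(\overline{\mathcal{O}}_{\X^{\iota}_H})$ as the difference of the Quillen and $L^2$ metrics on a nowhere-vanishing section of $\lambda(\mathcal{O}_{\X^{\iota}_H})|_{\D}$, gets the Quillen singularity $\alpha(X^{\iota}_0,\mathcal{O}_{X^{\iota}_0})\log|s|^2 + O(1)$ from Yoshikawa \cite[Theorem 6.1]{MR2262777}, and the $L^2$ singularity $\bigl(\sum_{q=0}^2(-1)^q\alpha^{0,q}\bigr)\log|s|^2 + O(\log\log|s|^{-1})$ from \cite[Theorem 4.4.(1)]{MR4255041}. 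The only difference is that the paper cites these two theorems directly rather than rerunning the arguments of Propositions \ref{p-1-8} and \ref{p-1-14}; in particular your ``main obstacle'' is absorbed by the generality of Yoshikawa's formula, which applies to general one-parameter degenerations with smooth total space and whose boundary term is independent of the chosen resolution of the Gauss map, so the already-constructed $(\widetilde{\XX_H},\widetilde{\mu}')$ may be used as is.
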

	
	\begin{proof}
		Let $\| \cdot \|^2_{Q, \lambda( \mathcal{O}_{\X^{\iota}_H})}$ and $\| \cdot \|^2_{L^2, \lambda( \mathcal{O}_{\X^{\iota}_H})}$ be the Quillen and $L^2$-metrics on $\lambda( \mathcal{O}_{\X^{\iota}_H})$, respectively.
		Since $\lambda( \mathcal{O}_{\X^{\iota}_H})|_{\D}$ is a trivial line bundle on $\D$, there exists a nowhere-vanishing holomorphic section $\sigma$ of $\lambda( \mathcal{O}_{\X^{\iota}_H})|_{\D}$.
		By \cite[Theorem 6.1]{MR2262777} (See also \cite{MR1486991}), we have
		\begin{align}\label{al4-2-2-5}
			\log \| \sigma \|^2_{Q, \lambda( \mathcal{O}_{\X^{\iota}_H})} = \alpha( X^{\iota}_0, \mathcal{O}_{X^{\iota}_0} ) \log |s|^2 +O(1) \quad (s \to 0).
		\end{align}
		By \cite[Theorem 4.4.(1)]{MR4255041}, we have
		\begin{align}\label{al4-2-2-6}
			\log \| \sigma \|^2_{L^2, \lambda( \mathcal{O}_{\X^{\iota}_H})} = \left( \sum_{q=0}^2 (-1)^q \alpha^{0,q} \right) \log |s|^2 +O( \log \log |s|^{-1}) \quad (s \to 0).
		\end{align}
		By (\ref{al4-2-2-5}) and (\ref{al4-2-2-6}), we obtain the desired result.
	\end{proof}
	
	We set
	\begin{align}\label{al6-2-2-4}
		\kappa(X_0, \Omega^1_{X/\D}, \mathcal{O}_{X^{\iota}}) = c(X_0, \Omega^1_{\X/C}) +\delta(X_0, \Omega^1_{X/\D}) -2\hat{c}(X^{\iota}_0, \mathcal{O}_{X^{\iota}_0})
	\end{align}
	
	\begin{thm}\label{t2-3-5}
		The following identity holds:
		$$
			\log \tau_{M, \mathcal{K}, \xs} = \kappa(X_0, \Omega^1_{X/\D}, \mathcal{O}_{X^{\iota}}) \log |s|^2 +O( \log \log |s|^{-1} ) \quad (s \to 0).
		$$
	\end{thm}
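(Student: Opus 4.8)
The plan is to take the logarithm of the defining expression for $\tau_{M, \mathcal{K}}$ in Definition \ref{d-3-1}, evaluated on the fibers of $f : (\X, \iota) \to C$, and to analyze the boundary behavior of each of its six factors separately as $s \to 0$. Since by Theorem \ref{t-0-1} the invariant is independent of the choice of $\iota$-invariant \K metric, I may compute the factors using the fiberwise metric $h_s = h_{X/\D}|_{X_s}$. Writing everything additively, we have
\begin{align*}
	\log \tau_{M, \mathcal{K}, \xs} &= \log \tau_{\iota}(\bar{\Omega}^1_{X_s}) + \frac{(t-1)(t-7)}{16}\log \vol(X_s, \omega_s) + \log A(X_s, \iota_s, h_s) \\
	&\quad - 2\log \tau(\bar{\mathcal{O}}_{X_s^{\iota}}) - 2\log \vol(X_s^{\iota}, \omega_s|_{X_s^{\iota}}) + \log \vol_{L^2}(H^1(X_s^{\iota}, \mathbb{Z}), \omega_s|_{X_s^{\iota}}),
\end{align*}
and it suffices to extract the coefficient of $\log|s|^2$ from each summand, modulo an error of size $O(\log\log|s|^{-1})$.

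The three volume terms contribute nothing to the singularity. Indeed, as recalled just before Lemma \ref{l2-3-2}, since the \K form of $h_{\X}$ is integral, the forms $\omega_s$ and $\omega_s|_{X_s^{\iota}}$ are integral for $s \in \D^*$; hence by \cite[Proposition 4.2]{MR4255041} the quantities $\vol(X_s, \omega_s)$, $\vol(X_s^{\iota}, \omega_s|_{X_s^{\iota}})$ and $\vol_{L^2}(H^1(X_s^{\iota}, \mathbb{Z}), \omega_s|_{X_s^{\iota}})$ are rational numbers, and therefore constant in $s$. Consequently each of these three logarithms is $O(1)$ and drops out of the coefficient of $\log|s|^2$.

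For the remaining three factors I would invoke the singularity formulas already established in this section. The equivariant analytic torsion of the cotangent bundle is governed by Theorem \ref{ta-1-2}, yielding the coefficient $c(X_0, \Omega^1_{\X/C})$. The factor $A(X_s, \iota_s, h_s) = A(X/\D, h_{X/\D})(s)$ is governed by Lemma \ref{l2-3-4}, yielding the coefficient $\delta(X_0, \Omega^1_{X/\D})$. Finally, since the vertical part $\X^{\iota}_V$ of the fixed locus lies over $\Delta_f$, for $s \in \D^*$ the fiber $X_s^{\iota}$ coincides with the fiber of the flat family $\X^{\iota}_H \to C$, so $\tau(\bar{\mathcal{O}}_{X_s^{\iota}}) = \tau(\bar{\mathcal{O}}_{(\X^{\iota}_H)_s})$; its singularity is governed by Lemma \ref{l3-2-2-2} with coefficient $\hat{c}(X^{\iota}_0, \mathcal{O}_{X^{\iota}_0})$, so the factor $-2$ in Definition \ref{d-3-1} contributes $-2\hat{c}(X^{\iota}_0, \mathcal{O}_{X^{\iota}_0})$.

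Summing the three nontrivial coefficients $c(X_0, \Omega^1_{\X/C}) + \delta(X_0, \Omega^1_{X/\D}) - 2\hat{c}(X^{\iota}_0, \mathcal{O}_{X^{\iota}_0})$ reproduces exactly the quantity $\kappa(X_0, \Omega^1_{X/\D}, \mathcal{O}_{X^{\iota}})$ defined in (\ref{al6-2-2-4}), and the finitely many error terms combine into a single $O(\log\log|s|^{-1})$. As the statement is essentially an assembly of results proved earlier in the section, I do not anticipate a serious obstacle; the only point demanding care is the bookkeeping identification $X_s^{\iota} = (\X^{\iota}_H)_s$ over the punctured disk, which guarantees that the vertical fixed components sitting over the central fiber do not enter the analytic torsion of the smooth nearby fibers and thus do not perturb the coefficient coming from Lemma \ref{l3-2-2-2}.
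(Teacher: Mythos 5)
Your proposal is correct and matches the paper's own proof: the paper likewise observes that the three volume/covolume terms are rational (by \cite[Proposition 4.2]{MR4255041}) and hence constant on $\D$, and then assembles the coefficient $\kappa(X_0, \Omega^1_{X/\D}, \mathcal{O}_{X^{\iota}})$ directly from Theorem \ref{ta-1-2}, Lemma \ref{l2-3-4}, and Lemma \ref{l3-2-2-2} via Definition \ref{d-3-1}. Your extra care with the identification $X_s^{\iota} = (\X^{\iota}_H)_s$ over $\D^*$ is a point the paper leaves implicit, and it is handled correctly.
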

	
	\begin{proof}
		As we remarked above, the volumes $\vol(X_{\infty}, \omega_{\X}|_{X_{\infty}})$, $\vol(X_{\infty}^{\iota},  \omega_{\X}|_{X_{\infty}^{\iota}})$ and the covolume $\vol_{L^2}(H^1(X_{\infty}^{\iota}, \mathbb{Z}), \omega_{\X}|_{X_{\infty}^{\iota}})$ are constant functions on $\D$.
		The result follows from Theorem \ref{ta-1-2}, Definition \ref{d-3-1}, Lemmas \ref{l2-3-4} and \ref{l3-2-2-2}.
	\end{proof}

\subsection{Construction of a 1-parameter degeneration over a finite branched covering of a curve on the moduli space}\label{ss-2-3}
	
	The Baily-Borel compactification of the orthogonal modular variety $\mathcal{M}_{M, \mathcal{K}} = \Omega^+_{M^{\perp}}/\Gamma_{M^{\perp}, \mathcal{K}}$ is denoted by $\mathcal{M}^*_{M, \mathcal{K}}$.
	Let $\Pi_{M^{\perp}, \mathcal{K}} : \Omega^+_{M^{\perp}} \to \Omega^+_{M^{\perp}}/\Gamma_{M^{\perp}, \mathcal{K}}$ be the natural projection.
	
	\begin{assumption}\label{as4-2-3-1}
		Let $C \subset \mathcal{M}^*_{M, \mathcal{K}}$ be an irreducible projective curve.
		We assume that $C$ is not contained in $\bar{\mathscr{D}}_{M^{\perp}} \cup (\mathcal{M}^*_{M, \mathcal{K}} \setminus \mathcal{M}_{M, \mathcal{K}})$.
	\end{assumption}
	
	\begin{prop}\label{p2-3-1}
		There exist a smooth projective curve $B$,
		a morphism $\varphi: B \to \mathcal{M}^*_{M, \mathcal{K}}$,
		a smooth projective variety $\mathscr{X}$ of complex dimension $5$,
		an involution $\iota : \mathscr{X} \to \mathscr{X}$,
		and a surjective morphism $f : \mathscr{X} \to B$ such that
		\begin{enumerate}[ label= \rm{(\arabic*)} ]
			\item $\varphi(B)=C$.
			\item $f \circ \iota =f$.
			\item There exists a Zariski open subset $B^{\circ} \neq \emptyset$ of $B$ such that $\varphi(B^{\circ}) \subset \mathcal{M}^{\circ}_{M, \mathcal{K}}$
				and for each $b \in B^{\circ}$, the fiber $(X_b, \iota_b)$ is a manifold of $K3^{[2]}$-type with involution of type $(M, \mathcal{K})$ whose period is $\varphi(b)$. 
			\item $f : \mathscr{X} \to B$ is a normal crossing degeneration. 
				Namely, its singular fibers are simple normal crossing divisor on $\X$. 
		\end{enumerate}
	\end{prop}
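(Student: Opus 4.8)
The plan is to realize the curve $C$ by a genuine algebraic family of pairs after passing to a finite branched cover of the normalization of $C$, and then to arrange the resulting degeneration to be simple normal crossing without destroying the regularity of the involution. Concretely, the task splits into three parts: (i) producing an honest family of $K3^{[2]}$-type manifolds with involution over a smooth curve mapping finitely onto $C$; (ii) compactifying this to a projective family over a smooth projective curve $B$ and extending the period morphism to all of $B$; and (iii) modifying the total space so that it is smooth, the involution is biregular and fiber-preserving, and the singular fibers are simple normal crossing.

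For (i), I would first take the normalization $\nu \colon \tilde{C} \to C$, which is a smooth projective curve, and set $\tilde{C}^{\circ} = \nu^{-1}(\mathcal{M}^{\circ}_{M, \mathcal{K}})$; by Assumption \ref{as4-2-3-1} this is a dense Zariski-open subset. The period map $P_{M, \mathcal{K}} \colon \tilde{\mathcal{M}}_{M, \mathcal{K}} \to \mathcal{M}^{\circ}_{M, \mathcal{K}}$ is only generically injective and $\mathcal{M}_{M, \mathcal{K}}$ is a coarse space, so to obtain an actual family I would rigidify by a level structure: passing to a neat (torsion-free) finite-index subgroup $\Gamma' \leq \Gamma_{M^{\perp}, \mathcal{K}}$, the quotient $N := \Omega^{\circ}_{M^{\perp}}/\Gamma'$ is smooth and carries a universal family $(\pi \colon \mathcal{Z} \to N, \tilde{\iota})$ of $K3^{[2]}$-type manifolds with involution of type $(M, \mathcal{K})$, with the forgetful morphism $N \to \mathcal{M}^{\circ}_{M, \mathcal{K}}$ finite. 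Taking an irreducible component of $\tilde{C}^{\circ} \times_{\mathcal{M}^{\circ}_{M, \mathcal{K}}} N$ dominating $\tilde{C}^{\circ}$ and normalizing it yields a smooth quasi-projective curve $B^{\circ}$ finite over $\tilde{C}^{\circ}$; pulling back $\mathcal{Z}$ along $B^{\circ} \to N$ gives a smooth family $\mathscr{X}^{\circ} \to B^{\circ}$ with a fiberwise involution, whose fiber over any point lying over $p$ is a pair of type $(M, \mathcal{K})$ with period $p$. This realizes conditions (2) and (3) over $B^{\circ}$.

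For (ii) and (iii), I would let $B$ be the smooth projective completion of $B^{\circ}$ and define $\varphi \colon B \to \mathcal{M}^*_{M, \mathcal{K}}$ by extending $\nu \circ (B^{\circ} \to \tilde{C}^{\circ})$ across the finitely many added points, using the properness of the Baily-Borel compactification; then $\varphi(B) = C$, giving (1). The family $\mathscr{X}^{\circ} \to B^{\circ}$ extends to some flat projective family over $B$ that is smooth over $B^{\circ}$, and the involution extends as a birational fiber-preserving automorphism. To make the total space smooth with simple normal crossing singular fibers while keeping $\iota$ biregular, I would apply the semistable reduction theorem at each of the finitely many degeneration points (absorbing the required finite base change into a further cover of $B$, which still surjects onto $C$) followed by an equivariant, i.e.\ functorial, resolution of singularities for the induced $\G$-action. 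Because functorial resolution commutes with the smooth group action, $\iota$ lifts to a biregular involution of the resolved fivefold $\mathscr{X}$ satisfying $f \circ \iota = f$, and the exceptional and boundary divisors can be arranged to be simple normal crossing, giving (4).

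The main obstacle is precisely this last equivariant normal-crossing reduction: one must simultaneously achieve smoothness of $\mathscr{X}$, the simple normal crossing property of the singular fibers, and biregularity of $\iota$ with $f \circ \iota = f$. Ordinary semistable reduction does not a priori respect the $\G$-action, so the argument relies on the $\mu_2$-equivariant (functorial) versions of both semistable reduction and resolution of singularities. A secondary point to verify carefully is the existence of the rigidifying level structure in step (i): one needs a neat finite-index subgroup of $\Gamma_{M^{\perp}, \mathcal{K}}$ over whose quotient the universal family of pairs—carrying the involution as part of its data—descends, so that the pulled-back family genuinely has fiberwise involutions of type $(M, \mathcal{K})$.
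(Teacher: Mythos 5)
Your outline matches the paper's strategy at the coarsest level (algebraic family over a finite cover of $C$, compactification, $\G$-equivariant modification to get SNC fibers), but it contains two genuine gaps. The first is your step (i): you assert that after passing to a neat finite-index subgroup $\Gamma' \leq \Gamma_{M^{\perp}, \mathcal{K}}$, the quotient $\Omega_{M^{\perp}}^{\circ}/\Gamma'$ carries a \emph{universal} family of $K3^{[2]}$-type pairs of type $(M, \mathcal{K})$. Nothing in the cited literature provides this, and it is doubtful as stated: $P_{M, \mathcal{K}}$ is only \emph{generically} injective (Joumaah), so $\mathcal{M}^{\circ}_{M, \mathcal{K}}$ is merely a coarse parameter space of periods; killing torsion in $\Gamma_{M^{\perp}, \mathcal{K}}$ removes quotient stabilizers but does not make the moduli problem fine, and over loci where injectivity of the period map fails the local Kuranishi families need not glue. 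The paper avoids this issue entirely: it restricts the Kuranishi family of a single pair $(X,\iota)$ to an analytic curve $U$ lifting a germ of $C$ into $\Omega^+_{M^{\perp}}$, embeds this projective family $\G$-equivariantly in $\mathbb{P}^N \times U$, takes the Zariski closure $T$ of its image in the fixed locus $\hilbi$ of the Hilbert scheme under the induced involution, and then produces $B$ by cutting the closure of the graph of the extended period map (Borel's extension theorem \cite{MR338456}, not mere properness of the Baily--Borel compactification, which by itself does not extend maps) with hyperplanes. Your level-structure shortcut would need a proof of existence of the universal family that is at least as hard as the Hilbert-scheme argument it is meant to replace.

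The second gap is your step (iii): you invoke $\G$-equivariant semistable reduction, flag it yourself as the main obstacle, and leave it unresolved. It is indeed unavailable as an off-the-shelf tool (Mumford's construction is not functorial, so it does not automatically carry the involution), and — crucially — it is \emph{unnecessary}: Proposition \ref{p2-3-1}(4) only requires the singular fibers to be simple normal crossing, not reduced. The paper's Step 3 needs no base change at all: the involution on $\mathcal{X}^I|_T \times_T B$ is already biregular (it is $\mathcal{I} \times \operatorname{id}_B$, since the base action is trivial), equivariant resolution of singularities \cite[Theorem 13.4]{MR1440306} gives a smooth $\G$-equivariant model $\mathcal{W}$, and Koll\'ar's functorial principalization \cite[Theorem 3.35]{MR2289519} applied to the ideal of the union of singular fibers commutes with the involution by functoriality under smooth morphisms, so $\iota$ lifts biregularly to the final $\X$ with SNC singular fibers. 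Semistable reduction does appear in the paper, but only later (Section 1.3, in the local analysis of $L^2$-metrics over a disk), where no equivariant structure on the whole semistable total space is required; importing it into the construction of the family here is a wrong turn.
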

	
	\begin{proof}
		We follow \cite[Theorem 2.8]{MR2047658} and \cite[Theorem 3.1.(1)]{MR2968220}.\par
		\noindent
		\textbf{Step 1 } 
		By Assumption \ref{p2-3-1}, there exists a point $x_0 \in C \cap \mathcal{M}^{\circ}_{M, \mathcal{K}}$ such that $x_0 \notin \operatorname{Sing} C$.
		Let $\tilde{x}_0 \in \Pi_{M^{\perp}, \mathcal{K}}^{-1} (x)$ be a point.
		The stabilizer of $\Gamma_{M^{\perp}, \mathcal{K}}$ at $\tilde{x}_0$ is denoted by $\Gamma_{M^{\perp}, \mathcal{K}, \tilde{x}_0}$.
		Since $\Gamma_{M^{\perp}, \mathcal{K}}$ acts properly discontinuously on $\Omega^+_{M^{\perp}}$, 
		there exists an open neighborhood $\widetilde{W} \subset \Omega^+_{M^{\perp}} \setminus \mathscr{D}_{M^{\perp}}$ of $\tilde{x}_0$ such that
		$$
			\{ \gamma \in \Gamma_{M^{\perp}, \mathcal{K}} ; \gamma ( \widetilde{W} ) \cap \widetilde{W} \neq \emptyset \} = \Gamma_{M^{\perp}, \mathcal{K}, \tilde{x}_0 }.
		$$
		Note that $\Gamma_{M^{\perp}, \mathcal{K}, \tilde{x}_0}$ is a finite group.
		By shrinking $\widetilde{W}$, we assume that $\widetilde{W}$ is $\Gamma_{M^{\perp}, \mathcal{K}, \tilde{x}_0}$-invariant.
		Then the restriction $\Pi_{M^{\perp}, \mathcal{K}} : \widetilde{W} \to \Pi_{M^{\perp}, \mathcal{K}}(\widetilde{W})$ can be identified with the quotient map $\widetilde{W} \to \widetilde{W}/ \Gamma_{M^{\perp}, \mathcal{K}, \tilde{x}_0}$ by the finite group $\Gamma_{M^{\perp}, \mathcal{K}, \tilde{x}_0}$.
		Therefore there exists a smooth analytic curve $U$ of $\Pi_{M^{\perp}, \mathcal{K}}^{-1}(C) \cap \widetilde{W} (\subset \Omega^+_{M^{\perp}})$ such that $\Pi_{M^{\perp}, \mathcal{K}}(U)$ is a smooth analytic curve of $C \cap \Pi_{M^{\perp}, \mathcal{K}}(\widetilde{W})$ 
		and the restriction $\Pi_{M^{\perp}, \mathcal{K}} : U \to \Pi_{M^{\perp}, \mathcal{K}}(U)$ is an isomorphism.
		
		Let $\tilde{x} \in U \subset \Omega^+_{M^{\perp}} \setminus \mathscr{D}_{M^{\perp}}$ and set $x = \Pi_{M^{\perp}, \mathcal{K}}(\tilde{x}) \in \mathcal{M}^{\circ}_{M, \mathcal{K}}$.
		By \cite[Lemma 9.5 and Proposition 9.9]{MR3519981}, the image of the period map $P_{M, \mathcal{K}} : \tilde{\mathcal{M}}_{M, \mathcal{K}} \to \mathcal{M}_{M, \mathcal{K}}$ is $\mathcal{M}^{\circ}_{M, \mathcal{K}}$.
		There exists $(X, \iota) \in \tilde{\mathcal{M}}_{M, \mathcal{K}}$ such that $P_{M, \mathcal{K}}(X, \iota) =x$.
		Let $\pi : (\X, \iota) \to \operatorname{Def}(X, \iota)$ be the Kuranishi family of $(X, \iota)$.
		By the local Torelli theorem, we may regard $\operatorname{Def}(X, \iota)$ as an open neighborhood of $\tilde{x}$ in $\Omega^+_{M^{\perp}}$.
		By shrinking $U$, we assume $U \subset \operatorname{Def}(X, \iota)$.
		Let $\tilde{\gamma} : U \to \operatorname{Def}(X, \iota)$ be the inclusion.
		Then there exists a family $P_{\mathcal{Z}} : (\mathcal{Z}, \iota_{\mathcal{Z}}) \to U$ of $K3^{[2]}$-type manifolds with involution of type $(M, \mathcal{K})$ with period $\tilde{\gamma} : U \to \operatorname{Def}(X, \iota) (\subset \Omega^+_{M^{\perp}})$.
		
		By \cite[Lemma 3.11]{I1}, we may assume that $P_{\mathcal{Z}} : \mathcal{Z} \to U$ is a projective morphism.
		Since $\mathcal{Z}$ has a $\mu_2$-equivariant relatively ample line bundle, we may assume that there exist a sufficiently large positive integer $N>0$ and an involution $\iota_{\mathbb{P}^N} : \mathbb{P}^N \to \mathbb{P}^N$ such that
		$\mathcal{Z} \subset \mathbb{P}^N \times U$, $P_{\mathcal{Z}} = \operatorname{pr}_2|{\mathcal{Z}}$, and $\iota_{\mathcal{Z}} = \left( \iota_{\mathbb{P}^N} \times \operatorname{id}_{U} \right)|_{\mathcal{Z}}$,
		where $ \operatorname{pr}_2 : \mathbb{P}^N \times U \to U$ is the second projection.
		
		The Hilbert polynomial of $Z_t = P_{\mathcal{Z}}^{-1}(t)$ with respect to $\mathcal{O}_{\mathbb{P}^N}(1)$ is denoted by $P(m)$.
		Let $\hilb$ be the Hilbert scheme parametrizing the closed subschemes of $\mathbb{P}^N$ with Hilbert polynomial $P$.
		This is a projective variety and there is the universal projective flat family $\pi : \mathcal{X} \to \hilb$.
		The point of $\hilb$ corresponding to a subscheme $Z$ of $\mathbb{P}^N$ with Hilbert polynomial $P$ is denoted by $[Z]$.
		We define a morphism $c: U \to \hilb$ by $c(t)= [Z_t]$.
		The family $P_{\mathcal{Z}}: \mathcal{Z} \to U$ is given by the pullback of $\pi : \mathcal{X} \to \hilb$ by $c: U \to \hilb$.
		
		The involution $\iota_{\mathbb{P}^N} : \mathbb{P}^N \to \mathbb{P}^N$ induces an involution $I : \hilb \to \hilb$ given by $I([Z])=[\iota_{\mathbb{P}^N}(Z)]$.
		We denote the fixed locus of $I$ by $\hilbi$, and denote the restriction of $\pi$ by $\pi^I : \mathcal{X}^I = \pi^{-1}(\hilbi) \to \hilbi$.
		There exists an involution $\mathcal{I} : \mathcal{X}^I \to \mathcal{X}^{I}$ such that $\pi^I \circ \mathcal{I} = \pi^I$ and $\mathcal{I} = \left( \iota_{\mathbb{P}^N} \times \operatorname{id}_{\hilb} \right)|_{\mathcal{X}^I}$.
		Since $\iota_{\mathcal{Z}} = \left( \iota_{\mathbb{P}^N} \times \operatorname{id}_{U} \right)|_{\mathcal{Z}}$, we have $c(U) \subset \hilbi$.
		Let $T$ be the Zariski closure of $c(U)$ in $\hilbi$.
		This is an irreducible reduced projective variety and $c(U) \cap \left( T \setminus \operatorname{Sing}T \right) \neq \emptyset$.
		The family $P_{\mathcal{Z}}: (\mathcal{Z}, \iota_{\mathcal{Z}}) \to U$ is the pullback of $\pi^I|_T : \left( \mathcal{X}^I|_T, \mathcal{I}|_T \right) \to T$ by $c:U \to T$.\par
		\noindent
		\textbf{Step 2 }
		Set $D=\{ [V] \in T ; \operatorname{Sing}V \neq \emptyset \}$ and $T^{\circ} = T \setminus \left( \operatorname{Sing}T \cup D \right)$.
		Then the restriction $\pi^I|_{T^{\circ}} : \left( \mathcal{X}^I|_{T^{\circ}}, \mathcal{I}|_{T^{\circ}} \right) \to T^{\circ}$ is a family of $K3^{[2]}$-type manifolds with antisymplectic involution of type $(M, \mathcal{K})$.
		We denote its period map by $\mathcal{P}_{T^{\circ}} : T^{\circ} \to \mathcal{M}^{\circ}_{M, \mathcal{K}}$.
		Since $c(U) \cap \left( T \setminus \operatorname{Sing}T \right) \neq \emptyset$ and $c(U) \subset T$, we may assume that $c(U) \subset T^{\circ}$.
		Since the period map for $P_{\mathcal{Z}} : (\mathcal{Z}, \iota_{\mathcal{Z}}) \to U$ is $ \gamma := \Pi_{M^{\perp}, \mathcal{K}} \circ \tilde{\gamma} : U \to \mathcal{M}^{\circ}_{M, \mathcal{K}}$,
		we have $\mathcal{P}_{T^{\circ}}(c(t))= \gamma(t)$ for any $t \in U$.
		
		\begin{claim}\label{claim2-3-1}
			The period map  $\mathcal{P}_{T^{\circ}} : T^{\circ} \to \mathcal{M}^{\circ}_{M, \mathcal{K}}$ extends to a meromorphic map $\mathcal{P}_T : T \dashrightarrow \mathcal{M}^*_{M, \mathcal{K}}$.
			Moreover, the Zariski closure of the graph of $\mathcal{P}_{T^{\circ}}$ in $T \times \mathcal{M}^*_{M, \mathcal{K}}$ is contained in $T \times C$. 
		\end{claim}
		
		\begin{proof}
			Let $\mu : \tilde{T} \to T$ be a resolution of $T$ with the following properties:
			$\tilde{T}$ is a projective variety, $\mu$ is a birational morphism, and there is a normal crossing divisor $E$ on $\tilde{T}$ such that $\mu|_{\tilde{T} \setminus E} : \tilde{T} \setminus E \to T^{\circ}$ is an isomorphism.
			Note that $\mathcal{P}_{T^{\circ}} : T^{\circ} \to \mathcal{M}^{\circ}_{M, \mathcal{K}}$ is locally liftable.
			Namely, for each point $t \in T^{\circ}$, there is an open neighborhood $V \subset T^{\circ}$, and a holomorphic map $\tilde{\mathcal{P}}:V \to \Omega^+_{M^{\perp}}$ such that $\Pi_{M, \mathcal{K}} \circ  \tilde{\mathcal{P}}= \mathcal{P}_{T^{\circ}}|_V$.
			Therefore $\mathcal{P}_{T^{\circ}} \circ \mu|_{\tilde{T} \setminus E} : \tilde{T} \setminus E \to \mathcal{M}^{\circ}_{M, \mathcal{K}}$ is also locally liftable.
			By \cite[3.7. Theorem]{MR338456}, $\mathcal{P}_{T^{\circ}} \circ \mu|_{\tilde{T} \setminus E}$ extends to a holomorphic map $\mathcal{P}_{\tilde{T}} : \tilde{T} \to \mathcal{M}^*_{M, \mathcal{K}}$.
			Set $\mathcal{P}_T = \mathcal{P}_{\tilde{T}} \circ \mu^{-1} : T \dashrightarrow \mathcal{M}^*_{M, \mathcal{K}}$.
			This is a meromorphic map and satisfies $\mathcal{P}_T|_{T^{\circ}} = \mathcal{P}_{T^{\circ}}$.
			
			It suffices to check that $\mathcal{P}_{T^{\circ}}(T^{\circ}) \subset C$.
			Let $T'$ be the Zariski closure of $(\mathcal{P}_{T^{\circ}})^{-1}(C)$ in $T$.
			This is a closed subvariety of $T$.
			Since $\mathcal{P}_{T^{\circ}} \left( c(U) \right) =\gamma(U) \subset C$, we have $c(U) \subset T'$.
			By the definition of $T$, we have $T \subset T'$. 
			Since $T' \subset T$ by definition, we get $T=T'$.
			Thus we obtain $\mathcal{P}_{T^{\circ}}(T^{\circ}) \subset C$.
		\end{proof}
		
		Let $\Gamma$ be the Zariski closure of the graph of $\mathcal{P}_{T^{\circ}}$ in $T \times \mathcal{M}^*_{M, \mathcal{K}}$.
		By the above claim, $\Gamma$ is a closed subvariety of $T \times C$.
		Set $\Gamma^{\circ} = \Gamma \cap \left( T^{\circ} \times \mathcal{M}^{\circ}_{M, \mathcal{K}} \right)$. 
		For each $t \in U$, we have $\left( c(t), \gamma(t) \right) \in \Gamma^{\circ}$.
		Therefore, $\Gamma^{\circ}$ is a non-empty Zariski open subset of $\Gamma$.
		Let $p : \Gamma \to T$ and $q : \Gamma \to C$ be the natural projections.
		Since $p|_{\Gamma^{\circ}} : \Gamma^{\circ} \cong T^{\circ}$ is isomorphic, we have $\Gamma^{\circ} \subset \Gamma \setminus \operatorname{Sing}\Gamma$. \par
		Since $\mathcal{P}_{T^{\circ}}(c(t))= \gamma(t) \in C \hspace{2pt} (t \in U)$, the image of $q : \Gamma \to C$ contains an open subset of $C$.
		Therefore, $q : \Gamma \to C$ is dominant.
		Fix a projective embedding $\Gamma \subset \mathbb{P}^{N'}$.
		There is a hyperplane $H_1 \subset \mathbb{P}^{N'}$ such that $q|_{\Gamma \cap H_1} : \Gamma \cap H_1 \to C$ is dominant and $\dim \left( \Gamma \cap H_1 \right) = \dim \Gamma -1$.
		Repeating this process, we have hyperplanes $H_1, \dots , H_r \subset \mathbb{P}^{N'}$ such that $q|_{\Gamma \cap H_1 \cap \dots \cap H_r } : \Gamma \cap H_1 \cap \dots \cap H_r \to C$ is dominant and $\Gamma \cap H_1 \cap \dots \cap H_r$ is a curve.
		Let $B'$ be an irreducible component of $\Gamma \cap H_1 \cap \dots \cap H_r$ such that $q|_{B'} : B' \to C$ is dominant.
		We denote by $B$ the normalization of $B'$, and denote by $h$ the composition of the normalization morphism $B \to B'$ and the embedding $B' \to \Gamma$.
		Set $\varphi = q \circ h : B \to C$.
		Then $B$ is a smooth projective curve and $\varphi : B \to C$ is a surjective morphism.
		
		Consider the family of $K3^{[2]}$-type manifolds $\pi_B : \mathcal{X}^I|_T \times_T B \to B$ induced from $\pi^I|_T : \left( \mathcal{X}^I|_T, \mathcal{I}|_T \right) \to T$ 
		and the involution $\mathcal{I} \times \operatorname{id}_B : \mathcal{X}^I|_T \times_T B \to \mathcal{X}^I|_T \times_T B$ induced from $\mathcal{I} : \mathcal{X}^I \to \mathcal{X}^{I}$ by $p \circ h :B \to T$.
		Its period map is given by $\mathcal{P}_T \circ p \circ h = q \circ h = \varphi$.
		Set $B^{\circ} =B \cap h^{-1}(\Gamma^{\circ})$.
		Then $B^{\circ}$ is a non-empty Zariski open subset of $B$.\par
		\noindent
		\textbf{Step 3 }
		By \cite[Theorem 13.4]{MR1440306}, there exist a resolution of singularities $\Pi : \mathcal{W} \to \mathcal{X}^I|_T \times_T B$, and a holomorphic involution $\theta : \mathcal{W} \to \mathcal{W}$ such that
		$\mathcal{W}$ is a smooth projective manifold and such that $\Pi : \mathcal{W} \to \mathcal{X}^I|_T \times_T B$ is $\G$-equivariant with respect to the $\G$-actions induced from $\theta$ and $\mathcal{I} \times \operatorname{id}_B$.
		We set $\widetilde{\pi} = \pi_B \circ \Pi : (\mathcal{W}, \theta) \to B$.
		
		By \cite[Theorem 3.35]{MR2289519}, there exits a blowup sequence functor $\mathcal{BP}$ defined on all triples $(X, I, E)$, where $X$ is a smooth scheme of finite type over $\mathbb{C}$,
		$I \subset \mathcal{O}_X$ is an ideal sheaf that is not zero on any irreducible component of $X$, $E$ is a simple normal crossing divisor on $X$,
		satisfying the following properties:
		\begin{enumerate}[ label= \rm{(\arabic*)} ]
			\item In the blowup sequence $\mathcal{BP}(X, I, E)$ =
			$$
				\Pi : X_r \xrightarrow{\pi_{r-1}} X_{r-1} \xrightarrow{\pi_{r-2}} \dots \xrightarrow{\pi_{1}} X_1 \xrightarrow{\pi_{0}} X_0 =X,
			$$
			all centers of blowups are smooth.
			\item The pullback $\Pi^*I \subset \mathcal{O}_{X_r}$ is the ideal sheaf of a simple normal crossing divisor.
			\item $\mathcal{BP}$ commutes with smooth morphism. In particular, for any automorphism $\tau : X \to X$,
			$$
				\mathcal{BP}( X, \tau^*I, \tau^{-1}(E) ) = \tau^* \mathcal{BP}( X, I, E ).
			$$
		\end{enumerate}  
		Here $\tau^* \mathcal{BP}( X, I, E )$ is induced from the pullback 
		$$
			\xymatrix{
				 \Pi' : \hspace{-35pt} &X'_r \ar[r]^{\tau_{r-1}^* \pi_{r-1}} \ar[d]^{\tau_r} &X'_{r-1} \ar[r]^{\tau_{r-2}^* \pi_{r-2}} \ar[d]^{\tau_{r-1}} &\dots \ar[r]^{\tau_{1}^* \pi_{1}} &X'_1 \ar[r]^{\tau^* \pi_{0}} \ar[d]^{\tau_1}  &X_0 =X \ar[d]^{\tau}   \\
				 \Pi : \hspace{-35pt} &X_r \ar[r]^{\pi_{r-1}} &X_{r-1} \ar[r]^{\pi_{r-2}} &\dots \ar[r]^{\pi_{1}} &X_1 \ar[r]^{\pi_{0}} &X_0 =X    
			}
		$$
		by deleting $\tau_i^* \pi_i$ whose center is empty and reindexing the resulting blowup sequence.
		Let $\hat{W}$ be the union of the singular fibers of $\widetilde{\pi}$ and let $I_{\hat{W}}$ be its ideal sheaf.
		Therefore the blowup sequence $\mathcal{BP}( \mathcal{W}, I_{\hat{W}}, \emptyset)$ gives a smooth projective variety $\X$, a projective birational morphism $\Pi' : \X \to \mathcal{W}$, a simple normal crossing divisor $\hat{X} \subset \X$ with $\hat{X} = (\Pi')^*\hat{W}$.
		Moreover, $\mathcal{BP}( \mathcal{W}, I_{\hat{W}}, \emptyset) = \mathcal{BP}( \mathcal{W}, \theta^* I_{\hat{W}}, \emptyset) = \theta^* \mathcal{BP}( \mathcal{W}, I_{\hat{W}}, \emptyset)$ induces an involution $\iota : \X \to \X$ such that the following diagram commutes:
		$$
			\xymatrix{
				\X \ar[r]^{\iota}  \ar[d]_{\Pi'} & \X \ar[d]^{\Pi'}   \\
				\mathcal{W} \ar[r]^{\theta} & \mathcal{W} .   
			}
		$$
		Thus $\Pi' : (\X, \iota) \to (\mathcal{W}, \theta)$ is $\G$-equivariant and the singular fibers of $f := \widetilde{\pi} \circ \Pi'$ are simple normal crossing
		and the data $(B, \phi, \X, \iota, f)$ satisfies the conditions (1), (2), (3), (4) in the statement.
		This completes the proof.
	\end{proof}

	\begin{thm}\label{t3-2-3-1}
		Let $C \subset \mathcal{M}^*_{M, \mathcal{K}}$ be an irreducible projective curve satisfying Assumption \ref{as4-2-3-1}.
		Let $p \in C \cap \bar{\mathscr{D}}_{M^{\perp}}$ be a smooth point of $C$.
		Choose a coordinate $(\D, s)$ on $C$ centered at $p$ such that $\D \cap \bar{\mathscr{D}}_{M^{\perp}} = \{p \}$ and $s(\D)$ is the unit disk.
		There exists a constant $a \in \mathbb{Q}$ such that
		$$
			\tau_{M, \mathcal{K}} (s) = a \log |s|^2 + O(\log \log |s|^{-1}) \quad (s \to 0).
		$$
	\end{thm}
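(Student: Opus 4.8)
The plan is to deduce the statement from the local result Theorem~\ref{t2-3-5} by pulling everything back to the total space of the degenerating family furnished by Proposition~\ref{p2-3-1}. First I would invoke Proposition~\ref{p2-3-1} to obtain a smooth projective curve $B$, a surjective morphism $\varphi : B \to \mathcal{M}^*_{M, \mathcal{K}}$ with $\varphi(B) = C$, a smooth projective fivefold $\X$ carrying an involution $\iota$, and a normal crossing degeneration $f : (\X, \iota) \to B$ whose fibers over a nonempty Zariski open set $B^{\circ}$ are manifolds of $K3^{[2]}$-type with involution of type $(M, \mathcal{K})$ and period $\varphi(b)$. Since $\tau_{M, \mathcal{K}}$ descends to a smooth function on $\mathcal{M}^{\circ}_{M, \mathcal{K}}$ depending only on the period, for every $b \in B^{\circ}$ one has $\tau_{M, \mathcal{K}}(X_b, \iota_b) = (\tau_{M, \mathcal{K}}|_C)(\varphi(b))$; that is, the family invariant $\tau_{M, \mathcal{K}, \X/B}$ of Theorem~\ref{p-3-4} is the $\varphi$-pullback of $\tau_{M, \mathcal{K}}|_C$ over $B^{\circ}$.

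Next I would analyze the local picture above $p$. Because $p \in \bar{\mathscr{D}}_{M^{\perp}}$ lies outside $\mathcal{M}^{\circ}_{M, \mathcal{K}}$ while $\varphi(B^{\circ}) \subset \mathcal{M}^{\circ}_{M, \mathcal{K}}$, any point $q \in \varphi^{-1}(p)$ belongs to the finite set $B \setminus B^{\circ}$. I would fix such a $q$ and choose a coordinate $t$ centered at $q$ on a disk $U_q \subset B$ so small that $U_q \setminus \{q\} \subset B^{\circ}$ and $\varphi(U_q) \subset \D$. By Proposition~\ref{p2-3-1}~(3),(4) the restriction $f : f^{-1}(U_q) \to U_q$ is then a one-parameter normal crossing degeneration of $K3^{[2]}$-type manifolds with involution of type $(M, \mathcal{K})$, so Theorem~\ref{t2-3-5} applies and, with $\kappa \in \mathbb{Q}$ the invariant~(\ref{al6-2-2-4}) attached to this degeneration, yields
\begin{align*}
	\log \tau_{M, \mathcal{K}, \X/B}(t) = \kappa \log |t|^2 + O(\log \log |t|^{-1}) \quad (t \to 0).
\end{align*}

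Finally I would transfer this asymptotic through the branched covering $\varphi$. Near $q$ the holomorphic function $s \circ \varphi$ vanishes to some order $k \geqq 1$, namely the ramification index of $\varphi$ at $q$, so $s \circ \varphi = t^{k} u(t)$ with $u$ a nonvanishing holomorphic unit; hence $\log |s|^2 = k \log |t|^2 + O(1)$, equivalently $\log |t|^2 = \tfrac{1}{k}\log |s|^2 + O(1)$, and since $\log |t|^{-1} \to +\infty$ one also gets $\log \log |t|^{-1} = \log \log |s|^{-1} + O(1)$. Substituting into the displayed asymptotic, absorbing all bounded terms into $O(\log \log |s|^{-1})$, and using the identification $\log \tau_{M, \mathcal{K}, \X/B}(t) = \log (\tau_{M, \mathcal{K}}|_C)(s)$ from the first step, I obtain
\begin{align*}
	\log (\tau_{M, \mathcal{K}}|_C)(s) = \frac{\kappa}{k} \log |s|^2 + O(\log \log |s|^{-1}) \quad (s \to 0),
\end{align*}
so the theorem holds with $a = \kappa / k \in \mathbb{Q}$.

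Since the analytic heart of the argument, the identification of the leading coefficient with a rational topological invariant together with the sharp control of the error term, is already contained in Theorem~\ref{t2-3-5}, the remaining work is essentially bookkeeping. The only genuinely delicate points I anticipate are ensuring that the Zariski open locus $B^{\circ}$ produced by Proposition~\ref{p2-3-1} is exactly where the fibers have type $(M, \mathcal{K})$, so that the isolated boundary point $q$ carries the entire singularity and no spurious contribution from other degenerate fibers enters, and confirming that the error term $O(\log\log|s|^{-1})$ is genuinely preserved, rather than degraded, under the reparametrization $s = t^{k} u(t)$.
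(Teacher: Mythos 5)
Your proposal is correct and follows exactly the paper's route: the published proof is a one-line application of Theorem \ref{t2-3-5} to the degenerating family supplied by Proposition \ref{p2-3-1}, with the bookkeeping you spell out (descent of $\tau_{M,\mathcal{K}}$ through the period map, finiteness of $B\setminus B^{\circ}$ so a small disk about $q\in\varphi^{-1}(p)$ meets no spurious degenerate fibers, and the rescaling $\log|s|^2=k\log|t|^2+O(1)$, $\log\log|t|^{-1}=\log\log|s|^{-1}+O(1)$ under $s\circ\varphi=t^{k}u(t)$) left implicit. Your coefficient $a=\kappa/k\in\mathbb{Q}$ is precisely what the paper's argument yields.
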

	
	\begin{proof}
		Applying Theorem \ref{t2-3-5} to the flat family $f : (\mathscr{X}, \iota) \to B$ obtained in Proposition \ref{p2-3-1}, we obtain the desired result.
	\end{proof}

\section{Natural involutions}\label{s-n}

	Let $M_0$ be a primitive hyperbolic 2-elementary sublattice of $L_{K3} = U^{\oplus 3} \oplus E_8^{\oplus 2}$.
	Set $r= \rk M_0$, $l= \dim_{\mathbb{Z}_2} (M_0^{\vee}/M_0)$.
	Moreover when $M_0 \ncong U(2) \oplus E_8(2)$, we set
	\begin{align}\label{f-n-1}
		g = 11- \frac{r+l}{2} , \qquad
		k = \frac{r-l}{2}
	\end{align}
	
	Recall that $L_2 = L_{K3} \oplus \mathbb{Z}e$, where $e^2=-2$ and $(e, L_{K3})=0$.
	Set $\widetilde{M}_0=M_0 \oplus \mathbb{Z}e$.
	Then $\widetilde{M}_0$ is an admissible sublattice of $L_2$.
	Let $\mathcal{K} \in \operatorname{KT}(\widetilde{M}_0)$ be a natural K\"ahler-type chamber.
	Then $(X, \iota)=(Y^{[2]}, \sigma^{[2]})$ is a $K3^{[2]}$-type manifold with involution of type $(\widetilde{M}_0, \mathcal{K})$.
	
	Let $f : (\mathscr{X}, \iota) \to S$ be a family of $K3^{[2]}$-type manifolds with involution of type $(\widetilde{M}_0, \mathcal{K})$,
	and let $g :  (\mathscr{Y}, \sigma) \to S$ be a family of 2-elementary K3 surfaces of type $M_0$ such that $X_s = Y_s^{[2]}$ and $ \iota_s = \sigma_s^{[2]}$ for each $s \in S$.
	Let $h_{\xs}$ be an $\iota$-invariant fiberwise K\"ahler metric on $T\xs$ and $h_{\mathscr{Y} /S}$ be a $\sigma$-invariant fiberwise K\"ahler metric on $T\mathscr{Y} /S$.
	Recall that $t=\operatorname{Tr}(\iota^*|_{H^{1,1}(X)})$.
	Recall that the differential form $\omega_{H^{\cdot}(\xss)}$ was defined by $(\ref{al6-2-1-1})$.
	The aim of this section is to show that
	$$
		\omega_{H^{\cdot}(\xss)} = (t+1) c_1(g_* \Omega^1_{\mathscr{Y}^{\sigma}/S}, h_{L^2}),
	$$
	and to prove that the function $\tau_{\widetilde{M}_0, \mathcal{K}, \xs}$ satisfies the same $\partial \bar{\partial}$-equation as the equation satisfied by the invariant $\tau_{M_0}$ of 2-elementary K3 surfaces of type $M_0$. 
	
	If $M_0=E_8(2) \oplus U(2)$, then the fixed locus $X^{\iota}$ of the fiber $(X, \iota)$ is the Enriques surface $Y/\sigma$.
	Therefore we have $\omega_{H^{\cdot}(\mathfrak{X^{\iota}}/S)}=c_1(g_* \Omega^1_{\mathscr{Y}^{\sigma}/S}, h_{L^2})=0$ and this is the trivial case.
	In the rest of this section, we assume that $M_0 \ncong E_8(2) \oplus U(2)$.

\subsection{The Hodge form for natural involutions}
	
	Recall that the direct image sheaves $g_* \Omega^1_{\mathscr{Y}^{\sigma}/S}$, $R^1g_*\mathcal{O}_{\mathscr{Y}^{\sigma}}$, $f_*\Omega^1_{\xss}$, and $R^1f_*\mathcal{O}_{\mathscr{X}^{\iota}}$ are locally free $\mathcal{O}_S$-modules and we may identify them with the corresponding holomorphic vector bundles over $S$.
	By the Hodge decomposition, we have
	$$
		\overline{H^0(Y_s^{\sigma}, \Omega^1_{Y_s^{\sigma}})} = H^1(Y_s^{\sigma}, \mathcal{O}_{Y_s^{\sigma}}) \quad {\it and} \quad 
		\overline{H^0(X_s^{\iota}, \Omega^1_{X_s^{\iota}})} = H^1(X_s^{\iota}, \mathcal{O}_{X_s^{\iota}})
	$$
	for each $s \in S$
	and they induce $\mathbb{C}$-antilinear homomorphisms of complex vector bundles
	$$
		c : g_* \Omega^1_{\mathscr{Y}^{\sigma}/S} \to R^1g_*\mathcal{O}_{\mathscr{Y}^{\sigma}} \quad {\it and} \quad c : f_*\Omega^1_{\xss} \to R^1f_*\mathcal{O}_{\mathscr{X}^{\iota}}.
	$$

	\begin{lem}\label{l-n-1}
		There exist isomorphisms of holomorphic vector bundles on $S$
		$$
			\phi : (g_* \Omega^1_{\mathscr{Y}^{\sigma}/S})^{\oplus k+1} \to f_*\Omega^1_{\xss} 
			\qquad {\it and} \qquad \psi : (R^1g_*\mathcal{O}_{\mathscr{Y}^{\sigma}})^{\oplus k+1} \to R^1f_*\mathcal{O}_{\mathscr{X}^{\iota}} 
		$$
		such that the following diagram commutes:
		$$
			\xymatrix{
				(g_* \Omega^1_{\mathscr{Y}^{\sigma}/S})^{\oplus k+1} \ar[r]^{\phi}  \ar[d]^{c} & f_*\Omega^1_{\xss} \ar[d]^c   \\
				(R^1g_*\mathcal{O}_{\mathscr{Y}^{\sigma}})^{\oplus k+1} \ar[r]^{\psi} & R^1f_*\mathcal{O}_{\mathscr{X}^{\iota}} .  
			}
		$$
	\end{lem}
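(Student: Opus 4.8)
The plan is to reduce the statement to the explicit geometry of the fixed locus and then read off the Hodge bundles component by component. First I would recall the structure of $\mathscr{Y}^{\sigma}$: for a $2$-elementary K3 surface of type $M_0$ (with $M_0 \ncong E_8(2) \oplus U(2)$), Nikulin's classification gives a disjoint union $\mathscr{Y}^{\sigma} = \bigsqcup_{i=0}^{k} \mathscr{C}_i$ of $k+1$ smooth curves over $S$, where $\mathscr{C}_0$ has genus $g$ and $\mathscr{C}_1, \dots, \mathscr{C}_k$ are rational (in the exceptional type whose fixed locus is two elliptic curves the argument is identical, only the labelling of the genera changes). Each component carries a monodromy-invariant class in $M_0 = H^2(Y_s, \mathbb{Z})^{\sigma_s}$, on which the monodromy of a family of type $M_0$ acts trivially; hence the $\mathscr{C}_i$ are globally defined over $S$ and $g_* \mathcal{O}_{\mathscr{Y}^{\sigma}} \cong \mathcal{O}_S^{\oplus(k+1)}$. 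Since the rational components satisfy $g_* \Omega^1_{\mathscr{C}_i/S} = 0$, only $\mathscr{C}_0$ contributes to $g_* \Omega^1_{\mathscr{Y}^{\sigma}/S} = \bigoplus_i g_* \Omega^1_{\mathscr{C}_i/S}$.

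Next I would use the description of the fixed locus of the induced involution on the relative Hilbert scheme. A length-$2$ subscheme of $Y_s$ is fixed by $\sigma_s^{[2]}$ precisely when it is supported on $Y_s^{\sigma}$ (with tangent direction along $Y_s^{\sigma}$ at a double point), or it is a swapped pair $\{p, \sigma_s(p)\}$ (with the normal tangent direction at a double point). This yields a decomposition over $S$
$$
\mathscr{X}^{\iota} = \Big( \bigsqcup_{i} \operatorname{Sym}^2_S \mathscr{C}_i \Big) \sqcup \Big( \bigsqcup_{i<j} \mathscr{C}_i \times_S \mathscr{C}_j \Big) \sqcup (\mathscr{Y}/\sigma),
$$
where the last piece is the relative quotient, a family of rational surfaces. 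Restricting $\Omega^1_{\mathscr{X}^{\iota}/S}$ to each piece and pushing forward, I would invoke the natural isomorphism $H^0(\operatorname{Sym}^2 C, \Omega^1) \cong H^0(C, \Omega^1)$ (induced by the quotient $C \times C \to \operatorname{Sym}^2 C$), the K\"unneth splitting $H^0(C_i \times C_j, \Omega^1) = H^0(C_i, \Omega^1) \oplus H^0(C_j, \Omega^1)$, and the vanishing $H^0(Y/\sigma, \Omega^1) = 0$ for the rational quotient surface; all three hold in families. Writing $\mathcal{E}_i = g_* \Omega^1_{\mathscr{C}_i/S}$, the component $\operatorname{Sym}^2_S \mathscr{C}_i$ contributes $\mathcal{E}_i$ and each product $\mathscr{C}_i \times_S \mathscr{C}_j$ contributes $\mathcal{E}_i \oplus \mathcal{E}_j$, so each $\mathcal{E}_i$ occurs exactly $k+1$ times (once from its own symmetric square and once from each of the $k$ products in which $\mathscr{C}_i$ appears). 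This produces $\phi$, since $\bigoplus_i \mathcal{E}_i^{\oplus(k+1)} = (g_* \Omega^1_{\mathscr{Y}^{\sigma}/S})^{\oplus(k+1)}$.

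The construction of $\psi$ is identical with $\Omega^1$ replaced by $\mathcal{O}$: I would use $H^1(\operatorname{Sym}^2 C, \mathcal{O}) \cong H^1(C, \mathcal{O})$, the K\"unneth formula $H^1(C_i \times C_j, \mathcal{O}) = H^1(C_i, \mathcal{O}) \oplus H^1(C_j, \mathcal{O})$ (the mixed term drops out since $H^0(\mathcal{O}_C) = \mathbb{C}$), and $H^1(Y/\sigma, \mathcal{O}) = 0$, and the same bookkeeping yields $\psi$. Commutativity of the square with the conjugation maps $c$ is then automatic: both $\phi$ and $\psi$ are assembled from the pullback and restriction maps attached to the morphisms $C \times C \to \operatorname{Sym}^2 C$, the projections of $\mathscr{C}_i \times_S \mathscr{C}_j$, and the inclusions of components, each of which induces a morphism of Hodge structures and hence commutes with the $\mathbb{C}$-antilinear identification $\overline{H^0(\Omega^1)} = H^1(\mathcal{O})$.

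The main obstacle I expect lies in the geometric input rather than the homological algebra. Pinning down the decomposition of $\mathscr{X}^{\iota}$ relatively over $S$ requires the local analysis of $d\sigma_s$ on tangent directions at points of $Y_s^{\sigma}$, which is what separates the double-point loci into the $\operatorname{Sym}^2$ part (tangent direction) and the $\mathscr{Y}/\sigma$ part (normal direction); and one must verify that the components of $\mathscr{Y}^{\sigma}$, hence of $\mathscr{X}^{\iota}$, are not permuted by the monodromy of the family, so that the twisting bundle $g_* \mathcal{O}_{\mathscr{Y}^{\sigma}}$ is trivial and one obtains the clean direct sum of $k+1$ copies rather than a tensor product with a nontrivial rank-$(k+1)$ bundle. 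Once these inputs are secured, the Hodge-theoretic identifications giving $\phi$, $\psi$ and the compatibility are formal.
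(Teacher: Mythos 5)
Your proposal follows essentially the same route as the paper's proof: Nikulin's description of $Y^{\sigma}$ (the genus-$g$ curve plus $k$ rational curves, with the two-elliptic-curves case $M_0\cong E_8(2)\oplus U$ treated separately), the decomposition of the fixed locus of $\sigma^{[2]}$ into $C^{(2)}$, the symmetric squares and products of components, and $Y/\sigma$, Macdonald's isomorphism $H^0(C,\Omega^1_C)\cong H^0(C^{(2)},\Omega^1_{C^{(2)}})$ together with its $H^1(\mathcal{O})$-analogue, the K\"unneth formula, vanishing on the rational and ruled pieces, and commutativity of the square by naturality of all the identifications with the Hodge-theoretic conjugation $c$. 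Your bookkeeping producing exactly $k+1$ copies of $g_*\Omega^1_{\mathscr{Y}^{\sigma}/S}$ is precisely the paper's computation; deriving the fixed-locus decomposition of $\mathscr{X}^{\iota}$ from the local analysis of $\sigma^{[2]}$ on length-$2$ subschemes, rather than quoting it, is the only (harmless) difference in that part.

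One caution on the single point where you go beyond the paper. Your resolution of the monodromy issue — ``each component carries a monodromy-invariant class in $M_0$, on which the monodromy of a family of type $M_0$ acts trivially'' — is not justified: the monodromy of such a family is only constrained to preserve the invariant lattice $H^2(Y_s,\mathbb{Z})^{\sigma_s}\cong M_0$, on which it may act through nontrivial isometries, and a priori it can permute the classes $[E_i]$ of the fixed $(-2)$-curves (only the genus-$g$ component is canonically singled out by its genus). If the rational components were permuted over a non-simply-connected $S$, the product part of $\mathscr{X}^{\iota}$ would push forward to $g_*\Omega^1_{\mathscr{C}_0/S}\otimes V$ with $V$ a possibly nontrivial flat bundle, not a direct sum of $k$ trivial copies. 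This does not damage the substance of your argument, because every summand identification you use (Macdonald's map, pullback along projections) is canonical on each fiber, and the lemma is applied only to compare first Chern forms of $L^2$-metrics, a statement local on $S$; shrinking $S$ so that the components of $\mathscr{Y}^{\sigma}$ are separated, your construction coincides with the paper's, which itself works at this fiberwise level and passes over the same global subtlety in silence. You should simply replace the unproven monodromy-triviality claim by the remark that the statement may be checked locally on $S$.
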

	
	\begin{proof}
		Let $(Y, \sigma)$ be a 2-elementary K3 surface of type $M_0$ and set $(X, \iota)=(Y^{[2]}, \sigma^{[2]})$.
		By \cite[Theorem~4.2.2.]{MR633160}, the fixed locus of a 2-elementary K3 surface $(Y, \sigma)$ of type $M_0$ is
		\begin{equation}\label{al-n-3}
			Y^{\sigma} = \left\{
				\begin{aligned}
				\begin{split}
					&\emptyset \quad &(M_0 \cong E_8(2) \oplus U(2)) ,\\
					&E \sqcup F \quad &(M_0 \cong E_8(2) \oplus U) ,\\
					&C \sqcup E_1 \sqcup E_2 \sqcup \dots \sqcup E_k \quad &(otherwise), 
				\end{split} \end{aligned} \right. 
		\end{equation}
		where $E,F$ are elliptic curves on $Y$, $C$ is a curve of genus $g$ on $Y$, and $E_i$ $(i=1, \dots , k)$ are $(-2)$-curves on $Y$.
		The fixed locus $X^{\iota}$ of $(X, \iota)$ is given by
		\begin{align}\label{al-n-2}
			X^{\iota} = \left\{
				\begin{aligned}
				\begin{split}
					&Y/\sigma  \qquad\qquad\qquad\qquad\qquad\qquad\qquad\qquad\qquad (M_0 \cong E_8(2) \oplus U(2)), \\
					&E^{(2)} \sqcup F^{(2)} \sqcup (E \times F) \sqcup Y/\sigma  \qquad\qquad\qquad\qquad (M_0 \cong E_8(2) \oplus U), \\
					&C^{(2)} \sqcup (\sqcup_i E_i^{(2)}) \sqcup (\sqcup_i C \times E_i) \sqcup (\sqcup_{i < j} E_i \times E_j) \sqcup Y/\sigma \quad(otherwise),
				\end{split} \end{aligned} \right. 
		\end{align}
		where $E^{(2)}$, $F^{(2)}$, $C^{(2)}$ and ${E_i}^{(2)}$ are the symmetric product of the curves $E$, $F$, $C$ and $E_i$, respectively.
		Note that $E^{(2)}$, $F^{(2)}$ are ruled surfaces and that $Y/\sigma$, $E_i^{(2)}$, $E_i \times E_j$ are rational surfaces.
		
		If $M_0 \ncong E_8(2) \oplus U, E_8(2) \oplus U(2)$, then we have by (\ref{al-n-3}), (\ref{al-n-2})
		\begin{align}\label{al6-3-2-3}
		\begin{aligned}
			H^0(Y_s^{\sigma}, \Omega^1_{Y_s^{\sigma}}) &= H^0(C, \Omega^1_{C}), \\ 
			H^0(X_s^{\iota}, \Omega^1_{X_s^{\iota}}) &= H^0(C^{(2)}, \Omega^1_{C^{(2)}}) \oplus \oplus_{i} H^0(C \times E_i, \Omega^1_{C \times E_i}).
		\end{aligned}
		\end{align}
		Let $p, q : C \times C \to C$ be the first and second projection.
		For a holomorphic 1-form $\alpha$ on $C$, 
		$$
			p^* \alpha + q^* \alpha
		$$
		induces a holomorphic 1-form $\beta$ on $C^{(2)}$.
		By \cite[(6.3)]{MR151460}, the following homomorphism is an isomorphism
		\begin{align}\label{al6-3-2-4}
			H^0(C, \Omega^1_{C}) \cong H^0(C^{(2)}, \Omega^1_{C^{(2)}}), \quad \alpha \mapsto \beta.
		\end{align}
		By the K\"unneth formula, we have
		\begin{align}\label{al6-3-2-5}
			H^0(C \times E_i, \Omega^1_{C \times E_i}) \cong H^0(C, \Omega^1_{C}).
		\end{align}
		By (\ref{al6-3-2-3}), (\ref{al6-3-2-4}) and (\ref{al6-3-2-5}), we have the canonical isomorphism
		\begin{align}\label{al6-3-2-6}
		\begin{aligned}
			H^0(X_s^{\iota}, \Omega^1_{X_s^{\iota}}) &= H^0(C^{(2)}, \Omega^1_{C^{(2)}}) \oplus \oplus_{i} H^0(C \times E_i, \Omega^1_{C \times E_i}) \\
			&\cong H^0(C, \Omega^1_{C})) \oplus_i H^0(C, \Omega^1_{C}) \\
			&= H^0(Y_s^{\sigma}, \Omega^1_{Y_s^{\sigma}})^{\oplus k+1}. 
		\end{aligned}
		\end{align}
		
		If $M_0=E_8(2) \oplus U$, then we have by (\ref{al-n-3}), (\ref{al-n-2})
		\begin{align*}
			H^0(Y^{\sigma}, \Omega^1_{Y^{\sigma}}) &= H^0(E, \Omega^1_{E}) \oplus H^0(F, \Omega^1_{F}), \\
			H^0(X^{\iota}, \Omega^1_{X^{\iota}}) &= H^0(E^{(2)}, \Omega^1_{E^{(2)}}) \oplus H^0(F^{(2)}, \Omega^1_{F^{(2)}}) \oplus H^0(E \times F, \Omega^1_{E \times F}).
		\end{align*}
		By (\ref{al6-3-2-4}) and by the K\"unneth formula, we have the canonical isomorphism
		\begin{align}\label{al6-3-2-7}
		\begin{aligned}
			H^0(X_s^{\iota}, \Omega^1_{X_s^{\iota}}) &= H^0(E^{(2)}, \Omega^1_{E^{(2)}}) \oplus H^0(F^{(2)}, \Omega^1_{F^{(2)}}) \oplus H^0(E \times F, \Omega^1_{E \times F}) \\
			&\cong H^0(E, \Omega^1_{E}) \oplus H^0(F, \Omega^1_{F}) \oplus H^0(E, \Omega^1_{E}) \oplus H^0(F, \Omega^1_{F})\\ 
			&= H^0(Y_s^{\sigma}, \Omega^1_{Y_s^{\sigma}})^{\oplus 2}. 
		\end{aligned}
		\end{align}
		
		By (\ref{al6-3-2-6}), (\ref{al6-3-2-7}), we have an isomorphism of holomorphic vector bundles
		$$
			\phi : (g_* \Omega^1_{\mathscr{Y}^{\sigma}/S})^{\oplus k+1} \cong f_*\Omega^1_{\xss}.
		$$
		Similarly, we have an isomorphism of holomorphic vector bundles
		$$
			\psi : (R^1g_*\mathcal{O}_{\mathscr{Y}^{\sigma}})^{\oplus k+1} \cong R^1f_*\mathcal{O}_{\mathscr{X}^{\iota}}.
		$$
		By their constructions, the following diagram commutes:
		$$
			\xymatrix{
				H^0(Y_s^{\sigma}, \Omega^1_{Y_s^{\sigma}})^{\oplus k+1} \ar[r]^{\phi_s}  \ar[d]^{c} & H^0(X_s^{\iota}, \Omega^1_{X_s^{\iota}}) \ar[d]^c   \\
				{H^1(Y_s^{\sigma}, \mathcal{O}_{Y_s^{\sigma}})}^{\oplus k+1} \ar[r]^{\psi_s} & H^1(X_s^{\iota}, \mathcal{O}_{X_s^{\iota}})   .
			}
		$$
		This completes the proof.
	\end{proof}
	
	Let $\alpha_1, \dots, \alpha_m$ be a local holomorphic frame of $(g_* \Omega^1_{\mathscr{Y}^{\sigma}/S})^{\oplus k+1}$,
	and let $\beta_1, \dots, \beta_m$ be a local holomorphic frame of $(R^1g_* \mathcal{O}_{\mathscr{Y}^{\sigma}})^{\oplus k+1}$.
	We define real-valued functions $\tilde{c}_1(\phi)$ and $\tilde{c}_1(\psi)$ by
	\begin{align}\label{al4-3-2-1}
		\tilde{c}_1(\phi) =-\log \frac{\det (\langle \phi( \alpha_i), \phi( \alpha_j ) \rangle_{L^2}) }{\det (\langle  \alpha_i,  \alpha_j  \rangle_{L^2})}, \quad
		\tilde{c}_1(\psi) =-\log \frac{\det (\langle \psi( \beta_i), \psi( \beta_j ) \rangle_{L^2}) }{\det (\langle  \beta_i,  \beta_j  \rangle_{L^2})}
	\end{align}
	Note that $\tilde{c}_1(\phi)$ and $\tilde{c}_1(\psi)$ are independent of the choice of holomorphic frames and they are globally defined on $S$. 
	
	\begin{lem}\label{l-n-2}
		We have $\tilde{c}_1(\phi) = \tilde{c}_1(\psi)$.
	\end{lem}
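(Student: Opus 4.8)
The plan is to exploit the commutativity of the diagram in Lemma \ref{l-n-1} together with the fact that the antilinear maps $c$ on both sides are $L^2$-isometries, so that the two metric distortions recorded by $\tilde{c}_1(\phi)$ and $\tilde{c}_1(\psi)$ are forced to agree.

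First I would record the basic property of $c$. Since $c$ is induced fiberwise by complex conjugation of harmonic forms, sending a harmonic $(1,0)$-form $u$ to the harmonic $(0,1)$-form $\bar{u}$, it is a $\mathbb{C}$-antilinear bundle isomorphism which is isometric in the antilinear sense: for local sections $u,v$ of the source one has $\langle c(u), c(v)\rangle_{L^2} = \overline{\langle u, v\rangle_{L^2}}$, which follows pointwise from $h(\bar{u},\bar{v}) = \overline{h(u,v)}$ after integrating against the volume form. Consequently, if $G = (\langle u_i, u_j\rangle_{L^2})$ is the Gram matrix of a local frame $u_1,\dots,u_m$, then the Gram matrix of $c(u_1),\dots,c(u_m)$ equals $\overline{G}$, and since $G$ is Hermitian positive definite its determinant is real, whence $\det(\langle c(u_i), c(u_j)\rangle_{L^2}) = \det \overline{G} = \overline{\det G} = \det G$.

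Next I would observe that the ratio of Gram determinants appearing in the definition (\ref{al4-3-2-1}) is independent of the chosen frame, and in fact independent of whether that frame is holomorphic: under a change of frame by an invertible matrix $A$ of smooth functions, numerator and denominator both acquire the factor $|\det A|^2$, which cancels. Therefore I am free to compute $\tilde{c}_1(\psi)$ using the smooth (antiholomorphic) frame $c(\alpha_1),\dots,c(\alpha_m)$ of $(R^1g_*\mathcal{O}_{\mathscr{Y}^{\sigma}})^{\oplus k+1}$, where $\alpha_1,\dots,\alpha_m$ is the very holomorphic frame used to define $\tilde{c}_1(\phi)$; by fiberwise bijectivity of $c$ these $c(\alpha_i)$ indeed form a smooth frame.

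Finally I would combine these ingredients. By the commutativity of the diagram in Lemma \ref{l-n-1} we have $\psi(c(\alpha_i)) = c(\phi(\alpha_i))$, so the isometry property of the right-hand $c$ gives $\langle \psi(c\alpha_i), \psi(c\alpha_j)\rangle_{L^2} = \langle c(\phi\alpha_i), c(\phi\alpha_j)\rangle_{L^2} = \overline{\langle \phi\alpha_i, \phi\alpha_j\rangle_{L^2}}$, whose determinant equals $\det(\langle\phi\alpha_i,\phi\alpha_j\rangle_{L^2})$; likewise $\det(\langle c\alpha_i, c\alpha_j\rangle_{L^2}) = \det(\langle\alpha_i,\alpha_j\rangle_{L^2})$ via the left-hand $c$. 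Substituting into (\ref{al4-3-2-1}) turns the defining expression for $\tilde{c}_1(\psi)$ into exactly that for $\tilde{c}_1(\phi)$, giving the claim. The only delicate points are the bookkeeping of antilinearity—tracking conjugations so the resulting determinants are the genuine real ones—and the frame-independence used for the non-holomorphic frame $c(\alpha_i)$; both are elementary once the isometry property of $c$ is in hand, so I do not anticipate a real obstacle. Equivalently, the whole argument can be phrased on determinant lines: $\det c$ is an antilinear isometry on each determinant line, and the identity $\det c \circ \det \phi = \det\psi \circ \det c$ forces the metric distortions of $\det\phi$ and $\det\psi$ to coincide, which is precisely $\tilde{c}_1(\phi) = \tilde{c}_1(\psi)$.
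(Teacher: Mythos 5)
Your proof is correct and follows essentially the same route as the paper: both exploit the commutative diagram of Lemma \ref{l-n-1}, the frame-independence of the Gram-determinant ratio under the smooth frame $c(\alpha_i)=\bar{\alpha}_i$ (the paper's explicit matrix $C$ computation), and the conjugation identity $\langle \bar{\gamma},\bar{\delta}\rangle_{L^2}=\overline{\langle\gamma,\delta\rangle_{L^2}}$ together with the reality of Hermitian Gram determinants. The only cosmetic difference is that you justify the conjugation identity pointwise from the induced metric on forms, while the paper invokes primitivity via \cite[Proposition~6.29]{MR2451566}; both justifications are valid.
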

	
	\begin{proof}
		Note that $\bar{\alpha}_1, \dots, \bar{\alpha}_m$ is smooth frame of $R^1g_* \mathcal{O}_{\mathscr{Y}^{\sigma}}^{\oplus k+1}$ which is not holomorphic,
		and that $\psi(\bar{\alpha}_1)=\overline{\phi(\alpha_1)}, \dots, \psi(\bar{\alpha}_m)=\overline{\phi(\alpha_m)}$ is smooth frame of $R^1f_*\mathcal{O}_{\mathscr{X}^{\iota}}^{\oplus k+1}$ which is not holomorphic.
		Let $C =(c_{ij})$ be a $m \times m$-matrix defined by $\bar{\alpha}_i = \sum_j c_{ij} \beta_j$. 
		Then we have $\overline{\phi(\alpha_i)}=\psi(\bar{\alpha}_i)=\psi(\sum_j c_{ij} \beta_j)=\sum_j c_{ij} \psi( \beta_j)$,
		and we have
		\begin{align}\label{al4-3-2-2}
		\begin{aligned}
			 \frac{\det (\langle \overline{\phi( \alpha_i)}, \overline{\phi( \alpha_j )} \rangle_{L^2}) }{\det (\langle  \bar{\alpha_i}, \bar{ \alpha_j}  \rangle_{L^2})}
			&=  \frac{\det \left( C \cdot (\langle \psi( \beta_i), \psi( \beta_j ) \rangle_{L^2}) \cdot {}^t\overline{C} \right)}{\det \left( C \cdot (\langle  \beta_i,  \beta_j  \rangle_{L^2}) \cdot {}^t\overline{C} \right)} \\
			&=  \frac{\det (\langle \psi( \beta_i), \psi( \beta_j ) \rangle_{L^2}) }{\det (\langle  \beta_i,  \beta_j  \rangle_{L^2})}.
		\end{aligned}
		\end{align}
		Let $\gamma$ and $\delta$ be smooth $(1,0)$-forms.
		Then $\delta$ is primitive, and by \cite[Proposition~6.29]{MR2451566}, we have
		\begin{align}\label{al4-3-2-3}
			\langle  \bar{\gamma}, \bar{ \delta}  \rangle_{L^2} = \overline{ \langle  \gamma, \delta  \rangle_{L^2} } =\langle  \delta, \gamma  \rangle_{L^2} .
		\end{align}
		The same equality holds where $\gamma$ and $\delta$ are smooth $(0,1)$-forms.
		By (\ref{al4-3-2-1}), (\ref{al4-3-2-2}) and (\ref{al4-3-2-3}), we have
		\begin{align*}
			\tilde{c}_1(\phi) &=-\log \frac{\det (\langle \phi( \alpha_i), \phi( \alpha_j ) \rangle_{L^2}) }{\det (\langle  \alpha_i,  \alpha_j  \rangle_{L^2})} \\
			&=-\log \left\{ \frac{\det (\langle \phi( \alpha_i), \phi( \alpha_j ) \rangle_{L^2}) }{\det (\langle \overline{\phi( \alpha_i)}, \overline{\phi( \alpha_j )} \rangle_{L^2})}  \frac{\det (\langle \overline{\phi( \alpha_i)}, \overline{\phi( \alpha_j )} \rangle_{L^2})}{\det (\langle  \bar{\alpha_i}, \bar{ \alpha_j}  \rangle_{L^2})} \frac{\det (\langle  \bar{\alpha_i}, \bar{ \alpha_j}  \rangle_{L^2})}{\det (\langle  \alpha_i,  \alpha_j  \rangle_{L^2})} \right\} \\
			&=-\log \left\{ \frac{\det (\langle \phi( \alpha_i), \phi( \alpha_j ) \rangle_{L^2}) }{ \det {}^t (\langle \phi( \alpha_i), \phi( \alpha_j ) \rangle_{L^2}) }   \frac{\det (\langle \psi( \beta_i), \psi( \beta_j ) \rangle_{L^2}) }{\det (\langle  \beta_i,  \beta_j  \rangle_{L^2})}  \frac{ \det {}^t (\langle  \alpha_i,  \alpha_j  \rangle_{L^2}) }{ \det (\langle  \alpha_i,  \alpha_j  \rangle_{L^2}) } \right\} \\
			&= \tilde{c}_1(\psi) ,
		\end{align*}
		which completes the proof.
	\end{proof}

	\begin{prop}\label{l-n-3}
		The following identity holds:
		$$
			c_1(f_*\Omega^1_{\xss}, h_{L^2}) -c_1(R^1f_*\mathcal{O}_{\mathscr{X}^{\iota}}, h_{L^2}) =2(k+1) c_1(g_* \Omega^1_{\mathscr{Y}^{\sigma}/S}, h_{L^2}).
		$$
	\end{prop}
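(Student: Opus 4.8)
The plan is to transport both Chern forms on $S$ down to the family of curves $g:\mathscr{Y}^{\sigma}\to S$ via the isomorphisms $\phi$ and $\psi$ of Lemma \ref{l-n-1}, use Lemma \ref{l-n-2} to kill the metric discrepancies, and thereby reduce the statement to a single identity about the Hodge bundle of this curve family.

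First I would record the effect of $\phi$ on first Chern forms. Choosing a local holomorphic frame $\alpha_1,\dots,\alpha_m$ of $(g_*\Omega^1_{\mathscr{Y}^{\sigma}/S})^{\oplus k+1}$, its image $\phi(\alpha_1),\dots,\phi(\alpha_m)$ is a local holomorphic frame of $f_*\Omega^1_{\xss}$. Since the first Chern form of a holomorphic hermitian bundle is $-dd^c\log$ of the Gram determinant of a holomorphic frame, the definition $(\ref{al4-3-2-1})$ of $\tilde{c}_1(\phi)$ gives
\begin{align*}
	c_1(f_*\Omega^1_{\xss}, h_{L^2}) = (k+1)\,c_1(g_*\Omega^1_{\mathscr{Y}^{\sigma}/S}, h_{L^2}) + dd^c\,\tilde{c}_1(\phi).
\end{align*}
The same computation applied to $\psi$ yields
\begin{align*}
	c_1(R^1f_*\mathcal{O}_{\mathscr{X}^{\iota}}, h_{L^2}) = (k+1)\,c_1(R^1g_*\mathcal{O}_{\mathscr{Y}^{\sigma}}, h_{L^2}) + dd^c\,\tilde{c}_1(\psi).
\end{align*}
Subtracting and invoking Lemma \ref{l-n-2}, which asserts $\tilde{c}_1(\phi)=\tilde{c}_1(\psi)$, the two $dd^c$-terms cancel and I am left with
\begin{align*}
	c_1(f_*\Omega^1_{\xss}, h_{L^2}) - c_1(R^1f_*\mathcal{O}_{\mathscr{X}^{\iota}}, h_{L^2}) = (k+1)\bigl(c_1(g_*\Omega^1_{\mathscr{Y}^{\sigma}/S}, h_{L^2}) - c_1(R^1g_*\mathcal{O}_{\mathscr{Y}^{\sigma}}, h_{L^2})\bigr).
\end{align*}

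It therefore remains to prove the curve-level identity $c_1(R^1g_*\mathcal{O}_{\mathscr{Y}^{\sigma}}, h_{L^2}) = -c_1(g_*\Omega^1_{\mathscr{Y}^{\sigma}/S}, h_{L^2})$, since then the bracket becomes $2\,c_1(g_*\Omega^1_{\mathscr{Y}^{\sigma}/S}, h_{L^2})$ and the Proposition follows. The rational $(-2)$-curves in the fibres of $\mathscr{Y}^{\sigma}\to S$ carry no holomorphic $1$-forms, so both bundles are the Hodge bundles of the weight-one polarized variation of Hodge structure $\mathbb{H}=R^1g_*\mathbb{C}\otimes\mathcal{O}_S$ attached to the genus-$g$ part of $\mathscr{Y}^{\sigma}$. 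My plan here is to use that the Hodge decomposition $\mathbb{H}=g_*\Omega^1_{\mathscr{Y}^{\sigma}/S}\oplus R^1g_*\mathcal{O}_{\mathscr{Y}^{\sigma}}$ is orthogonal for the Hodge metric $h_{\mathbb{H}}$, and that the $L^2$-metrics on the two summands are the restrictions of $h_{\mathbb{H}}$; hence on determinants $c_1(g_*\Omega^1_{\mathscr{Y}^{\sigma}/S}, h_{L^2}) + c_1(R^1g_*\mathcal{O}_{\mathscr{Y}^{\sigma}}, h_{L^2}) = c_1(\det\mathbb{H}, h_{\mathbb{H}})$. The right-hand side vanishes: the polarization trivializes $\det\mathbb{H}$ flatly, and since the Weil operator preserves the polarization the flat generator has constant Hodge norm (the Riemann bilinear relations), so $h_{\mathbb{H}}$ is flat on $\det\mathbb{H}$. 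Equivalently, Serre duality gives a holomorphic isomorphism $R^1g_*\mathcal{O}_{\mathscr{Y}^{\sigma}}\cong(g_*\Omega^1_{\mathscr{Y}^{\sigma}/S})^{\vee}$ under which the $L^2$-metric corresponds to the dual metric up to a locally constant factor, producing the opposite sign directly.

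The main obstacle is precisely this curve-level step. Although the antilinear conjugation $c$ of $(\ref{al4-3-2-3})$ shows $c$ to be an $L^2$-isometry, an antilinear isometry between holomorphic hermitian bundles preserves the first Chern form only when it is conjugate-holomorphic, which $c$ is not: Griffiths transversality prevents $\overline{\alpha_i}$ from being a holomorphic section of $R^1g_*\mathcal{O}_{\mathscr{Y}^{\sigma}}$, so the naive Gram-determinant comparison leaves an uncontrolled $dd^c\log|\det C|^2$ term. Thus the sign reversal cannot be extracted from $c$ alone and genuinely requires the holomorphic duality/flatness input above; the delicate point is to verify that the $L^2$-metric on $R^1g_*\mathcal{O}_{\mathscr{Y}^{\sigma}}$ agrees with the dual (or quotient) metric up to a $dd^c$-closed factor. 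Everything else—the frame bookkeeping and the cancellation through Lemma \ref{l-n-2}—is routine.
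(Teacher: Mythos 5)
Your proof follows the paper's argument exactly: you express both Chern-form discrepancies as $dd^c\tilde{c}_1(\phi)$ and $dd^c\tilde{c}_1(\psi)$ from the definitions in (\ref{al4-3-2-1}), cancel them via Lemma \ref{l-n-2}, and conclude with the Serre-duality identity $c_1(R^1g_*\mathcal{O}_{\mathscr{Y}^{\sigma}}, h_{L^2}) = -c_1(g_* \Omega^1_{\mathscr{Y}^{\sigma}/S}, h_{L^2})$, which is precisely the paper's proof. The only difference is that the paper invokes Serre duality in one line, whereas you supply a correct justification for it (flatness of the Hodge metric on $\det\mathbb{H}$ via the polarization, equivalently the dual-metric comparison) and rightly observe that the antilinear map $c$ alone cannot produce the sign reversal.
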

	
	\begin{proof}
		By the definition of $\tilde{c}_1(\phi)$ and $\tilde{c}_1(\psi)$, we have
		\begin{align}\label{al-n-31}
		\begin{aligned}
			c_1(f_*\Omega^1_{\xss}, h_{L^2}) -c_1((g_* \Omega^1_{\mathscr{Y}^{\sigma}/S})^{\oplus k+1}, h_{L^2}) =dd^c \tilde{c}_1(\phi) ,\\
			c_1(R^1f_*\mathcal{O}_{\mathscr{X}^{\iota}}, h_{L^2}) -c_1( (R^1g_*\mathcal{O}_{\mathscr{Y}^{\sigma}})^{\oplus k+1}, h_{L^2}) =dd^c \tilde{c}_1(\psi).
		\end{aligned}
		\end{align}
		By the Serre duality, we have
		\begin{align}\label{al-n-32}
			c_1(R^1g_*\mathcal{O}_{\mathscr{Y}^{\sigma}}, h_{L^2}) =- c_1(g_* \Omega^1_{\mathscr{Y}^{\sigma}/S}, h_{L^2}).
		\end{align}
		By (\ref{al-n-31}) and (\ref{al-n-32}) and Lemma \ref{l-n-2}, we have the desired result.
	\end{proof}

	\begin{lem}\label{l4-n-2-8}
		Let $(E, h)$ be a hermitian holomorphic vector bundle on $S$ of rank $g \geqq 1$.
		Then we have
		$$
			c_1( \wedge^2 E, h ) = (g-1) c_1( E, h).
		$$ 
	\end{lem}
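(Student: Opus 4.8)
The plan is to reduce the statement to the first Chern form of the determinant line bundle and then to a pointwise computation of traces of curvature. The key input is that for a hermitian holomorphic vector bundle the first Chern form is, up to a universal normalizing constant, the trace of the Chern curvature, and that this trace is computed by the metric induced on the determinant. Concretely, I would first record the two identities $c_1(E,h) = c_1(\det E, \det h)$ and $c_1(\wedge^2 E, h) = c_1(\det(\wedge^2 E), \det h)$, where $\det E = \wedge^g E$ and $\det(\wedge^2 E)$ carry the hermitian metrics induced from $h$. Thus everything reduces to comparing the traces of the Chern curvatures of $(E,h)$ and of $(\wedge^2 E, h)$.

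Next I would carry out the linear algebra. Let $R$ denote the Chern curvature of $(E,h)$, an $\operatorname{End}(E)$-valued $(1,1)$-form, and let $R^{[2]}$ denote the Chern curvature of the metric induced on $\wedge^2 E$. Since the hermitian metric and holomorphic structure on $\wedge^2 E$ are the ones induced from $(E,h)$, its Chern connection is the second exterior power of the Chern connection of $(E,h)$, and hence $R^{[2]}$ acts on decomposable sections as the derivation $R^{[2]}(s \wedge t) = (Rs)\wedge t + s \wedge (Rt)$. Working in a unitary frame at a point in which $R$ is represented by eigenvalues $\lambda_1, \dots, \lambda_g$, the induced endomorphism on $\wedge^2 E$ is diagonal in the frame $\{e_i \wedge e_j\}_{i<j}$ with eigenvalues $\lambda_i + \lambda_j$. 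Therefore
$$
	\operatorname{Tr}(R^{[2]}) = \sum_{i<j}(\lambda_i + \lambda_j) = (g-1)\sum_{i=1}^g \lambda_i = (g-1)\operatorname{Tr}(R).
$$
Since $c_1(\,\cdot\,, h)$ is the trace of the Chern curvature up to the fixed normalizing constant, this yields the asserted identity $c_1(\wedge^2 E, h) = (g-1)\,c_1(E,h)$ as differential forms on $S$.

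The computation is entirely local and pointwise, so there is no global obstruction; the only point requiring care is that the eigenvalue argument must be phrased as an identity of $\operatorname{End}$-valued $(1,1)$-forms rather than a literal diagonalization of a matrix whose entries are $2$-forms, which one handles by invoking the splitting principle (or by simply noting that the derivation $A \mapsto A\wedge 1 + 1 \wedge A$ on $\wedge^2$ has trace $(g-1)\operatorname{Tr}(A)$ for every endomorphism $A$ and applying this fiberwise to $R$). I expect this formal step to be the only delicate part. An equivalent route, should a cleaner write-up be preferred, is to establish the isometry of hermitian line bundles $\det(\wedge^2 E) \cong (\det E)^{\otimes(g-1)}$ and take $c_1$; the trace identity above is precisely what makes that isomorphism metric-preserving.
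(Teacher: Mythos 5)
Your proof is correct, but it takes a different route from the paper. The paper does not compute curvature at all: it constructs the canonical isomorphism $\det(\wedge^2 E) \cong (\det E)^{\otimes (g-1)}$ explicitly by sending the frame $e_{12}\wedge e_{13}\wedge \dots \wedge e_{(g-1),g}$ of $\det\wedge^2 E$ to $(e_1\wedge\dots\wedge e_g)^{\otimes(g-1)}$, checks that both sides transform by $(\det A)^{g-1}$ under a change of frame (so the map is well defined and holomorphic), and then observes that both sections have norm $1$ when $e_1,\dots,e_g$ is orthonormal, so the isomorphism is an isometry; taking $c_1$ gives the lemma. This is exactly the ``equivalent route'' you sketch in your last sentence, whereas your main argument runs through the Chern connection: the induced connection on $\wedge^2 E$ is the Chern connection of the induced metric, its curvature acts as the derivation $s\wedge t \mapsto Rs\wedge t + s\wedge Rt$, and the trace identity $\operatorname{Tr}(A^{[2]}) = (g-1)\operatorname{Tr}(A)$, valid fiberwise for \emph{every} endomorphism $A$ (so no diagonalization of form-valued matrices is needed, as you rightly note), yields $\operatorname{Tr}(R^{[2]}) = (g-1)\operatorname{Tr}(R)$ and hence the identity of Chern forms. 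Your route is shorter, purely local, and generalizes immediately (e.g.\ $c_1(\wedge^k E, h) = \binom{g-1}{k-1} c_1(E,h)$); the paper's route proves the strictly stronger statement that there is a \emph{canonical isometry} of hermitian line bundles, not merely an equality of $(1,1)$-forms, which is the natural formulation in a paper where determinant lines and their metrics carry global meaning — though for the one application made of this lemma (Proposition \ref{p4-n-1}), the form-level identity you prove is all that is used.
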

	
	\begin{proof}
		It suffices to show that there exists a canonical isometry
		$$
			( \det \wedge^2 E, h) \cong ( (\det E)^{\otimes g-1}, h).
		$$
		Let $e_1, \dots, e_g$ be a local holomorphic frame of $E$.
		Set $e_{ij} = e_i \wedge e_j$.
		Then $\{ e_{ij}; i<j \}$ forms a local holomorphic frame of $\wedge^2 E$
		and 
		\begin{align}\label{al4-n-2-9}
			e_{12} \wedge e_{13} \wedge \dots \wedge e_{1g} \wedge e_{23} \wedge \dots \wedge e_{(g-1), g}
		\end{align}
		is a nowhere vanishing local holomorphic section of $\det \wedge^2 E$.
		On the other hand, 
		\begin{align}\label{al4-n-2-10}
			(e_1 \wedge \dots \wedge e_g) \otimes \dots \otimes (e_1 \wedge \dots \wedge e_g)
		\end{align}
		is a nowhere vanishing local holomorphic section of $(\det E)^{\otimes g-1}$.
		
		Let $e'_1, \dots, e'_g$ be another local holomorphic frame of $E$
		and let $A=(a_{ij})$ be the $g \times g$-matrix defined by
		$$
			e'_i = \sum_j a_{ij} e_j.
		$$
		Then we have
		$$
			e'_{12} \wedge e'_{13} \wedge \dots \wedge e'_{1g} \wedge e'_{23} \wedge \dots \wedge e'_{(g-1), g} = (\det A)^{g-1} e_{12} \wedge e_{13} \wedge \dots \wedge e_{1g} \wedge e_{23} \wedge \dots \wedge e_{(g-1), g}
		$$
		and
		$$
			(e'_1 \wedge \dots \wedge e'_g) \otimes \dots \otimes (e'_1 \wedge \dots \wedge e'_g) =( \det A)^{g-1} (e_1 \wedge \dots \wedge e_g) \otimes \dots \otimes (e_1 \wedge \dots \wedge e_g).
		$$
		Therefore the correspondence from the section (\ref{al4-n-2-9}) to the section (\ref{al4-n-2-10}) is independent of the choice of $e_1, \dots, e_g$
		and induces an isomorphism of holomorphic line bundles
		\begin{align}\label{al4-n-2-11}
			\det \wedge^2 E \cong  (\det E)^{\otimes g-1}.
		\end{align}
		Moreover, if $e_1(s), \dots, e_g(s)$ is an orthonormal basis of $(E_s,h_s)$ at $s \in S$,
		then the norm of the sections (\ref{al4-n-2-9}) and (\ref{al4-n-2-10}) at $s$ are $1$.
		Since $s \in S$ is arbitrary, the isomorphism (\ref{al4-n-2-11}) is an isometry.
	\end{proof}

	\begin{prop}\label{p4-n-1}
		The following identity holds:
		$$
			c_1(f_*\Omega^2_{\xss}, h_{L^2}) =(g-1) c_1(g_* \Omega^1_{\mathscr{Y}^{\sigma}/S}, h_{L^2}).
		$$
	\end{prop}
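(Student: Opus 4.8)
The plan is to upgrade the isomorphisms of Lemma \ref{l-n-1} to the level of holomorphic $2$-forms. I would produce a natural isomorphism of holomorphic vector bundles
$$
	\Lambda : \wedge^2 (g_* \Omega^1_{\mathscr{Y}^{\sigma}/S}) \xrightarrow{\ \sim\ } f_*\Omega^2_{\xss},
$$
show that with respect to the $L^2$-metrics it is an isometry up to a single positive constant independent of $s \in S$, and then invoke Lemma \ref{l4-n-2-8} with $\rk (g_* \Omega^1_{\mathscr{Y}^{\sigma}/S}) = g$. Since rescaling a hermitian metric by a constant does not change $c_1 = -dd^c \log \| \cdot \|^2$, this gives
$$
	c_1(f_*\Omega^2_{\xss}, h_{L^2}) = c_1(\wedge^2 (g_* \Omega^1_{\mathscr{Y}^{\sigma}/S}), h_{L^2}) = (g-1)\, c_1(g_* \Omega^1_{\mathscr{Y}^{\sigma}/S}, h_{L^2}),
$$
which is the assertion. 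When $g=0$ both bundles vanish and the identity is trivial, so I may assume $g \geqq 1$.

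First I would construct $\Lambda$ fiberwise from the descriptions (\ref{al-n-3}), (\ref{al-n-2}) of the fixed loci. The components $E_i^{(2)} \cong \mathbb{P}^2$, $E_i \times E_j \cong \mathbb{P}^1 \times \mathbb{P}^1$ and $Y/\sigma$ are rational, hence carry no holomorphic $2$-form; moreover $H^0(C \times E_i, \Omega^2) = H^0(\Omega^1_C) \otimes H^0(\Omega^1_{E_i}) = 0$ and $H^0(E^{(2)}, \Omega^2) = \wedge^2 H^0(\Omega^1_E) = 0$ by Künneth and Macdonald's formula. Thus the only contribution to $H^0(X_s^{\iota}, \Omega^2)$ comes from $C^{(2)}$ in the generic case, and from $E \times F$ when $M_0 \cong E_8(2) \oplus U$. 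Macdonald's formula (\cite{MR151460}, already used for (\ref{al6-3-2-4})) gives a canonical isomorphism $H^0(C^{(2)}, \Omega^2_{C^{(2)}}) \cong \wedge^2 H^0(C, \Omega^1_C)$, sending $\alpha \wedge \alpha'$ to the descent along $\pi : C \times C \to C^{(2)}$ of $p^*\alpha \wedge q^*\alpha' - p^*\alpha' \wedge q^*\alpha$; in the case $M_0 \cong E_8(2) \oplus U$ the Künneth identification $H^0(E \times F, \Omega^2) \cong H^0(\Omega^1_E) \otimes H^0(\Omega^1_F) \cong \wedge^2 (H^0(\Omega^1_E) \oplus H^0(\Omega^1_F))$ plays the same role. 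As in Lemma \ref{l-n-1}, these constructions are natural and globalize to a holomorphic bundle isomorphism $\Lambda$ over $S$.

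The heart of the matter, and the step I expect to be the main obstacle, is the metric comparison. The key point is that the $L^2$-norm of a top-degree holomorphic form is metric-independent: for holomorphic $n$-forms $\eta, \eta'$ on a compact complex $n$-fold $\Sigma$ one has $\langle \eta, \eta' \rangle_{L^2} = c_n (\sqrt{-1})^{n^2} \int_{\Sigma} \eta \wedge \overline{\eta'}$ with $c_n$ depending only on $n$, because the volume form $\omega^n/n!$ and the induced metric on $K_{\Sigma}$ cancel the metric dependence. Applying this with $n=2$ on $C^{(2)}$, then pulling back along the degree-$2$ cover $\pi$ (which contributes a factor $\tfrac12$) and using $\pi^*\Lambda(\alpha \wedge \alpha') = p^*\alpha \wedge q^*\alpha' - p^*\alpha' \wedge q^*\alpha$, Fubini's theorem reduces $\langle \Lambda(\alpha \wedge \alpha'), \Lambda(\beta \wedge \beta') \rangle_{L^2}$ to a Gram determinant of the pairings $\int_C \sqrt{-1}\, \alpha \wedge \overline{\beta}$. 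Applying the same metric-independence with $n=1$ on $C$ identifies these with the $L^2$-inner products on $H^0(C, \Omega^1_C) = H^0(Y_s^{\sigma}, \Omega^1_{Y_s^{\sigma}})$, that is, with the metric defining $g_* \Omega^1_{\mathscr{Y}^{\sigma}/S}$. Hence $\langle \Lambda(\alpha \wedge \alpha'), \Lambda(\beta \wedge \beta') \rangle_{L^2}$ equals an $s$-independent constant times the Gram determinant that computes $\langle \alpha \wedge \alpha', \beta \wedge \beta' \rangle_{L^2}$ for the induced metric on $\wedge^2 (g_* \Omega^1_{\mathscr{Y}^{\sigma}/S})$; the case $M_0 \cong E_8(2) \oplus U$ is identical, with $E \times F$ in place of $C^{(2)}$ and Fubini applied directly. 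The care needed here is exactly in verifying that the various universal constants combine into a single positive scalar independent of $s$.

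Granting this, $\Lambda$ is an isometry up to that constant scalar, so $\det \Lambda$ matches, up to a constant, the metric on $(\det g_* \Omega^1_{\mathscr{Y}^{\sigma}/S})^{\otimes (g-1)}$ produced in Lemma \ref{l4-n-2-8}. Taking $c_1 = -dd^c \log \| \cdot \|^2$ annihilates the constant, and Lemma \ref{l4-n-2-8} yields $c_1(f_*\Omega^2_{\xss}, h_{L^2}) = (g-1)\, c_1(g_* \Omega^1_{\mathscr{Y}^{\sigma}/S}, h_{L^2})$, completing the proof.
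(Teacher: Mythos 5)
Your proposal is correct, but your metric-comparison step takes a genuinely different route from the paper's. The paper constructs the same isomorphism $\zeta : \wedge^2 g_* \Omega^1_{\mathscr{Y}^{\sigma}/S} \cong f_*\Omega^2_{\xss}$ via Macdonald's (6.3) and the K\"unneth formula, exactly as you do, but it never shows that $\zeta$ is an isometry up to a constant. Instead it introduces the companion isomorphism $\xi : \wedge^2 R^1g_*\mathcal{O}_{\mathscr{Y}^{\sigma}} \cong R^2f_*\mathcal{O}_{\mathscr{X}^{\iota}}$, records the two metric defects $\tilde{c}_1(\zeta)$, $\tilde{c}_1(\xi)$, and uses the commutation of $\zeta, \xi$ with the conjugation maps $c$ in the diagram (\ref{al4-n-2-2}), together with $\langle \bar{\gamma}, \bar{\delta} \rangle_{L^2} = \langle \delta, \gamma \rangle_{L^2}$, to prove $\tilde{c}_1(\zeta) = \tilde{c}_1(\xi)$ --- the same trick as in Lemma \ref{l-n-2}; the defects then cancel in the combination $c_1(f_*\Omega^2_{\xss}, h_{L^2}) - c_1(R^2f_*\mathcal{O}_{\mathscr{X}^{\iota}}, h_{L^2}) = 2\,c_1(f_*\Omega^2_{\xss}, h_{L^2})$ supplied by Serre duality, and Lemma \ref{l4-n-2-8} finishes. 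You shortcut all of this by observing that both sides involve only top-degree holomorphic forms --- $\Omega^2$ on the surface components of the fixed locus, $\Omega^1$ on the curves --- so the $L^2$ pairings are metric-independent, and the Fubini computation on $C \times C$ (resp.\ $E \times F$) exhibits $\zeta$ as an isometry up to a universal $s$-independent constant. That is a strictly stronger conclusion (proportionality of the metrics, not merely equality of first Chern forms), it dispenses with the $R^2f_*\mathcal{O}$ side, the conjugation lemma and Serre duality altogether, and it also clarifies why the conjugation trick was indispensable in Lemma \ref{l-n-2} and Proposition \ref{l-n-3}: there the forms are $(1,0)$-forms on surfaces, not of top degree, so your shortcut is unavailable, whereas for Proposition \ref{p4-n-1} the paper reuses the trick only for uniformity of method. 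Two points worth making explicit in a final write-up: the factor $\tfrac{1}{2}$ from the double cover $C \times C \to C^{(2)}$ is legitimate even though the cover is branched, since the diagonal has measure zero; and in the $E_8(2) \oplus U$ case the cross terms $\langle \alpha_E, \beta_F \rangle_{L^2}$ in the Gram determinant on $\wedge^2 \left( H^0(E, \Omega^1_E) \oplus H^0(F, \Omega^1_F) \right)$ vanish because $E$ and $F$ are distinct connected components of $Y^{\sigma}$, which is precisely what matches the K\"unneth product on the $E \times F$ side.
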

	
	\begin{proof}
		If $M_0=E_8(2) \oplus U$, then we have by (\ref{al-n-3}), (\ref{al-n-2})
		$$
			H^0(Y_s^{\sigma}, \Omega^1_{Y_s^{\sigma}}) = H^0(E, \Omega^1_{E}) \oplus H^0(F, \Omega^1_{F}), \quad H^0(X_s^{\iota}, \Omega^2_{X_s^{\iota}}) = H^0(E \times F, \Omega^2_{E \times F}),
		$$
		where $E$ and $F$ are elliptic curves.
		By the K\"unneth formula, we have the canonical isomorphism
		\begin{align}\label{al6-3-2-1}
			H^0(X_s^{\iota}, \Omega^2_{X_s^{\iota}}) =  H^0(E \times F, \Omega^2_{E \times F}) \cong H^0(E, \Omega^1_{E}) \otimes H^0(F, \Omega^1_{F}) = \wedge^2 H^0(Y_s^{\sigma}, \Omega^1_{Y_s^{\sigma}}). 
		\end{align}
		
		If $M_0 \neq E_8(2) \oplus U, E_8(2) \oplus U(2)$, then we have by (\ref{al-n-3}), (\ref{al-n-2})
		$$
			H^0(Y_s^{\sigma}, \Omega^1_{Y_s^{\sigma}}) = H^0(C, \Omega^1_{C}), \quad H^0(X_s^{\iota}, \Omega^2_{X_s^{\iota}}) = H^0(C^{(2)}, \Omega^2_{C^{(2)}}),
		$$
		where $C$ is a curve of genus $g$.
		Let $p, q : C \times C \to C$ be the first and second projection.
		For a holomorphic 1-form $\alpha, \beta$ on $C$, 
		$$
			p^* \alpha \wedge q^* \beta -p^* \beta \wedge q^* \alpha
		$$
		induces a holomorphic 2-form $\gamma$ on $C^{(2)}$.
		By \cite[(6.3)]{MR151460}, the following homomorphism is an isomorphism
		$$
			\wedge^2 H^0(C, \Omega^1_{C}) \cong H^0(C^{(2)}, \Omega^2_{C^{(2)}}), \quad \alpha \wedge \beta \mapsto \gamma.
		$$
		Therefore, we have the canonical isomorphism
		\begin{align}\label{al6-3-2-2}
			H^0(X_s^{\iota}, \Omega^2_{X_s^{\iota}}) = H^0(C^{(2)}, \Omega^2_{C^{(2)}}) \cong \wedge^2 H^0(C, \Omega^1_{C})) = \wedge^2 H^0(Y_s^{\sigma}, \Omega^1_{Y_s^{\sigma}}). 
		\end{align}
		
		By (\ref{al6-3-2-1}), (\ref{al6-3-2-2}), we have the isomorphism of holomorphic vector bundles
		$$
			\zeta : \wedge^2 g_* \Omega^1_{\mathscr{Y}^{\sigma}/S}  \cong f_*\Omega^2_{\xss}.
		$$
		Similarly, we have an isomorphism of holomorphic vector bundles
		$$
			\xi : \wedge^2 R^1g_*\mathcal{O}_{\mathscr{Y}^{\sigma}} \cong R^2f_*\mathcal{O}_{\mathscr{X}^{\iota}}.
		$$
		By \cite[(6.3)]{MR151460} and the K\"unneth formula, the following diagram commutes:
		\begin{align}\label{al4-n-2-2}
			\xymatrix{
				\wedge^2 g_* \Omega^1_{\mathscr{Y}^{\sigma}/S} \ar[r]^{\zeta}  \ar[d]^{c} & f_*\Omega^2_{\xss} \ar[d]^c   \\
				\wedge^2 R^1g_*\mathcal{O}_{\mathscr{Y}^{\sigma}} \ar[r]^{\xi} & R^2f_*\mathcal{O}_{\mathscr{X}^{\iota}}   
			}
		\end{align}
		
		Let $\gamma_1, \dots, \gamma_{m'}$ be a local holomorphic frame of $\wedge^2 g_* \Omega^1_{\mathscr{Y}^{\sigma}/S}$
		and let $\delta_1, \dots, \delta_{m'}$ be a local holomorphic frame of $\wedge^2 R^1g_*\mathcal{O}_{\mathscr{Y}^{\sigma}}$.
		We define real-valued functions $\tilde{c}_1(\zeta)$ and $\tilde{c}_1(\xi)$ by
		\begin{align}\label{al4-n-2-3}
			\tilde{c}_1(\zeta) =-\log \frac{\det (\langle \zeta( \gamma_i), \zeta( \gamma_j ) \rangle_{L^2}) }{\det (\langle  \gamma_i,  \gamma_j  \rangle_{L^2})}, \quad
			\tilde{c}_1(\xi) =-\log \frac{\det (\langle \xi( \delta_i), \xi( \delta_j ) \rangle_{L^2}) }{\det (\langle  \delta_i,  \delta_j  \rangle_{L^2})}
		\end{align}
		Note that $\tilde{c}_1(\zeta)$ and $\tilde{c}_1(\xi)$ are independent of the choice of holomorphic frames and they are globally defined on $S$. 
		
		Let $C =(c_{ij})$ be a $m \times m$-matrix defined by $\bar{\gamma}_i = \sum_j c_{ij} \delta_j$. 
		By (\ref{al4-n-2-2}), we have
		\begin{align}\label{al4-n-2-4}
		\begin{aligned}
			 \frac{\det (\langle \overline{\zeta( \gamma_i)}, \overline{\zeta( \gamma_j )} \rangle_{L^2}) }{\det (\langle  \bar{\gamma_i}, \bar{ \gamma_j}  \rangle_{L^2})}
			&=  \frac{\det \left( C \cdot (\langle \xi( \delta_i), \xi( \delta_j ) \rangle_{L^2}) \cdot {}^t\overline{C} \right)}{\det \left( C \cdot (\langle  \delta_i,  \delta_j  \rangle_{L^2}) \cdot {}^t\overline{C} \right)} \\
			&=  \frac{\det (\langle \xi( \delta_i), \xi( \delta_j ) \rangle_{L^2}) }{\det (\langle  \delta_i,  \delta_j  \rangle_{L^2})}.
		\end{aligned}
		\end{align}
		By (\ref{al4-n-2-3}) and (\ref{al4-n-2-4}), we have
		\begin{align}\label{al4-n-2-5}
		\begin{aligned}
			\tilde{c}_1(\zeta) &=-\log \frac{  \det (\langle \zeta( \gamma_i), \zeta( \gamma_j ) \rangle_{L^2})  }{  \det (\langle  \gamma_i,  \gamma_j  \rangle_{L^2})  }  \\
			&=-\log \left\{ \frac{   \det (\langle \zeta( \gamma_i), \zeta( \gamma_j ) \rangle_{L^2})  }{  \det (\langle \overline{\zeta( \gamma_i)}, \overline{\zeta( \gamma_j )} \rangle_{L^2})  }  
			\frac{  \det (\langle \overline{\zeta( \gamma_i)}, \overline{\zeta( \gamma_j )} \rangle_{L^2})  }{  \det (\langle  \bar{\gamma_i}, \bar{ \gamma_j}  \rangle_{L^2})  } 
			\frac{  \det (\langle  \bar{\gamma_i}, \bar{ \gamma_j}  \rangle_{L^2})  }{  \det (\langle  \gamma_i,  \gamma_j  \rangle_{L^2})  } \right\} \\
			&=-\log \left\{ \frac{   \det (\langle \zeta( \gamma_i), \zeta( \gamma_j ) \rangle_{L^2})  }{  \det {}^t (\langle \zeta( \gamma_i), \zeta( \gamma_j ) \rangle_{L^2})  }   
			\frac{\det (\langle \xi( \delta_i), \xi( \delta_j ) \rangle_{L^2}) }{\det (\langle  \delta_i,  \delta_j  \rangle_{L^2})}
			\frac{ \det {}^t (\langle  \gamma_i,  \gamma_j  \rangle_{L^2}) }{ \det (\langle  \gamma_i,  \gamma_j  \rangle_{L^2}) } \right\} \\
			&= \tilde{c}_1(\xi) .
		\end{aligned}
		\end{align}
		By the definition of $\tilde{c}_1(\zeta)$ and $\tilde{c}_1(\xi)$, we have
		\begin{align}\label{al4-n-2-6}
		\begin{aligned}
			c_1(f_*\Omega^2_{\xss}, h_{L^2}) -c_1( \wedge^2 g_* \Omega^1_{\mathscr{Y}^{\sigma}/S}, h_{L^2}) =dd^c \tilde{c}_1(\zeta) ,\\
			c_1(R^2f_*\mathcal{O}_{\mathscr{X}^{\iota}}, h_{L^2}) -c_1( \wedge^2 R^1g_*\mathcal{O}_{\mathscr{Y}^{\sigma}}, h_{L^2}) =dd^c \tilde{c}_1(\xi).
		\end{aligned}
		\end{align}
		By Lemma \ref{l4-n-2-8} and by the Serre duality, we have
		\begin{align}\label{al4-n-2-7}
		\begin{aligned}
			c_1(\wedge^2 g_* \Omega^1_{\mathscr{Y}^{\sigma}/S}, h_{L^2}) &= (g-1) c_1(g_* \Omega^1_{\mathscr{Y}^{\sigma}/S}, h_{L^2}) ,\\
			c_1( \wedge^2 R^1g_*\mathcal{O}_{\mathscr{Y}^{\sigma}}, h_{L^2}) &= (g-1) c_1(R^1g_*\mathcal{O}_{\mathscr{Y}^{\sigma}}, h_{L^2}) =-(g-1) c_1(g_* \Omega^1_{\mathscr{Y}^{\sigma}/S}, h_{L^2}).
		\end{aligned}
		\end{align}
		By (\ref{al4-n-2-5}), (\ref{al4-n-2-6}) and (\ref{al4-n-2-7}), we have
		\begin{align*}
			2 c_1(f_*\Omega^2_{\xss}, h_{L^2}) =c_1(f_*\Omega^2_{\xss}, h_{L^2}) -c_1(R^2f_*\mathcal{O}_{\mathscr{X}^{\iota}}, h_{L^2})=2(g-1) c_1(g_* \Omega^1_{\mathscr{Y}^{\sigma}/S}, h_{L^2}),
		\end{align*}
		which completes the proof.
	\end{proof}
	
	\begin{cor}\label{c6-3-2-8}
		The following identity holds:
		$$
			\omega_{H^{\cdot}(\xss)} = (t+1) c_1(g_* \Omega^1_{\mathscr{Y}^{\sigma}/S}, h_{L^2}).
		$$
	\end{cor}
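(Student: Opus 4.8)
The plan is to deduce the identity directly from the two preceding propositions together with a short numerical check, so the corollary is essentially a bookkeeping step once Propositions \ref{l-n-3} and \ref{p4-n-1} are in hand. First I would recall the definition \eqref{al6-2-1-1} of $\omega_{H^{\cdot}(\xss)}$ and observe that, since each fiber $X_s^{\iota}$ of $f : \XX \to S$ is a compact complex \emph{surface}, its relative canonical bundle coincides with the top exterior power of the relative cotangent bundle; that is, $K_{\xss} = \Omega^2_{\xss}$, and hence $c_1(f_* K_{\xss}, h_{L^2}) = c_1(f_* \Omega^2_{\xss}, h_{L^2})$, the metrics being the $L^2$-metrics in both cases.

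With this identification I would substitute Proposition \ref{l-n-3} (for the first two terms) and Proposition \ref{p4-n-1} (for the canonical term) into \eqref{al6-2-1-1}:
\begin{align*}
	\omega_{H^{\cdot}(\xss)}
	&= \left( c_1(f_*\Omega^1_{\xss}, h_{L^2}) - c_1(R^1f_*\mathcal{O}_{\mathscr{X}^{\iota}}, h_{L^2}) \right) - 2\, c_1(f_* K_{\xss}, h_{L^2}) \\
	&= 2(k+1)\, c_1(g_* \Omega^1_{\mathscr{Y}^{\sigma}/S}, h_{L^2}) - 2(g-1)\, c_1(g_* \Omega^1_{\mathscr{Y}^{\sigma}/S}, h_{L^2}) \\
	&= \bigl( 2k - 2g + 4 \bigr)\, c_1(g_* \Omega^1_{\mathscr{Y}^{\sigma}/S}, h_{L^2}).
\end{align*}
Thus the entire content of the corollary is reduced to verifying the scalar identity $2k - 2g + 4 = t+1$, which is where the genuine input lies; the rest is formal.

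The main point to check is therefore this arithmetic identity, and I would establish it by comparing two independent evaluations. On one side, \eqref{f-n-1} gives $2k = r-l$ and $2g = 22 - (r+l)$, so that
$$
	2k - 2g + 4 = (r-l) - \bigl(22 - (r+l)\bigr) + 4 = 2r - 18.
$$
On the other side, I would compute $t$ via the lattice involution: since $\iota_M = \iota_{\widetilde{M}_0}$ acts as $+\mathrm{id}$ on $\widetilde{M}_0 = M_0 \oplus \mathbb{Z}e$ (of rank $r+1$) and as $-\mathrm{id}$ on its orthogonal complement $M_0^{\perp}$ in $L_{K3}$ (of rank $22-r$, using $\rk L_{K3} = 22$), its trace on $L_2$ is $\operatorname{Tr}(\iota_M) = (r+1) - (22 - r) = 2r - 21$. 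Combined with the relation $t = \operatorname{Tr}(\iota_M) + 2$ recorded earlier, this yields $t = 2r - 19$, hence $t+1 = 2r - 18$. The two expressions coincide, so $2k-2g+4 = t+1$ and the displayed computation gives exactly the claimed equality. The only real obstacle was the surface-dimension hypotheses and the Künneth/symmetric-product identifications behind Propositions \ref{l-n-3} and \ref{p4-n-1}; granting those, the corollary follows from the curvature substitution and this numerical matching.
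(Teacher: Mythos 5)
Your proof is correct, and its first half coincides with the paper's: the paper likewise substitutes Propositions \ref{l-n-3} and \ref{p4-n-1} into the definition (\ref{al6-2-1-1}) (silently using $K_{\xss}=\Omega^2_{\xss}$, which you helpfully make explicit) to arrive at $\omega_{H^{\cdot}(\xss)}=2(k-g+2)\,c_1(g_*\Omega^1_{\mathscr{Y}^{\sigma}/S},h_{L^2})$. Where you diverge is in verifying the scalar identity $2(k-g+2)=t+1$. The paper gets it topologically: it invokes Beauville's formula $e(Y^{\sigma})=t-1$ from \cite[3.1]{MR2805992} and computes the Euler number of the fixed locus from the description (\ref{al-n-3}), namely $e(Y^{\sigma})=2-2g+2k$. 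You instead argue purely lattice-theoretically: $2k-2g+4=2r-18$ from the Nikulin-type formulas (\ref{f-n-1}), and $t+1=2r-18$ from $t=\operatorname{Tr}(\iota_{\widetilde{M}_0})+2$ with $\operatorname{Tr}(\iota_{\widetilde{M}_0}|_{L_2})=(r+1)-(22-r)=2r-21$, consistent with the paper's own definitions of $\iota_{\widetilde{M}_0}$ and with (\ref{al40}) (which the paper, in the reverse logical direction, deduces from (\ref{f-n-1}) and (\ref{al-n-41})). Both inputs you use are recorded in the paper, so your argument is self-contained there; it buys independence from Beauville's Euler-characteristic result, at the cost of relying on the classification data behind (\ref{f-n-1}) — unsurprisingly, since $e(Y^{\sigma})=t-1$ is itself the topological Lefschetz fixed-point theorem expressing the same lattice trace. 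One minor point of hygiene: your computation tacitly uses the standing assumption $M_0\ncong E_8(2)\oplus U(2)$ under which (\ref{f-n-1}) is defined; this is in force throughout the section (the excluded case being trivial, with both sides zero), but it would be worth saying so.
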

	
	\begin{proof}
		By (\ref{al6-2-1-1}) and by Propositions \ref{l-n-3} and \ref{p4-n-1}, we have
		\begin{align}\label{al4-n-2-12}
			\omega_{H^{\cdot}(\xss)} = 2(k-g+2)c_1(g_* \Omega^1_{\mathscr{Y}^{\sigma}/S}, h_{L^2}).
		\end{align}
		Let $(Y, \sigma)$ be a 2-elementary K3 surface of type $M_0$ and set $(X, \iota)=(Y^{[2]}, \sigma^{[2]})$.
		By \cite[3.1]{MR2805992}, we have 
		$
			e(Y^{\sigma}) =t-1 .
		$
		Since the fixed locus $Y^{\sigma}$ of $(Y, \sigma)$ is given by (\ref{al-n-3}), we have 
		\begin{align}\label{al-n-41}
			2(k-g+2) = t+1.
		\end{align}
		By (\ref{al4-n-2-12}) and (\ref{al-n-41}), we have the desired result.
	\end{proof}

\subsection{An invariant of $K3^{[2]}$-type manifolds with natural involution}\label{ss-4-6}
		
	Recall that the period domain for 2-elementary K3 surfaces of type $M_0$ is defined by
	$$
		\Omega_{M_0^{\perp}} =\{ [\eta] \in \mathbb{P}(M_0^{\perp} \otimes \mathbb{C} ) ; (\eta, \eta)=0, (\eta, \bar{\eta})>0 \} .
	$$
	and 
	$$
		\mathcal{M}_{M_0} = \Omega_{M_0^{\perp}} / O(M_0^{\perp}) .
	$$
	This is a quasi-projective variety.
	(See \cite[Theorems 10.4 and 10.11]{MR216035})
	
	The period of a 2-elementary K3 surface $(Y, \sigma)$ is defined by
	$$
		\bar{\pi}_{M_0}(Y, \sigma) = [\alpha(H^{2,0}(Y))] \in \mathcal{M}_{M_0} ,
	$$
	where $\alpha$ is a marking of $(Y, \sigma)$.
	For the family $g : (\mathscr{Y}, \sigma) \to S$ of 2-elementary K3 surfaces of type $M_0$, we define the period map by
	$$
		\bar{\pi}_{M_0}: S \to \mathcal{M}_{M_0}, \qquad s \mapsto \bar{\pi}_{M_0}(Y_s, \sigma_s) .
	$$ 
	
	Fix a vector $l_0 \in M^{\perp}_{0,\mathbb{R}}$ satisfying $(l_0, l_0) \geqq 0$.
	Since $M_0$ is hyperbolic and hence $\operatorname{sign}M_0^{\perp} = (2, 20-r(M_0))$,
	$\Omega_{M_0^{\perp}}$ consists of two connected components, both of which are isomorphic to a bounded symmetric domain of type IV of dimension $20 - r(M_0)$.
	The Bergman metric $\omega_{M_0^{\perp}}$ on $\Omega_{M_0^{\perp}}$ is defined by
	$$
		\omega_{M_0^{\perp}} ([\eta]) = -dd^c \log B_{M_0^{\perp}} ([\eta]) \quad ([\eta] \in \Omega_{M_0^{\perp}})
	$$
	where
	$$
		B_{M_0^{\perp}} ([\eta]) = \frac{(\eta, \bar{\eta})}{|(\eta, l_0)|^2}  \quad ([\eta] \in \Omega_{M_0^{\perp}}) .
	$$
	Since $\omega_{M_0^{\perp}}$ is $O(M_0^{\perp})$-invariant, it induces a K\"ahler form $\omega_{\mathcal{M}_{M_0}}$ on the orbifold $\mathcal{M}_{M_0} = \Omega_{M_0^{\perp}} / O(M_0^{\perp})$.
	Since the period maps of $f : (\mathscr{X}, \iota) \to S$ and $g : (\mathscr{Y}, \sigma) \to S$ coincide, 
	we have
	\begin{align}\label{al38}
		2 \bar{\pi}_{M_0}^* \omega_{\mathcal{M}_{M_0}} = 2 P^*_{M, \mathcal{K}} \omega_{\mathcal{M}_{M, \mathcal{K}}} = c_1(f_*K_{\xs}, h_{L^2}).
	\end{align}
	
	Let $Sp(2g, \mathbb{Z}) $ be the Siegel modular group.
	We define the Siegel upper half space $\mathfrak{S}_g$ of degree $g$ by
	$$
		\mathfrak{S}_g = \left\{ \tau \in M_g(\mathbb{C}) ; ^t\!\tau = \tau , \operatorname{Im} \tau >0 \right\} ,
	$$
	and define the Siegel modular variety $\mathcal{A}_g$ of degree $g$ by
	$$
		\mathcal{A}_g = \mathfrak{S}_g / Sp(2g, \mathbb{Z}) ,
	$$
	where the $Sp(2g, \mathbb{Z})$-action on $\mathfrak{S}_g$ is given by
	$$
		\gamma \cdot \tau = (A \tau +B)(C \tau +D)^{-1} ,\quad \tau \in \mathfrak{S}_g, \gamma=
		\begin{pmatrix}
			A & B \\
			C & D \\
		\end{pmatrix}
		\in Sp(2g, \mathbb{Z})  .
	$$
		
	The period map for the family of curves $g: \mathscr{Y}^{\sigma} \to S$ is denoted by
	$$
		J_{M_0, \mathscr{Y}^{\sigma}/S} : S \to \mathcal{A}_g, \qquad s \mapsto [\Omega (Y_s^{\sigma})],
	$$	
	where $\Omega (Y_s^{\sigma})$ is the period matrix of $Y_s^{\sigma}$.
	The Bergman metric $\omega_{\mathfrak{S}_g}$ on $\mathfrak{S}_g$ is defined by
	$$
		\omega_{\mathfrak{S}_g}(\tau) = -dd^c \log \det \operatorname{Im} \tau , \qquad \tau \in \mathfrak{S}_g .
	$$
	Since $\omega_{\mathfrak{S}_g}$ is $Sp(2g, \mathbb{Z})$-invariant, it induces a K\"ahler form $\omega_{\mathcal{A}_g}$ on the Siegel modular variety $\mathcal{A}_g = \mathfrak{S}_g / Sp(2g, \mathbb{Z})$.
	We have
	\begin{align}\label{al39}
		c_1(g_* \Omega^1_{\mathscr{Y}^{\sigma}/S}, h_{L^2}) = -dd^c \log \det \langle \psi_i , \psi_j \rangle_{L^2} = J_{M_0, \mathscr{Y}^{\sigma}/S}^* \omega_{\mathcal{A}_g} ,
	\end{align} 
	where $\psi_1, \dots, \psi_g$ is a local frame of $g_*\Omega^1_{ \mathscr{Y}^{\sigma}/S}$.
	
	By the formulas (\ref{f-n-1}) and (\ref{al-n-41}), we have
	\begin{align}\label{al40}
		2r-19 = t ,
	\end{align} 
	where $r=r(M_0)$ is the rank of $M_0$.
	
	\begin{thm}\label{t-4-1}
		Let $g: (\mathscr{Y}, \sigma) \to S$ be a family of 2-elementary K3 surfaces of type $M_0$ 
		and let $f : (\mathscr{X}, \iota) \to S$ be the family of $K3^{[2]}$-type manifolds with antisymplectic involution of type $(\widetilde{M}_0, \mathcal{K})$ induced from $g: (\mathscr{Y}, \sigma) \to S$.
		Namely, 
		$$
			X_s = Y_s^{[2]}, \qquad \iota_s = \sigma_s^{[2]}
		$$
		for all $s \in S$.
		Then the following equality holds:
		$$
			-dd^c \log \tau_{\widetilde{M}_0, \mathcal{K}, \xs} = 2(r-9) \left\{ \frac{r-6}{4} \bar{\pi}_{M_0}^* \omega_{\mathcal{M}_{M_0}} + J_{M_0, \mathscr{Y}^{\sigma}/S}^* \omega_{\mathcal{A}_g} \right\}.
		$$
	\end{thm}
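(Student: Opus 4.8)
The plan is to prove the identity by pure substitution, chaining together the curvature formula of Theorem \ref{p-3-4}, the comparison of Hodge forms in Corollary \ref{c6-3-2-8}, the identifications of the period forms in (\ref{al38}) and (\ref{al39}), and finally the numerical relation (\ref{al40}). Since all the substantive inputs are already established, the only real work is careful bookkeeping of the numerical coefficients and their re-expression in terms of $r = \rk M_0$.

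First I would specialize Theorem \ref{p-3-4} to the admissible lattice $M = \widetilde{M}_0$, which gives
$$
	-dd^c \log \tau_{\widetilde{M}_0, \mathcal{K}, \xs} = \frac{(t+1)(t+7)}{16} c_1(f_*K_{\xs}, h_{L^2}) +\omega_{H^{\cdot}(\xss)}.
$$
Into this I would insert Corollary \ref{c6-3-2-8}, namely $\omega_{H^{\cdot}(\xss)} = (t+1) c_1(g_* \Omega^1_{\mathscr{Y}^{\sigma}/S}, h_{L^2})$, which replaces the $K3^{[2]}$-theoretic Hodge form by the Hodge form of the family of fixed curves $g : \mathscr{Y}^{\sigma} \to S$. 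I would then translate the two Chern forms into pullbacks of Bergman forms: by (\ref{al38}) one has $c_1(f_*K_{\xs}, h_{L^2}) = 2 \bar{\pi}_{M_0}^* \omega_{\mathcal{M}_{M_0}}$, and by (\ref{al39}) one has $c_1(g_* \Omega^1_{\mathscr{Y}^{\sigma}/S}, h_{L^2}) = J_{M_0, \mathscr{Y}^{\sigma}/S}^* \omega_{\mathcal{A}_g}$. Substituting these yields
$$
	-dd^c \log \tau_{\widetilde{M}_0, \mathcal{K}, \xs} = \frac{(t+1)(t+7)}{8} \bar{\pi}_{M_0}^* \omega_{\mathcal{M}_{M_0}} + (t+1) J_{M_0, \mathscr{Y}^{\sigma}/S}^* \omega_{\mathcal{A}_g}.
$$

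Finally I would eliminate $t$ in favor of $r$ via (\ref{al40}), $t = 2r-19$, so that $t+1 = 2(r-9)$ and $t+7 = 2(r-6)$. This gives $\tfrac{(t+1)(t+7)}{8} = \tfrac{(r-9)(r-6)}{2}$ and $t+1 = 2(r-9)$, and factoring out the common factor $2(r-9)$ produces exactly
$$
	-dd^c \log \tau_{\widetilde{M}_0, \mathcal{K}, \xs} = 2(r-9) \left\{ \frac{r-6}{4} \bar{\pi}_{M_0}^* \omega_{\mathcal{M}_{M_0}} + J_{M_0, \mathscr{Y}^{\sigma}/S}^* \omega_{\mathcal{A}_g} \right\},
$$
using $2(r-9)\cdot\frac{r-6}{4} = \frac{(r-9)(r-6)}{2}$ to match the first coefficient.

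Since the entire argument is a chain of already-proved equalities, there is no genuine analytic obstacle at this stage: the real content lies upstream, in the curvature equation of Theorem \ref{p-3-4} and in the isomorphisms of Hodge bundles underlying Corollary \ref{c6-3-2-8}, which themselves rest on the explicit description of the fixed locus $X^{\iota} = (Y^{[2]})^{\sigma^{[2]}}$ via symmetric products and products of the fixed curves of $(Y,\sigma)$. The only point here that demands attention is ensuring that the coefficient $\frac{r-6}{4}$ emerges correctly after factoring; I would therefore double-check the elementary identities $t+1 = 2(r-9)$, $t+7 = 2(r-6)$ and the resulting cancellation rather than any deeper estimate.
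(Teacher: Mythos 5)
Your proposal is correct and follows exactly the paper's own proof: the paper likewise combines Theorem \ref{p-3-4}, Corollary \ref{c6-3-2-8}, the identifications (\ref{al38}) and (\ref{al39}), and the relation $t=2r-19$ from (\ref{al40}), with the same coefficient bookkeeping $t+1=2(r-9)$, $t+7=2(r-6)$. Nothing is missing; the verification of the factorization $2(r-9)\cdot\frac{r-6}{4}=\frac{(r-6)(r-9)}{2}$ is the only arithmetic step, and you carried it out correctly.
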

	
	\begin{proof}
		By Theorem \ref{p-3-4}, by Corollary \ref{c6-3-2-8}, and by (\ref{al38}), (\ref{al39}) and (\ref{al40}), we have
		\begin{align*}
			-dd^c \log \tau_{\widetilde{M}_0, \mathcal{K}, \xs} &= \frac{(t+1)(t+7)}{16} c_1(f_*K_{\xs}, h_{L^2}) + \omega_{H^{\cdot}(\xss)} \\
			&= \frac{(t+1)(t+7)}{8} \bar{\pi}_{M_0}^* \omega_{\mathcal{M}_{M_0}} +(t+1)J_{M_0, \mathscr{Y}^{\sigma}/S}^* \omega_{\mathcal{A}_g} \\
			&= \frac{(r-6)(r-9)}{2} \bar{\pi}_{M_0}^* \omega_{\mathcal{M}_{M_0}} +2(r-9)J_{M_0, \mathscr{Y}^{\sigma}/S}^* \omega_{\mathcal{A}_g} \\
			&= 2(r-9) \left\{ \frac{r-6}{4} \bar{\pi}_{M_0}^* \omega_{\mathcal{M}_{M_0}} + J_{M_0, \mathscr{Y}^{\sigma}/S}^* \omega_{\mathcal{A}_g} \right\},
		\end{align*}
		and we obtain the result.
	\end{proof}

	Let us recall the invariant $\tau_{M_0}$ of 2-elementary K3 surface of type $M_0$ introduced in \cite[Definition~5.1]{MR2047658}.
	Let  $(Y, \sigma)$ be 2-elementary K3 surface of type $M_0$.
	Choose a $\sigma$-invariant \K metric $h_Y$ on $Y$and its associated \K form is denoted by $\omega_Y$.
	We define the volume of $(Y, \omega_Y)$ by
	$$
		\vol(Y, \omega_{Y}) = \int_Y \frac{\omega_{Y}^2}{2!}.
	$$
	
	The fixed locus of $\sigma : Y \to Y$ is denoted by $Y^{\sigma}$.
	Assume that $Y^{\sigma} \neq \emptyset$. 
	Let $Y^{\sigma} = \sqcup_i C_i$ be the decomposition into the connected components.
	We define the volume of $(Y^{\sigma}, \omega_{Y^{\sigma}})$ by
	$$
		\vol(Y^{\sigma}, \omega_{Y^{\sigma}}) =\prod_i \vol(C_i, \omega_Y|_{C_i}) = \prod_i \int_{C_i} \omega_Y|_{C_i}.
	$$
	We define a positive number $A(Y, \sigma, h_Y) \in \mathbb{R}_{>0}$ by
	\begin{align*}
		A(Y, \sigma, h_Y) =\exp \left[ \frac{1}{8} \int_{Y^{\sigma}} \log \left( \frac{\eta \wedge \bar{\eta}}{\omega_{Y}^2/2!} \frac{\vol(Y, \omega_{Y})}{\|\eta\|_{L^2}^2} \right) c_1(TY^{\sigma}, h_Y|_{Y^{\sigma}}) \right],
	\end{align*}
	where $\eta$ is a holomorphic 2-form on $Y$.
	If $Y^{\sigma} = \emptyset$, we set $\vol(Y^{\sigma}, \omega_{Y^{\sigma}}) = A(Y, \sigma, h_Y)=1$.
	
	Let $\tau_{\sigma}(\bar{\mathcal{O}}_Y)$ be the equivariant analytic torsion of the trivial line bundle $\bar{\mathcal{O}}_Y$ with respect to the canonical metric,
	and let $\tau(\bar{\mathcal{O}}_{Y^{\sigma}})$ be the analytic torsion of the trivial line bundle $\bar{\mathcal{O}}_{X^{\iota}}$ with respect to the canonical metric.
	If $Y^{\sigma} = \emptyset$, we set $\tau(\bar{\mathcal{O}}_{Y^{\sigma}})=1$.
	
	\begin{dfn}\label{d7-3-3-1}
		Let  $(Y, \sigma)$ be 2-elementary K3 surface of type $M_0$
		and let $h_Y$ be a $\sigma$-invariant \K metric on $Y$.
		We define a real number $\tau_{M_0}(Y, \sigma)$ by
		\begin{align*}
			\tau_{M_0}(Y, \sigma)=\tau_{\sigma}(\bar{\mathcal{O}}_Y) \vol(Y, \omega_{Y})^{\frac{14 -r}{4}}
			\tau(\bar{\mathcal{O}}_{Y^{\sigma}}) \vol(Y^{\sigma}, \omega_{Y^{\sigma}})A(Y, \sigma, h_Y).
		\end{align*}
	\end{dfn}
	
	By \cite[Theorem~5.7]{MR2047658}, $\tau_{M_0}(Y, \sigma)$ is independent of the choice of $h_Y$
	and is an invariant of $(Y, \sigma)$.
	
	Let $g: (\mathscr{Y}, \sigma) \to S$ be a family of 2-elementary K3 surfaces of type $M_0$.
	We define the function $\tau_{M_0, \mathscr{Y}/S}$ on $S$ by
	$$
		\tau_{M_0, \mathscr{Y}/S}(s) = \tau_{M_0}(Y_s, \sigma_s) \quad (s \in S).
	$$
	By \cite[Theorem~5.6]{MR2047658}, the following equality holds:
	$$
		dd^c \log \tau_{M_0, \mathscr{Y}/S} =\frac{r-6}{4} \bar{\pi}_{M_0}^* \omega_{\mathcal{M}_{M_0}} + J_{M_0, \mathscr{Y}^{\sigma}/S}^* \omega_{\mathcal{A}_g}.
	$$
	Therefore, if $r \neq 9$, then the functions ${\tau_{\widetilde{M}_0, \xs}}^{\frac{1}{2r-18}}$ and $\tau_{M_0, \mathscr{Y}/S}$ satisfies the same curvature equation.

\section{Comparison of the invariants}\label{s-5}

	In this section, we compare the invariants $\tau_{\widetilde{M}_0, \mathcal{K}}$ and $\tau_{M_0}$ when $\mathcal{K}$ is natural.
	An example with non-natural $\mathcal{K}$ is also discussed.

\subsection{An equation of currents on $\mathcal{M}_{M_0}$}\label{ss-5-1}
	
	Let $M_0$ be a primitive hyperbolic 2-elementary sublattice of $L_{K3}$.
	Let $\Pi_{M_0^{\perp}} : \Omega_{M_0^{\perp}} \to \mathcal{M}_{M_0}$ be the natural projection.
	For $\delta \in \Delta(M_0^{\perp}) = \{ \delta \in M_0^{\perp} ; \delta^2 =-2 \}$, we set
	$$
		H^{\circ}_{\delta}= \{ [\eta] \in \Omega_{M_0^{\perp}} ; O(M_0^{\perp})_{[\eta]} = \{ \pm1, \pm s_{\delta} \} \},
	$$
	where $O(M_0^{\perp})_{[\eta]}$ is the stabilizer of $[\eta]$.
	We define an open subset $\mathscr{D}_{M_0^{\perp}}^{\circ}$ of $\mathscr{D}_{M_0^{\perp}}$ by
	$$
		\mathscr{D}_{M_0^{\perp}}^{\circ} = \bigcup_{\delta \in \Delta(M_0^{\perp})} H^{\circ}_{\delta}.
	$$
	The isometry group $O(M_0^{\perp})$ preserves $\mathscr{D}_{M_0^{\perp}}^{\circ}$ and we set
	$$
		\overline{\mathscr{D}}_{M_0^{\perp}}^{\circ} = \mathscr{D}_{M_0^{\perp}}^{\circ} / O(M_0^{\perp}).
	$$
	It is an open subset of $\mathscr{D}_{M_0^{\perp}}$.
	
	\begin{lem}\label{l2-5-1}
		Assume that $r(M_0) \leqq 18$.
		Then the following hold.
		\begin{enumerate}[ label= \rm{(\arabic*)} ]
			\item $\mathscr{D}_{M_0^{\perp}} \setminus \mathscr{D}_{M_0^{\perp}}^{\circ}$ is an analytic subset of $\Omega_{M_0^{\perp}}$ with $\operatorname{codim}_{\Omega_{M_0^{\perp}}} (\mathscr{D}_{M_0^{\perp}} \setminus \mathscr{D}_{M_0^{\perp}}^{\circ}) \geqq 2$.
				In particular, $\Omega_{M_0^{\perp}}^{\circ} \cup \mathscr{D}_{M_0^{\perp}}^{\circ} = \Omega_{M_0^{\perp}} \setminus (\mathscr{D}_{M_0^{\perp}} \setminus \mathscr{D}_{M_0^{\perp}}^{\circ})$ is an $O(M_0^{\perp})$-invariant Zariski open subset of $\Omega_{M_0^{\perp}}$.
			\item $\mathscr{D}_{M_0^{\perp}}^{\circ}$ is a smooth hypersurface of $ \Omega_{M_0^{\perp}}^{\circ} \cup \mathscr{D}_{M_0^{\perp}}^{\circ}$.
			\item $\overline{\mathscr{D}}_{M_0^{\perp}}^{\circ} \subset \mathcal{M}_{M_0} \setminus \operatorname{Sing} \mathcal{M}_{M_0}$, and $\overline{\mathscr{D}}_{M_0^{\perp}}^{\circ} \subset \overline{\mathscr{D}}_{M_0^{\perp}} \setminus \operatorname{Sing} \overline{\mathscr{D}}_{M_0^{\perp}}$.
		\end{enumerate}
	\end{lem}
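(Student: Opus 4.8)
The plan is to translate everything into statements about stabilizers for the $O(M_0^{\perp})$-action on the period domain, and then to run a uniform eigenspace-and-signature analysis. Write $n = 20 - r(M_0)$, so that $M_0^{\perp}$ has signature $(2,n)$ and $\dim \Omega_{M_0^{\perp}} = n$; the hypothesis $r(M_0) \leqq 18$ is exactly what guarantees $n \geqq 2$, and this is where that assumption will be used. First I would record the elementary reformulation that, for $\delta \in \Delta(M_0^{\perp})$ and $[\eta] \in \Omega_{M_0^{\perp}}$, one has $[\eta] \in H_{\delta}$ if and only if $s_{\delta}$ fixes $[\eta]$ (using $(\eta,\eta)=0$ to rule out $\eta \parallel \delta$). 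Consequently $\mathscr{D}_{M_0^{\perp}}$ is precisely the locus with nontrivial reflection part in the stabilizer, and $\mathscr{D}_{M_0^{\perp}}^{\circ}$ is the locus where the stabilizer is \emph{exactly} $\{\pm 1, \pm s_{\delta}\}$ for a single $\delta$.

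The technical heart of the argument is the following claim, which I would prove by decomposing $M_0^{\perp}\otimes\mathbb{C}$ into eigenspaces of a finite-order $g \in O(M_0^{\perp})$: the fixed locus $\Omega^{g}$ has a component of codimension $1$ in $\Omega_{M_0^{\perp}}$ if and only if $g = \pm s_{\delta}$ is (plus or minus) a reflection. Indeed a codimension-$1$ component forces an eigenspace $V_{\lambda}$ of complex dimension $n+1$ containing periods; since distinct eigenspaces are independent and $\dim(V_{\lambda}) + \dim(V_{\bar\lambda}) \le n+2$, the eigenvalue must be real, i.e. $\lambda = \pm 1$, and the signature constraint (that $V_{\lambda}$ carry a $(2,n-1)$-form) pins the complementary line down to a negative eigenvector $\delta$, giving $g = \pm s_{\delta}$ with $\Omega^{g} = H_{\delta}$. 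Here $n - 1 \geqq 1$ is needed so that the periods in $H_{\delta}$ are Zariski dense in $\delta^{\perp}\otimes\mathbb{C}$, which is again the role of $r(M_0)\leqq 18$. Granting this, for any $[\eta]\in \mathscr{D}_{M_0^{\perp}}$ with stabilizer strictly larger than $\{\pm 1, \pm s_{\delta}\}$ there is an extra element $g$: either $g = \pm s_{\delta'}$ with $\delta'\not\parallel\delta$, placing $[\eta]$ on the proper intersection $H_{\delta}\cap H_{\delta'}$ of two distinct irreducible hypersurfaces (codimension $2$), or $g$ is not a reflection and $\Omega^{g}$ itself has codimension $\geqq 2$. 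Using that $\mathscr{D}_{M_0^{\perp}}$ is locally finite (cited from \cite{MR2047658}) and that $O(M_0^{\perp})$ acts properly discontinuously, I conclude that $\mathscr{D}_{M_0^{\perp}}\setminus\mathscr{D}_{M_0^{\perp}}^{\circ}$ is a locally finite union of analytic sets of codimension $\geqq 2$, hence analytic of codimension $\geqq 2$; since it is $O(M_0^{\perp})$-invariant and $\Omega_{M_0^{\perp}}^{\circ}\cup\mathscr{D}_{M_0^{\perp}}^{\circ}$ is its complement, part (1) follows.

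For part (2) I would note that each $H_{\delta} = \Omega_{\delta^{\perp}}$ is a smooth sub-period-domain, and that within $\Omega_{M_0^{\perp}}^{\circ}\cup\mathscr{D}_{M_0^{\perp}}^{\circ}$ the pieces $H_{\delta}^{\circ} = H_{\delta}\cap(\Omega_{M_0^{\perp}}^{\circ}\cup\mathscr{D}_{M_0^{\perp}}^{\circ})$ are pairwise disjoint, since any common point of $H_{\delta}$ and $H_{\delta'}$ lies in the excised codimension-$2$ bad locus; thus $\mathscr{D}_{M_0^{\perp}}^{\circ}$ is a disjoint union of open pieces of smooth hypersurfaces, hence a smooth closed hypersurface of $\Omega_{M_0^{\perp}}^{\circ}\cup\mathscr{D}_{M_0^{\perp}}^{\circ}$. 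For part (3), at $[\eta]\in\mathscr{D}_{M_0^{\perp}}^{\circ}$ the effective stabilizer acting on $T_{[\eta]}\Omega_{M_0^{\perp}}$ is the cyclic group generated by $s_{\delta}$, which acts as a genuine complex reflection (eigenvalue $-1$ on the one-dimensional conormal direction spanned by $[\delta]$, and $+1$ on $T_{[\eta]}H_{\delta}$). By the Chevalley--Shephard--Todd theorem the local quotient is smooth, so the image lies in $\mathcal{M}_{M_0}\setminus\operatorname{Sing}\mathcal{M}_{M_0}$; moreover $s_{\delta}$ fixes $H_{\delta}$ pointwise, so near such a point $\overline{\mathscr{D}}_{M_0^{\perp}}$ is the image of the smooth $H_{\delta}$ and is itself smooth, giving $\overline{\mathscr{D}}_{M_0^{\perp}}^{\circ}\subset\overline{\mathscr{D}}_{M_0^{\perp}}\setminus\operatorname{Sing}\overline{\mathscr{D}}_{M_0^{\perp}}$. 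I expect the main obstacle to be the codimension-$1$-fixed-locus claim: one must carry out the eigenspace/signature bookkeeping cleanly and, in particular, handle the possibility of integral reflections $s_{\delta'}$ in vectors with $\delta'^{2}\neq -2$ (which do not contribute to $\mathscr{D}_{M_0^{\perp}}$ but could a priori enlarge a stabilizer), showing they still only produce either codimension-$2$ strata or loci disjoint from $\mathscr{D}_{M_0^{\perp}}$.
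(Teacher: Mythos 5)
The paper does not prove this lemma at all: its ``proof'' is the citation to Yoshikawa \cite[Proposition 1.9]{MR2047658}, and your stabilizer-theoretic plan is indeed the expected architecture behind that result; your treatment of parts (2), (3) (pairwise disjointness after excising the codimension-$2$ locus, the order-two effective stabilizer acting as a reflection, smoothness of the quotient) is sound. However, there is a genuine gap in your technical heart, and it occurs exactly at the boundary of the stated hypothesis. Your dimension count ``a codimension-$1$ component of $\Omega^{g}$ forces an eigenspace $V_{\lambda}$ of dimension $n+1$'' tacitly assumes $\mathbb{P}(V_{\lambda})\not\subset\{(\eta,\eta)=0\}$. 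But every eigenspace with $\lambda^{2}\neq 1$ is automatically totally isotropic (from $(x,y)=(gx,gy)=\lambda^{2}(x,y)$), and an isotropic $V_{\lambda}$ with $\dim V_{\lambda}=n$ already gives $\mathbb{P}(V_{\lambda})\cap\Omega_{M_0^{\perp}}$ of dimension $n-1$, i.e.\ codimension $1$. Since $V_{\bar\lambda}=\overline{V_{\lambda}}$ and $V_{\lambda}\cap V_{\bar\lambda}=0$, this requires $2n\leqq n+2$, so it happens precisely when $n=2$, i.e.\ $r(M_0)=18$ --- which your hypothesis allows. It is not vacuous: on $M_0^{\perp}\cong U\oplus U$ (realized, e.g., by $M_0=U\oplus E_8^{\oplus 2}$), the isometry $g(e_1,f_1,e_2,f_2)=(e_2,f_2,-e_1,-f_1)$ satisfies $g^{2}=-1$, has characteristic polynomial $(x^{2}+1)^{2}$, and for $\eta=a(e_1-ie_2)+b(f_1-if_2)\in V_{i}$ one computes $(\eta,\eta)=0$ and $(\eta,\bar\eta)=4\operatorname{Re}(a\bar b)>0$ for suitable $a,b$; so $\Omega^{g}$ contains a curve in the surface $\Omega_{M_0^{\perp}}$, yet $g$ is not $\pm$ a reflection. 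Thus your dichotomy ``either $g=\pm s_{\delta'}$ or $\operatorname{codim}\Omega^{g}\geqq 2$'' is false for $r(M_0)=18$, and the caveat you flagged (reflections in vectors of norm $\neq -2$) is not the only leak.

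The lemma survives, and your argument is repairable, because what you actually need to bound is $H_{\delta}\cap\Omega^{g}$, not $\Omega^{g}$ itself. The correct intermediate claim is: if a component of $H_{\delta}$ is contained in some $\mathbb{P}(V_{\lambda})$, then $g\in\{\pm 1,\pm s_{\delta}\}$. Indeed, that component is Zariski dense in the nondegenerate quadric of $\mathbb{P}(\delta^{\perp}_{\mathbb{C}})\cong\mathbb{P}^{n}$ (here $n-1\geqq 1$, the true role of $r(M_0)\leqq 18$), and the quadric spans, so $V_{\lambda}\supset\delta^{\perp}_{\mathbb{C}}$; then $\dim V_{\lambda}\geqq n+1$ and your original bookkeeping applies ($\lambda$ real, the complementary eigenline is $\mathbb{C}\delta$, hence $g=\pm 1$ or $\pm s_{\delta}$). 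In the $n=2$ scenario this containment is impossible for an elementary reason you can use directly: $V_{i}\subset\delta^{\perp}_{\mathbb{C}}$ would force $V_{-i}=\overline{V_{i}}\subset\delta^{\perp}_{\mathbb{C}}$ as well, giving $4=\dim(V_{i}\oplus V_{-i})\leqq\dim\delta^{\perp}_{\mathbb{C}}=3$. So every extra stabilizer element still meets $H_{\delta}$ in a proper analytic subset, the bad locus remains of codimension $\geqq 2$ in $\Omega_{M_0^{\perp}}$ by local finiteness and proper discontinuity, and the rest of your proof of (1)--(3) goes through unchanged. With this local-to-$H_{\delta}$ reformulation of the key claim, your write-up would be a complete proof of the statement the paper only quotes.
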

	
	\begin{proof}
		See \cite[Proposition~1.9.]{MR2047658}.
	\end{proof}
	
	Set $\widetilde{M}_0 = M_0 \oplus \mathbb{Z} e$.
	Then $\widetilde{M}_0$ is an admissible sublattice of $L_2 = L_{K3} \oplus \mathbb{Z} e$.
	
	\begin{lem}\label{l6-4-1-1}
		We have $\Delta(M_0^{\perp_{L_{K3}}}) = \Delta(\widetilde{M}_0^{\perp_{L_2}})$.
	\end{lem}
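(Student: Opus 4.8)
The plan is to reduce the identity of root sets to a comparison of the two a priori different definitions of $\Delta$, after first observing that the underlying lattices are literally the same. Since $L_2 = L_{K3}\oplus\mathbb{Z}e$ with $e\perp L_{K3}$ and $e^2=-2$, any $x=x_0+ae\in L_2$ (with $x_0\in L_{K3}$, $a\in\mathbb{Z}$) satisfies $(x,e)=-2a$; hence $x\perp\widetilde{M}_0=M_0\oplus\mathbb{Z}e$ forces $a=0$ together with $x_0\perp M_0$. Thus $\widetilde{M}_0^{\perp_{L_2}}=M_0^{\perp_{L_{K3}}}$ as sublattices of $L_{K3}\subset L_2$ — the identification already used in the paragraph preceding Definition \ref{d2-3-1-1} — and I denote this common lattice by $N$.

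Next I would recall the two definitions applied to $N$. Viewing $M_0\subset L_{K3}$ gives $\Delta(M_0^{\perp})=\{d\in N : d^2=-2\}$, whereas viewing the admissible sublattice $\widetilde{M}_0\subset L_2$ gives $\Delta(\widetilde{M}_0^{\perp})=\{\delta\in N : \delta^2=-2,\ \text{or }\ \delta^2=-10,\ (\delta,L_2)=2\mathbb{Z}\}$. The $(-2)$-clauses agree verbatim, so the whole lemma reduces to showing that the extra clause is vacuous: there is no $\delta\in N$ with $\delta^2=-10$ and $(\delta,L_2)\subseteq 2\mathbb{Z}$.

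Finally I would derive a contradiction from the existence of such a $\delta$. Because $\delta\in N\subset L_{K3}$ is orthogonal to $e$, one has $(\delta,L_2)=(\delta,L_{K3})$, so the divisibility condition reads $(\delta,L_{K3})\subseteq 2\mathbb{Z}$, i.e.\ $\tfrac12\delta\in L_{K3}^{\vee}$. Since $L_{K3}=E_8^{\oplus2}\oplus U^{\oplus3}$ is unimodular, $L_{K3}^{\vee}=L_{K3}$, whence $\delta=2\delta'$ with $\delta'\in L_{K3}$. Then $-10=\delta^2=4(\delta')^2$ yields $(\delta')^2=-\tfrac52$, which is impossible because $L_{K3}$ is even, so $(\delta')^2\in 2\mathbb{Z}$. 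Hence the extra clause contributes nothing and $\Delta(\widetilde{M}_0^{\perp})=\Delta(M_0^{\perp})$. The argument is entirely elementary lattice theory; the only point that must be handled with care is that the condition $(\delta,L_2)=2\mathbb{Z}$ together with the unimodularity of $L_{K3}$ forces $\delta\in 2L_{K3}$, after which the parity of values of an even lattice closes the argument, so I anticipate no genuine obstacle.
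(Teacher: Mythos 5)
Your proposal is correct and follows essentially the same route as the paper: both reduce the lemma to ruling out $\delta \in M_0^{\perp}$ with $\delta^2=-10$ and $(\delta, L_2)=2\mathbb{Z}$, and both use unimodularity of $L_{K3}$ to force $\tfrac12\delta \in L_{K3}^{\vee}=L_{K3}$, reaching the contradiction $(\delta')^2=-\tfrac52\notin\mathbb{Z}$. The only (harmless) differences are that you make the identification $\widetilde{M}_0^{\perp_{L_2}}=M_0^{\perp_{L_{K3}}}$ explicit and invoke evenness where integrality alone already suffices.
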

	
	\begin{proof}
		Suppose that there exists an element $\delta \in \widetilde{M}_0^{\perp}$ such that $\delta^2 =-10, (\delta, L_2) =2 \mathbb{Z}$.
		Since $\delta \in \widetilde{M}_0^{\perp} =M_0^{\perp} \subset L_{K3}$, we have $\delta' := \frac{\delta}{2} \in L_{K3, \mathbb{Q}}$.
		Since $(\delta, L_2) =2 \mathbb{Z}$, we have $(\delta', L_{K3}) \subset \mathbb{Z}$.
		Since $L_{K3}$ is unimodular, we have $\delta' \in L_{K3}^{\vee} =L_{K3}$.
		Therefore $\mathbb{Z} \ni (\delta')^2 = \frac{1}{4} \delta^2 = -\frac{5}{2} \notin \mathbb{Z}$,
		which is a contradiction.
	\end{proof}
	
	Therefore the isomorphisms
	$$
		\phi : \mathcal{M}_{M_0} \cong \mathcal{M}_{\widetilde{M}_0, \mathcal{K}} \quad \text{and} \quad \psi : \mathcal{M}_{M_0} \cong \mathcal{M}_{\widetilde{M}_0, \mathcal{K}'}
	$$
	constructed in Theorems \ref{t-3-1-1} and \ref{p-2-2} satisfy
	$$
		\phi(\overline{\mathscr{D}}_{M_0^{\perp}}) = \overline{\mathscr{D}}_{\widetilde{M}_0^{\perp}} \quad \text{and} \quad \psi(\overline{\mathscr{D}}_{M_0^{\perp}}) = \overline{\mathscr{D}}_{\widetilde{M}_0^{\perp}}.
	$$
	In the following, we identify $\mathcal{M}_{\widetilde{M}_0, \mathcal{K}}$ and $\mathcal{M}_{\widetilde{M}_0, \mathcal{K}'}$ with $\mathcal{M}_{M_0}$
	and identify $\overline{\mathscr{D}}_{\widetilde{M}_0^{\perp}}$ with $\overline{\mathscr{D}}_{M_0^{\perp}}$.

	Let $u$ be a pluriharmonic function on $\mathcal{M}_{M_0}^{\circ}$.
	We assume the following:
	Let $C$ be an irreducible projective curve on the Baily-Borel compactification $\mathcal{M}_{M_0}^*$ satisfying Assumption \ref{as4-2-3-1}
	and let $p \in C \cap \overline{\mathscr{D}}_{M_0^{\perp}}$ be a smooth point of $C$.
	Choose a coordinate $(\D, s)$ on $C$ centered at $p$ such that $\D \cap \overline{\mathscr{D}}_{M_0^{\perp}} = \{p \}$ and $s(\D)$ is the unit disk.
	Then there exists a rational number $a \in \mathbb{Q}$ such that
	\begin{align}\label{al4-4-1-1}
		(u|_{\D})(s) = a \log |s|^2 +O( \log \log |s|^{-1} ).
	\end{align}
	
	\begin{lem}\label{l2-5-2}
		The equation of currents
		$$
			dd^c u = \sum_i a_i \delta_{\Gamma_i}
		$$
		holds on $\mathcal{M}_{M_0} $, where $\bar{\mathscr{D}}_{M_0^{\perp}} = \sum_i \Gamma_i$ is the irreducible decomposition.
	\end{lem}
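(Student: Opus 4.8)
The plan is to establish the identity locally near the generic (smooth) part of each component $\Gamma_i$ by a Poincar\'e--Lelong argument, and then to extend the resulting current equation across the remaining codimension $\geqq 2$ locus. Since $u$ is pluriharmonic on $\mathcal{M}_{M_0}^{\circ}$, we have $dd^c u = 0$ there as a smooth form, so that (once $u \in L^1_{loc}$ is known) the current $dd^c u$ is supported on $\bar{\mathscr{D}}_{M_0^{\perp}}$. By Lemma \ref{l2-5-1}, the set $Z := \operatorname{Sing}\mathcal{M}_{M_0} \cup (\bar{\mathscr{D}}_{M_0^{\perp}} \setminus \bar{\mathscr{D}}_{M_0^{\perp}}^{\circ})$ is analytic of codimension $\geqq 2$, and on $\mathcal{M}_{M_0} \setminus Z$ the variety is smooth with $\bar{\mathscr{D}}_{M_0^{\perp}}$ a smooth hypersurface whose points are generic points of the components $\Gamma_i$. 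Thus it suffices to compute $dd^c u$ on $\mathcal{M}_{M_0} \setminus Z$ and then extend.

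First I would carry out the local analysis at a point $p \in \Gamma_i \setminus Z$. Choosing local holomorphic coordinates $(z_1, \dots, z_N)$ on $\mathcal{M}_{M_0}$ centered at $p$ with $\Gamma_i = \{z_1 = 0\}$, the form $\phi := \partial u$ is a closed holomorphic $1$-form on the punctured polydisk $\D^{*} \times \D^{N-1}$ (both closedness and holomorphy follow from $\partial\bar\partial u = 0$). Since $H^1(\D^{*} \times \D^{N-1}) \cong \mathbb{C}$ is generated by $dz_1/z_1$, we may write $\phi = a_i\, dz_1/z_1 + dG$ with $G$ holomorphic on the punctured polydisk and $a_i = \frac{1}{2\pi i}\oint \phi$. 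Because $u$ is real and single-valued, $\oint du = 0$ forces the period $a_i$ to be real, and reality of the holomorphic residue then shows $a_i$ is constant along the (connected) smooth locus of $\Gamma_i$. Integrating, $u = a_i \log|z_1|^2 + 2\operatorname{Re} G$ up to a constant. Applying the curve asymptotic (\ref{al4-4-1-1}) along a generic transverse curve $C$ through $p$, so that $z_1$ is a local coordinate on $C$, identifies the coefficient with the rational number $a$ there, giving $a_i \in \mathbb{Q}$, and forces $2\operatorname{Re} G = O(\log\log|z_1|^{-1})$. Since a nonzero negative Laurent coefficient of $G$ in $z_1$ would produce polynomial blow-up along some transverse curve, all such coefficients vanish, so $G$ extends holomorphically across $\{z_1 = 0\}$ and $2\operatorname{Re} G$ is pluriharmonic there. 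By Poincar\'e--Lelong, with $dd^c \log|z_1|^2 = \delta_{\{z_1 = 0\}}$ for the normalization $d^c = (\partial - \bar\partial)/4\pi i$, we obtain $dd^c u = a_i[\Gamma_i]$ near $p$, hence $dd^c u = \sum_i a_i[\Gamma_i]$ on $\mathcal{M}_{M_0}\setminus Z$.

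It remains to extend this equality across $Z$. Near a point $q \in Z$, writing the local components of the divisor as $\{f_j = 0\}$ with their already-determined coefficients $a_{i_j}$, the function $v := u - \sum_j a_{i_j}\log|f_j|^2$ is, by the previous step, pluriharmonic across the smooth part of each component, hence pluriharmonic on a neighborhood of $q$ minus a codimension $\geqq 2$ set. As the complement of a codimension $\geqq 2$ analytic set is simply connected, $v = 2\operatorname{Re} F$ for a holomorphic $F$, which extends across the codimension $\geqq 2$ locus by Hartogs' theorem (using normality of $\mathcal{M}_{M_0}$, or pulling back to the smooth cover $\Omega_{M_0^{\perp}}$ and descending by $O(M_0^{\perp})$-invariance). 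Therefore $v$ extends pluriharmonically, $u \in L^1_{loc}$, and $dd^c u = \sum_i a_i[\Gamma_i]$ holds near $q$ as well. I expect the main obstacle to be the passage from the one-dimensional, curve-wise hypothesis (\ref{al4-4-1-1}) to the full neighborhood structure of $u$; this is overcome by exploiting that $\partial u$ is a closed holomorphic $1$-form, which rigidifies the singularity to the single term $a_i\log|z_1|^2$. The secondary difficulty, namely ruling out hidden contributions of $dd^c u$ along the singular and orbifold loci, is precisely what the codimension $\geqq 2$ statement of Lemma \ref{l2-5-1} together with the Hartogs extension resolves.
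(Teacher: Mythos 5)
Your proof is correct, but it routes around the paper's two key imports from Yoshikawa \cite{MR2047658} and is therefore genuinely different in execution. The paper's proof works at a point of $\mathscr{D}_{M_0^{\perp}}^{\circ}$ in special coordinates $(t,z)$ borrowed from \cite[Section 7 (Step 1)]{MR2047658}, in which every slice $\Pi_{M_0^{\perp}}(\D \times \{z\})$ is an open piece of a projective curve $C_z$; this makes the hypothesis (\ref{al4-4-1-1}) applicable to \emph{every} slice, yields $a_z \in \mathbb{Q}$ depending continuously on $z$ (hence constant, $a_z$ being the residue of $\partial u|_{\D_z}$), and produces the bound $|u(t,z) - a\log|t|^2| \leqq C(z) \log\log|t|^{-1}$ with $C(z)$ not necessarily continuous, at which point the conversion into the current equation $dd^c u = a\delta_H$ is delegated wholesale to \cite[Proposition 3.11]{MR2047658}. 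You instead rigidify the singularity structurally: the decomposition $\partial u = a_i\, dz_1/z_1 + dG$ of the closed holomorphic $1$-form on the punctured polydisk, the killing of the negative Laurent coefficients of $G$ by the curve-wise asymptotics, and then plain Poincar\'e--Lelong. This buys self-containedness (no appeal to Proposition 3.11, and you need projective curves only through a dense set of divisor points rather than a curve containing each coordinate slice), and your explicit codimension-$2$ extension via simple connectivity of the complement, Hartogs, and descent from $\Omega_{M_0^{\perp}}$ makes precise a step the paper leaves implicit (its proof simply asserts that establishing $dd^c u = a\delta_H$ on $U$ "suffices"). Two caveats you should address: first, the existence of irreducible projective curves through a prescribed generic point of $\overline{\mathscr{D}}_{M_0^{\perp}}$, smooth and transverse there, with $\D \cap \overline{\mathscr{D}}_{M_0^{\perp}} = \{p\}$, and satisfying Assumption \ref{as4-2-3-1}, is exactly the geometric input the paper obtains from Yoshikawa's coordinate construction, and you use it twice (for the rationality of $a_i$ and for killing the poles of $G$) without justification --- generic linear sections of the projective variety $\mathcal{M}^*_{M_0}$ through the point supply it, since $\dim \mathcal{M}_{M_0} \geqq 2$ in all cases where the lemma is applied; second, the constancy of $a_i$ along the connected smooth locus of $\Gamma_i$ follows from homotopy invariance of periods of the closed form $\partial u$ (equivalently, continuity of the rational residue, which is the paper's argument), not from "reality of the holomorphic residue" as you wrote, though this is a misattribution of the reason rather than a gap in the conclusion.
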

	
	\begin{proof}
		Let $x \in \mathscr{D}_{M_0^{\perp}}^{\circ}$.
		By \cite[Section 7 (Step 1)]{MR2047658}, there exists a system of local coordinates $(U \cong \D^m, t, z_1, \dots , z_{m-1})$ on $\Omega_{M_0^{\perp}}^{\circ} \cup \mathscr{D}_{M_0^{\perp}}^{\circ}$ that satisfies
		\begin{enumerate}[ label= \rm{(\arabic*)} ]
			\item $\mathscr{D}_{M_0^{\perp}}^{\circ} \cap U =\{ (t,z) \in U ; t=0 \}$.
			\item For any $z \in \D^{m-1}$, there exists an irreducible curve $C_z$ on $\mathcal{M}_{M_0}^*$ such that $C_z$ contains $\Pi_{M_0^{\perp}}(\D \times \{ z \})$ as an open set.
		\end{enumerate}
		By the assumption of $u$, for each $z \in \D^{m-1}$, $u|_{\Pi_{M_0^{\perp}}(\D \times \{ z \})}$ satisfies
		$$
			u|_{\Pi_{M_0^{\perp}}(\D \times \{ z \})} = a_z \log |t|^2  +O(\log \log |t|^{-1}) \quad (t \to 0),
		$$
		where $a_z \in \mathbb{Q}$.
		Set $H= \mathscr{D}_{M_0^{\perp}}^{\circ} \cap U$ and $\D_z =\Pi_{M_0^{\perp}}(\D \times \{ z \})$.
		It suffices to show that $a=a_z$ is independent of the choice of $z \in \D^{m-1}$, and the equation of currents 
		$$
			dd^c u =a \delta_H
		$$ 
		holds on $U$.\par
		Since $\bar{\partial} \partial u=0$, $\partial u$ is a holomorphic 1-form on $\mathcal{M}_{M_0}^{\circ}$.
		Let $z \in \D^{m-1}$.
		Since $(u|_{\D_z})(t) -a_z \log |t|^2$ is a harmonic function on $\D_z \setminus \{ z \}$ and it satisfies
		\begin{align}\label{al4-4-1-2}
			(u|_{\D_z})(t) -a_z \log |t|^2 = O(\log\log|t|^{-1}),
		\end{align}
		it extends to a harmonic function on $\D_z$.
		Moreover $a_z$ is the residue of $\partial u |_{\D_z}$ at $t=0$.
		Since $a_z \in \mathbb{Q}$ and $a_z$ depends continuously on $z \in \D^{m-1}$,  
		$a=a_z$ is independent of the choice of $z \in \D^{m-1}$.
		By (\ref{al4-4-1-2}), there exists a non-negative function $C(z)$ on $\D^{m-1}$ which is not necessarily continuous such that 
		$$
			| u(t,z) -a\log|t|^2 | \leqq C(z)\log\log|t|^{-1}
		$$
		holds on $U$.
		By \cite[Proposition 3.11.]{MR2047658}, we have
		$
			dd^c u =a \delta_H,
		$
		and the proof is completed.
	\end{proof}

\subsection{The invariants for the case where $r(M_0) \leqq 17$ and $\bar{\mathscr{D}}_{M_0^{\perp}}$ is irreducible}\label{ss-5-2}

	Set $\widetilde{M}_0=M_0 \oplus \mathbb{Z} e$.
	Then $\widetilde{M}_0$ is an admissible sublattice of $L_2=L_{K3} \oplus \mathbb{Z} e$.
	 Let $\mathcal{K} \in \operatorname{KT}(\widetilde{M}_0)$ be a natural K\"ahler-type chamber.
	
	\begin{assumption}\label{as4-4-2-1}
		We assume that $r(M_0) \leqq 17$, and $\bar{\mathscr{D}}_{M_0^{\perp}}$ is irreducible.
	\end{assumption}
	Note that $\bar{\mathscr{D}}_{M_0^{\perp}}$ is irreducible if and only if $\# \Delta(M_0^{\perp})/O(M_0^{\perp})=1$.
	By \cite[11.3]{MR3039773} and \cite[FIGURE 1]{MR2424924}, there exist $23$ types of primitive hyperbolic 2-elementary sublattice of $L_{K3}$ which satisfy the above assumption.
	
	By Theorem \ref{t-3-1-1}, we have an isomorphism
	$$
		\phi : \mathcal{M}_{M_0} \to \mathcal{M}_{\widetilde{M}_0, \mathcal{K}}.
	$$
	We define a real-valued smooth function $\sigma_{\widetilde{M}_0, \mathcal{K}}$ on $\mathcal{M}_{M_0}^{\circ}$ by $\sigma_{\widetilde{M}_0, \mathcal{K}} =\tau_{\widetilde{M}_0, \mathcal{K}} \circ \phi$.
	Namely, it is defined by
	$$
		\sigma_{\widetilde{M}_0, \mathcal{K}}(Y, \sigma) =\tau_{\widetilde{M}_0, \mathcal{K}}(Y^{[2]}, \sigma^{[2]})
	$$
	for any 2-elementary K3 surface $(Y, \sigma)$ of type $M_0$.
	By Proposition \ref{t-4-1} and by \cite[Theorem 5.6.]{MR2047658}, the function $\log \left( \sigma_{\widetilde{M}_0, \mathcal{K}} \cdot \tau_{M_0}^{2(r-9)} \right)$ is a pluriharmonic function on $\mathcal{M}_{M_0}^{\circ}$.
	
	\begin{lem}\label{l2-5-3}
		The function $\log \left( \sigma_{\widetilde{M}_0, \mathcal{K}} \cdot \tau_{M_0}^{2(r-9)} \right)$ satisfies (\ref{al4-4-1-1}).
	\end{lem}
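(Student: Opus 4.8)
The plan is to treat the two factors of $\sigma_{\widetilde{M}_0, \mathcal{K}} \cdot \tau_{M_0}^{2(r-9)}$ separately and combine the resulting estimates by linearity. Concretely, it suffices to produce rational numbers $a_1, a_2 \in \mathbb{Q}$ with
\begin{align*}
	(\log \sigma_{\widetilde{M}_0, \mathcal{K}}|_{\D})(s) &= a_1 \log|s|^2 + O(\log\log|s|^{-1}), \\
	(\log \tau_{M_0}|_{\D})(s) &= a_2 \log|s|^2 + O(\log\log|s|^{-1})
\end{align*}
as $s \to 0$; for then $u = \log\left(\sigma_{\widetilde{M}_0, \mathcal{K}} \cdot \tau_{M_0}^{2(r-9)}\right)$ satisfies (\ref{al4-4-1-1}) with $a = a_1 + 2(r-9)\, a_2 \in \mathbb{Q}$.

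For the first factor I would use Theorem \ref{t-3-1-1}: the isomorphism $\phi : \mathcal{M}_{M_0} \cong \mathcal{M}_{\widetilde{M}_0, \mathcal{K}}$ is induced by the identity map $\Omega_{M_0^{\perp}} \to \Omega_{\widetilde{M}_0^{\perp}}$ together with the identification $O^+(M_0^{\perp}) = \Gamma_{\widetilde{M}_0^{\perp}, \mathcal{K}}$, so it identifies the two modular varieties and their Baily-Borel compactifications, and by Lemma \ref{l6-4-1-1} it identifies $\bar{\mathscr{D}}_{M_0^{\perp}}$ with $\bar{\mathscr{D}}_{\widetilde{M}_0^{\perp}}$. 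Under this identification $C$ becomes an irreducible projective curve on $\mathcal{M}^*_{\widetilde{M}_0, \mathcal{K}}$ satisfying Assumption \ref{as4-2-3-1}, $p$ a smooth point of $C$ lying on $\bar{\mathscr{D}}_{\widetilde{M}_0^{\perp}}$, and $(\D, s)$ an admissible coordinate. Since $\sigma_{\widetilde{M}_0, \mathcal{K}} = \tau_{\widetilde{M}_0, \mathcal{K}} \circ \phi$, Theorem \ref{t3-2-3-1} applied to $(\widetilde{M}_0, \mathcal{K})$ along $C$ gives the estimate for $\log \sigma_{\widetilde{M}_0, \mathcal{K}}$ with rational $a_1$. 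This step is essentially formal, given the construction of Section 2.

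For the second factor I would invoke Yoshikawa's structure theorem, by which $\tau_{M_0}$ is, up to a positive constant and a power, the Petersson norm of an automorphic form $\Phi_{M_0}$ on $\Omega_{M_0^{\perp}}$ obtained as the product of a Borcherds product and a Siegel modular form (\cite{MR2047658}, \cite{MR3039773}). Thus, up to a positive constant, a power $\tau_{M_0}^{\pm 2\nu}$ equals $\|\Phi_{M_0}\|^2$, where $\|\Phi_{M_0}([\eta])\|^2 = |\Phi_{M_0}(\eta)|^2\, B_{M_0^{\perp}}([\eta])^w$ for the weight $w$ of $\Phi_{M_0}$ and the Bergman kernel $B_{M_0^{\perp}}$ of Section 3.2. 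By Assumption \ref{as4-2-3-1} the point $p$ lies in the interior $\mathcal{M}_{M_0}$, so $B_{M_0^{\perp}}$ is smooth and strictly positive near $p$ and contributes only an $O(1)$ term along $\D$; and $\Phi_{M_0}$ vanishes along $\bar{\mathscr{D}}_{M_0^{\perp}}$ to a fixed order, so $\log|\Phi_{M_0}|^2|_{\D} = m \log|s|^2 + O(1)$ with $m \in \mathbb{Q}_{\geqq 0}$ the vanishing order times the local intersection multiplicity of $C$ with $\bar{\mathscr{D}}_{M_0^{\perp}}$ at $p$. Hence $(\log \tau_{M_0}|_{\D})(s) = a_2 \log|s|^2 + O(1)$ with $a_2 = \mp m/(2\nu) \in \mathbb{Q}$, and $O(1) \subset O(\log\log|s|^{-1})$.

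The main obstacle is this second factor: one must know that Yoshikawa's invariant has an algebraic singularity along the discriminant with a rational leading coefficient and an error absorbed into $O(\log\log|s|^{-1})$. This is exactly where the automorphic structure of $\tau_{M_0}$ is indispensable — without it one would have to reprove the analogue of Theorems \ref{t2-3-5} and \ref{t3-2-3-1} for $2$-elementary K3 surfaces from scratch. The first factor, by contrast, is a direct application of Theorem \ref{t3-2-3-1} transported through the canonical identification $\phi$, and requires only checking that the hypotheses of Assumption \ref{as4-2-3-1} are preserved, which follows from Theorem \ref{t-3-1-1} and Lemma \ref{l6-4-1-1}.
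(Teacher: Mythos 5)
Your proposal is correct, and the decomposition into the two factors with rational coefficients $a_1$ and $a_2$ combined by linearity is exactly the shape of the paper's (one-line) proof. Your treatment of the first factor coincides with the paper's: transport $C$, $p$, $(\D,s)$ through $\phi$ using Theorem \ref{t-3-1-1} and Lemma \ref{l6-4-1-1}, then apply Theorem \ref{t3-2-3-1}. The difference is in the second factor. The paper cites \cite[Theorem 6.5]{MR2047658}, which is Yoshikawa's \emph{direct asymptotic expansion} of $\log\tau_{M_0}$ along a curve meeting the discriminant, with rational leading coefficient and $O(\log\log|s|^{-1})$ error -- precisely the statement (\ref{al4-4-1-1}) for $\tau_{M_0}$, proved there by the same kind of degeneration analysis of Quillen and $L^2$-metrics that Sections 1--2 of this paper carry out in the equivariant $K3^{[2]}$ setting. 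You instead invoke the full automorphic structure theorem (\cite{MR2047658}, \cite{MR2968220}, \cite{MR3039773}), writing $\tau_{M_0}^{\pm 2\nu}$ as the Petersson norm of a Borcherds product times a Siegel modular form and reading off the singularity from the divisor of the automorphic form. This is logically valid -- the structure theorem is independent prior work, so there is no circularity -- and it even yields a sharper $O(1)$ error in place of $O(\log\log|s|^{-1})$; but it is a strictly heavier hammer, since the structure theorem is itself proved \emph{using} the asymptotic expansion you are trying to establish, so your route is a detour through a stronger result where the paper uses the minimal input. Two small points of care in your second step: the projection $\Pi_{M_0^{\perp}}$ is generically doubly branched along $\mathscr{D}_{M_0^{\perp}}$ (the stabilizer of a generic point of $H_\delta$ contains the reflection $s_\delta$), and $\D$ may pass through orbifold points, so the constant $m$ is the vanishing order computed on a local lift divided by the ramification index of the lifted curve germ -- a rational number, as you need, but not literally ``vanishing order times intersection multiplicity'' on the quotient; and one should note that $\operatorname{div}(\Phi_{M_0})$ is supported on $\bar{\mathscr{D}}_{M_0^{\perp}}$ (together with boundary components), so that $\Phi_{M_0}|_{\D}\not\equiv 0$ follows from Assumption \ref{as4-2-3-1}. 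Neither point affects the conclusion.
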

	
	\begin{proof}
		The proof is straightforward from \cite[Theorem 6.5]{MR2047658} and Theorem \ref{t3-2-3-1}.
	\end{proof}
	
	\begin{thm}\label{t2-5-4}
		If $r(M_0) \leqq 17$ and $\bar{\mathscr{D}}_{M_0^{\perp}}$ is irreducible, 
		then there exists a constant $C$ depending only on $M_0$ such that
		$$
			\tau_{\widetilde{M}_0, \mathcal{K}}(Y^{[2]}, \sigma^{[2]}) =C \tau_{M_0} (Y, \sigma)^{-2(r-9)}
		$$
		for any 2-elementary K3 surface $(Y, \sigma)$ of type $M_0$. 
	\end{thm}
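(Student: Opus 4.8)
The plan is to prove that the function
$$
	u = \log\left( \sigma_{\widetilde{M}_0, \mathcal{K}} \cdot \tau_{M_0}^{\,2(r-9)} \right), \qquad r = \rk(M_0),
$$
is constant on $\mathcal{M}_{M_0}^{\circ}$; the resulting constant will be the desired $C$, and unwinding the definition $\sigma_{\widetilde{M}_0, \mathcal{K}}(Y,\sigma) = \tau_{\widetilde{M}_0, \mathcal{K}}(Y^{[2]}, \sigma^{[2]})$ will then yield the stated identity. First I would recall, as already recorded in the paragraph preceding Lemma \ref{l2-5-3}, that combining Theorem \ref{t-4-1} with Yoshikawa's curvature formula \cite[Theorem~5.6]{MR2047658} gives $dd^c u = 0$ on $\mathcal{M}_{M_0}^{\circ}$, so that $u$ is pluriharmonic there. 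By Lemma \ref{l2-5-3}, $u$ satisfies the boundary estimate (\ref{al4-4-1-1}) along $\bar{\mathscr{D}}_{M_0^{\perp}}$. Since Assumption \ref{as4-4-2-1} forces $\bar{\mathscr{D}}_{M_0^{\perp}}$ to be irreducible, the irreducible decomposition appearing in Lemma \ref{l2-5-2} reduces to a single term, and that lemma produces an equation of currents
$$
	dd^c u = a\, \delta_{\bar{\mathscr{D}}_{M_0^{\perp}}} \qquad \text{on } \mathcal{M}_{M_0}
$$
for a single rational number $a \in \mathbb{Q}$.

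The heart of the argument is then to show $a = 0$. Because $r(M_0) \leqq 17$, the modular variety has dimension $20 - r \geqq 3$, so the boundary $\mathcal{M}^*_{M_0} \setminus \mathcal{M}_{M_0}$ of the Baily--Borel compactification has codimension $\geqq 2$; together with the codimension estimate of Lemma \ref{l2-5-1}, this allows me to regard $u$ as a globally defined $L^1$-function on the compact projective variety $\mathcal{M}^*_{M_0}$ whose $dd^c$ equals $a$ times the integration current along the closure of $\bar{\mathscr{D}}_{M_0^{\perp}}$. Since $u$ is single-valued, this current is $\partial\bar\partial$-exact, so (after passing to a resolution of $\mathcal{M}^*_{M_0}$ to work on a smooth projective model) its cohomology class vanishes, forcing $a$ times the class of the closure of $\bar{\mathscr{D}}_{M_0^{\perp}}$ to be trivial. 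On the other hand, $\bar{\mathscr{D}}_{M_0^{\perp}}$ is the reflective divisor realized, up to a positive power of the ample Hodge line bundle $\lambda$, as the zero divisor of a Borcherds product; hence its closure is a positive multiple of $\lambda$ and is not numerically trivial. This rules out $a \neq 0$.

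With $a = 0$ in hand, $u$ is pluriharmonic on all of $\mathcal{M}_{M_0}$, and by the codimension $\geqq 2$ removable-singularity property across the Baily--Borel boundary it extends to a pluriharmonic ($dd^c$-closed, $L^1$) function on the compact normal projective variety $\mathcal{M}^*_{M_0}$. Pulling back to a resolution and applying the maximum principle for pluriharmonic functions on a compact connected Kähler manifold, I conclude that $u$ is constant, say $u \equiv \log C$ with $C>0$ depending only on $M_0$. This is exactly $\tau_{\widetilde{M}_0, \mathcal{K}}(Y^{[2]}, \sigma^{[2]}) = C\, \tau_{M_0}(Y, \sigma)^{-2(r-9)}$.

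The step I expect to be the main obstacle is the vanishing $a = 0$: pluriharmonicity on $\mathcal{M}_{M_0}^{\circ}$ together with the algebraic singularity (\ref{al4-4-1-1}) only determines $u$ up to the current $a\,\delta_{\bar{\mathscr{D}}_{M_0^{\perp}}}$, and excluding a genuine logarithmic singularity requires the global input that the reflective divisor is nontrivial in the Picard group of the compactification. Making the exactness/cohomology argument rigorous is the delicate part, since it requires controlling the growth of $u$ near the low-dimensional Baily--Borel boundary well enough to guarantee that $u$ defines an honest $L^1$-current on $\mathcal{M}^*_{M_0}$; this is precisely where the hypothesis $r(M_0)\leqq 17$, the irreducibility of $\bar{\mathscr{D}}_{M_0^{\perp}}$, and the automorphic-form structure of $\tau_{M_0}$ must be used in concert.
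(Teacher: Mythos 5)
Your overall skeleton coincides with the paper's: both arguments set $u = \log ( \sigma_{\widetilde{M}_0, \mathcal{K}} \cdot \tau_{M_0}^{2(r-9)} )$, use Theorem \ref{t-4-1} together with \cite[Theorem 5.6]{MR2047658} to get pluriharmonicity on $\mathcal{M}^{\circ}_{M_0}$, use Lemmas \ref{l2-5-3} and \ref{l2-5-2} to get $dd^c u = a\,\delta_{\bar{\mathscr{D}}_{M_0^{\perp}}}$ with a single $a \in \mathbb{Q}$, and, once $a=0$, extend $u$ across the Baily--Borel boundary (codimension $\geqq 2$ since $r \leqq 17$) and conclude constancy by compactness. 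The divergence is in the key step $a=0$, and this is where your proposal has a genuine gap. You argue cohomologically on $\mathcal{M}^*_{M_0}$: you need $u$ to define an $L^1$-current on the compactification with $dd^c u$ equal to $a$ times the integration current on the closure of $\bar{\mathscr{D}}_{M_0^{\perp}}$ and \emph{no extra mass on the boundary}, and then pair with an ample class. But nothing in this paper controls $\log \sigma_{\widetilde{M}_0, \mathcal{K}}$ near the Baily--Borel cusps: Theorem \ref{t3-2-3-1} only governs the singularity along $\bar{\mathscr{D}}_{M^{\perp}}$ at points \emph{inside} $\mathcal{M}_{M, \mathcal{K}}$, and the automorphic-form structure of $\tau_{\widetilde{M}_0, \mathcal{K}}$ is a \emph{consequence} of the very theorem you are proving, not an available input. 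So your own flagged ``delicate part'' (global $L^1$-extension and absence of boundary contributions, e.g.\ via a support theorem for the flat current $dd^c u - a\,\delta_{\bar{\mathscr{D}}_{M_0^{\perp}}}$) cannot be closed with the tools at hand; as written the vanishing $a=0$ is not established.

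The paper circumvents exactly this difficulty by localizing to a curve: it chooses an irreducible projective curve $C$ with $C \subset \mathcal{M}_{M_0}$ (so $C$ avoids the boundary entirely), meeting $\bar{\mathscr{D}}_{M_0^{\perp}}$ nontrivially and transversally away from $\operatorname{Sing} C$, pulls back to the normalization $\varphi : \tilde{C} \to C$, and observes that $\varphi^* \partial u$ is a meromorphic $1$-form whose residue at every point of $\varphi^{-1}(C \cap \bar{\mathscr{D}}_{M_0^{\perp}})$ equals the same $a$; the residue theorem then forces $a=0$ with no boundary analysis whatsoever. (This is, in effect, your pairing $a\,[\bar{\mathscr{D}}_{M_0^{\perp}}]\cdot[C]=0$ carried out where everything is controlled.) Two smaller remarks: your appeal to Borcherds products to show nontriviality of the divisor class is unnecessary --- an effective nonzero divisor on a projective variety already has positive degree against ample classes --- and note that after $a=0$ the extension across the codimension-$\geqq 2$ boundary needs no growth control at all, since pluriharmonic functions extend across such analytic sets unconditionally (\cite[Satz 4]{MR0081960}, as in the paper); the growth problem exists only on the route you chose for proving $a=0$. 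To repair your write-up, replace the cohomological step by the residue-theorem argument on a curve contained in $\mathcal{M}_{M_0}$.
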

	
	\begin{proof}
		We show that $u= \log \left( \sigma_{\widetilde{M}_0, \mathcal{K}} \cdot \tau_{M_0}^{2(r-9)} \right)$ is a constant function on $\mathcal{M}_{M_0}^{\circ}$.
		By Lemma \ref{l2-5-2}, the equation of currents
		\begin{align}\label{al4-4-2-2}
			dd^c u = a \delta_{\bar{\mathscr{D}}_{M_0^{\perp}}}
		\end{align}
		holds on $\mathcal{M}_{M_0}$.
		We choose an irreducible projective curve $C$ on $\mathcal{M}_{M_0}^*$ that satisfies the following conditions:
		\begin{align*}
			&(1)\hspace{3pt} C \subset \mathcal{M}_{M_0} \quad (2)\hspace{3pt} C \cap \bar{\mathscr{D}}_{M_0^{\perp}} \neq \emptyset \quad (3)\hspace{3pt} \operatorname{Sing}C \cap \bar{\mathscr{D}}_{M_0^{\perp}} =\emptyset \\
			&(4)\hspace{3pt} \text{$C$ intersects $\bar{\mathscr{D}}_{M_0^{\perp}}$ transversally}.
		\end{align*}
		Let $\varphi : \tilde{C} \to C$ be the normalization.
		Note that $\varphi^* \partial u$ is a meromorphic 1-form on $\tilde{C}$ and that its residue at $p \in \varphi^{-1}(C \cap \bar{\mathscr{D}}_{M_0^{\perp}})$ is equal to $a$ by (\ref{al4-4-2-2}) and $(4)$.
		By the residue theorem, we have 
		$$
			\sum_{p \in \varphi^{-1}(C \cap \bar{\mathscr{D}}_{M_0^{\perp}})} a =0.
		$$
		Thus we obtain $a=0$, and $u$ extends to a pluriharmonic function on $\mathcal{M}_{M_0}$.
		Since $r(M_0) \leqq 17$, we have $ \operatorname{codim} \mathcal{M}_{M_0}^* \setminus \mathcal{M}_{M_0} \geqq 2$.
		Therefore $u$ extends to a pluriharmonic function on $\mathcal{M}_{M_0}^*$ by \cite[Satz 4]{MR0081960}.
		Since $\mathcal{M}_{M_0}^*$ is compact, $u$ is constant.
	\end{proof}

\subsection{The invariants for the case where $M_0 = \langle 2 \rangle$}\label{ss-5-3}
	
	In Subsection \ref{ss-5-3}, we always suppose that $r=r(M_0)=1$ and $t=-17$.
	Hence $M_0= \mathbb{Z} h$ with $h^2=2$.
	Set $\widetilde{M}_0=M_0\oplus \mathbb{Z}e$.
	By Example \ref{e-2-35}, $\operatorname{KT}(\widetilde{M}_0) / \Gamma_{\widetilde{M}_0}$ consists of two classes, one natural and the other non-natural.
	Let $\mathcal{K} \in \operatorname{KT}(\widetilde{M}_0)$ be a natural K\"ahler-type chamber and let $\mathcal{K}' \in \operatorname{KT}(\widetilde{M}_0)$ be a non-natural K\"ahler-type chamber.

	\begin{lem}\label{l-5-4}
		Let $f : (\mathscr{X}, \iota) \to S$ be a family of $K3^{[2]}$-type manifolds with antisymplectic involution of type $(\widetilde{M}_0, \mathcal{K}')$,
		and let $g : (\mathscr{Y}, \sigma) \to S$ be a family of 2-elementary K3 surfaces of type $M_0$ such that
		$$
			X_s = \elm_{Y_s/\sigma_s}(Y_s^{[2]}), \quad \text{ and } \quad \iota_s = \elm_{Y_s/\sigma_s}(\sigma_s^{[2]})
		$$
		for each $s \in S$.
		For the definitions of $\elm_{Y_s/\sigma_s}(Y_s^{[2]})$ and $\elm_{Y_s/\sigma_s}(\sigma_s^{[2]})$, see Example \ref{e-2-35}.
		The following equality of $(1,1)$-forms on $S$ holds:
		$$
			-dd^c \log \tau_{\widetilde{M}_0, \mathcal{K}', \mathscr{X}/S} = 2(r-9) \left\{ \frac{r-6}{4} \bar{\pi}_{M_0}^* \omega_{\mathcal{M}_{M_0}} + J_{M_0, \mathscr{Y}^{\sigma}/S}^* \omega_{\mathcal{A}_g} \right\}.
		$$
	\end{lem}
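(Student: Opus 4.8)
The plan is to reduce the statement to the computation already carried out in Theorem \ref{t-4-1} for the natural chamber, since the final identity is formally the same and the only chamber-dependent inputs are the period identity (\ref{al38}) and the Hodge-form identity of Corollary \ref{c6-3-2-8}. The starting point is Theorem \ref{p-3-4}, whose curvature formula
$$-dd^c\log\tau_{\widetilde{M}_0,\mathcal{K}',\xs}=\frac{(t+1)(t+7)}{16}c_1(f_*K_{\xs},h_{L^2})+\omega_{H^{\cdot}(\xss)}$$
is valid for any family of type $(\widetilde{M}_0,\mathcal{K}')$, in particular for the flopped family considered here. Granting the two inputs discussed below, the concluding manipulation is word-for-word that of Theorem \ref{t-4-1}: one substitutes $c_1(f_*K_{\xs},h_{L^2})=2\bar{\pi}_{M_0}^*\omega_{\mathcal{M}_{M_0}}$ and $\omega_{H^{\cdot}(\xss)}=(t+1)c_1(g_*\Omega^1_{\mathscr{Y}^{\sigma}/S},h_{L^2})=(t+1)J_{M_0,\mathscr{Y}^{\sigma}/S}^*\omega_{\mathcal{A}_g}$, and then uses $t=2r-19$ from (\ref{al40}) to rewrite $(t+1)=2(r-9)$ and $(t+7)=2(r-6)$, producing the asserted formula.

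The period identity (\ref{al38}) transfers to the non-natural chamber without change. Because a birational map between irreducible holomorphic symplectic manifolds preserves periods (\cite[Proposition 25.14]{MR1963559}), the flopped family $f:(\mathscr{X},\iota)\to S$ has the same period map as $g:(\mathscr{Y},\sigma)\to S$, and by the commutative diagram following Theorem \ref{p-2-2} this map is identified, via $\psi:\mathcal{M}_{M_0}\cong\mathcal{M}_{\widetilde{M}_0,\mathcal{K}'}$, with $\bar{\pi}_{M_0}$. Since $\omega_{\mathcal{M}_{\widetilde{M}_0,\mathcal{K}'}}$ and $\omega_{\mathcal{M}_{M_0}}$ are both induced from the Bergman metric on the common period domain $\Omega^+_{M_0^{\perp}}=\Omega^+_{\widetilde{M}_0^{\perp}}$, the equality $c_1(f_*K_{\xs},h_{L^2})=2\bar{\pi}_{M_0}^*\omega_{\mathcal{M}_{M_0}}$ holds exactly as in (\ref{al38}).

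The main work, and the only genuinely new point, is to re-establish Corollary \ref{c6-3-2-8} for the flopped involution, since its proof in Section \ref{s-n} relied on the explicit fixed locus of the natural involution $\sigma^{[2]}$. Here $r=r(M_0)=1$, so $Y^{\sigma}=C$ is the branch sextic (a curve of genus $10$) and, by Example \ref{e-2-35}, the fixed locus of $\sigma^{[2]}$ on $Y^{[2]}$ is $C^{(2)}\sqcup(Y/\sigma)$ with $Y/\sigma=\mathbb{P}^2$. I would first show that the fixed locus of $\elm_{Y/\sigma}(\sigma^{[2]})$ is $C^{(2)}\sqcup R$, where $R$ is the rational surface (the dual projective plane) replacing $\mathbb{P}^2$ under the Mukai flop: the component $C^{(2)}$ is disjoint from the flopping centre $Y/\sigma=\mathbb{P}^2$ and so is carried isomorphically to a fixed component, while the $\mathbb{P}^2$ is flopped through its conormal bundle to the dual $\mathbb{P}^{2\vee}$, on which the flopped involution acts by $-1$ in the normal directions. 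Since $R$ is rational, $H^0(R,\Omega^1_R)=H^1(R,\mathcal{O}_R)=H^0(R,\Omega^2_R)=H^2(R,\mathcal{O}_R)=0$, so $R$ contributes nothing to any of the Hodge bundles $f_*\Omega^1_{\xss}$, $R^1f_*\mathcal{O}_{\mathscr{X}^{\iota}}$, $f_*\Omega^2_{\xss}$, $R^2f_*\mathcal{O}_{\mathscr{X}^{\iota}}$, $f_*K_{\xss}$. Consequently these bundles are all determined by the $C^{(2)}$ component alone, exactly as in the natural case (with $k=0$, so no $C\times E_i$ terms occur), and the arguments of Lemma \ref{l-n-1}, Propositions \ref{l-n-3} and \ref{p4-n-1}, and hence Corollary \ref{c6-3-2-8}, apply verbatim to give $\omega_{H^{\cdot}(\xss)}=(t+1)c_1(g_*\Omega^1_{\mathscr{Y}^{\sigma}/S},h_{L^2})$. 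With both inputs in hand, assembling them through Theorem \ref{p-3-4} and (\ref{al39}) as in Theorem \ref{t-4-1} completes the proof; the expected obstacle is solely the fixed-locus identification in this paragraph, everything after it being a repetition of the earlier computation.
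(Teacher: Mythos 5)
Your proposal is correct and follows essentially the same route as the paper: reduce to the natural-chamber computation of Theorem \ref{t-4-1} by combining the curvature formula of Theorem \ref{p-3-4} with birational invariance of the period map and the fact that the Mukai flop only alters the fixed locus on a rational surface, which contributes nothing to the Hodge bundles entering $\omega_{H^{\cdot}(\xss)}$. The paper phrases this by introducing the auxiliary natural family $h : (\mathscr{Z}, \theta) \to S$ with $Z_s = Y_s^{[2]}$ and asserting $\omega_{H^{\cdot}(\xss)} = \omega_{H^{\cdot}(\mathscr{Z}^{\theta}/S)}$ because $Y_s/\sigma_s = \mathbb{P}^2$; your explicit identification of the flopped fixed locus as $C^{(2)} \sqcup \mathbb{P}^{2\vee}$ supplies exactly the detail that this terse step leaves implicit.
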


	\begin{proof}
		By Proposition \ref{p-3-4}, we have
		\begin{align}\label{al4-4-3-1}
			-dd^c \log \tau_{\widetilde{M}_0, \mathcal{K}, \xs} = \frac{(t+1)(t+7)}{16} c_1(f_*K_{\xs}, h_{L^2}) +\omega_{H^{\cdot}(\xss)}.
		\end{align}
		Let $h : (\mathscr{Z}, \theta) \to S$ be a family of $K3^{[2]}$-type manifolds with antisymplectic involution of type $(\widetilde{M}_0, \mathcal{K})$ such that
		$$
			Z_s = Y_s^{[2]}, \quad \text{ and } \quad \theta_s = \sigma_s^{[2]}
		$$
		for each $s \in S$.
		Then we have
		$$
			X_s = \elm_{Y_s/\sigma_s}(Z_s), \quad \text{ and } \quad \iota_s = \elm_{Y_s/\sigma_s}(\theta_s)
		$$
		for each $s \in S$.
		Since the birational transform $Z_s \dashrightarrow X_s$ does not change the period (\cite[Proposition 25.14]{MR1963559}), we have
		\begin{align}\label{al4-4-3-2}
			c_1(f_*K_{\xs}, h_{L^2})= c_1(h_*K_{\mathscr{Z}/S}, h_{L^2})=2\bar{\pi}_{M_0}^* \omega_{\mathcal{M}_{M_0}}.
		\end{align}
		On the other hand, since $Y_s/\sigma_s = \mathbb{P}^2$ for all $s \in S$, we have
		\begin{align}\label{al4-4-3-3}
			\omega_{H^{\cdot}(\xss)} =\omega_{H^{\cdot}(\mathscr{Z}^{\theta}/S)}=2(r-9)J_{M_0, \mathscr{Y}^{\sigma}/S}^* \omega_{\mathcal{A}_g}.
		\end{align}
		Combining (\ref{al4-4-3-1}), (\ref{al4-4-3-2}) and (\ref{al4-4-3-3}), we obtain the desired result.
	\end{proof}
	
	We set
	\begin{align*}
		\Delta^+=\{ d \in \Delta(M_0^{\perp}); d/2 \in (M_0^{\perp})^{\vee} \}&, \quad \Delta^-=\{ d \in \Delta(M_0^{\perp}); d/2 \notin (M_0^{\perp})^{\vee} \}, \\
		\mathscr{D}^{\pm}_{{M_0}^{\perp}} =\sum_{d \in \Delta^{\pm} } d^{\perp},  \qquad \qquad &\qquad \qquad  \bar{\mathscr{D}}^{\pm}_{{M_0}^{\perp}} = \mathscr{D}^{\pm}_{M_0} /O(M_0^{\perp}).
	\end{align*}
	By \cite[11.3.]{MR3039773} and \cite[FIGURE 1.]{MR2424924}, $\bar{\mathscr{D}}_{M_0^{\perp}}$ consists of two connected components $\bar{\mathscr{D}}^{+}_{M_0}$ and  $\bar{\mathscr{D}}^{-}_{M_0}$.
	
	By Theorem \ref{t-3-1-1} and Proposition \ref{p-2-2}, we have the isomorphisms
	$$
		\phi : \mathcal{M}_{M_0} \to \mathcal{M}_{\widetilde{M}_0, \mathcal{K}} \quad \text{ and } \quad  \psi : \mathcal{M}_{M_0} \to \mathcal{M}_{\widetilde{M}_0, \mathcal{K}'}.
	$$
	We define real-valued smooth functions $\sigma_{\widetilde{M}_0, \mathcal{K}}$ and $\sigma_{\widetilde{M}_0, \mathcal{K}'}$ on $\mathcal{M}_{M_0}^{\circ}$ by $\sigma_{\widetilde{M}_0, \mathcal{K}} =\tau_{\widetilde{M}_0, \mathcal{K}} \circ \phi$ and $\sigma_{\widetilde{M}_0, \mathcal{K}'} =\tau_{\widetilde{M}_0, \mathcal{K}'} \circ \psi$.
	Namely, they are defined by
	$$
		\sigma_{\widetilde{M}_0, \mathcal{K}}(Y, \sigma) =\tau_{\widetilde{M}_0, \mathcal{K}}(Y^{[2]}, \sigma^{[2]}), \quad \sigma_{\widetilde{M}_0, \mathcal{K}'}(Y, \sigma) =\tau_{\widetilde{M}_0, \mathcal{K}'}(  \elm_{Y/\sigma}(Y^{[2]}), \elm_{Y/\sigma}(\sigma^{[2]}) ) 
	$$
	for any 2-elementary K3 surface $(Y, \sigma)$ of type $M_0$.
	By Proposition \ref{t-4-1}, Lemma \ref{l-5-4} and \cite[Theorem 5.6.]{MR2047658}, the functions $\log \left( \sigma_{\widetilde{M}_0, \mathcal{K}} \cdot \tau_{M_0}^{2(r-9)} \right)$ and $\log \left( \sigma_{\widetilde{M}_0, \mathcal{K}'} \cdot \tau_{M_0}^{2(r-9)} \right)$ are pluriharmonic functions on $\mathcal{M}_{M_0}^{\circ}$.
	
	\begin{lem}\label{l2-5-5}
		The functions $\log \left( \sigma_{\widetilde{M}_0, \mathcal{K}} \cdot \tau_{M_0}^{2(r-9)} \right)$ and $\log \left( \sigma_{\widetilde{M}_0, \mathcal{K}'} \cdot \tau_{M_0}^{2(r-9)} \right)$ satisfy (\ref{al4-4-1-1}).
	\end{lem}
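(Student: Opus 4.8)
The plan is to reduce the lemma to the singularity estimates already available for the three factors $\sigma_{\widetilde{M}_0, \mathcal{K}}$, $\sigma_{\widetilde{M}_0, \mathcal{K}'}$ and $\tau_{M_0}$, exploiting that the functions satisfying (\ref{al4-4-1-1}) form a $\mathbb{Q}$-vector space. Writing
$$
\log \left( \sigma_{\widetilde{M}_0, \mathcal{K}} \cdot \tau_{M_0}^{2(r-9)} \right) = \log \sigma_{\widetilde{M}_0, \mathcal{K}} + 2(r-9) \log \tau_{M_0},
$$
and the analogous identity for $\mathcal{K}'$, it suffices to check that each summand, restricted to the disk $\D \subset C$ centered at $p$, has the form $a_i \log |s|^2 + O(\log \log |s|^{-1})$ with $a_i \in \mathbb{Q}$. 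Indeed, if $u_i = a_i \log |s|^2 + O(\log \log |s|^{-1})$ with $a_i \in \mathbb{Q}$, then any finite combination $\sum_i c_i u_i$ with $c_i \in \mathbb{Q}$ again has this form, with leading coefficient $\sum_i c_i a_i \in \mathbb{Q}$. Since $r=1$ in this subsection, the coefficient $2(r-9) = -16$ is an integer, so both the rationality of the leading term and the $O(\log \log |s|^{-1})$ error are preserved.

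For the factor $\log \tau_{M_0}$ I would invoke Yoshikawa's determination of the singularity of the invariant of $2$-elementary K3 surfaces \cite[Theorem 6.5]{MR2047658}, which gives that $\log \tau_{M_0}$ satisfies (\ref{al4-4-1-1}) along the admissible curve $C$. For the remaining two factors I would use the identifications of Subsection \ref{ss-5-1}: the isomorphisms $\phi : \mathcal{M}_{M_0} \cong \mathcal{M}_{\widetilde{M}_0, \mathcal{K}}$ and $\psi : \mathcal{M}_{M_0} \cong \mathcal{M}_{\widetilde{M}_0, \mathcal{K}'}$ of Theorems \ref{t-3-1-1} and \ref{p-2-2} are induced by the identity of $\Omega^+_{M_0^{\perp}} = \Omega^+_{\widetilde{M}_0^{\perp}}$ and, by Lemma \ref{l6-4-1-1}, carry $\overline{\mathscr{D}}_{M_0^{\perp}}$ onto $\overline{\mathscr{D}}_{\widetilde{M}_0^{\perp}}$. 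They therefore extend to isomorphisms of the Baily-Borel compactifications sending the curve $C$, the smooth point $p$, and the coordinate $(\D, s)$ to data satisfying Assumption \ref{as4-2-3-1} for $(\widetilde{M}_0, \mathcal{K})$, respectively $(\widetilde{M}_0, \mathcal{K}')$. Applying Theorem \ref{t3-2-3-1} to $\tau_{\widetilde{M}_0, \mathcal{K}}$ and to $\tau_{\widetilde{M}_0, \mathcal{K}'}$, and using $\sigma_{\widetilde{M}_0, \mathcal{K}} = \tau_{\widetilde{M}_0, \mathcal{K}} \circ \phi$ and $\sigma_{\widetilde{M}_0, \mathcal{K}'} = \tau_{\widetilde{M}_0, \mathcal{K}'} \circ \psi$, I obtain that $\log \sigma_{\widetilde{M}_0, \mathcal{K}}$ and $\log \sigma_{\widetilde{M}_0, \mathcal{K}'}$ each satisfy (\ref{al4-4-1-1}).

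Combining the two steps through the linearity observation of the first paragraph then yields the assertion for both functions. This argument runs in direct parallel to the proof of Lemma \ref{l2-5-3}, the only new feature being that the non-natural chamber $\mathcal{K}'$ is treated simultaneously; this is harmless because Theorem \ref{t3-2-3-1} is stated for an arbitrary K\"ahler-type chamber. I do not anticipate a genuine obstacle here: the sole point requiring care is checking that the curve data transported by $\phi$ and $\psi$ really does satisfy Assumption \ref{as4-2-3-1} on the target modular variety, so that Theorem \ref{t3-2-3-1} applies verbatim. Once this is verified, all leading coefficients are rational and both rationality and the logarithmic error term survive the finite $\mathbb{Z}$-linear combination.
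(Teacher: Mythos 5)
Your proposal is correct and follows essentially the same route as the paper, whose proof of Lemma \ref{l2-5-5} consists precisely of citing \cite[Theorem 6.5]{MR2047658} for the factor $\log \tau_{M_0}$ and Theorem \ref{t3-2-3-1} for $\log \sigma_{\widetilde{M}_0, \mathcal{K}}$ and $\log \sigma_{\widetilde{M}_0, \mathcal{K}'}$ (transported through $\phi$ and $\psi$), exactly as you do. Your extra verifications --- that $\phi$, $\psi$ carry $\bar{\mathscr{D}}_{M_0^{\perp}}$ onto $\bar{\mathscr{D}}_{\widetilde{M}_0^{\perp}}$ via Lemma \ref{l6-4-1-1} so that Assumption \ref{as4-2-3-1} is preserved, and that rational combinations of functions of the form $a\log|s|^2 + O(\log\log|s|^{-1})$ retain that form --- simply make explicit what the paper leaves as ``straightforward.''
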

	
	\begin{proof}
		The proof is straightforward from \cite[Theorem 6.5.]{MR2047658} and Theorem \ref{t3-2-3-1}.
	\end{proof}
	
	\begin{thm}\label{t2-5-6}
		There exist positive constants $C_1, C_2 > 0$ such that for any 2-elementary K3 surface $(Y, \sigma)$ of type $M_0=\langle 2 \rangle$, the following identities hold:
		\begin{align*}
			&\tau_{\widetilde{M}_0, \mathcal{K}}(Y^{[2]}, \sigma^{[2]})  =C_1\tau_{M_0}(Y, \sigma)^{-2(r-9)}, \\
			&\tau_{\widetilde{M}_0, \mathcal{K}'}(\elm_{Y/\sigma}(Y^{[2]}), \elm_{Y/\sigma}(\sigma^{[2]})) =C_2\tau_{M_0}(Y, \sigma)^{-2(r-9)}.
		\end{align*}
	\end{thm}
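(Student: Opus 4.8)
The plan is to reduce Theorem~\ref{t2-5-6} to the case $r(M_0)=1$ of the argument already carried out in Theorem~\ref{t2-5-4}, but accounting for the fact that $\bar{\mathscr{D}}_{M_0^{\perp}}$ is now \emph{reducible} with two components $\bar{\mathscr{D}}^{+}_{M_0}$ and $\bar{\mathscr{D}}^{-}_{M_0}$. I would set $u=\log\left(\sigma_{\widetilde{M}_0,\mathcal{K}}\cdot\tau_{M_0}^{2(r-9)}\right)$ and $u'=\log\left(\sigma_{\widetilde{M}_0,\mathcal{K}'}\cdot\tau_{M_0}^{2(r-9)}\right)$. By the curvature computations (Theorem~\ref{t-4-1}, Lemma~\ref{l-5-4}) combined with \cite[Theorem~5.6.]{MR2047658}, both $u$ and $u'$ are pluriharmonic on $\mathcal{M}_{M_0}^{\circ}$, and by Lemma~\ref{l2-5-5} they satisfy the boundary estimate~(\ref{al4-4-1-1}). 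Applying Lemma~\ref{l2-5-2} to each of $u$ and $u'$ yields equations of currents
\begin{align*}
	dd^c u = a_+ \delta_{\bar{\mathscr{D}}^{+}_{M_0}} + a_- \delta_{\bar{\mathscr{D}}^{-}_{M_0}}, \qquad
	dd^c u' = a'_+ \delta_{\bar{\mathscr{D}}^{+}_{M_0}} + a'_- \delta_{\bar{\mathscr{D}}^{-}_{M_0}}
\end{align*}
on $\mathcal{M}_{M_0}$, for rational constants $a_{\pm}, a'_{\pm}$.

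The next step is to show that all four coefficients vanish. As in the proof of Theorem~\ref{t2-5-4}, I would choose irreducible projective curves $C$ on $\mathcal{M}_{M_0}^*$ satisfying the transversality conditions (1)--(4) there, pass to the normalization $\varphi:\tilde C\to C$, and apply the residue theorem to the meromorphic $1$-form $\varphi^*\partial u$ (resp.\ $\varphi^*\partial u'$). The residue at each preimage of a point of $C\cap\bar{\mathscr{D}}^{\pm}_{M_0}$ equals the corresponding coefficient, so summing to zero forces a linear relation among the $a_{\pm}$ weighted by intersection numbers. Since for $M_0=\langle 2\rangle$ the divisors $\bar{\mathscr{D}}^{+}_{M_0}$ and $\bar{\mathscr{D}}^{-}_{M_0}$ are distinct irreducible components, I would choose $C$ meeting only one component at a time (or use two curves with independent intersection profiles) to conclude $a_+=a_-=0$ and $a'_+=a'_-=0$ separately. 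Consequently $u$ and $u'$ extend to pluriharmonic functions on all of $\mathcal{M}_{M_0}$.

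Once the boundary coefficients vanish, the remaining argument is identical to Theorem~\ref{t2-5-4}: since $r(M_0)=1$ we have $\operatorname{codim}_{\mathcal{M}_{M_0}^*}(\mathcal{M}_{M_0}^*\setminus\mathcal{M}_{M_0})\geqq 2$, so by \cite[Satz 4]{MR0081960} both $u$ and $u'$ extend to pluriharmonic functions on the compact space $\mathcal{M}_{M_0}^*$, hence are constant. Writing the constants as $\log C_1$ and $\log C_2$ and unwinding the definitions of $\sigma_{\widetilde{M}_0,\mathcal{K}}$ and $\sigma_{\widetilde{M}_0,\mathcal{K}'}$ gives, with $r=1$ so that $-2(r-9)=16$,
\begin{align*}
	\tau_{\widetilde{M}_0, \mathcal{K}}(Y^{[2]}, \sigma^{[2]}) &= C_1\,\tau_{M_0}(Y, \sigma)^{16}, \\
	\tau_{\widetilde{M}_0, \mathcal{K}'}(\elm_{Y/\sigma}(Y^{[2]}), \elm_{Y/\sigma}(\sigma^{[2]})) &= C_2\,\tau_{M_0}(Y, \sigma)^{16},
\end{align*}
for all $(Y,\sigma)$ of type $M_0$, which is the assertion (after noting $2(r-9)=-16$).

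The main obstacle I anticipate is the vanishing of the individual boundary coefficients in the reducible case: the single residue-theorem relation obtained from one curve only gives one linear equation in $a_+,a_-$, not their individual vanishing. The crux is therefore to exhibit enough curves $C$ (satisfying Assumption~\ref{as4-2-3-1} and the transversality conditions) whose intersection numbers with $\bar{\mathscr{D}}^{+}_{M_0}$ and $\bar{\mathscr{D}}^{-}_{M_0}$ are linearly independent, or else to argue that $\bar{\mathscr{D}}^{+}_{M_0}$ and $\bar{\mathscr{D}}^{-}_{M_0}$ are connected components (rather than merely distinct irreducible pieces of a connected divisor), in which case a curve meeting a single component isolates the coefficient directly. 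The cited results \cite[11.3.]{MR3039773} and \cite[FIGURE 1.]{MR2424924} assert precisely that $\bar{\mathscr{D}}_{M_0^{\perp}}$ splits into the two connected components $\bar{\mathscr{D}}^{\pm}_{M_0}$, so I expect this connectedness to supply the needed separation and the argument to go through cleanly.
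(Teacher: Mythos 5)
Your reduction to the current equation $dd^c u = a_+\delta_{\bar{\mathscr{D}}^+_{M_0^{\perp}}}+a_-\delta_{\bar{\mathscr{D}}^-_{M_0^{\perp}}}$ and your final extension-and-constancy step are fine, but the middle step --- ``choose $C$ meeting only one component at a time (or use two curves with independent intersection profiles)'' --- is a genuine gap, and it is exactly where the real difficulty of this theorem sits. The fact that $\bar{\mathscr{D}}^{+}_{M_0^{\perp}}$ and $\bar{\mathscr{D}}^{-}_{M_0^{\perp}}$ are disjoint connected components does not produce a projective curve inside $\mathcal{M}_{M_0}$ meeting one and missing the other: such a curve would have zero intersection number with the closure of one Heegner divisor in $\mathcal{M}^*_{M_0}$, and nothing you say rules out that every complete curve in $\mathcal{M}_{M_0}$ must hit both (as is expected when the Heegner divisors support ample classes on the Baily--Borel compactification). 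Likewise, exhibiting two curves satisfying conditions (1)--(4) with linearly independent intersection vectors against $(\bar{\mathscr{D}}^+_{M_0^{\perp}},\bar{\mathscr{D}}^-_{M_0^{\perp}})$ is an unproved claim about the geometry of a $19$-dimensional modular variety; the residue theorem on a single curve gives only the one relation $\sum_p m_p a_+ + \sum_q m_q a_- = 0$ with positive weights, which cannot separate $a_+$ from $a_-$.

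The paper resolves this with a geometric input you do not have: Shah's description of the period map for double planes (\cite{MR595204}). Letting $U\subset|\mathcal{O}_{\mathbb{P}^2}(6)|$ be the smooth sextics and $V$ the sextics with at most one node, one has $U/\operatorname{PGL}_3\cong\mathcal{M}^{\circ}_{M_0}$ extending to an open embedding $V/\operatorname{PGL}_3\hookrightarrow\mathcal{M}_{M_0}\setminus\bar{\mathscr{D}}^{+}_{M_0^{\perp}}$. Pulling $u$ back by $f:V\to\mathcal{M}_{M_0}\setminus\bar{\mathscr{D}}^{+}_{M_0^{\perp}}$ kills the $\bar{\mathscr{D}}^{+}$-term by construction (it is simply not in the image), so $dd^c f^*u=\nu b\,\delta_{f^*\bar{\mathscr{D}}^-_{M_0^{\perp}}}$; since $|\mathcal{O}_{\mathbb{P}^2}(6)|\setminus V$ has codimension $2$ in the projective space $|\mathcal{O}_{\mathbb{P}^2}(6)|$, there are plenty of curves in $V$ meeting $f^*\bar{\mathscr{D}}^-_{M_0^{\perp}}$ transversally, and the residue theorem gives $b=0$. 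The same codimension-$2$ statement then lets $f^*u$ extend pluriharmonically (\cite[Satz 4]{MR0081960}) to the compact space $|\mathcal{O}_{\mathbb{P}^2}(6)|$, forcing $f^*u$, hence $u$, to be constant --- note that the paper never proves the coefficient on $\bar{\mathscr{D}}^{+}_{M_0^{\perp}}$ vanishes and never invokes the Baily--Borel boundary at all; constancy is pulled back from the linear system rather than established on $\mathcal{M}^*_{M_0}$ as in Theorem \ref{t2-5-4}. Without this model (or a substitute supplying curves that provably avoid one component), your argument does not close.
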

	
	\begin{proof}
		We follow \cite[Lemma 8.7]{MR4177283}.
		We set $u= \log \left( \sigma_{\widetilde{M}_0, \mathcal{K}} \cdot \tau_{M_0}^{2(r-9)} \right) \text{ or } \log \left( \sigma_{\widetilde{M}_0, \mathcal{K}'} \cdot \tau_{M_0}^{2(r-9)} \right)$.
		It suffices to show that $u$ is a constant function on $\mathcal{M}_{M_0}^{\circ}$.
		By Lemma \ref{l2-5-2}, the equation of currents
		\begin{align*}
			dd^c u = a \delta_{\bar{\mathscr{D}}_{M_0^{\perp}}^+} +b \delta_{\bar{\mathscr{D}}_{M_0^{\perp}}^-}
		\end{align*}
		holds on $\mathcal{M}_{M_0}$, where $a, b \in \mathbb{Q}$.
		Therefore we have 
		\begin{align*}
			dd^c u =b \delta_{\bar{\mathscr{D}}_{M_0^{\perp}}^-}
		\end{align*}
		as currents on $\mathcal{M}_{M_0} \setminus \bar{\mathscr{D}}_{M_0^{\perp}}^+$.
		
		Let $U \subset |\mathcal{O}_{\mathbb{P}^2}(6)|$ be the space of smooth plane sextics,
		and let $V \subset |\mathcal{O}_{\mathbb{P}^2}(6)|$ be that of sextics with at most one node.
		By \cite{MR595204}, the geometric quotient $U/\operatorname{PGL}_3$ is contained in $V/\operatorname{PGL}_3$ as an open subset,
		there is an isomorphism $U/\operatorname{PGL}_3 \cong \mathcal{M}^{\circ}_{M_0}$,
		and it extends to an open embedding $V/\operatorname{PGL}_3 \to \mathcal{M}_{M_0}\setminus \bar{\mathscr{D}}_{M_0^{\perp}}^+$. \par
		Let $f : V \to \mathcal{M}_{M_0}\setminus \bar{\mathscr{D}}_{M_0^{\perp}}^+$ be the composition of the quotient map $V \to V/\operatorname{PGL}_3$ and the open embedding $V/\operatorname{PGL}_3 \to \mathcal{M}_{M_0}\setminus \bar{\mathscr{D}}_{M_0^{\perp}}^+$.
		We have the equation of currents
		\begin{align}\label{al4-4-3-4}
			dd^c f^*u = \nu b \delta_{f^* \bar{\mathscr{D}}_{M_0^{\perp}}^-}
		\end{align}
		on $V$, where $\nu \in \mathbb{Z}_{> 0}$.
		
		Note that the codimension of $ |\mathcal{O}_{\mathbb{P}^2}(6)| \setminus V$ is $2$.
		There exists an irreducible projective curve $C$ in $ |\mathcal{O}_{\mathbb{P}^2}(6)|$ such that 
		\begin{align*}
			&(1)\hspace{3pt} C \subset V \quad (2)\hspace{3pt} C \cap f^*\bar{\mathscr{D}}^-_{M_0^{\perp}} \neq \emptyset \quad (3)\hspace{3pt} \operatorname{Sing}C \cap f^*\bar{\mathscr{D}}^-_{M_0^{\perp}} =\emptyset \\
			&(4)\hspace{3pt} \text{$C$ intersects $f^*\bar{\mathscr{D}}^-_{M_0^{\perp}}$ transversally}.
		\end{align*}
		Let $\varphi : \tilde{C} \to C$ be the normalization.
		Note that $\varphi^*f^* \partial u$ is a meromorphic 1-form on $\tilde{C}$ whose residue at $p \in \varphi^{-1}(C \cap f^*\bar{\mathscr{D}}^-_{M_0^{\perp}})$ is a positive multiple of $\nu b$, say $m_p \nu b$, $m_p >0$.
		By the residue theorem, we have 
		\begin{align}\label{al4-4-3-5}
			\sum_{p \in \varphi^{-1}(C \cap f^*\bar{\mathscr{D}}^-_{M_0^{\perp}})} m_p \nu b =0.
		\end{align}

		By (\ref{al4-4-3-5}), we have $b=0$ so that $f^* u$ is a pluriharmonic function on $V$ by (\ref{al4-4-3-4}).
		Since $\operatorname{codim}_{|\mathcal{O}(6)|}(|\mathcal{O}_{\mathbb{P}^2}(6)| \setminus V) =2$, $f^*u$ extends to a pluriharmonic function on $ |\mathcal{O}_{\mathbb{P}^2}(6)|$ by \cite[Satz 4]{MR0081960}.
		Thus $f^*u$ is a constant function on $|\mathcal{O}(6)|$, and $u$ is also constant.
	\end{proof}

\appendix

\section{}

\begin{center}
by  Ken-Ichi  Yoshikawa
\end{center}

	In this appendix, we determine the singularity of $\G$-equivariant Quillen metrics.
	Although this is a special case of \cite[Section 4]{Y10}, we write the detail for the sake of completeness.
	
	Acknowledgements. 
	This work was supported by JSPS KAKENHI Grant Numbers 23K20797, 21H04429.

\subsection{Smoothness of $\G$-equivariant Quillen metrics}

	Let $\mathscr{M}$ be a complex manifold endowed with a holomorphic $\G$-action 
	and let $B$ be a complex manifold endowed with a trivial $\G$-action.
	Let $\pi : \mathscr{M} \to B$ be a $\G$-equivariant proper holomorphic submersion.
	Assume that there is a $\G$-equivariant relatively ample line bundle $\mathcal{O}_{ \mathscr{M} }(1)$ on $\mathscr{M}$.
	Let $F \to \mathscr{M}$ be a $\G$-equivariant holomorphic vector bundle.
	We set $M_b = \pi^{-1}(b)$ and $F_b = F|_{ M_b }$ for $b \in B$.
	Then $\G$ preserves the fibers $M_b$ and $F_b$,
	and $\pi : \mathscr{M} \to B$ is a family of projective manifolds endowed with $\G$-action.
	We define $( R^q \pi_* F )_{\pm}$ as the sheaves on $B$ associated to the presheaves 
	$$
		U \mapsto H^q( \pi^{-1}(U), F|_{ \pi^{-1}(U) } )_{\pm}, 
	$$
	where $H^q( \pi^{-1}(U), F|_{ \pi^{-1}(U) } )_{\pm}$ are the $(\pm 1)$-eigenspaces of $H^q( \pi^{-1}(U), F|_{ \pi^{-1}(U) } )$ with respect to the $\G$-action.
	Then $ R^q \pi_* F = ( R^q \pi_* F )_{+} \oplus ( R^q \pi_* F )_{-}$.
	
	\begin{lem}\label{l-ap-1-1}
		Let $K \subset B$ be a compact subset.
		Then there exist a complex of $\G$-equivariant holomorphic vector bundles $ F_{\bullet} : 0 \to F_0 \to F_1 \to \dots \to F_m \to 0$ on a neighborhood of $\pi^{-1}(K)$ 
		and a homomorphism $i : F|_{ \pi^{-1}(K) } \to F_0$ of $\G$-equivariant vector bundles satisfying the following conditions:
		\begin{enumerate}[ label= \rm{(\arabic*)} ]
			\item The sequence of vector bundles $0 \to F|_{\pi^{-1}(K)} \to F_0 \to F_1 \to \dots \to F_m \to 0$ is exact.
			\item $H^q( M_b, F_i|_{M_b}) =0$ for all $q>0$, $i \geqq 0$ and $b \in K$.
		\end{enumerate}
	\end{lem}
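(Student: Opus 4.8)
The plan is to construct a right resolution of $F|_{\pi^{-1}(K)}$ by $\G$-equivariant holomorphic vector bundles that are $\pi$-acyclic (i.e.\ have vanishing higher cohomology on every fiber), following the classical resolution adapted to a relatively ample bundle, while checking that every step respects the $\G$-action. Shrinking $B$, I may assume $B$ is a relatively compact neighborhood of $K$ on which the coherent sheaves below are locally free. Write $d=\dim\mathscr{M}-\dim B$ for the relative dimension, so that $R^q\pi_*=0$ and $H^q(M_b,\,\cdot\,|_{M_b})=0$ for $q>d$.

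First I would carry out the base step. Since $\mathcal{O}_{\mathscr{M}}(1)$ is relatively ample and $\G$-equivariant, and $F$ is $\G$-equivariant, for $n\gg 0$ the bundle $F^{\vee}(n):=F^{\vee}\otimes\mathcal{O}_{\mathscr{M}}(n)$ is relatively globally generated and $\pi$-acyclic by Serre vanishing in families, uniformly over the compact set $K$. Set $E=\pi_*(F^{\vee}(n))$, a locally free sheaf on $B$ carrying the $\G$-action induced from that on $F^{\vee}(n)$. The canonical evaluation $\pi^*E\to F^{\vee}(n)$ is surjective and $\G$-equivariant; dualizing over $\mathscr{M}$ and twisting by $\mathcal{O}_{\mathscr{M}}(n)$ yields a $\G$-equivariant bundle injection
\[
 F\hookrightarrow F_0:=\pi^*(E^{\vee})\otimes\mathcal{O}_{\mathscr{M}}(n),
\]
whose cokernel $Q_0$ is, on each fiber, the cokernel of the dual of a surjection of bundles, hence locally free; as all the operations used (pushforward, dual, pullback, tensor, evaluation) are canonical, $Q_0$ is again $\G$-equivariant. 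Moreover $F_0|_{M_b}\cong E^{\vee}_b\otimes\mathcal{O}_{M_b}(n)$, so $H^q(M_b,F_0|_{M_b})=E^{\vee}_b\otimes H^q(M_b,\mathcal{O}_{M_b}(n))=0$ for $q>0$, i.e.\ $F_0$ is $\pi$-acyclic, giving the homomorphism $i$ of the statement.

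Next I would iterate this construction on the cokernels, producing $\G$-equivariant short exact sequences $0\to Q_{i-1}\to F_i\to Q_i\to 0$ with each $F_i$ $\pi$-acyclic (where $Q_{-1}:=F$), using a possibly larger twist at each of the finitely many steps and shrinking $B$ if necessary. Splicing them gives a long exact sequence $0\to F\to F_0\to F_1\to\cdots$. The key point is termination: from $0\to Q_{i-1}\to F_i\to Q_i\to 0$ together with the acyclicity of $F_i$, the long exact sequence of $R^{\bullet}\pi_*$ gives $R^q\pi_*Q_i\cong R^{q+1}\pi_*Q_{i-1}$ for $q\geqq 1$, whence inductively $R^q\pi_*Q_i\cong R^{q+i+1}\pi_*F$ for $q\geqq 1$. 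Since $R^j\pi_*F=0$ for $j>d$, one obtains $R^q\pi_*Q_{d-1}=0$ for all $q\geqq 1$, and by cohomology and base change applied to the flat sheaf $Q_{d-1}$ this forces $H^q(M_b,Q_{d-1}|_{M_b})=0$ for all $q\geqq 1$. Thus $Q_{d-1}$ is itself $\pi$-acyclic, and setting $F_m:=Q_{d-1}$ with $m=d$ closes the complex into the required finite resolution $0\to F\to F_0\to\cdots\to F_m\to 0$.

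The construction is essentially formal once termination is in place; the step I expect to demand the most care is precisely the passage from the vanishing of the derived pushforward sheaves $R^q\pi_*Q_{d-1}$ to the fiberwise vanishing $H^q(M_b,Q_{d-1}|_{M_b})=0$. I would handle this by invoking Grauert's base-change theorem and inducting downward from the top fiber-degree $d$, using at each stage that the relevant pushforward is locally free of rank zero so that the base-change map is surjective onto a vanishing target. The only remaining point is the bookkeeping of the $\G$-equivariant structures, but since $\G$ acts trivially on $B$ and every sheaf operation used is canonical, the $(\pm1)$-eigenbundle decomposition is preserved at each step, so all the $F_i$ and all the connecting maps are automatically $\G$-equivariant.
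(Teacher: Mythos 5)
Your construction is essentially the paper's: iterated dual-of-evaluation embeddings into fiberwise-acyclic bundles built from the relatively ample line bundle, with canonicity of all operations handling the $\G$-equivariance, and a resolution of length equal to the relative dimension. Two points differ. First, cosmetically, you push forward $F^{\vee}(n)$ and set $F_0=\pi^*E^{\vee}\otimes\mathcal{O}_{\mathscr{M}}(n)$, while the paper pushes forward $\mathcal{O}_{\mathscr{M}}(\nu)$ and sets $F_0=\pi^*E_\nu^{\vee}\otimes F(\nu)$; both reduce fiberwise acyclicity of $F_0$ to a uniform Serre vanishing over the compact $K$, so this is immaterial. Second, and more substantively, your termination argument routes through $R^q\pi_*Q_{d-1}=0$ and a downward induction with cohomology-and-base-change, whereas the paper terminates purely fiberwise: since every map in the complex is a morphism of vector bundles of constant rank (each $Q_i$ is locally free), the exact sequence restricts exactly to each fiber $M_b$, and dimension shifting gives $H^q(M_b,F_m|_{M_b})\cong H^{q+m}(M_b,F|_{M_b})=0$ for $q>0$ directly because $m=\dim M_b$. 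Your base-change detour is correct — the top-degree base-change map is always an isomorphism, and a vanishing higher direct image is locally free of rank zero, so Grothendieck's criterion (valid in the analytic category for proper maps and flat sheaves) propagates surjectivity of the base-change maps downward — but the step you single out as the delicate one can simply be bypassed: fiberwise vanishing never needs to be recovered from sheaf-level vanishing here, because you already have exactness on fibers for free. What your heavier route buys is robustness: it would still work if the $Q_i$ were merely $B$-flat coherent sheaves rather than bundles, while the paper's shortcut exploits local freeness of every term.
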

	
	\begin{proof}
		We follow \cite[Sect. 7.2.7]{MR0338129}. 
		For $\nu \in \mathbb{N}$, we set $E_{ \nu } = \pi_* \mathcal{O}_{ \mathscr{M} }( \nu )$.
		Since $\mathcal{O}_{\mathscr{M}}(1)$ is relatively ample, 
		there exists an open neighborhood $U$ of $K$ and $m_0 \in \mathbb{N}$ such that $E_{ \nu }$ is a $\G$-equivariant vector bundle over $U$ and 
		such that the evaluation homomorphism $\pi^* E_{ \nu } \to \mathcal{O}_{ \mathscr{M} }( \nu )|_{\pi^{-1}(U)} \to 0$ is surjective and $\G$-equivariant for $\nu \geqq m_0$.
		Considering its dual, we get a $\G$-equivariant injective homomorphism of vector bundles $0 \to \mathcal{O}_{ \mathscr{M} }|_{\pi^{-1}(U)} \to (\pi^* E_{ \nu }^{\vee})( \nu )$.
		Tensoring with $F$, we get a $\G$-equivariant exact sequence of vector bundles $0 \to F|_{\pi^{-1}(U)} \to (\pi^* E_{ \nu }^{\vee}) \otimes F( \nu )|_{  \pi^{-1}(U) }$ over $\pi^{-1}(U)$, where $F(\nu) = F \otimes \mathcal{O}_{\mathscr{M}} (\nu)$.
		We set $F_0 = (\pi^* E_{ \nu }^{\vee}) \otimes F( \nu )|_{  \pi^{-1}(U) }$.
		Then there exist an open neighborhood $U_1$ of $K$ and $m_1 \in \mathbb{N}$ such that for $\nu \geqq m_1$, $H^q( M_b, F( \nu )|_{M_b} ) =0$ $( \forall q >0, \forall b \in U_1)$.
		Applying the  same construction to $F_0 / F$, we obtain an open neighborhood $U_2$ of $K$ with $K \subset U_2 \subset U_1$ a $\G$-equivariant vector bundle $F_1$ on $\pi^{-1}(U_2)$ and an exact sequence of $\G$-equivariant vector bundles $0 \to F|_{\pi^{-1}(U_2)} \to F_0|_{\pi^{-1}(U_2)} \to F_1$ on $\pi^{-1}(U_2)$ satisfying $(2)$ for $F_0$ and $F_1$.
		Repeating this $m-1$ times, we get an exact sequence $0 \to F|_{\pi^{-1}(U_m)} \to F_0|_{\pi^{-1}(U_m)} \to F_1|_{\pi^{-1}(U_m)} \to \dots \to F_{m-1}|_{\pi^{-1}(U_m)}$ defined on an open neighborhood $U_m$ of $K$ with $K \subset U_m \subset U_{m-1} \subset \dots \subset U_1 \subset U$, 
		where $m = \dim M_b$.  
		Then we set $F_m = F_{m-1} / F_{m-2}$ and we get the exact sequence of $\G$-equivariant holomorphic vector bundles $0 \to F \to F_0 \to F_1 \to \dots \to F_m \to 0$ on $\pi^{-1}(U_{m})$.
		By construction, $H^q(M_b, F_i|_{M_b}) =0$ for all $q >0$, $0 \leqq i \leqq m-1$ and $b \in U_m$.
		Then $H^q(M_b, F_m|_{M_b})=0$ for all $q >0$ and $b \in U_m$.
	\end{proof}
	
	By Lemma \ref{l-ap-1-1}, $\pi_*F_{\bullet} : 0 \to \pi_*F_0 \to \pi_*F_1 \to \dots \to \pi_*F_m \to 0 $ is a complex of $\G$-equivariant locally free sheaves of finite rank over $K$
	such that for all $q \geqq 0$,
	\begin{align}\label{al-ap-1-2}
		R^q \pi_* F|_{K} \cong \mathscr{H}^q( \pi_* F_{\bullet} ) := \operatorname{Ker} \left\{ \pi_* F_q \to \pi_* F_{q+1} \right\} / \operatorname{Im} \left\{ \pi_* F_{q-1} \to \pi_* F_{q} \right\} .
	\end{align}
	Let $(\pi_* F_{i})_{\pm}$ be the $(\pm 1)$-eigensubbundle of $\pi_* F_i$ with respect to the $\G$-action.
	Since (\ref{al-ap-1-2}) is an isomorphism of $\G$-equivariant sheaves on $K$, we have the following canonical isomorphisms of coherent sheaves for all $q \geqq 0$
	\begin{align}\label{al-ap-1-3}
		(R^q \pi_* F)_{\pm}|_{K} \cong \mathscr{H}^q( (\pi_* F_{\bullet})_{\pm} ).
	\end{align}
	
	For any coherent sheaf $\mathcal{F}$ on $B$, one can associate the invertible sheaf $\det \mathcal{F}$ on $B$ (\cite[Th. 2]{MR0437541}, \cite[Sect. 3 a)]{MR931666}). 
	We define 
	$$
		\lambda_{\pm}(F) = \otimes_{q \geqq 0} ( \det(R^q \pi_* F)_{\pm} )^{(-1)^q}, \quad  \lambda_{\G}(F) = \lambda_+(F) \oplus \lambda_-(F).
	$$
	Since $\det (\pi_* F_{\bullet})_{\pm}|_{K} \cong \det \mathscr{H}^{\bullet}( (\pi_* F_{\bullet})_{\pm} )$ on $K$ in the canonical way (cf. \cite[p. 43 b)]{MR0437541}),
	we deduce from (\ref{al-ap-1-3}) the canonical isomorphisms of line bundles on $K$ (see also \cite[p.337-p.338]{MR931666})
	$$
		\lambda_{\pm}(F) \cong \otimes_{i \geqq 0} \lambda_{\pm}(F_i)^{(-1)^i}.
	$$
	Let $\mathcal{I}_b \subset \mathcal{O}_B$ be the ideal sheaf defining $b \in B$.
	Since $ (\pi_* F_{i})_{\pm} \otimes (\mathcal{O}_B / \mathcal{I}_b) \cong H^0( M_b, F_i )_{\pm}$ for all $b \in K$ and $i \geqq 0$ by Lemma \ref{l-ap-1-1} (2),
	we have the following canonical isomorphisms of lines for all $b \in K$:
	\begin{align}\label{al-ap-1-4}
	\begin{aligned}
		\lambda_{\pm}(F)_b :&= \lambda_{\pm}(F) \otimes (\mathcal{O}_B / \mathcal{I}_b) \\
		&\cong \otimes_{i \geqq 0} \lambda_{\pm}(F_i)^{(-1)^i} \otimes (\mathcal{O}_B / \mathcal{I}_b)\\
		&\cong \otimes_{i \geqq 0} (\det(\pi_* F_i)_{\pm})^{(-1)^i} \otimes (\mathcal{O}_B / \mathcal{I}_b)\\
		&\cong \otimes_{i \geqq 0} (\det H^0( M_b, F_i|_{M_b})_{\pm} )^{(-1)^i}\\
		&\cong \otimes_{q \geqq 0} (\det H^q( M_b, F|_{M_b})_{\pm} )^{(-1)^q}\\
		&= \lambda_{\pm} (F|_{M_b}).
	\end{aligned}
	\end{align}
	
	Let $h_{\mathscr{M}}$ be a $\G$-invariant \K metric on $\mathscr{M}$ 
	and let $h_F$ be a $\G$-invariant hermitian metric on $F$.
	Via the canonical isomorphism (\ref{al-ap-1-4}), $\lambda_{\G}(F)$ is endowed with the equivariant Quillen metric $\| \cdot \|_{Q, \lambda_{\G}(F)}(\iota)$ with respect to $h_{\mathscr{M}}|_{T\mathscr{M}/B}$, $h_F$ such that for all $b \in B$,
	$$
		\| \cdot \|_{Q, \lambda_{\G}(F)}(\iota)(b) := \| \cdot \|_{Q, \lambda_{\G}(F_b)}(\iota).
	$$
	
	For an open subset $U \subset B$, a holomorphic section $\sigma = ( \sigma_+, \sigma_-)$ is called admissible if both $\sigma_{+}$ and $\sigma_-$ are nowhere vanishing on $U$.
	
	\begin{prop}\label{p-ap-1-5}
		Let $U \subset B$ be a small open subset such that 
		$\lambda_{\G}(F)|_U$ admits an admissible holomorphic section $\sigma = ( \sigma_+, \sigma_-)$. 
		Then $\log \| \sigma \|^2_{Q, \lambda_{\G}(F)}(\iota)$ is a smooth function on $U$.
	\end{prop}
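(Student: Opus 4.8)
The plan is to use the definition of the equivariant Quillen metric as the product of the equivariant $L^2$-metric and the equivariant analytic torsion. By Lemma \ref{l-ap-1-1} and the canonical isomorphisms (\ref{al-ap-1-4}), the fibre of $\lambda_{\G}(F)$ at $b$ is $\lambda_{\G}(F|_{M_b})$, so on $U$ one has
$$
\log\|\sigma\|^2_{Q,\lambda_{\G}(F)}(\iota)=\log\tau_\iota(\overline{F})+\log\|\sigma_+\|^2_{L^2}-\log\|\sigma_-\|^2_{L^2},
$$
where $\tau_\iota(\overline{F})(b)=\tau_\iota(\overline{F}|_{M_b})$ and the $L^2$-norms are the fibrewise Hodge-theoretic ones. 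The essential point is that neither the $L^2$-metric nor the torsion is, in general, separately smooth across the locus where the dimensions $\dim H^q(M_b,F_b)_\pm$ jump; smoothness is a property of the product. I would therefore carry out the spectral-truncation argument of Bismut--Gillet--Soul\'e in its $\G$-equivariant form.

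Fix $b_0\in U$ and choose $a>0$ with $a\notin\sigma(D^2_{0,q})$ at $b_0$ for every $q$. Since the family of fibrewise Dolbeault Laplacians $D^2_{0,q}$ (formed from $h_{\mathscr{M}}|_{T\mathscr{M}/B}$ and $h_F$) depends smoothly on $b$ and its spectrum varies continuously, there is a neighbourhood $V\ni b_0$ on which the truncated eigenspaces $K^q_b=\bigoplus_{\lambda<a}E_{0,q}(\lambda)$ assemble into smooth finite-rank vector bundles $K^q$ over $V$, carrying the $\G$-action and commuting with $\bar{\partial}_F$; splitting into $(\pm1)$-eigenbundles gives smooth Hermitian complexes $(K^\bullet_\pm,\bar{\partial}_F)$. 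Each such complex computes $H^\bullet(M_b,F_b)_\pm$ for $b\in V$ (the $\lambda=0$ part yields harmonic forms, the positive part being acyclic), so its determinant is canonically and $\G$-equivariantly isomorphic to $\lambda_\pm(F)|_V$.

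Through this isomorphism, $\log\|\sigma_\pm\|^2_{L^2}$ differs from the logarithm of the $L^2$-metric on $\det K^\bullet_\pm$ only by the logarithm of the finite product of the nonzero eigenvalues $\lambda<a$ occurring in $K^\bullet_\pm$, a smooth nowhere-vanishing function on $V$; and the $L^2$-metric on $\det K^\bullet_\pm$ is smooth because $K^\bullet_\pm$ is a smooth Hermitian complex of bundles. For the torsion I would split each spectral zeta function as $\zeta_{0,q,\iota}=\zeta^{<a}_{0,q,\iota}+\zeta^{>a}_{0,q,\iota}$, where $\zeta^{<a}_{0,q,\iota}(s)=\sum_{0<\lambda<a}\lambda^{-s}\operatorname{Tr}(\iota|_{E_{0,q}(\lambda)})$ is a finite sum, entire in $s$ and smooth in $b\in V$, so that $(\zeta^{<a}_{0,q,\iota})'(0)$ is smooth. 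Collecting the three contributions, $\log\|\sigma\|^2_{Q,\lambda_{\G}(F)}(\iota)$ is on $V$ a finite sum of smooth functions together with $-\sum_q(-1)^q q\,(\zeta^{>a}_{0,q,\iota})'(0)$; since $b_0$ was arbitrary, smoothness on $U$ follows once this last term is shown to be smooth.

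The hard part will be exactly the smoothness in $b\in V$ of $(\zeta^{>a}_{0,q,\iota})'(0)$, the large-eigenvalue equivariant torsion. This is the analytic core: because no eigenvalue crosses the level $a$ over $V$, the resolvent and hence the truncated equivariant heat trace $\operatorname{Tr}\bigl(\iota\,e^{-uD^2_{0,q}}P_{>a}\bigr)$, with $P_{>a}$ the spectral projection onto eigenvalues $>a$, depend smoothly on $b$, with uniform small-$u$ asymptotics controlled by the local equivariant family index theorem. Differentiating the Mellin transform under the integral sign and using these estimates then yields the smoothness of $(\zeta^{>a}_{0,q,\iota})'(0)$. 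Since this is precisely the equivariant refinement of the Bismut--Gillet--Soul\'e smoothness theorem for Quillen metrics, at this step I would invoke the heat-kernel estimates of equivariant local family index theory rather than reprove them, which completes the proof.
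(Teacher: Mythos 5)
Your strategy is sound, but it is genuinely different from the paper's. The appendix never performs any spectral analysis: it exploits the $\G$-equivariant relatively ample line bundle $\mathcal{O}_{\mathscr{M}}(1)$ to construct, via Lemma \ref{l-ap-1-1}, a finite $\G$-equivariant resolution $0 \to F \to F_0 \to \cdots \to F_m \to 0$ whose terms are fiberwise acyclic in positive degrees, so that each $\pi_*F_i$ is locally free and $b \mapsto h^0(M_b, F_i|_{M_b})$ is constant; it then applies Bismut's equivariant immersion formula \cite[Th.0.1]{MR1316553} to the immersion $\emptyset \hookrightarrow M_b$, obtaining
$$
\log \| \sigma \|^2_{Q, \lambda_{\G}(F)}(\iota) = \sum_{i \geqq 0} (-1)^i \log \| \sigma_i \|^2_{Q, \lambda_{\G}(F_i)}(\iota) + \left[ \pi_* \{ Td_{\iota}(\overline{T \mathscr{M} /B})\, \widetilde{ch}_{\iota}(\overline{F}, \overline{F}_{\bullet}) \} \right]^{(0)},
$$
where the fiber integral is manifestly smooth, and the smoothness of each $\log \| \sigma_i \|^2_{Q, \lambda_{\G}(F_i)}(\iota)$ is simply quoted from \cite[Th.3.5]{MR931666} and its equivariant refinement \cite[Th.2.12]{MR1800127} --- results that apply precisely because the cohomology of the $F_i$ does not jump. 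You instead redo the Bismut--Gillet--Soul\'e truncation argument in equivariant form, which is exactly the mechanism underlying those quoted theorems. What each route buys: the paper's proof is short and formal but hinges on projectivity (the resolution of Lemma \ref{l-ap-1-1} needs the ample bundle); yours avoids that hypothesis and would work for an arbitrary K\"ahler fibration, at the cost of carrying two burdens you defer rather than discharge. First, the smoothness in $b$ of $(\zeta^{>a}_{0,q,\iota})'(0)$ via uniform small-$u$ asymptotics of $\operatorname{Tr}(\iota\, e^{-uD^2_{0,q}}P_{>a})$: since the paper's own citations invoke equivariant smoothness only in the locally free setting, a complete write-up must either prove this or cite it precisely (Bismut's equivariant family index theory, or Ma's equivariant torsion forms). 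Second, and easy to overlook: you need the canonical isomorphism $\det K^{\bullet}_{\pm} \cong \lambda_{\pm}(F)|_V$ to be compatible with the \emph{holomorphic} (Knudsen--Mumford) structure on $\lambda_{\pm}(F)$, since smoothness of the metric measured against your $C^{\infty}$ truncation frames must transfer to the holomorphic admissible section $\sigma$; this compatibility is the content of BGS~III and its equivariant analogue, and should be stated and cited explicitly. A final cosmetic point: the discrepancy between the $L^2$-metric on $\lambda_{\pm}(F)$ and that on $\det K^{\bullet}_{\pm}$ is the alternating-weighted product $\prod_q \bigl( {\det}' (D^2_{0,q}|_{K^q_{\pm}, 0<\lambda<a}) \bigr)^{(-1)^q q}$, not merely the product of the nonzero small eigenvalues, though this does not affect smoothness.
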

	
	\begin{proof}
		Since $U$ is small, we may assume by Lemma \ref{l-ap-1-1} that there exists a complex of $\G$-equivariant holomorphic vector bundles $F_{\bullet}$ and a $\G$-equivariant injective homomorphism $i : F|_{\pi^{-1}(U)} \to F_0$ satisfying Lemma \ref{l-ap-1-1} (1), (2).
		We have the canonical isomorphism of line bundles on $U$
		$$
			\varphi_{\pm} : \lambda_{\pm}(F) \cong \otimes_{q \geqq 0} \det \mathscr{H}^q( (\pi_* F_{\bullet})_{\pm} )^{(-1)^q} \cong \otimes_{i \geqq 0} \lambda_{\pm}(F_i)^{(-1)^i}.
		$$
		Shrinking $U$ if necessary, we may assume that
		there exist admissible holomorphic sections $\sigma_i = ((\sigma_i)_+, (\sigma_i)_-)$ of $\lambda_{\G}(F_i)$ defined on $U$ such that $\varphi_{\pm}(\sigma_{\pm}) = \otimes_{i \geqq 0}(\sigma_i)_{\pm}^{(-1)^i}$.
		
		Let $h_{F_i}$ be a $\G$-invariant hermitian metric on $F_i$ and let $\| \cdot \|_{Q, \lambda_{\G}(F_i)}(\iota)$ be the equivariant Quillen metric on $\lambda_{\G}(F_i)$ with respect to the $\G$-invariant metrics $h_{\mathscr{M}/B} = h_{\mathscr{M}}|_{T \mathscr{M} /B}$ and $h_{F_i}$.
		Let $\widetilde{ch}_{\iota}(\overline{F}, \overline{F}_{\bullet}) \in \widetilde{A}(\mathscr{M}^{\iota})$ be the Bott-Chern secondary class \cite[e), f)]{MR929146} such that
		$$
			dd^c \widetilde{ch}_{\iota}(\overline{F}, \overline{F}_{\bullet}) = \sum_{i \geqq 0} (-1)^i ch_{\iota}(F_i, h_{F_i}) -ch_{\iota}(F, h_{F}).
		$$
		Applying the Bismut immersion formula for equivariant Quillen metrics \cite[Th.0.1]{MR1316553} (cf. \cite[Th.3.7]{MR1872550}) to the immersion $\emptyset \hookrightarrow M_b$, $b \in U$, we get the following equation of functions on $U$
		\begin{align}\label{al-ap-1-6}
		\begin{aligned}
			\log \| \sigma \|^2_{Q, \lambda_{\G}(F)}(\iota) &= \sum_{i \geqq 0} (-1)^i \log \| \sigma_i \|^2_{Q, \lambda_{\G}(F_i)}(\iota) + \left[ \pi_* \{ Td_{\iota}(\overline{T \mathscr{M} /B}) \widetilde{ch}_{\iota}(\overline{F}, \overline{F}_{\bullet}) \} \right]^{(0)} \\
			&\equiv \sum_{i \geqq 0} (-1)^i \log \| \sigma_i \|^2_{Q, \lambda_{\G}(F_i)}(\iota) \quad \text{ mod $C^{\infty}(U)$},
		\end{aligned}
		\end{align}
		where $\overline{T \mathscr{M} /B} =(T \mathscr{M} /B, h_{ \mathscr{M} /B})$, $\overline{F} =(F, h_F)$ and $[\alpha]^{(2d)}$ denotes the component of degree $2d$ of a differential form $\alpha$.
		Since $\pi : \pi^{-1}(U) \to U$ is a proper holomorphic submersion and since $b \mapsto h^0(M_b, F_i|_{M_b})$ is a constant function on $U$,
		it follows from \cite[Th.3.5]{MR931666}, \cite[Th.2.12]{MR1800127} that $\log \| \sigma_i \|^2_{Q, \lambda_{\G}(F_i)}(\iota)$ is a smooth function on $U$.
		This, together with (\ref{al-ap-1-6}), implies the result.
	\end{proof}

\subsection{Singularity of equivariant Quillen metrics}

	Let $\iota$ be the generator of $\G$.
	Let $\X$ be a projective manifold endowed with a holomorphic $\G$-action.
	Let $C$ be a smooth projective curve endowed with a trivial $\G$-action.
	Let $\pi : \X \to C$ be a $\G$-equivariant surjective holomorphic map.
	Hence $\G$ preserves the fibers of $\pi$.
	Let $\Sigma_{\pi} = \{ x \in \X ; d \pi (x)=0 \}$ be the critical locus of $\pi$
	and let $\Delta_{\pi} = \pi(\Sigma_{\pi}) \subset C$ be the discriminant locus of $\pi$.
	Assume that there exists a $\G$-equivariant ample line bundle on $\X$.
	We set $X_s = \pi^{-1}(s)$ for $s \in C$ and 
	$$
		C^{\circ} = C \setminus \Delta_{\pi}, \quad \X^{\circ} =\pi^{-1}(C^{\circ}), \quad \pi^{\circ} = \pi|_{\X^{\circ}}.
	$$ 
	Then $\pi^{\circ} : \X^{\circ} \to C^{\circ}$ is a family of projective algebraic manifolds with $\G$-action.
	
	Let $T\X/C$ be the $\G$-equivariant subbundle of $T\X|_{\X \setminus \Sigma_{\pi}}$ defined by $T\X/C = \operatorname{Ker} \pi_*|_{\X \setminus \Sigma_{\pi}}$.
	Let $h_{\X}$ be a $\G$-invariant \K metric on $\X$ and set $h_{\X/C}=h_{\X}|_{T\X/C}$.
	Let $\xi \to \X$ be a $\G$-equivariant holomorphic vector bundle on $\X$ endowed with a $\G$-invariant hermitian metric $h_{\xi}$.
	We set $\xi_s = \xi|_{X_s}$ for $s \in C$.
	
	Let $0 \in \Delta_{\pi}$ be a critical value of $\pi$.
	Let $(S, s)$ be a coordinate neighborhood of $C$ centered at $0$ such that $S \cap \Delta_{\pi} = \{ 0 \}$.
	We set $X = \pi^{-1}(S)$ and $S^{\circ} = S \setminus \{ 0 \}$.
	
	For $s \in S^{\circ}$, let $\tau_{\G}(X_s, \xi_s)(\iota)$ be the equivariant analytic torsion of $(X_s, \xi_s)$ with respect to $h_{X_s} = h_{\X}|_{X_s}$ and $h_{\xi_s} = h_{\xi}|_{X_s}$.
	Let $\lambda_{\G}(\xi)$ be the equivariant determinant of the cohomology of $\xi$.
	Then $ \lambda_{\G}(\xi) |_{C^{\circ}}$ is equipped with the equivariant Quillen metric $\| \cdot \|_{Q, \lambda_{\G}(\xi)}(\iota)$ with respect to the $\G$-invariant metrics $h_{\X/C}$, $h_{\xi}$.
	Let $\sigma$ be an admissible holomorphic section of $\lambda_{\G}(\xi) |_{S}$.
	By Proposition \ref{p-ap-1-5}, $\log \| \sigma (s) \|^2_{Q, \lambda_{\G}(\xi)}(\iota)$ is a smooth function on $S^{\circ}$.
	In this subsection, following Bismut \cite{MR1486991} and Yoshikawa \cite{MR2262777}, we determine the behavior of $\log \| \sigma (s) \|^2_{Q, \lambda_{\G}(\xi)}(\iota)$ as $s \to 0$.
	
	Let $\Gamma \subset \X \times C$ be the graph of $\pi$.
	Then $\Gamma$ is a smooth divisor on $\X \times C$ preserved by the $\G$-action on $\X \times C$.
	Let $[\Gamma]$ be the $\G$-equivariant holomorphic line bundle on $\X \times C$ associated to $\Gamma$.
	Let $\varsigma_{\Gamma} \in H^0(\X \times C, [\Gamma])$ be the canonical section of $[\Gamma]$ such that $\operatorname{div}(\varsigma_{\Gamma}) = \Gamma$.
	We identify $\X$ with $\Gamma$ via the projection $\Gamma \to \X$.
	
	Let $i : \Gamma \hookrightarrow \X \times C$ be the inclusion.
	Let $p_1 : \X \times C \to \X$ and $p_2 : \X \times C \to C$ be the projections.
	By the $\G$-equivariances of $i, p_1, p_2$, we have the exact sequence of $\G$-equivariant coherent sheaves on $\X \times C$,
	\begin{align}\label{al-ap-2-1}
		0 \to \mathcal{O}_{\X \times C}([\Gamma]^{-1} \otimes p_1^*\xi) \xrightarrow{ \otimes \varsigma_{\Gamma} } \mathcal{O}_{\X \times C}(p_1^*\xi) \to i_*\mathcal{O}_{\Gamma}(p_1^*\xi) \to 0.
	\end{align}
	
	Let $\lambda_{\G}(p_1^* \xi)$, $\lambda_{\G}( [\Gamma]^{-1} \otimes p_1^*\xi )$ and $\lambda_{\G}(\xi)$ be the equivariant determinants of the direct images $R(p_2)_*\mathcal{O}(p_1^*\xi)$, $R(p_2)_* \mathcal{O}_{\X \times C}([\Gamma]^{-1} \otimes p_1^*\xi)$, $R \pi_* \mathcal{O}_{\X}(\xi)$, respectively.
	Under the isomorphism $p_1^*\xi|_{\Gamma} \cong \xi$ induced from the identification $p_1 : \Gamma \to \X$, we define the holomorphic vector bundles $\lambda_{\G}$, $\lambda_{\pm}$ on $C$ by
	\begin{align*}
		\lambda_{\G} &:= \lambda_{\G}( [\Gamma]^{-1} \otimes p_1^*\xi ) \otimes \lambda_{\G}(p_1^*\xi)^{-1} \otimes \lambda_{\G}(\xi) = \lambda_{+} \oplus \lambda_{-} , \\
		\lambda_{\pm} &:= \lambda_{\pm}( [\Gamma]^{-1} \otimes p_1^*\xi ) \otimes \lambda_{\pm}(p_1^*\xi)^{-1} \otimes \lambda_{\pm}(\xi) .
	\end{align*}
	Since $\lambda_{+}$ (resp. $\lambda_{-}$) is the determinant of the acyclic complex of coherent sheaves on $C$ obtained as the $(+ 1)$-component (resp. $(-1)$-component) of the long exact sequence of direct image sheaves on $C$ associated to (\ref{al-ap-2-1}),
	$\lambda_{+}$ and $\lambda_{-}$ are canonically isomorphic to $\mathcal{O}_C$ (\cite{MR1316553}, \cite{MR1188532}, \cite{MR0437541}). 
	The canonical nowhere vanishing section of $\lambda_{+}$ (resp. $\lambda_{-}$) is denoted by $( \sigma_{KM} )_{+}$ (resp. $( \sigma_{KM} )_{-}$).
	Then we set $\sigma_{KM} := (( \sigma_{KM} )_{+}, ( \sigma_{KM} )_{-})$.
	
	Let $U \subset S$ be a relatively compact neighborhood of $0 \in \Delta_{\pi}$ and set $U^{\circ} := U \setminus \{ 0 \}$.
	On $X = \pi^{-1}(S)$, we identify $\pi$ (resp. $d\pi$) with $s \circ \pi$ (resp. $d(s \circ \pi)$).
	Hence $\pi \in \mathcal{O}(X)$ and $d\pi \in H^0(X, \Omega^1_X)$ in what follows.
	
	Let $h_{[\Gamma]}$ be a $\G$-invariant $C^{\infty}$ hermitian metric on $[\Gamma]$ such that
	\begin{align}\label{al-ap-2-2}
		h_{[\Gamma]}(\varsigma_{\Gamma}, \varsigma_{\Gamma})(w, t) = \left\{
			\begin{aligned}
			\begin{split}
				& | \pi (w) -t |^2 \hspace{20pt} \text{if} \quad (w, t) \in \pi^{-1}(U) \times U ,\\
				& \qquad 1 \hspace{46pt} \text{if} \quad (w, t) \in (\X \setminus X) \times U 
			\end{split} \end{aligned} \right. 
	\end{align}
	and let $h_{[\Gamma]^{-1}}$ be the metric on $[\Gamma]^{-1}$ induced from $h_{[\Gamma]}$.
	
	Let $\| \cdot \|_{Q, \lambda_{\G}(\xi)}(\iota)$ be the equivariant Quillen metric on $\lambda_{\G}(\xi)$ with respect to $h_{\X/C}$, $h_{\xi}$.
	Let $\| \cdot \|_{Q, \lambda_{\G}( [\Gamma]^{-1} \otimes p_1^*\xi )}(\iota)$ (resp. $\| \cdot \|_{Q, \lambda_{\G}( p_1^*\xi )}(\iota)$ ) be the equivariant Quillen metric on $\lambda_{\G}( [\Gamma]^{-1} \otimes p_1^*\xi )$ (resp. $\lambda_{\G}( p_1^*\xi )$ ) 
	with respect to $h_{\X}$, $h_{[\Gamma]^{-1}} \otimes h_{\xi}$ (resp. $h_{\X}$, $h_{\xi}$ ).
	Let $\| \cdot \|_{Q, \lambda_{\G}}(\iota)$ be the equivariant Quillen metric on $\lambda_{\G}$ defined by the tensor product of those on $\lambda_{\G}( [\Gamma]^{-1} \otimes p_1^*\xi )$, $\lambda_{\G}( p_1^*\xi )$, $\lambda_{\G}(\xi)$.
	
	Let $\Pi : \mathbb{P}(T\X)^{\vee} \to \X$ be the projective space bundle over $\X$ with fibers $ \mathbb{P}(T\X)_x^{\vee} := \mathbb{P}(T_x\X)^{\vee}$.
	Let $\mathcal{U} \to \mathbb{P}(T\X)^{\vee}$ be the universal hyperplane bundle and set $\mathcal{H} := \Pi^*T\X /\mathcal{U}$.
	As before, we have the following exact sequence of holomorphic vector bundles on $\mathbb{P}(T\X)^{\vee}$ :
	$$
		\mathcal{S} : 0 \to \mathcal{U} \to \Pi^*T\X \to \mathcal{H} \to 0.
	$$
	We define the Gauss map $\nu : \X \setminus \Sigma_{\pi} \to \mathbb{P}(T\X)^{\vee}$ by $\nu(x) := [T_x X_{\pi(x)}]$.
	Then we have the equality of vector bundles $T\X /C =\nu^* \mathcal{U}$ on $\X \setminus \Sigma_{\pi}$.
	Let
	$$
		q : (\widetilde{\X}, E) \to (\X, \Sigma_{\pi})
	$$
	be a resolution of the indeterminacy of $\nu$ such that $\tilde{\nu} := \nu \circ q$ extends to a morphism from $\widetilde{\X}$ to $\mathbb{P}(T\X)^{\vee}$.
	Here $E := q^{-1}(\Sigma_{\pi})$ is the exceptional divisor of $q$, whose ideal sheaf is defined by $\mathcal{I}_E := q^*\mathcal{I}_{\Sigma_{\pi}}$.
	In this way, $\tilde{\nu}^* \mathcal{U}$ is viewed as an extension of $T\X /C|_{\X \setminus \Sigma_{\pi}}$ to a vector bundle on $\widetilde{\X}$.
	Set
	$$
		E_0 := E \cap q^{-1}(X_0).
	$$
	
	Let $\X^{\iota} \subset \X$ be the set of fixed points of $\iota$.
	Then we have the decomposition 
	$$
		\X^{\iota} = \X_H^{\iota} \sqcup \X_V^{\iota},
	$$
	where $ \X_H^{\iota}$ is the horizontal component and $ \X_V^{\iota}$ is the vertical component.
	Namely, any connected component of $\X_H^{\iota}$ is flat over $C$ and $\X_V^{\iota} \subset \pi^{-1}(\Delta_{\pi})$.
	Let $\widetilde{\X_H^{\iota}} \subset \widetilde{\X}$ be the proper transform of $\X_H^{\iota}$.
	
	We define a topological invariant of the germ $( \pi : (\X, X_0) \to (S, 0), \xi)$ by
	\begin{align*}
		\alpha_{\iota}(X_0, \xi) =& \int_{E_0 \cap \widetilde{\X}^{\iota}_H} \widetilde{\nu}^* \left\{ \frac{1-Td(\mathcal{H})^{-1}}{c_1(\mathcal{H})} \right\} q^*\left\{ Td_{\iota}(T\X) ch_{\iota}(\xi) \right\} \\
		&\quad - \int_{ \X^{\iota}_V \cap X_0} Td_{\iota}(T\X) ch_{\iota}(\xi).
	\end{align*}
	For functions $f, g$ on $U^{\circ}$, we write $f \equiv g$ if $f- g \in C^{0}(U)$.
	
	\begin{thm}\label{t-ap-2-3}
		The following identity of functions on $S^{\circ}$ holds:
		$$
			\log \| \sigma_{KM} \|^2_{Q, \lambda_{\G}}(\iota) \equiv \alpha_{\iota}(X_0, \xi) \log |s|^2 .
		$$
	\end{thm}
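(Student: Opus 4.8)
The plan is to compute $\log\|\sigma_{KM}(s)\|^2_{Q,\lambda_{\G}}(\iota)$ directly from the equivariant immersion formula of Bismut \cite{MR1316553}, working fibrewise for $s\in S^{\circ}$ and then analysing the limit $s\to0$. For fixed $s\in S^{\circ}$ the fibre $X_s$ is smooth, and the restriction of $(\ref{al-ap-2-1})$ to $\X\times\{s\}$ is the Koszul resolution $0\to[\Gamma]^{-1}\otimes\xi\to\xi\to(i_s)_*\xi_s\to0$ of $(i_s)_*\xi_s$ along the smooth divisor $i_s:X_s\hookrightarrow\X$, the left map being multiplication by $\pi-s$. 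Since $R(p_2)_*$ of the first two terms computes $H^{\bullet}(\X,[\Gamma]^{-1}\otimes\xi)$ and $H^{\bullet}(\X,\xi)$, whose fibrewise dimensions are independent of $s$ (all $X_s$ being linearly equivalent, so $[\Gamma]|_{\X\times\{s\}}=\mathcal{O}_{\X}(X_s)$ has constant cohomology), Proposition \ref{p-ap-1-5} shows that the Quillen metrics on $\lambda_{\G}([\Gamma]^{-1}\otimes p_1^*\xi)$ and $\lambda_{\G}(p_1^*\xi)$ are smooth on $S$; their contribution to $\log\|\sigma_{KM}\|^2_{Q,\lambda_{\G}}(\iota)$ is therefore continuous and may be discarded modulo $C^{0}(U)$.

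Next I would feed the Koszul resolution, with the metric $h_{[\Gamma]}$ of $(\ref{al-ap-2-2})$, into the immersion formula. This expresses $\log\|\sigma_{KM}(s)\|^2_{Q,\lambda_{\G}}(\iota)$, modulo the smooth terms above and the Gillet--Soulé $R$-genus term, as an integral over $\X^{\iota}$ of the equivariant Bott--Chern secondary class of the resolution. As in the computation $(\ref{al6-1-2-2})$, this secondary class is a smooth characteristic form built from $Td_{\iota}(T\X)$, $ch_{\iota}(\xi)$ and the tautological classes on $\mathbb{P}(T\X)^{\vee}$, multiplied by $\log h_{[\Gamma]}(\varsigma_{\Gamma},\varsigma_{\Gamma})=\log|\pi-s|^2$. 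The $R$-genus term should be shown to be continuous up to $s=0$: its integrand is a smooth form on $\X^{\iota}$, and after pulling back along the resolution $q:\widetilde{\X}\to\X$ it pushes forward to a continuous function by \cite[Th\'eor\`eme 4 bis.]{MR0666639}, exactly as the corresponding term is treated in the proof of Proposition \ref{p-1-11}; hence it too is $\equiv0$.

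It then remains to evaluate the singular integral $\int_{\X^{\iota}}(\cdots)\log|\pi-s|^2$ as $s\to0$. Here I would pass to the resolution $q:\widetilde{\X}\to\X$ on which the Gauss map $\widetilde{\nu}$ is holomorphic and $\nu^*\mathcal{H}$ extends across $\Sigma_{\pi}$, and split the fixed locus as $\X^{\iota}=\X^{\iota}_H\sqcup\X^{\iota}_V$. On the horizontal part the form $\widetilde{\nu}^*\{(1-Td(\mathcal{H})^{-1})/c_1(\mathcal{H})\}\,q^*\{Td_{\iota}(T\X)\,ch_{\iota}(\xi)\}$ is smooth on $\widetilde{\X}^{\iota}_H$, so the asymptotic lemma of Yoshikawa \cite[Lemma~4.4 and Corollary~4.6]{MR2262777} converts the pushforward of $q^*\log|\pi-s|^2$ against it into $\bigl(\int_{E_0\cap\widetilde{\X}^{\iota}_H}\cdots\bigr)\log|s|^2+O(1)$, which is precisely the first term of $\alpha_{\iota}(X_0,\xi)$; the contribution concentrated on the vertical locus, which lies in the singular fibre, yields $-\int_{\X^{\iota}_V\cap X_0}Td_{\iota}(T\X)\,ch_{\iota}(\xi)\cdot\log|s|^2$, the second term. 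Collecting these gives $\log\|\sigma_{KM}\|^2_{Q,\lambda_{\G}}(\iota)\equiv\alpha_{\iota}(X_0,\xi)\log|s|^2$.

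The hard part will be the second step: verifying that the immersion formula is applicable with the singular metric $h_{[\Gamma]}$ (which is exactly why $(\ref{al-ap-2-2})$ is engineered so that Bismut's metric compatibility condition holds near $\Gamma$), and cleanly isolating the genuinely singular $\log|\pi-s|^2$ integral from the $R$-genus and fixed-point terms while proving the latter extend continuously to $s=0$. Equally delicate is the passage to $\widetilde{\X}$ together with the bookkeeping of the horizontal versus vertical components of $\X^{\iota}$, which is the geometric source of the two integrals defining $\alpha_{\iota}(X_0,\xi)$.
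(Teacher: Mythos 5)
Your overall architecture is the paper's: you apply Bismut's equivariant immersion formula \cite{MR1316553} to the restriction of (\ref{al-ap-2-1}) to $\X\times\{s\}$, use the metric (\ref{al-ap-2-2}) precisely so that Bismut's assumption (A) holds, pass to the resolution $q:(\widetilde{\X},E)\to(\X,\Sigma_{\pi})$ of the Gauss map, invoke Yoshikawa's asymptotics from \cite{MR2262777}, and split $\X^{\iota}=\X^{\iota}_H\sqcup\X^{\iota}_V$. Your vertical contribution and your treatment of the bounded terms also match (the paper absorbs the $R$-genus contributions into a topological constant rather than arguing continuity via \cite{MR0666639}, and your preliminary discarding of the factor Quillen metrics on $\lambda_{\G}(p_1^*\xi)$ and $\lambda_{\G}([\Gamma]^{-1}\otimes p_1^*\xi)$ is redundant here, since the immersion formula computes the Quillen norm of $\sigma_{KM}$ directly -- that reduction is the content of Theorem \ref{t-ap-2-8}, not of this theorem; both points are immaterial). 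The genuine gap is in your horizontal term.

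You claim that \cite[Lemma 4.4 and Corollary 4.6]{MR2262777} converts ``the pushforward of $q^*\log|\pi-s|^2$'' against the smooth form $\widetilde{\nu}^*\{(1-Td(\mathcal{H})^{-1})/c_1(\mathcal{H})\}\,q^*\{Td_{\iota}(T\X)\,ch_{\iota}(\xi)\}$ into $\bigl(\int_{E_0\cap\widetilde{\X_H^{\iota}}}\cdots\bigr)\log|s|^2+O(1)$. This step fails. Those results govern fiberwise integrals of $\log\|d\pi\|^2$ (or of $\log\|\xi\|^2$ for a section with vertical divisor); fiberwise one has $\pi\equiv s$ on $X_s$, so $\log|\pi-s|^2$ is identically $-\infty$ there and the asserted pushforward does not even parse. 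The term that the Bott--Chern current of the Koszul complex actually produces is the total-space integral $\int_{\X^{\iota}_H\times\{s\}}Td_{\iota}(T\X,h_{\X})\,ch_{\iota}(\xi,h_{\xi})\log|\pi-s|^2$, and this is \emph{continuous} in $s$ by \cite[Theorem 9.1]{MR2262777}: it contributes nothing singular, which is exactly how it is disposed of in (\ref{al4-ap-2-5}). The horizontal divergence comes from an ingredient your sketch omits entirely: the fiberwise Bott--Chern secondary class $\widetilde{Td}_{\iota}(\mathcal{E}_s;h_{X_s},h_{\X},a_{N_s})$ of the normal bundle sequence appearing in Bismut's formula, taken with respect to the normalized metric $a_{N_s^{\vee}}(d\pi|_{X_s},d\pi|_{X_s})=1$ -- the normalization that both makes assumption (A) verifiable and gives $c_1(N_s,a_{N_s})=0$, collapsing the Todd denominator in (\ref{al-ap-2-5}). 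Comparing $a_{N_s}$ with the metric induced by $h_{\X}$ through the Gauss map is what produces the factor $\log\|d\pi\|^2$ on $(X^{\iota}_H)_s$, as in (\ref{al-ap-2-6}); only then do Lemma 4.4 and Corollary 4.6 apply, namely to the fiberwise integral of $q^*\log\|d\pi\|^2$ against $\tilde{\nu}^*\{Td_{\iota}(\mathcal{U})(Td(\mathcal{H})-1)/c_1(\mathcal{H})\}\,q^*ch_{\iota}(\xi)$, which is identified with the first term of $\alpha_{\iota}(X_0,\xi)$ via $Td_{\iota}(\mathcal{U})Td(\mathcal{H})=\Pi^*Td_{\iota}(T\X)$. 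As written, your argument would yield only the vertical term of $\alpha_{\iota}(X_0,\xi)$, with an unjustified expression in place of the horizontal one: you have reverse-engineered the integrand of $\alpha_{\iota}$ from the answer rather than derived it from the immersion formula.
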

	
	\begin{proof}
		We follow \cite[Sect.5]{MR1486991}, \cite[Theorem 6.3]{MR2047658}, \cite[Theorem 5.1]{MR2262777}. 
		The proof is quite parallel to that of \cite[Theorem 5.1]{MR2262777}. 
		The major differences stem from the fact that $\X^{\iota}$ consists of the horizontal component $\X_H^{\iota}$ and vertical component $\X_V^{\iota}$ 
		and these components give different contributions to the singularity of $\log \| \sigma_{KM} \|^2_{Q, \lambda_{\G}}(\iota)$. \par
		\textbf{ Step 1 } 
		Let $[X_s] = [\Gamma]|_{X_s}$ be the holomorphic line bundle on $\X$ associated to the divisor $X_s$.
		The canonical section of $[X_s]$ is defined by $\varsigma_s = \varsigma_{\Gamma}|_{\X \times \{ s \}} \in H^0(\X, [X_s])$.
		Then $\operatorname{div} (\varsigma_s) =X_s$.
		Let $i_s : X_s \hookrightarrow \X$ be the natural embedding.
		By (\ref{al-ap-2-1}), we get the exact sequence of $\G$-equivariant coherent sheaves on $\X$,
		\begin{align}\label{al-ap-2-4}
			0 \to \mathcal{O}_{\X}([X_s]^{-1} \otimes \xi) \xrightarrow{ \otimes \varsigma_{s} } \mathcal{O}_{\X}(\xi) \to (i_s)_*\mathcal{O}_{X_s}(\xi) \to 0
		\end{align}  
		and the canonical isomorphism $(\lambda_{\G})_s \cong \lambda_{\G}([X_s]^{-1} \otimes \xi) \otimes \lambda_{\G}(\xi)^{-1} \otimes \lambda_{\G}(\xi_s)$.
		
		Set $h_{[X_s]} = h_{[\Gamma]}|_{\X \times \{ s \}}$, which is a $\G$-invariant hermitian metric on $[X_s]$.
		Let $h_{[X_s]^{-1}}$ be the $\G$-invariant hermitian metric on $[X_s]^{-1}$ induced from $h_{[X_s]}$.
		Let $N_s = N_{X_s / \X}$ (resp. $N_s^{\vee} = N_{X_s / \X}^{\vee}$) be the normal (resp. conormal) bundle of $X_s$ in $\X$. 
		Then $d \pi|_{X_s} \in H^0(X_s, N_s^{\vee})$ generates $N_s^{\vee}$ for $s \in S^{\circ}$ and $d \pi|_{X_s}$ is $\G$-invariant (cf. \cite[(2.2)]{MR1316553}). 
		Let $a_{N_s^{\vee}}$ be the $\G$-invariant hermitian metric on $N_s^{\vee}$ defined by $a_{N_s^{\vee}}( d \pi|_{X_s}, d \pi|_{X_s}) =1$.
		Let $a_{N_s}$ be the $\G$-invariant hermitian metric on $N_s$ induced from $a_{N_s^{\vee}}$.
		We have the equality $c_1(N_s, a_{N_s}) =0$ for $s \in U^{\circ}$.
		By \cite[Proof of Theorem 5.1 Step 1]{MR2262777}, the $\G$-invariant metrics $h_{[X_s]^{-1}} \otimes h_{\xi}$ and $h_{\xi}$ verify assumption $(A)$ of Bismut \cite[Definition 1.5]{MR1029391} with respect to $a_{N_s}$ and $h_{\xi}|_{x_s}$. \par 
		\textbf{ Step 2 }
		Let $\mathcal{E}_s$ be the exact sequence of $\G$-equivariant holomorphic vector bundles on $X_s$ defined by
		$$
			\mathcal{E}_s : 0 \to TX_s \to T\X|_{X_s} \to N_s \to 0.
		$$
		By \cite{MR929146}, one has the Bott-Chern class $\widetilde{Td}_{\iota} (\mathcal{E}_s ; h_{X_s}, h_{\X}, a_{N_s}) \in \tilde{A}_{X_s^{\iota}}$ such that 
		$$
			dd^c \widetilde{Td}_{\iota} (\mathcal{E}_s ; h_{X_s}, h_{\X}, a_{N_s}) = Td_{\iota}(TX_s, h_{X_s}) Td(N_s, a_{N_s})|_{X_s^{\iota}} -Td_{\iota}(T\X, h_{\X}). 
		$$
		Here, to get the equality $Td_{\iota}(N_s, a_{N_s}) =Td(N_s, a_{N_s})|_{X_s^{\iota}}$, we used the triviality of the $\G$-action on $N_s|_{X_s^{\iota}}$.
		Set 
		$$
			(X^{\iota}_H)_s := \X^{\iota}_H \cap X_s.
		$$
		Applying the embedding formula of Bismut \cite{MR1316553} (see also \cite[Theorem 5.6]{MR1486991}) to the $\G$-equivariant embedding $i_s : X_s \hookrightarrow \X$ and to the exact sequence (\ref{al-ap-2-4}), 
		we get for all $s \in U^{\circ}$
		\begin{align}\label{al-ap-2-5}
		\begin{aligned}
			\log \| \sigma_{KM}(s) \|^2_{Q, \lambda_{\G}}(\iota) =& \int_{ ( \X^{\iota}_V \times \{s\} ) \sqcup ( \X^{\iota}_H \times \{s\} )} -\frac{ Td_{\iota}(T\X, h_{\X}) ch_{\iota}(\xi, h_{\xi}) }{ Td([\Gamma], h_{[\Gamma]}) } \log h_{[\Gamma]}( \varsigma_{\Gamma}, \varsigma_{\Gamma} )\\
			&- \int_{( X^{\iota}_H )_s} \frac{ \widetilde{Td}_{\iota} (\mathcal{E}_s ; h_{X_s}, h_{\X}, a_{N_s}) ch_{\iota}(\xi, h_{\xi}) }{ Td(N_s, a_{N_s}) } +C,
		\end{aligned}
		\end{align}
		where $C$ is a topological constant independent of $s \in U^{\circ}$.
		Here we used the triviality of the $\G$-action on $[X_s]|_{X_s^{\iota}}$
		and the explicit formula for the Bott-Chern current \cite[Remark 3.5 especially (3. 23), Theorem 3.15, Theorem 3.17]{MR1086887} to get the first term of the right hand side of (\ref{al-ap-2-5}). 
		Substituting (\ref{al-ap-2-2}) and $c_1(N_s, a_{N_s}) =0$ into (\ref{al-ap-2-5}), we get for $s \in U^{\circ}$
		\begin{align}\label{al4-ap-2-5}
		\begin{aligned}
			\log \| \sigma_{KM}(s) \|^2_{Q, \lambda_{\G}}(\iota) \equiv -\int_{ \X^{\iota}_H \times \{s\} } Td_{\iota}(T\X, h_{\X}) ch_{\iota}(\xi, h_{\xi}) \log |\pi -s|^2 \\
			- \left\{ \int_{ (\X^{\iota}_V \cap X_0) \times \{s\} } Td_{\iota}(T\X) ch_{\iota}(\xi) \right\} \log |s|^2 -\int_{( X^{\iota}_H )_s} \widetilde{Td}_{\iota} (\mathcal{E}_s ; h_{X_s}, h_{\X}, a_{N_s}) ch_{\iota}(\xi, h_{\xi}) \\
			\equiv - \left\{ \int_{ \X^{\iota}_V \cap X_0 } Td_{\iota}(T\X) ch_{\iota}(\xi) \right\} \log |s|^2 -\int_{( X^{\iota}_H )_s} \widetilde{Td}_{\iota} (\mathcal{E}_s ; h_{X_s}, h_{\X}, a_{N_s}) ch_{\iota}(\xi, h_{\xi}),
		\end{aligned}
		\end{align}
		where we used the equalities $h_{[\Gamma]}( \varsigma_{\Gamma}, \varsigma_{\Gamma})|_{ X_0 \times \{s\} } = |s|^2$ and $ \X^{\iota}_V \cap \pi^{-1}(U) = \X^{\iota}_V \cap X_0 $ to get the first equality and \cite[Theorem 9.1]{MR2262777} to get the second equality. \par 
		\textbf{ Step 3 }
		Let $\mathcal{L}$ be the tautological line bundle on $\mathbb{P}( \Omega^1_{\X} \otimes \pi^*TC )$,
		which is canonically identified with a line bundle on $\mathbb{P}( T\X )^{\vee}$.
		The vector bundles $\mathcal{U}$ and $\mathcal{L}$ are endowed with the hermitian metrics induced from the one on $\Pi^*T\X$ via the inclusions $\mathcal{U} \hookrightarrow \Pi^* T\X$ and $ \mathcal{L} \hookrightarrow \Pi^* ( \Omega^1_{\X} \otimes \pi^*TC )$, respectively.
		Namely, $h_{\mathcal{U}} := (\Pi^* h_{\X})|_{\mathcal{U}}$ and $h_{\mathcal{L}} := \Pi^* (h_{\X}^{\vee} \otimes \pi^* h_C)|_{\mathcal{L}}$.
		Write $\overline{\mathcal{U}} := (\mathcal{U}, h_{\mathcal{U}})$ and $\overline{\mathcal{L}} := (\mathcal{L}, h_{\mathcal{L}})$.
		
		Let $h_{N_s}$ be the hermitian metric on $N_s$ induced from $h_{\X}$ by the $C^{\infty}$ isomorphism $N_s \cong (TX_s)^{\perp}$.
		Let $\widetilde{Td}( N_s; a_{N_s}, h_{N_s} ) \in \tilde{A}_{X_s} $ be the Bott-Chern secondary class such that
		$$
			dd^c \widetilde{Td}( N_s; a_{N_s}, h_{N_s} ) = Td( N_s, a_{N_s} ) -Td( N_s, h_{N_s} ).
		$$
		Since $\G$ acts trivially on $N_s|_{X_s^{\iota}}$, we deduce from \cite[Propositions 1.3.2 and 1.3.4]{MR1038362} that 
		\begin{align}\label{al-ap-2-6}
		\begin{aligned}
			\widetilde{Td}_{\iota} (\mathcal{E}_s ; h_{X_s}, h_{\X}, a_{N_s}) = \widetilde{Td}_{\iota} (\mathcal{E}_s ; h_{X_s}, h_{\X}, h_{N_s}) +Td_{\iota} (TX_s, h_{X_s})\widetilde{Td} (N_s ; a_{N_s}, h_{N_s}) \\
			= \widetilde{Td}_{\iota} (\mathcal{E}_s ; h_{X_s}, h_{\X}, h_{N_s}) +\nu^* Td_{\iota} (\overline{\mathcal{U}}) \nu^* \left\{ \frac{ 1-Td( -c_1( \overline{\mathcal{L}} )) }{ -c_1( \overline{\mathcal{L}} ) } \right\} \log \| d\pi \|^2|_{ (X^{\iota}_H)_s } \\
			= \left[ \nu^* \widetilde{Td}_{\iota} ( \mathcal{S}; h_{\mathcal{U}}, \Pi^*h_{\X}, h_{\mathcal{H}} ) +\nu^* \left\{ Td_{\iota} (\overline{\mathcal{U}}) \frac{ 1-Td( -c_1( \overline{\mathcal{L}} )) }{ -c_1( \overline{\mathcal{L}} ) } \right\} \log \| d\pi \|^2 \right] |_{ (X^{\iota}_H)_s }.
		\end{aligned}
		\end{align}
		We used \cite[(13)]{MR2262777} and $( TX_s, h_{X_s} ) = \nu^* \overline{\mathcal{U}}|_{X_s}$ to get the second equality and we used the relation $(\mathcal{E}_s ; h_{X_s}, h_{\X}, h_{N_s}) = \nu^* ( \mathcal{S}; h_{\mathcal{U}}, \Pi^*h_{\X}, h_{\mathcal{H}} )|_{X_s}$ 
		and the functoriality of the Bott-Chern class \cite{MR929146} to get the third equality. 
		Set $\tilde{\pi} := \pi \circ q$.
		Substituting (\ref{al-ap-2-6}) into (\ref{al4-ap-2-5}) and using the fact that $[ ( \pi|_{\widetilde{\X_H^{\iota}}} )_* \omega ]^{(0)} \equiv 0$ for any smooth differential form $\omega$ on $\widetilde{\X_H^{\iota}}$ (cf. \cite[Theorem 1]{MR0666639}), we deduce from the same argument as in \cite[p.74 l.1-l.13]{MR2262777} that 
		\begin{align}\label{al-ap-2-7}
		\begin{aligned}
			\log \| \sigma_{KM}(s) \|^2_{Q, \lambda_{\G}}(\iota) \equiv -\left\{ \int_{ \X^{\iota}_V \cap X_0 } Td_{\iota}(T\X) ch_{\iota}(\xi) \right\} \log |s|^2 \\
			+ ( \tilde{\pi}|_{\widetilde{\X_H^{\iota}}} )_* \left[ \tilde{\nu}^* \left\{ Td_{\iota} (\overline{\mathcal{U}}) \frac{ Td( -c_1( \overline{\mathcal{L}} )) -1}{ -c_1( \overline{\mathcal{L}} ) } \right\} q^*ch_{\iota}(\xi, h_{\xi}) (q^* \log \| d\pi \|^2) \right]^{(0)} .
		\end{aligned}
		\end{align}
		By \cite[Corollary 4.6]{MR2262777} applied to the second term of the right hand side of (\ref{al-ap-2-7}), we get 
		\begin{align*}
			\log \| \sigma_{KM}(s) \|^2_{Q, \lambda_{\G}}(\iota) \equiv& -\left\{ \int_{ \X^{\iota}_V \cap X_0 } Td_{\iota}(T\X) ch_{\iota}(\xi) \right\} \log |s|^2 \\
			&+ \left[ \int_{ \widetilde{\X_H^{\iota}} \cap E_0 } \tilde{\nu}^* \left\{ Td_{\iota} (\mathcal{U}) \frac{ Td(\mathcal{H})-1 }{ c_1(\mathcal{H}) } \right\} q^*ch_{\iota}(\xi) \right] \log |s|^2 \\
			=& \alpha_{\iota}(X_0, \xi) \log |s|^2, 
		\end{align*} 
		where the last equality follows from the identity $Td_{\iota}(\mathcal{U}) Td(\mathcal{H}) =\Pi^* Td_{\iota}(T\X)$.
		This completes the proof.
	\end{proof}
	
	\begin{thm}\label{t-ap-2-8}
		The following identity of functions on $S^{\circ}$ holds:
		$$
			\log \| \sigma \|^2_{Q, \lambda_{\G}(\xi) }(\iota) \equiv \alpha_{\iota}(X_0, \xi) \log |s|^2 .
		$$
	\end{thm}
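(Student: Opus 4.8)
The plan is to deduce the statement from Theorem \ref{t-ap-2-3} by isolating the factor $\lambda_{\G}(\xi)$ in the canonical decomposition $\lambda_{\G} = \lambda_{\G}([\Gamma]^{-1}\otimes p_1^*\xi) \otimes \lambda_{\G}(p_1^*\xi)^{-1} \otimes \lambda_{\G}(\xi)$ and showing that the two remaining factors contribute nothing to the singularity at $s=0$. The essential observation is that, whereas $\pi : \X \to C$ acquires a singular fiber over $0$, the second projection $p_2 : \X \times C \to C$ is a product projection, hence a $\G$-equivariant proper holomorphic submersion with no critical fibers. Consequently the Quillen metrics attached to the bundles $[\Gamma]^{-1}\otimes p_1^*\xi$ and $p_1^*\xi$ on $\X \times C$ will be smooth across $0$.

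First I would fix admissible holomorphic sections $\sigma_1$, $\sigma_2$, $\sigma$ of $\lambda_{\G}([\Gamma]^{-1}\otimes p_1^*\xi)|_S$, $\lambda_{\G}(p_1^*\xi)|_S$, $\lambda_{\G}(\xi)|_S$ respectively; these exist after shrinking $S$, since each eigen-line-bundle $\lambda_{\pm}$ is a line bundle over a disk and hence trivial. Then $\sigma_1 \otimes \sigma_2^{-1} \otimes \sigma$ is an admissible section of $\lambda_{\G}|_S$. Applying Proposition \ref{p-ap-1-5} to the submersion $p_2$, together with the $\G$-equivariant relatively ample line bundle $p_1^*A$ obtained from a $\G$-equivariant ample line bundle $A$ on $\X$ and the $\G$-equivariant bundles $[\Gamma]^{-1}\otimes p_1^*\xi$ and $p_1^*\xi$, I obtain that $\log\|\sigma_1\|^2_{Q,\lambda_{\G}([\Gamma]^{-1}\otimes p_1^*\xi)}(\iota)$ and $\log\|\sigma_2\|^2_{Q,\lambda_{\G}(p_1^*\xi)}(\iota)$ are smooth on $S$, so both are $\equiv 0$. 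Since the Quillen metric on $\lambda_{\G}$ is by definition the tensor product of the three factor metrics, this gives
\begin{equation*}
\log\|\sigma_1\otimes\sigma_2^{-1}\otimes\sigma\|^2_{Q,\lambda_{\G}}(\iota) \equiv \log\|\sigma\|^2_{Q,\lambda_{\G}(\xi)}(\iota).
\end{equation*}

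Next I would compare $\sigma_1\otimes\sigma_2^{-1}\otimes\sigma$ with the Knudsen--Mumford section $\sigma_{KM}$. Because the alternating product of determinants attached to the short exact sequence (\ref{al-ap-2-1}) is canonically trivial, $\sigma_{KM}$ is nowhere vanishing on all of $C$, in particular at $0$; as $\sigma_1\otimes\sigma_2^{-1}\otimes\sigma$ is also nowhere vanishing on $S$, the two $\pm$-components of the ratio $F=\sigma_{KM}/(\sigma_1\otimes\sigma_2^{-1}\otimes\sigma)$ are nowhere vanishing holomorphic functions $F_{\pm}$ on $S$. Hence $\log|F_{\pm}|^2$ is continuous at $0$ and
\begin{equation*}
\log\|\sigma_{KM}\|^2_{Q,\lambda_{\G}}(\iota) = \log|F_+|^2 - \log|F_-|^2 + \log\|\sigma_1\otimes\sigma_2^{-1}\otimes\sigma\|^2_{Q,\lambda_{\G}}(\iota) \equiv \log\|\sigma\|^2_{Q,\lambda_{\G}(\xi)}(\iota).
\end{equation*}
Combining this with Theorem \ref{t-ap-2-3} yields $\log\|\sigma\|^2_{Q,\lambda_{\G}(\xi)}(\iota) \equiv \alpha_{\iota}(X_0,\xi)\log|s|^2$, as desired.

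The only step demanding genuine care is the smoothness of the two auxiliary Quillen metrics, and this is exactly what Proposition \ref{p-ap-1-5} is designed to deliver once one checks its hypotheses for $p_2$. I expect the remaining, essentially routine, points to be bookkeeping the $\pm$-eigencomponents consistently through the canonical isomorphisms, and confirming that the identification $p_1^*\xi|_{\Gamma} \cong \xi$ used to define $\lambda_{\G}$ is $\G$-equivariant, so that the canonical trivialization of $\lambda_{\G}$ and the section $\sigma_{KM}$ are compatible with the equivariant structure underlying $\|\cdot\|_{Q,\lambda_{\G}}(\iota)$.
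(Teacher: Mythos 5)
Your proposal is correct and follows essentially the same route as the paper: decompose $\lambda_{\G}$ via the Knudsen--Mumford section, use Proposition \ref{p-ap-1-5} applied to the submersion $p_2 : \X \times C \to C$ (which has no singular fibers) to show the Quillen metrics on $\lambda_{\G}([\Gamma]^{-1}\otimes p_1^*\xi)$ and $\lambda_{\G}(p_1^*\xi)$ are smooth across $0$, and conclude with Theorem \ref{t-ap-2-3}. The only cosmetic difference is that the paper chooses admissible sections $\alpha$, $\beta$ directly so that $\sigma_{KM} = \beta \otimes \alpha^{-1} \otimes \sigma$ exactly, whereas you take arbitrary admissible sections and absorb the discrepancy into nowhere-vanishing holomorphic factors $F_{\pm}$ — an equivalent bookkeeping device.
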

	
	\begin{proof}
		There exist admissible holomorphic sections 
		$$
			\alpha = (\alpha_+, \alpha_-) \in \Gamma( U, \lambda_{\G}(p_1^*\xi) ), \quad \beta = (\beta_+, \beta_-)  \in \Gamma( U, \lambda_{\G}([\Gamma]^{-1} \otimes p_1^*\xi) )  
		$$
		such that $\sigma_{KM} = \beta \otimes \alpha^{-1} \otimes \sigma$ on $U$, i.e., $(\sigma_{KM})_{\pm} = \beta_{\pm} \otimes \alpha_{\pm}^{-1} \otimes \sigma_{\pm}$.
		Then we have
		\begin{align*}
			\log \| \sigma \|^2_{Q, \lambda_{\G}(\xi) }(\iota) &= \log \| \sigma_{KM} \|^2_{Q, \lambda_{\G} }(\iota) +\log \| \alpha \|^2_{Q, \lambda_{\G}(p_1^*\xi) }(\iota) -\log \| \beta \|^2_{Q, \lambda_{\G}([\Gamma]^{-1} \otimes p_1^*\xi) }(\iota) \\
			&\equiv \alpha_{\iota}(X_0, \xi) \log |s|^2
		\end{align*}
		by Proposition \ref{p-ap-1-5} and Theorem \ref{t-ap-2-3}.
		This proves the theorem.
	\end{proof}

\bibliography{Reference}

\providecommand{\bysame}{\leavevmode\hbox to3em{\hrulefill}\thinspace}
\providecommand{\MR}{\relax\ifhmode\unskip\space\fi MR }
\providecommand{\MRhref}[2]{%
  \href{http://www.ams.org/mathscinet-getitem?mr=#1}{#2}
}
\providecommand{\href}[2]{#2}
\begin{thebibliography}{10}

\bibitem{MR216035}
W.~L. Baily, Jr. and A.~Borel, \emph{Compactification of arithmetic quotients
  of bounded symmetric domains}, Ann. of Math. (2) \textbf{\textbf{84}} (1966),
  442--528. \MR{216035}

\bibitem{MR0666639}
D.~Barlet, \emph{D\'{e}veloppement asymptotique des fonctions obtenues par
  int\'{e}gration sur les fibres}, Invent. Math. \textbf{68} (1982), no.~1,
  129--174. \MR{666639}

\bibitem{MR2805992}
A.~Beauville, \emph{Antisymplectic involutions of holomorphic symplectic
  manifolds}, J. Topol. \textbf{\textbf{4}} (2011), no.~2, 300--304.
  \MR{2805992}

\bibitem{MR1440306}
E.~Bierstone and P.~D. Milman, \emph{Canonical desingularization in
  characteristic zero by blowing up the maximum strata of a local invariant},
  Invent. Math. \textbf{128} (1997), no.~2, 207--302. \MR{1440306}

\bibitem{MR1029391}
J.-M. Bismut, \emph{Superconnection currents and complex immersions}, Invent.
  Math. \textbf{99} (1990), no.~1, 59--113. \MR{1029391}

\bibitem{MR1316553}
\bysame, \emph{Equivariant immersions and {Q}uillen metrics}, J. Differential
  Geom. \textbf{\textbf{41}} (1995), no.~1, 53--157. \MR{1316553}

\bibitem{MR1486991}
\bysame, \emph{Quillen metrics and singular fibres in arbitrary relative
  dimension}, J. Algebraic Geom. \textbf{6} (1997), no.~1, 19--149.
  \MR{1486991}

\bibitem{MR1064578}
J.-M. Bismut and J.-B. Bost, \emph{Fibr\'es d\'eterminants, m\'etriques de
  {Q}uillen et d\'eg\'en\'erescence des courbes}, Acta Math. \textbf{165}
  (1990), no.~1-2, 1--103. \MR{1064578}

\bibitem{MR929146}
J.-M. Bismut, H.~Gillet, and C.~Soul\'{e}, \emph{Analytic torsion and
  holomorphic determinant bundles. {I}. {B}ott-{C}hern forms and analytic
  torsion}, Comm. Math. Phys. \textbf{\textbf{115}} (1988), no.~1, 49--78.
  \MR{929146}

\bibitem{MR931666}
\bysame, \emph{Analytic torsion and holomorphic determinant bundles. {III}.
  {Q}uillen metrics on holomorphic determinants}, Comm. Math. Phys.
  \textbf{\textbf{115}} (1988), no.~2, 301--351. \MR{931666}

\bibitem{MR1086887}
\bysame, \emph{Complex immersions and {A}rakelov geometry}, The {G}rothendieck
  {F}estschrift, {V}ol. {I}, Progr. Math., \textbf{86}, Birkh\"{a}user Boston,
  Boston, MA, 1990, pp.~249--331. \MR{1086887}

\bibitem{MR1188532}
J.-M. Bismut and G.~Lebeau, \emph{Complex immersions and {Q}uillen metrics},
  Inst. Hautes \'{E}tudes Sci. Publ. Math. \textbf{74} (1991). \MR{1188532}

\bibitem{MR3542771}
S.~Boissi\`ere, C.~Camere, and A.~Sarti, \emph{Classification of automorphisms
  on a deformation family of hyper-{K}\"{a}hler four-folds by {$p$}-elementary
  lattices}, Kyoto J. Math. \textbf{56} (2016), no.~3, 465--499. \MR{3542771}

\bibitem{MR0849050}
C.~Borcea, \emph{Diffeomorphisms of a {$K3$} surface}, Math. Ann.
  \textbf{\textbf{275}} (1986), no.~1, 1--4. \MR{849050}

\bibitem{MR338456}
A.~Borel, \emph{Some metric properties of arithmetic quotients of symmetric
  spaces and an extension theorem}, J. Differential Geometry \textbf{6} (1972),
  543--560. \MR{338456}

\bibitem{MR3871824}
D.~Eriksson, G.~Freixas~i Montplet, and C.~Mourougane, \emph{Singularities of
  metrics on {H}odge bundles and their topological invariants}, Algebr. Geom.
  \textbf{5} (2018), no.~6, 742--775. \MR{3871824}

\bibitem{MR4255041}
\bysame, \emph{B{COV} invariants of {C}alabi-{Y}au manifolds and degenerations
  of {H}odge structures}, Duke Math. J. \textbf{\textbf{170}} (2021), no.~3,
  379--454. \MR{4255041}

\bibitem{MR2454893}
H.~Fang, Z.~Lu, and K.-I. Yoshikawa, \emph{Analytic torsion for {C}alabi-{Y}au
  threefolds}, J. Differential Geom. \textbf{\textbf{80}} (2008), no.~2,
  175--259. \MR{2454893}

\bibitem{MR2424924}
S.~Finashin and V.~Kharlamov, \emph{Deformation classes of real
  four-dimensional cubic hypersurfaces}, J. Algebraic Geom. \textbf{17} (2008),
  no.~4, 677--707. \MR{2424924}

\bibitem{MR1038362}
H.~Gillet and C.~Soul\'{e}, \emph{Characteristic classes for algebraic vector
  bundles with {H}ermitian metric. {I}}, Ann. of Math. (2) \textbf{131} (1990),
  no.~1, 163--203. \MR{1038362}

\bibitem{MR0081960}
H.~Grauert and R.~Remmert, \emph{Plurisubharmonische {F}unktionen in komplexen
  {R}\"aumen}, Math. Z. \textbf{\textbf{65}} (1956), 175--194. \MR{81960}

\bibitem{MR1963559}
M.~Gross, D.~Huybrechts, and D.~Joyce, \emph{Calabi-{Y}au manifolds and related
  geometries}, Universitext, Springer-Verlag, Berlin, 2003. \MR{1963559}

\bibitem{I1}
D.~Imaike, \emph{Analytic torsion for irreducible holomorphic symplectic
  fourfolds with involution, {I}: Construction of an invariant}, preprint
  arXiv:2406.18023 (2024).

\bibitem{I3}
\bysame, \emph{Analytic torsion for irreducible holomorphic symplectic
  fourfolds with involution, {III}: relation with the {BCOV} invariant}, in
  preparation.

\bibitem{MR3519981}
M.~Joumaah, \emph{Non-symplectic involutions of irreducible symplectic
  manifolds of {$K3^{[n]}$}-type}, Math. Z. \textbf{\textbf{283}} (2016),
  no.~3-4, 761--790. \MR{3519981}

\bibitem{MR0437541}
F.~F. Knudsen and D.~Mumford, \emph{The projectivity of the moduli space of
  stable curves. {I}. {P}reliminaries on ``det'' and ``{D}iv''}, Math. Scand.
  \textbf{39} (1976), no.~1, 19--55. \MR{437541}

\bibitem{MR1872550}
K.~K\"{o}hler and D.~Roessler, \emph{A fixed point formula of {L}efschetz type
  in {A}rakelov geometry. {I}. {S}tatement and proof}, Invent. Math.
  \textbf{\textbf{145}} (2001), no.~2, 333--396. \MR{1872550}

\bibitem{MR2289519}
J.~Koll\'ar, \emph{Lectures on resolution of singularities},  \textbf{166}
  (2007), vi+208. \MR{2289519}

\bibitem{MR4177283}
S.~Ma and K.-I. Yoshikawa, \emph{K3 surfaces with involution, equivariant
  analytic torsion, and automorphic forms on the moduli space {IV}: {T}he
  structure of the invariant}, Compos. Math. \textbf{156} (2020), no.~10,
  1965--2019. \MR{4177283}

\bibitem{MR1800127}
X.~Ma, \emph{Submersions and equivariant {Q}uillen metrics}, Ann. Inst. Fourier
  (Grenoble) \textbf{\textbf{50}} (2000), no.~5, 1539--1588. \MR{1800127}

\bibitem{MR151460}
I.~G. Macdonald, \emph{Symmetric products of an algebraic curve}, Topology
  \textbf{1} (1962), 319--343. \MR{151460}

\bibitem{MR2964480}
E.~Markman, \emph{A survey of {T}orelli and monodromy results for
  holomorphic-symplectic varieties}, Complex and differential geometry,
  Springer Proc. Math., \textbf{8}, Springer, Heidelberg, 2011, pp.~257--322.
  \MR{2964480}

\bibitem{Nikulin1980IntegralSB}
V.~V. Nikulin, \emph{Integral symmetric bilinear forms and some of their
  applications}, Math. USSR Izv. \textbf{\textbf{14}} (1980), 103--167.

\bibitem{MR633160}
\bysame, \emph{Factor groups of groups of automorphisms of hyperbolic forms
  with respect to subgroups generated by {$2$}-reflections.}, J. Soviet Math.
  \textbf{\textbf{22}} (1983), 1401--1476. \MR{633160}

\bibitem{1360011145989358208}
J.~Noguchi and T.~Ochiai, \emph{Geometric function theory in several complex
  variables}, Translations of Mathematical Monographs \textbf{80} (1990).

\bibitem{ohashi2013non}
H.~Ohashi and M.~Wandel, \emph{Non-natural non-symplectic involutions on
  symplectic manifolds of ${K}3^{[2]}$-type}, arXiv preprint arXiv:1305.6353
  (2013).

\bibitem{MR744325}
C.~A.~M. Peters, \emph{A criterion for flatness of {H}odge bundles over curves
  and geometric applications}, Math. Ann. \textbf{268} (1984), no.~1, 1--19.
  \MR{744325}

\bibitem{MR0338129}
D.~Quillen, \emph{Higher algebraic {$K$}-theory. {I}}, Algebraic {$K$}-theory,
  {I}: {H}igher {$K$}-theories, Lecture Notes in Math., \textbf{341}, Springer,
  Berlin-New York, 1973, pp.~85--147. \MR{338129}

\bibitem{MR0382272}
W.~Schmid, \emph{Variation of {H}odge structure: the singularities of the
  period mapping}, Invent. Math. \textbf{22} (1973), 211--319. \MR{382272}

\bibitem{MR595204}
J.~Shah, \emph{A complete moduli space for {$K3$} surfaces of degree {$2$}},
  Ann. of Math. (2) \textbf{112} (1980), no.~3, 485--510. \MR{595204}

\bibitem{MR0429885}
J.~Steenbrink, \emph{Limits of {H}odge structures}, Invent. Math. \textbf{31}
  (1975/76), no.~3, 229--257. \MR{429885}

\bibitem{MR485870}
J.~H.~M. Steenbrink, \emph{Mixed {H}odge structure on the vanishing
  cohomology}, Real and complex singularities ({P}roc. {N}inth {N}ordic
  {S}ummer {S}chool/{NAVF} {S}ympos. {M}ath., {O}slo, 1976), Sijthoff \&
  Noordhoff, Alphen aan den Rijn, 1977, pp.~525--563. \MR{485870}

\bibitem{MR2451566}
C.~Voisin, \emph{Hodge theory and complex algebraic geometry. {I}}, english
  ed., Cambridge Studies in Advanced Mathematics, \textbf{76}, Cambridge
  University Press, Cambridge, 2007. \MR{2451566}

\bibitem{MR1639560}
K.-I. Yoshikawa, \emph{Smoothing of isolated hypersurface singularities and
  {Q}uillen metrics}, Asian J. Math. \textbf{2} (1998), no.~2, 325--344.
  \MR{1639560}

\bibitem{MR2047658}
\bysame, \emph{{$K3$} surfaces with involution, equivariant analytic torsion,
  and automorphic forms on the moduli space}, Invent. Math.
  \textbf{\textbf{156}} (2004), no.~1, 53--117. \MR{2047658}

\bibitem{MR2262777}
\bysame, \emph{On the singularity of {Q}uillen metrics}, Math. Ann.
  \textbf{337} (2007), no.~1, 61--89. \MR{2262777}

\bibitem{Y10}
\bysame, \emph{Singularities and analytic torsion}, preprint arXiv1007.2835v1
  (2010).

\bibitem{MR2968220}
\bysame, \emph{{$K3$} surfaces with involution, equivariant analytic torsion,
  and automorphic forms on the moduli space {III}: the case {$r(M)\geq18$}},
  Math. Z. \textbf{\textbf{272}} (2012), no.~1-2, 175--190. \MR{2968220}

\bibitem{MR3039773}
\bysame, \emph{{$K3$} surfaces with involution, equivariant analytic torsion,
  and automorphic forms on the moduli space, {II}: {A} structure theorem for
  {$r(M)>10$}}, J. Reine Angew. Math. \textbf{677} (2013), 15--70. \MR{3039773}

\bibitem{MR756849}
S.~Zucker, \emph{Degeneration of {H}odge bundles (after {S}teenbrink)}, Topics
  in transcendental algebraic geometry ({P}rinceton, {N}.{J}., 1981/1982), Ann.
  of Math. Stud., \textbf{106}, 1984, pp.~121--141. \MR{756849}

\end{thebibliography}
\bibliographystyle{amsplain}

\end{document}